\newtheorem{theorem}{Theorem}
\newtheorem{corollary}[theorem]{Corollary}
\newtheorem{conj}[theorem]{Conjecture}
\newtheorem{lemma}[theorem]{Lemma}
\theoremstyle{definition}
\theoremstyle{remark}
\newtheorem{rem}{Remark}
\numberwithin{equation}{section}
\numberwithin{theorem}{section}
\numberwithin{defn}{section}
\DeclareMathOperator{\diag}{diag}
\DeclareMathOperator{\CT}{\mathrm{CT}}
\begin{document}
\title[Proofs of Mizuno's Conjectures on  Rank Three Nahm Sums]{Proofs of Mizuno's Conjectures on  Rank Three Nahm Sums of Index $(1,2,2)$}

\author{Boxue Wang and Liuquan Wang}
\address[B. Wang]{School of Mathematics and Statistics, Wuhan University, Wuhan 430072, Hubei, People's Republic of China}
\email{boxwang@whu.edu.cn}
\address[L. Wang]{School of Mathematics and Statistics, Wuhan University, Wuhan 430072, Hubei, People's Republic of China}
\email{wanglq@whu.edu.cn;mathlqwang@163.com}

\subjclass[2010]{11P84, 05A30, 33D15, 33D60, 11F03}

\keywords{Rogers--Ramanujan type identities; Nahm sums; Bailey pairs; vector-valued modular forms}

\begin{abstract}
Mizuno provided 15 examples of generalized rank three Nahm sums with symmetrizer $\mathrm{diag}(1,2,2)$ which are conjecturally modular. Using the theory of Bailey pairs and some $q$-series techniques, we establish a number of triple sum Rogers--Ramanujan type identities. These identities confirm the modularity of all of Mizuno's examples except that two Nahm sums are sums of modular forms of weights $0$ and $1$. We also prove Mizuno's conjectural modular transformation formulas for two vector-valued functions consisting of Nahm sums with symmetrizers $\mathrm{diag}(1,1,2)$ and $\mathrm{diag}(1,2,2)$.
\end{abstract}

\maketitle

\tableofcontents

\section{Introduction}\label{sec-intro}
We begin by introducing some standard $q$-series notation:
\begin{align}
  & (a;q)_n:=\prod\limits_{k=0}^{n-1} (1-aq^k), \quad n\in \mathbb{Z}_{\geq 0},  \label{aq-defn} \\
   & (a;q)_\infty:=\prod\limits_{k=0}^{\infty} (1-aq^k), \quad |q|<1. \label{aq-infinite-defn}
\end{align}
For convenience, we also use compressed notation:
\begin{align}
    (a_1,a_2,\dots,a_m;q)_n:=(a_1;q)_n(a_2;q)_n\cdots (a_m;q)_n, \quad n \in \mathbb{Z}_{\geq 0}\cup \{\infty\}.
\end{align}

The famous Rogers--Ramanujan identities assert that
\begin{align}
    \sum_{n=0}^\infty\frac{q^{n^2}}{(q;q)_n}
    =
    \frac{1}{(q,q^4;q^5)_\infty}, \quad
    \sum_{n=0}^\infty\frac{q^{n^2+n}}{(q;q)_n}
    =
    \frac{1}{(q^2,q^3;q^5)_\infty}.\label{RR}
\end{align}
They were first proved by Rogers \cite{Rogers1894} and later rediscovered by Ramanujan. The Andrews--Gordon identity (see \cite{Andrews1974,Gordon1961}) generalized \eqref{RR} to arbitrary odd moduli. It states that for integer $k\geq 2$ and $1\leq i \leq k$,
\begin{align}
\sum_{n_1,\dots,n_{k-1}\geq 0} \frac{q^{N_1^2+\cdots+N_{k-1}^2+N_i+\cdots +N_{k-1}}}{(q;q)_{n_1}\cdots (q;q)_{n_{k-2}} (q;q)_{n_{k-1}}}=\frac{(q^i,q^{2k+1-i},q^{2k+1};q^{2k+1})_\infty}{(q;q)_\infty},\label{AG}
\end{align}
where $N_j=n_j+\cdots+n_{k-1}$ if $j\leq k-1$ and $N_k=0$.  As the even moduli companion of it, Bressoud's identity \cite{Bressoud1980} asserts that for integers $k\geq 2$ and $1\leq i \leq k$,
\begin{align}\label{eq-Bressoud}
\sum_{n_1,\dots,n_{k-1}\geq 0} \frac{q^{N_1^2+\cdots+N_{k-1}^2+N_i+\cdots +N_{k-1}}}{(q;q)_{n_1}\cdots (q;q)_{n_{k-2}} (q^2;q^2)_{n_{k-1}}} =\frac{(q^i,q^{2k-i},q^{2k};q^{2k})_\infty}{(q;q)_\infty}
\end{align}
where $N_j$ is defined as before.

The Rogers--Ramanujan identities motivate people to find similar sum-product identities wherein the sum side is a mixed sum of some basic hypergeometric series, and the product side is a sum of some nice infinite products. Such kind of identities are usually called Rogers--Ramanujan type identities. So far there are lots of such identities in the literature and they attract great attention from different areas. The reader may consult Sills' book \cite{Sills-book} for more details.

Rogers--Ramanujan type identities have important implications in combinatorics and number theory, and they are also closely related to Lie algebras and physics. In combinatorics, the sum side and product side of some Rogers--Ramanujan type identities can be interpreted as generating functions of two different kinds of partitions, and the identity implies that such partitions are equinumerous. Around the 1980s, Lepowsky and Wilson \cite{Lepowsky1984,Lepowsky1985} developed a Lie-theoretic approach to discover and prove Rogers--Ramanujan type identities. The sum side of such identities can sometimes be regarded as characters of standard modules of some affine Kac--Moody Lie algebras which turn out to be the infinite products in the product side. In particular, the identities \eqref{AG} and \eqref{eq-Bressoud} correspond to the standard modules of the Kac--Moody Lie algebra $A_1^{(1)}$ of odd and even levels, respectively.

If we look at the Rogers--Ramanujan identities from the aspect of modular forms, it is clear that the product sides of them are modular after multiplying by suitable powers of $q$. Here following \cite{Mizuno,Zagier} we say that a $q$-series $f(q)$ is modular to mean that the function $\widetilde{f}(\tau)=f(e^{2\pi i\tau})$ is invariant with respect to some finite index subgroup of $\mathrm{SL}(2,\mathbb{Z})$. In contrast, modular properties of the sum sides are not easy to be observed or proved. A natural and very important problem in the theory of $q$-series and modular forms is to characterize modular basic hypergeometric series. In this direction, Nahm considered the following particular class of multi-sum basic hypergeometric series:
$$f_{A,b,c}(q):=\sum_{n=(n_1,\dots,n_r)^\mathrm{T} \in (\mathbb{Z}_{\geq 0})^r} \frac{q^{\frac{1}{2}n^\mathrm{T} An+n^\mathrm{T} b+c}}{(q;q)_{n_1}\cdots (q;q)_{n_r}},$$
where $r\geq 1$ is a positive integer, $A$ is a real positive definite symmetric $r\times r$ matrix, $b$ is a vector of length $r$, and $c$ is a scalar. These are called \textit{Nahm sum} or \textit{Nahm series}. With motivation from physics, Nahm proposed the following problem: determine all rational matrix $A$, rational vector $b$ and rational scalar $c$ such that the Nahm series $f_{A,b,c}(q)$ is modular. We call such $(A,b,c)$ a \textit{modular triple}. These modular Nahm series are expected to be characters of some rational conformal field theories.

A conjecture including a criterion on $A$ so that $A$ is the matrix part of a modular triple was stated in \cite{Zagier}, and it is usually referred to as Nahm's conjecture. Zagier \cite{Zagier} confirmed this conjecture in the rank one case by proving that there are exactly seven modular triples $(A,b,c)$:
\begin{align}
   & (1/2,0,-1/40), \quad (1/2,1/2,1/40), \quad (1,0,-1/48), \quad (1,1/2,1/24), \nonumber \\
   &(1,-1/2,1/24), \quad (2,0,-1/60), \quad (2,1,11/60).
\end{align}
Note that the last two triples correspond to the Rogers--Ramanujan identities \eqref{RR}.

While Nahm's problem has been solved for the rank one case, a solution for the general case seems far from reach. When the rank $r\geq 2$, Nahm's conjecture needs further modifications as counterexamples were found by Vlasenko and Zwegers \cite{VZ}. Nevertheless, one direction of Nahm's conjecture has been confirmed by recent work of Calegari, Garoufalidis and Zagier \cite{CGZ}.

To get a better understanding of Nahm's problem, a number of works have been done to find as many modular triples as possible.
After an extensive search, Zagier \cite{Zagier} provided 11 and 12 sets of possible modular triples in the rank two and rank three cases, respectively. The modularity of these rank two modular triples were confirmed in several works such as the works of Zagier \cite{Zagier}, Vlasenko and Zwegers \cite{VZ}, Cherednik and Feigin \cite{Feigin}, Wang \cite{Wang-rank2} and Cao, Rosengren and Wang \cite{CRW}. Zagier's rank three modular triples have all been confirmed in the works of Zagier \cite{Zagier} and Wang \cite{Wang-rank3}.

In 2023, Mizuno \cite{Mizuno} considered the following generalized Nahm series:
\begin{align}\label{eq-general-Nahm-sum}
   {f}_{A,b,c,d}(q):= \sum_{n=(n_1,\dots,n_r)^\mathrm{T} \in (\mathbb{Z}_{\geq 0})^r} \frac{q^{\frac{1}{2}n^\mathrm{T}ADn+n^\mathrm{T}b+c}}{(q^{d_1};q^{d_1})_{n_1}\cdots (q^{d_r};q^{d_r})_{n_r}}.
\end{align}
Here $d=(d_1,\dots,d_r)\in \mathbb{Z}_{>0}^r$,   $b \in \mathbb{Q}^r$ is a vector and  $c \in \mathbb{Q}$ is a scalar. Following \cite{Mizuno}, we call $A \in \mathbb{Q}^{r \times r} $ a symmetrizable matrix with the symmetrizer $D := \mathrm{diag}(d_1,\dots, d_r)$ if $AD$ is symmetric positive definite, and we call $(A,b,c,d)$ a modular quadruple when $f_{A,b,c,d}(q)$ is modular. Following the strategy in \cite{Zagier} based on asymptotic analysis, Mizuno provided 14 sets of possible rank two modular quadruples, and he presented 19 and 15 sets of possible rank three modular quadruples with symmetrizers $\mathrm{diag}(1,1,2)$ and $\mathrm{diag}(1,2,2)$, respectively.

For convenience, following the notion in \cite{Wang2022}, we  call ${f}_{A,b,c,d}(q)$ in \eqref{eq-general-Nahm-sum} a Nahm sum of index $(d_1,\dots,d_r)$, and call a Rogers--Ramanujan type identity involving such Nahm sums an identity of index $(d_1,\dots,d_r)$. For instance, the Andrews--Gordon identity \eqref{AG} and Bressoud's identity \eqref{eq-Bressoud} are of indices $(1,1,\dots,1)$ and $(1,1,\dots,1,2)$, respectively.

So far the modularity of Mizuno's rank two examples (see \cite[Table 2]{Mizuno}) have almost all been confirmed. Mizuno \cite{Mizuno} justified the modularity of four sets of rank two modular quadruples.  Together with some results in the literature, the authors \cite{WW-rank2} confirmed the modularity of eight other sets of Mizuno's rank two modular quadruples, and proposed conjectural identities for the remaining two sets. One of the remaining sets correspond to the Kande--Russell mod 9 conjectural identities (see \cite{K2015,KR2019} or \cite[Conjecture 6.1]{Kursungoz-JCTA}).

As for the rank three case, we \cite{WW-I} proved that all the 19 sets of Nahm sums of index $(1,1,2)$ listed in \cite[Table 2]{Mizuno} are indeed modular. As a sequel to \cite{WW-I}, this work is devoted to confirming the modularity of all of Mizuno's rank three Nahm sums of index $(1,2,2)$ listed in \cite[Table 3]{Mizuno}. We will establish Rogers--Ramanujan type identities for each of the Nahm sums by expressing them in terms of infinite products. The product side will show their modularity clearly.

It should be emphasized that several examples considered in the current paper are more difficult and deeper than the Nahm sums studied in \cite{WW-I}, and the proofs are more complicated. The main method used in \cite{WW-I} is the Bailey machinery, but here we need to use various techniques such as $q$-difference equations, constant term methods and construct some new interesting Bailey pairs. Besides proving all of Mizuno's conjectures on Nahm sums of index $(1,2,2)$, we also obtain several surprising results such as the distinct behavior of two Nahm sums (see Theorem \ref{thm-S}).

For convenience, we label the examples from 1 to 15 according to their order of appearance in \cite[Table 3]{Mizuno}. The corresponding matrices and number of vectors for these examples are listed in Table \ref{tab-matrix}, and the details can be found in Section \ref{sec-exam}. Here we classify the 15 examples into several groups. Those examples in the same group share analogous pattern or are treated using similar techniques.
\begin{table}[htbp]
\begin{tabular}{c|ccc} \hline
  \makecell{{\small Exam.} \\ {\small No.} } & Matrix $A$ & Matrix $AD$  & \makecell{{\small Number of} \\ {\small vectors $b$}}   \\
  \hline
1 & $\left(\begin{smallmatrix}
1 & 0 & 1\\
0 & 2 & 2\\
1/2&1 & 2
\end{smallmatrix}\right)$ & $\left(\begin{smallmatrix}
2 & 0 & 1\\
0 & 4 & 2\\
1 & 2 & 2
\end{smallmatrix}\right)$ & 4 \\
 7 & $\left(\begin{smallmatrix}
1 & 1 & 1\\
1 & 2 & 2\\
1/2&1 &3/2\\
\end{smallmatrix}\right)$ & $\left(\begin{smallmatrix}
2 & 2 & 1\\
2 & 4 & 2\\
1 & 2 &3/2\\
\end{smallmatrix}\right)$ & 3\\
 8 & $\left(\begin{smallmatrix}
1 & 0 & 1\\
0 & 2 & 2\\
1/2 & 1 & 5/2
\end{smallmatrix}\right)$ & $\left(\begin{smallmatrix}
2 & 0 & 1\\
0 & 4 & 2\\
1 & 2 & 5/2
\end{smallmatrix}\right)$  & 4 \\
\hline
 2 & $\left(\begin{smallmatrix}
1 & 1 & 1\\
1 & 2 & 2\\
1/2 & 1 & 2
\end{smallmatrix}\right)$ & $\left(\begin{smallmatrix}
2 & 2 & 1\\
2 & 4 & 2\\
1 & 2 & 2
\end{smallmatrix}\right)$ & 3 \\
 9 & $\left(\begin{smallmatrix}
1 & 1 & 1\\
1 & 3 & 3\\
1/2&3/2&5/2
\end{smallmatrix}\right)$ &
 $\left(\begin{smallmatrix}
2 & 2 & 1\\
2 & 6 & 3\\
1 & 3&5/2
\end{smallmatrix}\right)$ & 3 \\
 11 & $\left( \begin{smallmatrix}
4 & 4 & 4\\
4 & 6 & 6\\
2 & 3 & 4\\
\end{smallmatrix}\right)$ & $\left(\begin{smallmatrix}
8 & 8 & 4\\
8 & 12 & 6\\
4 & 6 & 4\\
\end{smallmatrix}\right)$ &  2\\
\hline
 3 & $\left(\begin{smallmatrix}
1 & 1 & 0\\
1 & 4 & 4\\
0 & 2 & 3
\end{smallmatrix}\right)$ & $\left(\begin{smallmatrix}
2 & 2 & 0\\
2 & 8 & 4\\
0 & 4 & 3
\end{smallmatrix}\right)$  &  2 \\
\hline
 4 & $\left(\begin{smallmatrix}
2 & 1 & 0\\
1 & 2 & 2\\
0 & 1 & 2
\end{smallmatrix}\right)$ & $\left(\begin{smallmatrix}
4 & 2 & 0\\
2 & 4 & 2\\
0 & 2 & 2
\end{smallmatrix}\right)$ & 3 \\
\hline
 5 & $\left(\begin{smallmatrix}
2 & 2 & 2\\
2 & 4 & 4\\
1 & 2 & 3
\end{smallmatrix}\right)$ & $\left(\begin{smallmatrix}
4 & 4 & 2\\
4 & 8 & 4\\
2 & 4 & 3
\end{smallmatrix}\right)$ & 10 \\
 \hline
6 & $\left(\begin{smallmatrix}
4 & 4 & 2\\
4 & 5 & 2\\
1 & 1 & 1
\end{smallmatrix}\right)$  & $\left(\begin{smallmatrix}
8 & 8 & 2\\
8 & 10 & 2\\
2 & 2 & 1
\end{smallmatrix}\right)$  & 2 \\
\hline
 10 & $\left(\begin{smallmatrix}
3/2&1/2&1\\
1/2&3/2&1\\
1/2&1/2&1
\end{smallmatrix}\right)$  & $\left(\begin{smallmatrix}
3&1&1\\
1&3&1\\
1&1&1
\end{smallmatrix}\right)$ & 10 \\
12 & $\left(\begin{smallmatrix}
3/2&1/2& 1\\
1/2&3/2& 1\\
1/2&1/2&3/2
\end{smallmatrix}\right)$ &
$\left(\begin{smallmatrix}
3&1& 1\\
1&3& 1\\
1&1&3/2
\end{smallmatrix}\right)$  & 6 \\
13 & $\left(\begin{smallmatrix}
3/2&1/2& 2\\
1/2&3/2& 2\\
1 & 1 & 4
\end{smallmatrix}\right)$ &
$\left(\begin{smallmatrix}
3 & 1 & 2\\
1 & 3 & 2\\
2 & 2 & 4
\end{smallmatrix}\right)$ & 6 \\
14 & $\left(\begin{smallmatrix}
5/2&3/2& 1\\
3/2&5/2& 1\\
1/2&1/2& 1
\end{smallmatrix}\right)$ & $\left(\begin{smallmatrix}
5&3& 1\\
3&5& 1\\
1&1& 1
\end{smallmatrix}\right)$ &  4 \\
15 & $\left(\begin{smallmatrix}
5/2&3/2& 2\\
3/2&5/2& 2\\
1 & 1 & 2
\end{smallmatrix}\right)$ & $\left(\begin{smallmatrix}
5&3& 2\\
3&5& 2\\
2&2& 2
\end{smallmatrix}\right)$ & 6 \\
  \hline
\end{tabular}
\\[2mm]
\caption{Matrices in Mizuno's examples with symmetrizer $D=\diag(2,2,1)$}
\label{tab-matrix}
\end{table}

We prove identities of index $(1,2,2)$ for each of the Nahm series ${f}_{A,b,c,d}(q)$. For instance, for the four modular quadruples in Example 1, we establish the following theorem to prove their modularity.
\begin{theorem}\label{thm-exam-1}
    We have
    \begin{align}
    &\sum_{i,j,k\ge 0}\frac{q^{i^2+2j^2+k^2+2ij+ik}}{(q;q)_i(q^2;q^2)_j(q^2;q^2)_k}=
    \frac{(q^{20},q^{24},q^{44};q^{44})_\infty}{(q,q^3,q^4;q^4)_\infty}
    +q\frac{(-q,q^{10},-q^{11};q^{11})_\infty
    }{(q^2,q^2,q^4;q^4)_\infty},\label{table3.1.1}
    \\
    &\sum_{i,j,k\ge 0}\frac{q^{i^2+2j^2+k^2+2ij+ik+k}}{(q;q)_i(q^2;q^2)_j(q^2;q^2)_k}=
    \frac{(-q^5,q^6,-q^{11};-q^{11})_\infty}{(q^2,q^2,q^4;q^4)_\infty}
    +q\frac{(q^{12},q^{32},q^{44};q^{44})_\infty}{(q,q^3,q^4;q^4)_\infty},\label{table3.1.2}\\
    &\sum_{i,j,k\ge 0}\frac{q^{i^2+2j^2+k^2+2ij+ik+i+k}}{(q;q)_i(q^2;q^2)_j(q^2;q^2)_k}=
    \frac{(q^4,-q^7,-q^{11};-q^{11})_\infty}{(q^2,q^2,q^4;q^4)_\infty}
    +q^2\frac{(q^8,q^{36},q^{44};q^{44})_\infty}{(q,q^3,q^4;q^4)_\infty},\label{table3.1.3}\\
    &\sum_{i,j,k\ge 0}\frac{q^{i^2+2j^2+k^2+2ij+ik+2i+2j+k}}{(q;q)_i(q^2;q^2)_j(q^2;q^2)_k}=
    \frac{(q^2,-q^9,-q^{11};-q^{11})_\infty}{(q^2,q^2,q^4;q^4)_\infty}
    +q^3\frac{(q^4,q^{40},q^{44};q^{44})_\infty}{(q,q^3,q^4;q^4)_\infty}\label{table3.1.4}.
\end{align}
\end{theorem}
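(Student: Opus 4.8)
The plan is to collapse the triple sum one index at a time, arriving at a single theta-type series that can then be converted to the displayed products. The key structural features of the quadratic form $i^2+2j^2+k^2+2ij+ik$ are that it contains no $jk$ cross term, that the coefficient of $ik$ is $1$, and that $i^2+2ij+2j^2=(i+j)^2+j^2$. Because of the first two features, in each of the four series the index $k$ enters only through $k^2+(i+\epsilon)k$, where $\epsilon\in\{0,1\}$ is the coefficient of the pure linear term in $k$. Writing $k^2+(i+\epsilon)k=2\binom{k}{2}+(i+\epsilon+1)k$ and applying Euler's identity
\[
\sum_{k\ge 0}\frac{q^{k^2+Lk}}{(q^2;q^2)_k}=(-q^{L+1};q^2)_\infty ,
\]
the innermost summation collapses uniformly to the product $(-q^{\,i+\epsilon+1};q^2)_\infty$, reducing each identity to a double sum over $i,j$ weighted by this factor.

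First I would use $i^2+2ij+2j^2=(i+j)^2+j^2$ to expose the remaining structure. The crucial point is that $(-q^{\,i+\epsilon+1};q^2)_\infty$ depends on $i$ only through its parity, since for $i=2m$ it equals $(-q^{\,\epsilon+1};q^2)_\infty/(-q^{\,\epsilon+1};q^2)_m$ and for $i=2m+1$ it equals $(-q^{\,\epsilon+2};q^2)_\infty/(-q^{\,\epsilon+2};q^2)_m$. Splitting the $i$-summation according to the parity of $i$ therefore breaks each double sum into two pieces, and this is precisely the mechanism that I expect to produce the two summands---one of modulus $44$ and one of modulus $11$---on each right-hand side of \eqref{table3.1.1}--\eqref{table3.1.4}.

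Next I would carry out the two remaining summations with the Bailey machinery. After the parity split each piece is a double sum whose $j$-part carries $(q^2)^{j^2}$ against $(q^2;q^2)_j$, so it is governed by a Bailey pair in base $q^2$ of Rogers--Ramanujan/Bressoud type; inserting this into the Bailey lemma and then summing the remaining index $m$ should reduce each piece to a single theta-type series. The final step is to identify these series, through the Jacobi triple product and the quintuple product identity, as the stated infinite products, after simplifying the denominators by relations such as $(q,q^3,q^4;q^4)_\infty=(q;q^2)_\infty(q^4;q^4)_\infty$. The same three-step scheme---Euler on $k$, parity split on $i$, and Bailey summation followed by a product evaluation---should apply verbatim to all four identities, the linear shifts $+k$, $+i+k$ and $+2i+2j+k$ only altering the parameters fed into the Bailey iteration.

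The main obstacle is this last reduction: choosing the Bailey pair that correctly matches the coupled $i,j$ summand together with the parity-truncated Euler product, and then proving the precise theta-function identity that isolates the two products of moduli $11$ and $44$. The linear terms distinguishing the four series change both the seed of the Bailey chain and the residues selected in the theta decomposition, so the delicate bookkeeping that attaches the correct product to each of \eqref{table3.1.1}--\eqref{table3.1.4} is where I expect the essential difficulty to lie.
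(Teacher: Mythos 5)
Your plan is essentially the paper's proof: Euler's identity collapses the $k$-sum to $(-wq^{i+\epsilon+1};q^2)_\infty$, the parity split on $i$ turns that factor into a finite Pochhammer in the denominator and produces the two summands of moduli $44$ and $11$, and the remaining double sum is evaluated by the twice-iterated Bailey lemma. The step you flag as the main difficulty is handled in the paper by feeding the Slater Bailey pairs \eqref{C(1)}, \eqref{B-G(1)}, \eqref{G(2)}, \eqref{G(3)}, \eqref{C(3)}, \eqref{C(4)}, \eqref{B-G(1.1)} and \eqref{C4*} (in base $q^2$) into \eqref{id-BP-twice}, after which only the Jacobi triple product --- no quintuple product --- is needed to obtain the stated products.
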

Note that the products $(-q^3,q^8,-q^{11};-q^{11})_\infty$ and $(q^{16},q^{28},q^{44};q^{44})_\infty$ are missing in the product side. We find the following companion result, which represents these missing products by combination of Nahm sums of the same type:
\begin{theorem}\label{thm-exam-1-2}
We have
\begin{align}
&\sum_{i,j,k\ge 0}\frac{q^{i^2+2j^2+k^2+2ij+ik+i}(1+q^{2i+2j+k})}{(q;q)_i(q^2;q^2)_j(q^2;q^2)_k} \nonumber \\
     & =
    \frac{(-q^3,q^8,-q^{11};-q^{11})_\infty}{(q^2,q^2,q^4;q^4)_\infty}
    +\frac{(q^{16},q^{28},q^{44};q^{44})_\infty}{(q,q^3,q^4;q^4)_\infty},  \label{revise-add-exam1-companion} \\
     &\sum_{i,j,k\ge 0}\frac{q^{i^2+2j^2+k^2+2ij+ik+k}(1+q^{i+1}-q^{2i})}{(q;q)_i(q^2;q^2)_j(q^2;q^2)_k} \nonumber \\
     &=
    q\frac{(-q^3,q^8,-q^{11};-q^{11})_\infty}{(q^2,q^2,q^4;q^4)_\infty}
    +q\frac{(q^{16},q^{28},q^{44};q^{44})_\infty}{(q,q^3,q^4;q^4)_\infty}\label{add-exam1-companion}.
\end{align}
\end{theorem}

Along our investigation of Mizuno's examples, we establish some beautiful identities as byproducts. For example, in order to confirm the modularity of Example 4, we prove the following remarkable relations between different Nahm sums.
\begin{theorem}\label{thm-exam-4-relation}
We have
\begin{align}
   & \sum_{i,j,k\ge 0}\frac{q^{(i+j)^2+(j+k)^2+k^2+j}x^{i+2j+2k}}{(q;q)_i(q^2;q^2)_j(q^2;q^2)_k}
    =
    \sum_{i,j,k\ge 0}\frac{q^{i^2+(i+j)^2+(i+j+k)^2}x^{3i+2j+k}}{(q;q)_i(q;q)_j(q;q)_k}, \label{conj-u-id-1}
    \\
& \sum_{i,j,k\ge 0}\frac{q^{(i+j)^2+(j+k)^2+k^2+j+2k}x^{i+2j+2k}}{(q;q)_i(q^2;q^2)_j(q^2;q^2)_k}
=\sum_{i,j,k\ge 0}\frac{q^{i^2+(i+j)^2+(i+j+k)^2+2i+j}x^{3i+2j+k}}{(q;q)_i(q;q)_j(q;q)_k}. \label{conj-u-id-2}
\end{align}
\end{theorem}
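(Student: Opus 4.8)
The plan is to recognize both sides as iterated Bailey chains and then to reconcile their different bases. Denote the left- and right-hand sides of \eqref{conj-u-id-1} by $L(x)$ and $R(x)$. Writing $N_1=i+j+k$, $N_2=i+j$, $N_3=i$, the right-hand side becomes $\sum q^{N_1^2+N_2^2+N_3^2}x^{N_1+N_2+N_3}/\big((q;q)_{n_1}(q;q)_{n_2}(q;q)_{n_3}\big)$ with $n_3=i,\ n_2=j,\ n_1=k$; this is precisely the threefold iterate of Bailey's lemma applied to the unit Bailey pair relative to $a=1$, where $x$ plays the role of the free Bailey parameter. Within this framework the two identities \eqref{conj-u-id-1} and \eqref{conj-u-id-2} should arise from the same Bailey pair under two specializations of the free parameters in the lemma, the extra linear exponents ($+j$ versus $+j+2k$ on the left, and $0$ versus $2i+j$ on the right) recording the different choices. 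First I would make this correspondence precise and record the resulting nested representation of the right-hand sides.

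Next I would treat the left-hand sides. The denominators $(q;q)_i(q^2;q^2)_j(q^2;q^2)_k$ together with the quadratic form $i^2+2ij+2j^2+2jk+2k^2$ signal a mixed chain: a single base-$q$ step in $i$ followed by two base-$q^2$ steps in $j$ and $k$. Summing out $i$ first collapses the $i$-dependence into a Rogers--Ramanujan factor with dilated argument, leaving a base-$q^2$ double sum in $(j,k)$; I would then apply a change-of-base Bailey lemma (of the Bressoud--Ismail--Stanton or Warnaar type) to convert the two base-$q^2$ steps into base-$q$ steps, so that $L(x)$ is brought into the same nested base-$q$ shape as $R(x)$. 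The bulk of the work is bookkeeping: tracking every power of $q$ and $x$ through the base change and verifying that the cross terms on the left, $(i+j)^2+(j+k)^2+k^2$, reorganize into the right-hand cross terms $i^2+(i+j)^2+(i+j+k)^2$, with the linear terms matched by the appropriate parameter specialization.

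The main obstacle is exactly this reconciliation of the mixed-base structure with the pure base-$q$ structure: the change-of-base step must be applied with precisely matched parameters so that the quadratic, linear and $x$-exponents all align, and it is here that the two cases \eqref{conj-u-id-1} and \eqref{conj-u-id-2} diverge in detail. As an independent safeguard I would also verify the identities at the level of formal power series in $x$: both sides are power series in $x$ with coefficients in $\mathbb{Q}(q)$ and constant term $1$, and a common $q$-difference equation in $x$ (obtained by peeling off the outermost summation variable on each side) would determine all coefficients recursively; one checks directly that the first several coefficients in $x$ agree, which both corroborates the identity and pins down the base case. The second identity \eqref{conj-u-id-2} is then handled by the identical argument under the companion specialization of the Bailey parameters, or deduced in parallel to \eqref{conj-u-id-1}.
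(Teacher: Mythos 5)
Your proposal correctly locates the crux of the problem --- reconciling the mixed-base left-hand sides with the pure base-$q$ Andrews--Gordon sums on the right --- but neither of your two routes actually crosses that gap. In the primary route, summing out $i$ does produce $q^{j^2}\sum_{i}q^{i^2}(xq^{2j})^i/(q;q)_i$, a Rogers--Ramanujan function with argument $xq^{2j}$, but this factor depends on $j$ and does not detach from the $(j,k)$-sum; what remains is not a base-$q^2$ Bailey-chain object to which a Bressoud--Ismail--Stanton or Warnaar change-of-base lemma can be applied, and you never specify which lemma would be used or why its quadratic form would match $(i+j)^2+(j+k)^2+k^2$. This step is an unsubstantiated hope rather than an argument. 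The fallback route is closer in spirit to what is actually needed, but it understates the difficulty by an order of magnitude: the hard part is not checking initial coefficients, it is deriving a $q$-difference system for the left-hand sides at all, and this is emphatically not a matter of ``peeling off the outermost summation variable.'' Note also that the two identities cannot be handled ``by the identical argument under the companion specialization'' independently of one another: the relevant $q$-difference equations couple the two series together, so they must be proved simultaneously.

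For comparison, the paper's proof sets $L_4(x)$ and $L_2(x)$ equal to the two left-hand sides, introduces the auxiliary combinations $L_1(x):=L_4(xq)$ and $L_3(x):=L_2(x)+(xq)^2L_2(xq)$, and shows (Lemma \ref{lem-L-system}, using the shift identities \eqref{Q-diff-1}--\eqref{Q-diff-3} for $P(i,j,k)=(i+j)^2+(j+k)^2+k^2$ together with three further auxiliary series $G(x)$, $T_1(x)$, $T_2(x)$ and several telescoping manipulations) that the $L_i$ satisfy exactly Andrews's system $Q_{4,i}(x)-Q_{4,i-1}(x)=(xq)^{i-1}Q_{4,5-i}(xq)$ from \eqref{Q-general}. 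Eliminating variables yields a single fourth-order $q$-difference equation shared by $L_4$ and $Q_{4,4}$, and matching low-order coefficients gives $L_i=Q_{4,i}$ for all $i$, which is the theorem; no change of base is ever performed. If you wish to salvage your plan, the concrete task is to produce the analogues of \eqref{L-relation-2} and \eqref{L-relation-4} for your series; until those couplings are established, the recursive determination of coefficients that you invoke has nothing to recurse on.
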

We use various methods to establish identities for Mizuno's examples. The Nahm sums in Example 3 can be directly reduced to some known double sums by summing over one of the indices first. In Examples 2, 6 and 9--15, this reduction process is achieved by using some finite summation formulas. The remaining examples are considerably more difficult. For Examples 1, 7 and 8 we will split the Nahm sums into two parts according to the parity of one of the summation indices and then evaluate the two parts separately. Examples 7 and 8 are closely connected in the sense that some parts of their Nahm sums can be converted to each other. We will use $q$-difference equations in Example 4 especially for proving Theorem \ref{thm-exam-4-relation}.  For Example 5 we need to use the constant term method. Note that Bailey pairs play an important role in Examples 1, 5, 7 and 8.

After expressing the Nahm sums in terms of infinite products, we find that all of the Nahm sums in \cite[Table 3]{Mizuno} are modular of weight zero except for two cases in Example 5. For these two cases we have the following product representations.
\begin{theorem}\label{thm-S}
 We have
 \begin{align}
      &\sum_{i,j,k\ge 0}\frac{q^{\frac{3}{2}i^2+2j^2+4k^2+2ij+4ik+4jk+\frac{1}{2}i+j}}{(q;q)_i(q^2;q^2)_j(q^2;q^2)_k}=\frac{1}{4}\left(3\frac{(q^2;q^2)_\infty}{(q;q)_\infty}
      +\frac{(q;q)_\infty^3}{(q^2;q^2)_\infty}\right), \label{S1-product} \\
       &\sum_{i,j,k\ge 0}\frac{q^{\frac{3}{2}i^2+2j^2+4k^2+2ij+4ik+4jk+\frac{5}{2}i+3j+4k}}{(q;q)_i(q^2;q^2)_j(q^2;q^2)_k}=\frac{1}{4}q^{-1}\left(\frac{(q^2;q^2)_\infty}{(q;q)_\infty}
       -\frac{(q;q)_\infty^3}{(q^2;q^2)_\infty}  \right). \label{S2-product}
 \end{align}
\end{theorem}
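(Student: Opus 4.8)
The plan is to evaluate both sums by a constant-term argument after completing the square in the exponents. First I would record the algebraic identity
$$\tfrac32 i^2+2j^2+4k^2+2ij+4ik+4jk=\tfrac12 i^2+j^2+(i+j+2k)^2,$$
so that the exponent in \eqref{S1-product} becomes $\tfrac12 i(i+1)+j(j+1)+(i+j+2k)^2$ and the exponent in \eqref{S2-product} becomes $\tfrac12 i(i+1)+j(j+1)+(i+j+2k+1)^2-1$. Writing $S_1,S_2$ for the two triple sums and
$$T(N):=\sum_{\substack{i,j,k\ge0\\ i+j+2k=N}}\frac{q^{\frac12 i(i+1)+j(j+1)}}{(q;q)_i(q^2;q^2)_j(q^2;q^2)_k},$$
this recasts the two identities as $S_1=\sum_{N\ge0}q^{N^2}T(N)$ and $S_2=q^{-1}\sum_{N\ge0}q^{(N+1)^2}T(N)$.

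Next I would introduce a variable $z$ marking $i+j+2k$ and factor the generating function of the $T(N)$ into three Euler sums, each summable in closed form:
$$\sum_{N\ge0}T(N)z^N=\frac{(-zq;q)_\infty(-zq^2;q^2)_\infty}{(z^2;q^2)_\infty}=:G(z),$$
using $\sum_i q^{i(i+1)/2}z^i/(q;q)_i=(-zq;q)_\infty$, $\sum_j q^{j(j+1)}z^j/(q^2;q^2)_j=(-zq^2;q^2)_\infty$ and $\sum_k z^{2k}/(q^2;q^2)_k=1/(z^2;q^2)_\infty$. Since $[z^N]G(z)=0$ for $N<0$, the theta weights $q^{N^2}$ and $q^{(N+1)^2}$ let me rewrite both sums as constant terms in $z$: with $\vartheta(z):=\sum_{N\in\mathbb Z}q^{N^2}z^{-N}$ one gets $S_1=\CT_z[G(z)\vartheta(z)]$ and $qS_2=\CT_z[zG(z)\vartheta(z)]$, hence $S_1+qS_2=\CT_z[(1+z)G(z)\vartheta(z)]$.

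The evaluation is where the substance lies. Substituting the Jacobi triple product $\vartheta(z)=(q^2;q^2)_\infty(-qz;q^2)_\infty(-qz^{-1};q^2)_\infty$ and telescoping via $(-z;q)_\infty=(1+z)(-zq;q)_\infty$ and $(z^2;q^2)_\infty=(z;q)_\infty(-z;q)_\infty$ collapses the expression to
$$(1+z)G(z)\vartheta(z)=(q^2;q^2)_\infty\frac{(-zq;q)_\infty(-qz^{-1};q^2)_\infty}{(z;q)_\infty},$$
whose constant term I expect to be $1/(q;q)_\infty$, yielding the clean relation $S_1+qS_2=(q^2;q^2)_\infty/(q;q)_\infty$. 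Writing $A=(q^2;q^2)_\infty/(q;q)_\infty$ and $B=(q;q)_\infty^3/(q^2;q^2)_\infty$, the two claims of Theorem \ref{thm-S} are then equivalent to the single further relation $S_1-3qS_2=B$: solving the $2\times2$ linear system $S_1+qS_2=A$, $S_1-3qS_2=B$ produces exactly $S_1=\tfrac14(3A+B)$ and $S_2=\tfrac14 q^{-1}(A-B)$, which accounts both for the combinations and for the overall factor $\tfrac14$.

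The main obstacle is this second relation, together with the appearance of the coefficient $3$: unlike $S_1+qS_2$, the combination does not collapse to a single product by elementary telescoping. I expect to prove it by $2$-dissecting $G(z)\vartheta(z)$ according to the parity of the $z$-exponent and evaluating the two resulting theta sums by means of a suitable Bailey pair (Bailey pairs being the tool flagged for Example 5), recognizing each dissected piece as a multiple of $A$ or $B$ through the Jacobi triple product and, if needed, the quintuple product identity. The delicate bookkeeping will be tracking how the two weights $q^{N^2}$ and $q^{(N+1)^2}$ distribute across the even and odd parts of $G(z)$, which is ultimately what forces the asymmetric coefficients $3$ and $-3$.
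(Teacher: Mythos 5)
Your setup is sound and, up to the relation $S_1+qS_2=(q^2;q^2)_\infty/(q;q)_\infty$, essentially reproduces what the paper does: your $(1+z)G(z)\vartheta(z)$ is exactly the paper's constant-term representation of $F(q^{1/2},q,1)$ with the factor $(1+z)$ cleared, and the constant term you ``expect'' to be $1/(q;q)_\infty$ is indeed $\sum_{n\ge 0}q^{n^2}/(q;q)_n^2=1/(q;q)_\infty$ (the Durfee-square identity), so that first relation is correct. (The paper gets the same relation more cheaply, by shifting $i\mapsto i+1$ in the already-proved identity \eqref{122-7}.) The reduction of the theorem to one further independent linear relation, e.g.\ $S_1-3qS_2=(q;q)_\infty^3/(q^2;q^2)_\infty$, is also a correct reformulation.

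The genuine gap is that this second relation — which is the entire substance of the theorem — is not proved; it is only announced as something you ``expect to prove'' by a parity dissection of $G(z)\vartheta(z)$ and ``a suitable Bailey pair,'' possibly with the quintuple product. That plan does not work as described: the combination $\CT_z[(1-3z)G(z)\vartheta(z)]$ admits no telescoping analogous to the $(1+z)$ case, and a $2$-dissection of the $z$-exponent does not split the series into pieces that are individually multiples of $A$ and $B$. In the paper the coefficients $3$ and the factor $\tfrac14$ arise from evaluating $S_1$ directly: the constant term $\sum_{n\ge0}\frac{(-1)^nq^{n^2}}{(q^2;q^2)_n}\sum_{k=0}^n(-1)^k\frac{(-q;q)_k}{(q;q)_k}$ is handled by a \emph{new} Bailey pair (Lemma \ref{lem-new-BP}) with $\alpha_n(1;q)=\sum_{i=-n}^{n-1}(-1)^{i+n}q^{i^2}$, whose verification is itself a nontrivial recurrence argument, and the resulting bilateral sum is then recognized as $1+\tfrac12(\varphi(-q)-1)+\tfrac14(\varphi(-q)-1)^2=\tfrac14\varphi(-q)^2+\tfrac34$ — i.e.\ the $3$ and the $\tfrac14$ come from squaring a partial theta function, not from a dissection. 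Without supplying this (or an equivalent) ingredient, your argument establishes only one linear relation between $S_1$ and $S_2$ and therefore does not prove either identity.
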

Clearly, after multiplying both sides of \eqref{S1-product} and \eqref{S2-product} by $q^{1/24}$ and $q^{25/24}$, respectively, the product sides express these Nahm sums as sums of modular forms of weights 0 and 1 and hence they are in the ring of modular forms. The definition of modularity can easily be adjusted to include these cases.

Mizuno also observed some deep relations between two sets of Nahm sums with indices $(1,1,2)$ and $(1,2,2)$. Namely, for matrices
$$A=\begin{pmatrix}
        1 & 0 & \frac{1}{2} \\
        0 & 2 & 1 \\
        1 & 2 & 2
    \end{pmatrix} \quad \text{and} \quad A^{\mathrm{T}}=\begin{pmatrix}
        1 & 0 & 1 \\ 0 & 2 & 2 \\ \frac{1}{2} & 1 & 2
    \end{pmatrix}$$
with symmetrizers $d=(1,1,2)$ and $d^{\vee}=(2,2,1)$, respectively, we define the following Nahm sums associated with the matrices $A\diag(1,1,2)$ and $A^\mathrm{T}\diag(2,2,1)$:
\begin{align}
&F(u,v,w;q^{\frac{1}{2}}):=\sum_{i,j,k\ge 0}\frac{q^{\frac{1}{2}i^2+j^2+2k^2+ik+2jk}u^iv^jw^k}{(q;q)_i(q;q)_j(q^2;q^2)_k}, \label{F-defn}\\
&G(u,v,w;q):=\sum_{i,j,k\geq 0} \frac{q^{i^2+2j^2+k^2+2ij+ik}u^{i}v^jw^k}{(q;q)_{i}(q^2;q^2)_j(q^2;q^2)_k}.\label{G-defn}
\end{align}
For $\sigma \in \{0,1\}$, we also consider the following partial Nahm sums:
\begin{align}
&F_\sigma(u,v,w;q^{\frac{1}{2}}):=\sum_{\substack{i\equiv \sigma \!\!\!\pmod{2} \\ i,j,k\ge 0}}\frac{q^{\frac{1}{2}i^2+j^2+2k^2+ik+2jk}u^iv^jw^k}{(q;q)_i(q;q)_j(q^2;q^2)_k}, \label{Fc-defn} \\
&G_\sigma(u,v,w;q):=\sum_{\begin{smallmatrix}
i\equiv \sigma \!\! \pmod{2} \\ i,j,k\geq 0
\end{smallmatrix}}\frac{q^{i^2+2j^2+k^2+2ij+ik}u^{i}v^jw^k}{(q;q)_{i}(q^2;q^2)_j(q^2;q^2)_k}.\label{Gc-defn}
\end{align}
Let $q=e^{2\pi i \tau}$ where $\mathrm{Im}~ \tau>0$ and let $\zeta_N=e^{2\pi i/N}$ throughout this paper. We define two vector-valued functions:
\begin{align}
g(\tau)&:=\begin{pmatrix}q^{-\frac{5}{88}}F_0(1,1,1;q^{\frac{1}{2}}) \\ q^{-\frac{1}{88}}F_1(1,1,q;q^{\frac{1}{2}}) \\ q^{\frac{7}{88}}(F_1(1,q,q;q^{\frac{1}{2}})+qF_1(q,q^2,q^3;q^{\frac{1}{2}})) \\
q^{\frac{19}{88}}F_0(1,q,q^2;q^{\frac{1}{2}}) \\ q^{\frac{35}{88}}F_0(q,q,q^2;q^{\frac{1}{2}})  \end{pmatrix}, \\
g^{\vee}(\tau)&:=\begin{pmatrix} q^{-\frac{7}{88}}G(1,1,1;q) \\ q^{\frac{25}{88}}(G(q,1,1;q)+G(q^3,q^2,q;q)) \\
q^{\frac{1}{88}}G(1,1,q;q) \\ q^{\frac{9}{88}}G(q,1,q;q) \\ q^{\frac{49}{88}}G(q^2,q^2,q;q)  \end{pmatrix}.
\end{align}
It is easy to check that \cite[p.\ 28]{Mizuno}
\begin{align}
g(\tau+1)&=\diag(\zeta_{88}^{-5},\zeta_{88}^{43},\zeta_{88}^{51},\zeta_{88}^{19},\zeta_{88}^{35} )g(\tau), \\   g^\vee(\tau+1)&=\diag(\zeta_{88}^{-7},\zeta_{88}^{25},\zeta_{88}^1,\zeta_{88}^9,\zeta_{88}^{49}) g^\vee(\tau).
\end{align}
Mizuno \cite[Eq.\ (54)]{Mizuno} conjectured the following transformation formula between $g(\tau)$ and $g^\vee(\tau)$.
\begin{conj}(Cf.\ \cite[Eq.\ (54)]{Mizuno})\label{conj-M}
We have
\begin{align}\label{eq-M54}
g^\vee\left(-\frac{1}{2\tau}\right)=2Sg(\tau), \quad g\left(-\frac{1}{2\tau}\right)=Sg^\vee (\tau)
\end{align}
where
\begin{align*}
    S=\begin{pmatrix}
    \alpha_5 & \alpha_4 & \alpha_3 & \alpha_2 & \alpha_1 \\
    \alpha_4 & \alpha_1 & -\alpha_2 & -\alpha_5 & -\alpha_3 \\
    \alpha_3 & -\alpha_2 & -\alpha_4 & \alpha_1 & \alpha_5 \\
    \alpha_2 & -\alpha_5 & \alpha_1 & \alpha_3 & -\alpha_4 \\
    \alpha_1 & -\alpha_3 & \alpha_5 & -\alpha_4 & \alpha_2
    \end{pmatrix}, \quad \alpha_k=\sqrt{\frac{2}{11}}\sin \frac{k\pi}{11}.
\end{align*}
\end{conj}

We further define
\begin{align}
U(\tau)&:=\begin{pmatrix}q^{-\frac{5}{88}}F(1,1,1;q^{\frac{1}{2}}) \\ q^{-\frac{1}{88}}F(1,1,q;q^{\frac{1}{2}}) \\ q^{\frac{7}{88}}(F(1,q,q;q^{\frac{1}{2}})+qF(q,q^2,q^3;q^{\frac{1}{2}})) \\
q^{\frac{19}{88}}F(1,q,q^2;q^{\frac{1}{2}}) \\ q^{\frac{35}{88}}F(q,q,q^2;q^{\frac{1}{2}})  \end{pmatrix}, \\
V(\tau)&:=\begin{pmatrix} q^{-\frac{7}{88}}G_{0}(1,1,1;q) \\ q^{\frac{25}{88}}(G_0(q,1,1;q)+G_1(q^3,q^2,q;q)) \\
q^{\frac{1}{88}}G_1(1,1,q;q) \\ q^{\frac{9}{88}}G_1(q,1,q;q) \\ q^{\frac{49}{88}}G_1(q^2,q^2,q;q)  \end{pmatrix}
\end{align}
and
\begin{align}\label{wU-wV-defn}
U^*(\tau):=2g(\tau)-U(\tau), \quad {V}^*(\tau):=g^\vee(\tau)-V(\tau).
\end{align}
It is easy to check that
\begin{align}
U(\tau+2)=\diag(\zeta_{44}^{-5},\zeta_{44}^{43},\zeta_{44}^{51},\zeta_{44}^{19},\zeta_{44}^{35} )U(\tau), \\   V(\tau+1)=\diag(\zeta_{88}^{-7},\zeta_{88}^{25},\zeta_{88}^1,\zeta_{88}^9,\zeta_{88}^{49}) V(\tau).
\end{align}

We establish the following transformation formulas between $U(\tau), U^*(\tau)$,  $V(\tau)$ and $V^*(\tau)$ and thereby confirm Mizuno's conjectural formula \eqref{eq-M54}.
\begin{theorem}\label{thm-Mizuno-conj}
We have
\begin{align}\label{eq-conj-tran}
V\left(-\frac{1}{2\tau}\right)=SU(\tau), \quad U\left(-\frac{1}{2\tau}\right)=2SV(\tau)
\end{align}
and
\begin{align}\label{eq-conj-tran-add}
V^*\left(-\frac{1}{2\tau}\right)=SU^*(\tau), \quad U^*\left(-\frac{1}{2\tau}\right)=2SV^*(\tau).
\end{align}
As a consequence, Conjecture \ref{conj-M} holds.
\end{theorem}

The rest of this paper is organized as follows. In Section \ref{sec-pre} we collect some useful identities, and we review some basic facts about Bailey pairs and modular forms. We also present four new Bailey pairs and some new single-sum Rogers--Ramanujan type identities. In Section \ref{sec-exam} we discuss Mizuno's rank three Nahm sums of index $(1,2,2)$  one by one according to the order in Table \ref{tab-matrix} and present identities for each of them.  Meanwhile, employing the theory of modular forms, we will prove the modular transformation formulas stated in Conjecture \ref{conj-M} and Theorem \ref{thm-Mizuno-conj}.

\section{Preliminaries}\label{sec-pre}
\subsection{Auxiliary identities and Bailey pairs}
First, we need the $q$-binomial theorem \cite[Theorem 2.1]{Andrews}:
\begin{align}\label{eq-qbinomial}
    \sum_{n=0}^\infty \frac{(a;q)_nz^n}{(q;q)_n}=\frac{(az;q)_\infty}{(z;q)_\infty}, \quad |z|<1
\end{align}
and the Jacobi triple product identity \cite[Theorem 2.8]{Andrews}
\begin{align}\label{Jacobi}
(q,z,q/z;q)_\infty=\sum_{n=-\infty}^\infty (-1)^nq^{\binom{n}{2}}z^n.
\end{align}
As two important corollaries of \eqref{eq-qbinomial}, Euler's $q$-exponential identities state that \cite[Corollary 2.2]{Andrews}
\begin{align}
\sum_{n=0}^{\infty}\frac{z^n}{(q;q)_n}
&=
\frac{1}{(z;q)_{\infty}}, \quad|z|<1, \label{euler-1} \\
\sum_{n=0}^{\infty}\frac{q^{(^n_2)} z^n}{(q;q)_n}
&=
(-z;q)_{\infty}. \label{euler-2}
\end{align}
The identities \eqref{eq-qbinomial}--\eqref{euler-2} will be used frequently and often without mention.

 A pair of sequences $(\alpha_n(a;q),\beta_n(a;q))$ is called a Bailey pair relative to $a$ if for all $n\geq 0$,
 \begin{align}\label{defn-BP}
     \beta_n(a;q)=\sum_{k=0}^n\frac{\alpha_k(a;q)}{(q;q)_{n-k}(aq;q)_{n+k}}.
 \end{align}

 \begin{lemma}[Bailey's Lemma]
Suppose that $(\alpha_n(a;q),\beta_n(a;q))$ is a Bailey pair relative to $a$. Then $(\alpha_n'(a;q),\beta_n'(a;q))$ is also a Bailey pair relative to $a$ where
 \begin{equation}
 \begin{split}
 \alpha_n'(a;q)&:=\frac{(\rho_1,\rho_2;q)_n(aq/\rho_1\rho_2)^n}{(aq/\rho_1,aq/\rho_2;q)_n}\alpha_n(a;q), \\ \beta_n'(a;q)&:=\sum_{r=0}^n\frac{(\rho_1,\rho_2;q)_r(aq/\rho_1\rho_2;q)_{n-r}(aq/\rho_1\rho_2)^r}{(aq/\rho_1,aq/\rho_2;q)_n(q;q)_{n-r}}\beta_r(a;q).
 \end{split}
 \end{equation}
 Equivalently, if $(\alpha_n(a;q),\beta_n(a;q))$ is a Bailey pair relative to $a$, then
\begin{align}\label{eq-Bailey-general-id}
&\frac{1}{(aq/\rho_1,aq/\rho_2;q)_n}\sum_{j=0}^n \frac{(\rho_1,\rho_2;q)_j(aq/\rho_1\rho_2;q)_{n-j}}{(q;q)_{n-j}}\Big(\frac{aq}{\rho_1\rho_2} \Big)^j\beta_j(a;q) \nonumber \\
&=\sum_{r=0}^n \frac{(\rho_1,\rho_2;q)_r}{(q;q)_{n-r}(aq;q)_{n+r}(aq/\rho_1,aq/\rho_2;q)_r}\Big(\frac{aq}{\rho_1\rho_2} \Big)^r \alpha_r(a;q).
\end{align}
 \end{lemma}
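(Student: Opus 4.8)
The plan is to verify the single defining relation \eqref{defn-BP} for the new pair, that is, to show
\begin{align}
\beta_n'(a;q)=\sum_{k=0}^{n}\frac{\alpha_k'(a;q)}{(q;q)_{n-k}(aq;q)_{n+k}};\nonumber
\end{align}
once this is done, the ``equivalently'' form \eqref{eq-Bailey-general-id} follows with no extra work, as I explain at the end. First I would take the given formula for $\beta_n'(a;q)$ and substitute the hypothesis \eqref{defn-BP} for each $\beta_r(a;q)$, producing a double sum over $0\le k\le r\le n$. Interchanging the two summations and collecting the coefficient of $\alpha_k(a;q)$, the identity reduces to evaluating, for every fixed $k$, the inner sum
\begin{align}
S_k:=\sum_{r=k}^{n}\frac{(\rho_1,\rho_2;q)_r\,(aq/\rho_1\rho_2;q)_{n-r}\,(aq/\rho_1\rho_2)^r}{(q;q)_{n-r}\,(q;q)_{r-k}\,(aq;q)_{r+k}}.\nonumber
\end{align}
Since all sums are finite, this is a purely algebraic (terminating) identity and no convergence issues arise.

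Next I would put $r=k+s$ with $0\le s\le n-k$ and split every $q$-shifted factorial through $(\rho_i;q)_{k+s}=(\rho_i;q)_k(\rho_iq^k;q)_s$ and $(aq;q)_{2k+s}=(aq;q)_{2k}(aq^{2k+1};q)_s$. The two factorials carrying the shift $n-r=(n-k)-s$ are handled by the standard reflection formula $(x;q)_{m-s}=(x;q)_m(-q/x)^s q^{\binom{s}{2}-ms}/(q^{1-m}/x;q)_s$ with $m=n-k$, applied to the quotient $(aq/\rho_1\rho_2;q)_{n-r}/(q;q)_{n-r}$; this converts it into the $s$-dependent factorials $(q^{-m};q)_s$ and $(q^{-m}\rho_1\rho_2/a;q)_s$ and, together with the factor $(aq/\rho_1\rho_2)^s$, leaves a net power $q^s$. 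After pulling out all quantities independent of $s$, the remaining sum over $s$ is exactly a terminating, balanced (Saalschützian) ${}_3\phi_2$ series with upper parameters $\rho_1q^k,\rho_2q^k,q^{-m}$ and lower parameters $aq^{2k+1},\,q^{-m}\rho_1\rho_2/a$, evaluated at argument $q$.

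The key step is to evaluate this ${}_3\phi_2$ by the $q$-Pfaff--Saalschütz summation formula; one first checks the balancing condition (the product of the two lower parameters equals $q$ times the product of the three upper parameters), which indeed holds. Substituting the resulting product and using $(aq;q)_{2k}(aq^{2k+1};q)_{n-k}=(aq;q)_{n+k}$ together with $(aq/\rho_i;q)_n=(aq/\rho_i;q)_k(aq^{k+1}/\rho_i;q)_{n-k}$, the sum $S_k$ collapses to a clean product, and the coefficient of $\alpha_k(a;q)$ in $\beta_n'(a;q)$, namely $S_k/(aq/\rho_1,aq/\rho_2;q)_n$, becomes
\begin{align}
\frac{1}{(q;q)_{n-k}(aq;q)_{n+k}}\cdot\frac{(\rho_1,\rho_2;q)_k\,(aq/\rho_1\rho_2)^k}{(aq/\rho_1,aq/\rho_2;q)_k}=\frac{\alpha_k'(a;q)}{(q;q)_{n-k}(aq;q)_{n+k}\,\alpha_k(a;q)}.\nonumber
\end{align}
This is precisely the coefficient demanded by \eqref{defn-BP} for the pair $(\alpha_n',\beta_n')$, so the first assertion is proved. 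I expect the main obstacle to lie entirely in this block of bookkeeping: reflecting the factorials carrying the shift $n-r$ without sign or exponent errors, and matching the three numerator and two denominator parameters so that the series is genuinely Saalschützian and the summation theorem applies.

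Finally, the equivalent form \eqref{eq-Bailey-general-id} is immediate. Its left-hand side is, term by term (with $j=r$), the defining expression for $\beta_n'(a;q)$, while its right-hand side is obtained from $\sum_{r=0}^{n}\alpha_r'(a;q)/\big((q;q)_{n-r}(aq;q)_{n+r}\big)$ by inserting the explicit formula for $\alpha_r'(a;q)$. Hence \eqref{eq-Bailey-general-id} is nothing but the Bailey pair relation $\beta_n'=\sum_{r}\alpha_r'/((q;q)_{n-r}(aq;q)_{n+r})$ established above, written out in full.
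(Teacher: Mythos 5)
Your proof is correct, and it is the classical argument for Bailey's Lemma: substitute the defining relation \eqref{defn-BP} into $\beta_n'(a;q)$, interchange the two summations, and evaluate the resulting inner sum over $r=k+s$ by the $q$-Pfaff--Saalsch\"utz theorem. The bookkeeping you flag as the main risk does go through: with $m=n-k$, the reflection $(x;q)_{m-s}=(x;q)_m(-q/x)^sq^{\binom{s}{2}-ms}/(q^{1-m}/x;q)_s$ applied to $(aq/\rho_1\rho_2;q)_{n-r}/(q;q)_{n-r}$ leaves the net argument $q^s$; the ${}_3\phi_2$ with upper parameters $\rho_1q^k,\rho_2q^k,q^{-m}$ and lower parameters $aq^{2k+1},\,q^{-m}\rho_1\rho_2/a$ is indeed balanced; and the Saalsch\"utz evaluation $\big(aq^{k+1}/\rho_1,aq^{k+1}/\rho_2;q\big)_m/\big(aq^{2k+1},aq/\rho_1\rho_2;q\big)_m$, combined with $(aq;q)_{2k}(aq^{2k+1};q)_{n-k}=(aq;q)_{n+k}$ and $(aq/\rho_i;q)_k(aq^{k+1}/\rho_i;q)_{n-k}=(aq/\rho_i;q)_n$, collapses $S_k/(aq/\rho_1,aq/\rho_2;q)_n$ to exactly $\alpha_k'(a;q)/\big((q;q)_{n-k}(aq;q)_{n+k}\,\alpha_k(a;q)\big)$. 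Your reading of the ``equivalently'' clause is also right: the left side of \eqref{eq-Bailey-general-id} is $\beta_n'(a;q)$ term by term, and the right side is $\sum_r \alpha_r'(a;q)/\big((q;q)_{n-r}(aq;q)_{n+r}\big)$, so the two formulations are the same statement. For comparison: the paper gives no proof of this lemma at all --- it is stated as the classical Bailey's Lemma and used as a black box --- so your argument supplies the standard proof from the literature (Bailey, Andrews) rather than taking a different route from anything actually written in the paper.
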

If we let $\rho_1,\rho_2 \to \infty$, we obtain the Bailey pair \cite[Eq. (S1)]{BIS}:
\begin{align}
        \alpha'_n(a;q)=a^nq^{n^2}\alpha_n(a;q), \quad
        \beta'_n(a;q)=\sum_{r=0}^n\frac{a^rq^{r^2}}{(q;q)_{n-r}}\beta_r(a;q). \label{Bailey lemma-infty}
\end{align}

If we let $n,\rho_1,\rho_2\to \infty$ in \eqref{eq-Bailey-general-id}, we obtain the following result (see, e.g. \cite[Eq.\ (1.2.8)]{MSZ}).
\begin{lemma}\label{lem-BP-sum}
If $(\alpha_n(a;q),\beta_n(a;q))$ is a Bailey pair relative to $a$, we have
\begin{align}
    \sum_{n=0}^\infty a^nq^{n^2}\beta_n(a;q)=\frac{1}{(aq;q)_\infty}\sum_{n=0}^{\infty}a^nq^{n^2}\alpha_n(a;q). \label{cor1}
\end{align}
\end{lemma}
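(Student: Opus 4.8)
The plan is to obtain \eqref{cor1} as the triple limit $n,\rho_1,\rho_2\to\infty$ of the general Bailey identity \eqref{eq-Bailey-general-id}, exactly as the remark preceding the statement suggests. First I would fix $n$ and send $\rho_1,\rho_2\to\infty$. The only $\rho$-dependent pieces are the numerator symbols $(\rho_1,\rho_2;q)_j$, the factors $(aq/\rho_1\rho_2)^j$, and the symbols $(aq/\rho_1,aq/\rho_2;q)$ and $(aq/\rho_1\rho_2;q)$ in the denominators. Using the elementary asymptotic $(\rho;q)_m\sim(-\rho)^m q^{\binom{m}{2}}$ as $\rho\to\infty$, one finds
\[
(\rho_1,\rho_2;q)_j\Big(\frac{aq}{\rho_1\rho_2}\Big)^j\longrightarrow a^jq^{j^2},
\]
while $(aq/\rho_1,aq/\rho_2;q)_n\to1$, $(aq/\rho_1,aq/\rho_2;q)_r\to1$ and $(aq/\rho_1\rho_2;q)_{n-j}\to1$. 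Since the sums over $j$ and $r$ are finite for fixed $n$, these termwise limits may be taken freely, and \eqref{eq-Bailey-general-id} collapses to the finite identity
\[
\sum_{j=0}^n\frac{a^jq^{j^2}}{(q;q)_{n-j}}\beta_j(a;q)=\sum_{r=0}^n\frac{a^rq^{r^2}}{(q;q)_{n-r}(aq;q)_{n+r}}\alpha_r(a;q),
\]
which is exactly the defining relation \eqref{defn-BP} for the iterated Bailey pair in \eqref{Bailey lemma-infty}, confirming consistency.

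Next I would let $n\to\infty$. For fixed $j$ and $r$ we have $(q;q)_{n-j}\to(q;q)_\infty$ and $(aq;q)_{n+r}\to(aq;q)_\infty$, so formally the left side tends to $\frac{1}{(q;q)_\infty}\sum_{j\ge0}a^jq^{j^2}\beta_j(a;q)$ and the right side to $\frac{1}{(q;q)_\infty(aq;q)_\infty}\sum_{r\ge0}a^rq^{r^2}\alpha_r(a;q)$; cancelling the common factor $(q;q)_\infty^{-1}$ then yields \eqref{cor1}.

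The step requiring genuine care — and the main obstacle — is justifying the interchange of the limit $n\to\infty$ with the infinite summation. I would control it by dominated convergence: for $|q|<1$ the partial products satisfy $|(q;q)_{n-j}|\ge\prod_{i\ge1}(1-|q|^i)>0$, and (assuming $(aq;q)_\infty\neq0$) the factor $|(aq;q)_{n+r}|$ is likewise bounded away from $0$ uniformly in $n$, so each summand is dominated, uniformly in $n$, by a constant multiple of $|a^jq^{j^2}\beta_j(a;q)|$ (respectively $|a^rq^{r^2}\alpha_r(a;q)|$). The Gaussian factor $q^{j^2}$ guarantees absolute convergence of the majorant series for the Bailey pairs under consideration, so the limit passes inside the sum and the identity follows.

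As a cross-check, and an alternative route that avoids limits in $n$, I note that \eqref{cor1} also follows directly from the definition \eqref{defn-BP}: substituting $\beta_n(a;q)$ into $\sum_{n\ge0} a^nq^{n^2}\beta_n(a;q)$, interchanging the order of summation, and setting $m=n-k$ reduces the claim, for each $k$, to the single auxiliary evaluation
\[
\sum_{m=0}^\infty\frac{q^{m(m-1)}c^m}{(q;q)_m(c;q)_m}=\frac{1}{(c;q)_\infty},
\]
which is the $\rho_1,\rho_2\to\infty$ specialisation of the classical $q$-Gauss sum. Either route completes the proof, and I would present the limiting argument as the primary one since it relies only on results already recorded in the excerpt.
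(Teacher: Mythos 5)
Your proposal is correct and takes essentially the same approach as the paper: the paper obtains Lemma \ref{lem-BP-sum} precisely by letting $n,\rho_1,\rho_2\to\infty$ in \eqref{eq-Bailey-general-id}, citing the literature rather than writing out the limit justification. Your careful asymptotics $(\rho;q)_m\sim(-\rho)^mq^{\binom{m}{2}}$, the dominated-convergence argument for the $n\to\infty$ step, and the alternative route via the $q$-Gauss specialization all supply details the paper leaves implicit, but the core argument is identical.
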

If we substitute the Bailey pair \eqref{Bailey lemma-infty} into Lemma \ref{lem-BP-sum}, we obtain
\begin{align}\label{id-BP-twice}
\sum_{n=0}^\infty a^nq^{n^2} \sum_{r=0}^n \frac{a^rq^{r^2}}{(q;q)_{n-r}}\beta_r(a;q)=\frac{1}{(aq;q)_\infty} \sum_{n=0}^\infty a^{2n}q^{2n^2}\alpha_n(a;q).
\end{align}

We need some known Bailey pairs from \cite[p.\ 469]{Slater1951}.  Below we always let $\alpha_{0}(a;q)=1$ and agree that $\alpha_{n}(a;q)=0$ when $n<0$ unless otherwise stated.

We first recall some Bailey pairs in Group G of Slater's list \cite[p.\ 469]{Slater1951}:
\begin{align}
    &\alpha_n(1;q)=(-1)^nq^{\frac{1}{2}n^2+\frac{1}{2}\binom{n}{2}}(1+q^{\frac{1}{2}n}), \quad \beta_n(1;q)=\frac{1}{(-q^{1/2};q)_n(q^2;q^2)_n}; \tag{G1}
    \label{B-G(1)}
    \\
    &\alpha_n(q;q)=(-1)^nq^{\frac{3}{2}\binom{n+1}{2}}\frac{q^{-n}-q^{n+1}}{1-q}, \quad \beta_n(q;q)=\frac{1}{(-q^{{1}/{2}};q)_n(q^2;q^2)_n};  \tag{G1*} \label{B-G(1.1)} \\
    &\alpha_n(q;q)=(-1)^n q^{\frac{3}{2}\binom{n+1}{2}}\frac{q^{-{n}/{2}}-q^{{(n+1)}/{2}}}{1-q^{{1}/{2}}}, \quad \beta_n(q;q)=\frac{1}{(-q^{{3}/{2}};q)_n(q^2;q^2)_n};   \tag{G2}\label{G(2)} \\
    &\alpha_n(1;q)=(-1)^nq^{\frac{3}{2}\binom{n}{2}}(1+q^{\frac{3}{2}n}), \quad \beta_n(1;q)=\frac{q^n}{(-q^{1/2};q)_n(q^2;q^2)_n}; \tag{G3} \label{G(3)} \\
   &\alpha_n(1;q)=(-1)^nq^{\frac{1}{2}\binom{n}{2}}(1+q^n), \quad \beta_n(1;q)=\frac{(-1)^nq^{\frac{1}{2}n^2}}{(-q^{1/2};q)_n(q^2;q^2)_n}; \quad \tag{G4} \label{G(4)}
    \\
    &\alpha_n(q;q)=\frac{q^{\frac{1}{2}\binom{n}{2}}(1-q^{n+\frac{1}{2}})}{1-q^{\frac{1}{2}}}, \quad \beta_n(q;q)=\frac{(-1)^nq^{\frac{1}{2}n^2}}{(-q^{3/2};q)_n(q^2;q^2)_n}; \quad \tag{G5} \label{G(5)}
    \\
    &\alpha_n(1;q)=(-1)^nq^{\frac{1}{2}\binom{n}{2}-\frac{1}{2}n}(1+q^{\frac{3}{2}n}), \quad \beta_n(1;q)=\frac{(-1)^nq^{\frac{1}{2}n^2-n}}{(-q^{1/2};q)_n(q^2;q^2)_n}.  \tag{G4*} \label{G(4.1)}
\end{align}
Note that we write $\alpha_n(a,q)$ in a unified expression while Slater \cite{Slater1951} wrote different expressions for even  and odd values of $n$ separately. Here \eqref{G(4.1)} is the Bailey pair without label above \eqref{G(4)} in \cite{Slater1951}, and a typo for \eqref{G(5)} has been corrected here. The Bailey pair \eqref{B-G(1.1)} appeared in \cite[Eq.\ (4.4)]{Warnaar2003} (see also  \cite{WW-I}).

Next, we list some Bailey pairs in Group C \cite[p.\ 469]{Slater1951}:
\begin{align}
\begin{split}
    &\alpha_{2n}(1;q)=(-1)^nq^{3n^2}(q^n+q^{-n}), \quad \alpha_{2n+1}(1;q)=0, \\
    &\beta_n(1;q)=\frac{1}{(q;q)_n(q,q^2)_n};
\end{split} \tag{C1} \label{C(1)}  \\
    \begin{split}
    &\alpha_{2n}(q;q)=(-1)^nq^{3n^2+n}, \quad \alpha_{2n+1}(q;q)=(-1)^{n+1}q^{3n^2+5n+2}, \\
     &\beta_n(q;q)=\frac{1}{(q;q)_n(q^3;q^2)_n};
    \end{split} \tag{C3} \label{C(3)}
    \\
   \begin{split}
    &\alpha_{2n}(q;q)=(-1)^nq^{3n^2+3n}, \quad \alpha_{2n+1}(q;q)=(-1)^{n+1}q^{3n^2+3n}, \\
     &\beta_n(q;q)=\frac{q^n}{(q;q)_n(q^3;q^2)_n};
     \end{split} \tag{C4} \label{C(4)}
    \\
    \begin{split}
    &\alpha_{2n}(1;q)=(-1)^nq^{n^2}(q^n+q^{-n}), \quad \alpha_{2n+1}(1;q)=0,\\
     &\beta_n(1;q)=\frac{q^{\frac{1}{2}(n^2-n)}}{(q;q)_n(q;q^2)_n};
     \end{split} \tag{C5} \label{C(5)}
    \\
    \begin{split}
    &\alpha_{2n}(q;q)=(-1)^nq^{n^2-n}, \quad \alpha_{2n+1}(q;q)=(-1)^{n+1}q^{n^2+3n+2}, \\
      &\beta_n(q;q)=\frac{q^{\frac{1}{2}(n^2-n)}}{(q;q)_n(q^3;q^2)_n};
    \end{split} \tag{C6} \label{C(6)}
    \\
    \begin{split}
      &\alpha_{2n}(q;q)=(-1)^nq^{n^2+n}, \quad \alpha_{2n+1}(q;q)=(-1)^{n+1}q^{n^2+n}, \\
    &\beta_n(q;q)=\frac{q^{\frac{1}{2}(n^2+n)}}{(q;q)_n(q^3;q^2)_n}.
    \end{split}   \tag{C7} \label{C(7)}
\end{align}

The following lemma generates a Bailey pair related to $a/q$ from a Bailey pair related to $a$.
\begin{lemma}\label{lem-DJK}
(Cf. \cite[Lemma 1.12]{DJK}).
If $(\alpha_n(a;q),\beta_n(a;q))$ is a Bailey pair related to $a$, then
\begin{align}
    &\alpha_n'(a/q;q):=(1-a)\Big(\frac{1-bq^n}{1-b}\frac{\alpha_n(a;q)}{1-aq^{2n}}-\frac{q^{n-1}(aq^{n-1}-b)}{1-b}\frac{\alpha_{n-1}(a;q)}{1-aq^{2n-2}}\Big), \nonumber \\
    &\beta_n'(a/q;q):=\frac{(bq;q)_n}{(b;q)_n}\beta_n(a;q) \label{Douse-lemma1.12}
\end{align}
is a Bailey pair related to $a/q$.
\end{lemma}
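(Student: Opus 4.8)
The plan is to verify the defining relation \eqref{defn-BP} for the pair $(\alpha_n'(a/q;q),\beta_n'(a/q;q))$ directly. Since a Bailey pair relative to $a/q$ carries the Pochhammer symbol $((a/q)q;q)_{n+k}=(a;q)_{n+k}$ in the denominator, the goal is to show
$$\sum_{k=0}^n\frac{\alpha_k'(a/q;q)}{(q;q)_{n-k}(a;q)_{n+k}}=\frac{(bq;q)_n}{(b;q)_n}\beta_n(a;q).$$
I would start from the left-hand side, insert the two-term expression for $\alpha_k'(a/q;q)$ from \eqref{Douse-lemma1.12}, and split the sum into a piece carrying $\alpha_k(a;q)$ and a piece carrying $\alpha_{k-1}(a;q)$. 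In the second piece I would shift the index by $k\mapsto k+1$; since $\alpha_{-1}(a;q)=0$ by convention, the lower limit stays at $k=0$, and after factoring out the common quantity $\tfrac{(1-a)\alpha_k(a;q)}{(1-b)(1-aq^{2k})}$ the two pieces become a single sum of $\alpha_k(a;q)$ against rational coefficients.

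Using $(q;q)_{n-k}=(1-q^{n-k})(q;q)_{n-k-1}$ and $(a;q)_{n+k+1}=(1-aq^{n+k})(a;q)_{n+k}$ to align denominators, the combined coefficient of each $\alpha_k(a;q)$ (for $0\le k\le n-1$) reduces, up to the common factor $\tfrac{1}{(q;q)_{n-k-1}(a;q)_{n+k}}$, to
$$\frac{1-bq^k}{1-q^{n-k}}-\frac{q^{k}(aq^{k}-b)}{1-aq^{n+k}}.$$
The crux of the argument is the simplification of this expression. Placing it over the common denominator $(1-q^{n-k})(1-aq^{n+k})$, the numerator expands to $1-aq^{2k}-bq^n+abq^{n+2k}$, which factors as $(1-aq^{2k})(1-bq^n)$. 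This factorization is the only nonobvious cancellation and is the main obstacle, though it is purely mechanical once expanded; it cancels the factor $1-aq^{2k}$ that was pulled out and produces the $k$-independent factor $1-bq^n$. One then checks that the boundary term $k=n$, present only in the first piece, satisfies the same resulting formula.

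After these steps the left-hand side collapses to
$$\frac{(1-a)(1-bq^n)}{1-b}\sum_{k=0}^n\frac{\alpha_k(a;q)}{(q;q)_{n-k}(a;q)_{n+k+1}}.$$
Applying $(a;q)_{n+k+1}=(1-a)(aq;q)_{n+k}$ cancels the factor $1-a$ and leaves $\sum_{k=0}^n\tfrac{\alpha_k(a;q)}{(q;q)_{n-k}(aq;q)_{n+k}}$, which equals $\beta_n(a;q)$ by the hypothesis that $(\alpha_n(a;q),\beta_n(a;q))$ is a Bailey pair relative to $a$ via \eqref{defn-BP}. Finally, since $\tfrac{(bq;q)_n}{(b;q)_n}=\tfrac{1-bq^n}{1-b}$, the whole expression equals $\tfrac{(bq;q)_n}{(b;q)_n}\beta_n(a;q)=\beta_n'(a/q;q)$, which is exactly the required relation and completes the verification. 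Throughout, $b$ is treated as a free parameter, so the conclusion holds for arbitrary $b$.
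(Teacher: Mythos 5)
Your proof is correct, and it is worth noting that the paper itself gives no proof of this lemma at all: it is simply quoted from Dousse--Jouhet--Konan \cite[Lemma 1.12]{DJK}. Your direct verification therefore supplies a self-contained argument where the paper relies on a citation. I checked the computation: after inserting the two-term definition of $\alpha_k'(a/q;q)$, shifting $k\mapsto k+1$ in the $\alpha_{k-1}$ piece (legitimate since the paper's convention $\alpha_{-1}(a;q)=0$ kills the boundary term), and aligning denominators, the coefficient of $\alpha_k(a;q)$ for $0\le k\le n-1$ is indeed
\begin{align*}
\frac{1-bq^k}{1-q^{n-k}}-\frac{q^{k}(aq^{k}-b)}{1-aq^{n+k}}
=\frac{(1-aq^{2k})(1-bq^{n})}{(1-q^{n-k})(1-aq^{n+k})},
\end{align*}
since the expanded numerator $1-aq^{2k}-bq^{n}+abq^{n+2k}$ factors as claimed; the $k=n$ term, coming only from the first piece, fits the same pattern because $(a;q)_{2n+1}=(a;q)_{2n}(1-aq^{2n})$. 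The collapse to $\frac{(1-a)(1-bq^{n})}{1-b}\sum_{k=0}^{n}\frac{\alpha_k(a;q)}{(q;q)_{n-k}(a;q)_{n+k+1}}$, the cancellation of $1-a$ via $(a;q)_{n+k+1}=(1-a)(aq;q)_{n+k}$, and the telescoping $\frac{(bq;q)_n}{(b;q)_n}=\frac{1-bq^n}{1-b}$ are all correct, so the defining relation \eqref{defn-BP} relative to $a/q$ holds exactly as required.
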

In particular, when $b\rightarrow \infty$, \eqref{Douse-lemma1.12} becomes
\begin{equation}\label{eq-b-infinity-reduce}
\begin{split}
   & \alpha_n'(a/q;q)=(1-a)\Big(\frac{q^n\alpha_n(a;q) }{1-aq^{2n}}-\frac{q^{n-1}\alpha_{n-1}(a;q)}{1-aq^{2n-2}}  \Big), \\
   & \beta_n'(a/q;q)=q^n\beta_n(a;q).
\end{split}
\end{equation}

Conversely, the following result due to Lovejoy \cite[Lemma 2.3]{Lovejoy2022} gives a way to generate a Bailey pair related to $aq$ from a Bailey pair related to $a$.
\begin{lemma}
If $(\alpha_n(a;q),\beta_n(a;q))$ is a Bailey pair related to $a$, then
\begin{align}\label{eq-raise-power}
&\alpha_n'(aq;q)=\frac{(1-aq^{2n+1})(aq/b;q)_n(-b)^nq^{n(n-1)/2}}{(1-aq)(bq;q)_n} \nonumber \\
&\qquad \qquad \qquad \times \sum_{r=0}^n \frac{(b;q)_r}{(aq/b;q)_r}(-b)^{-r}q^{-r(r-1)/2}\alpha_r(a;q),  \\
&\beta_n'(aq;q)=\frac{(b;q)_n}{(bq;q)_n}\beta_n(a;q) \nonumber
\end{align}
is a Bailey pair related to $aq$.
\end{lemma}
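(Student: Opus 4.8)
The plan is to verify directly that the proposed pair $(\alpha_n'(aq;q),\beta_n'(aq;q))$ satisfies the defining relation \eqref{defn-BP} with $a$ replaced by $aq$, that is, that
\[
\beta_n'(aq;q)=\sum_{k=0}^{n}\frac{\alpha_k'(aq;q)}{(q;q)_{n-k}(aq^2;q)_{n+k}} .
\]
The key structural observation is that the asserted statement must hold for \emph{every} Bailey pair $(\alpha_n(a;q),\beta_n(a;q))$ relative to $a$, and that the map $\alpha\mapsto\beta$ defined by \eqref{defn-BP} is invertible (it is lower triangular in the index with nonzero diagonal entries $1/((q;q)_0(aq;q)_{2n})$). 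Hence it suffices to treat the entries $\alpha_r(a;q)$ as free parameters: I would substitute the given $\beta_n(a;q)=\sum_{r=0}^n \alpha_r(a;q)/\big((q;q)_{n-r}(aq;q)_{n+r}\big)$ into $\beta_n'(aq;q)=\frac{(b;q)_n}{(bq;q)_n}\beta_n(a;q)$, substitute the double sum defining $\alpha_k'(aq;q)$ into the right-hand side, interchange the order of summation, and compare the coefficients of each $\alpha_r(a;q)$.

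This coefficient comparison reduces the entire lemma to the single finite $q$-series identity
\begin{align*}
\frac{(b;q)_n}{(bq;q)_n(q;q)_{n-r}(aq;q)_{n+r}}
&=\frac{(b;q)_r(-b)^{-r}q^{-\binom{r}{2}}}{(aq/b;q)_r} \\
&\quad\times\sum_{k=r}^{n}\frac{(1-aq^{2k+1})(aq/b;q)_k(-b)^kq^{\binom{k}{2}}}{(1-aq)(bq;q)_k(q;q)_{n-k}(aq^2;q)_{n+k}} ,
\end{align*}
which no longer refers to the specific Bailey pair. After reindexing $k=r+j$ and factoring out the $j=0$ term, and using the reflection $1/(q;q)_{n-k}=(q^{-n};q)_k(-1)^kq^{\binom{k}{2}-nk}/(q;q)_n$ together with $(aq^2;q)_{n+k}=(aq^2;q)_n(aq^{n+2};q)_k$, the inner sum becomes a terminating, very-well-poised basic hypergeometric series with well-poised factor $1-aq^{2k+1}$, upper data governed by $aq/b$ and $q^{-n}$, and a quadratic Gaussian factor $q^{\binom{k}{2}}$ coming from $(-b)^k$.

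To evaluate this inner sum I would recognize it as a special (doubly limiting) case of Rogers' very-well-poised ${}_6\phi_5$ summation; equivalently, splitting the very-well-poised factor as $1-aq^{2k+1}=(1-aq^{k+1})+aq^{k+1}(1-q^k)$ breaks it into two balanced terminating sums, each summable in closed form by the $q$-Pfaff--Saalschütz theorem, and the two resulting products combine to the stated left-hand side. (I have checked the cases $r=0$, $n=0,1$ by hand and the products collapse exactly as predicted, which makes induction on $n$ a reliable fallback should the parameter matching be delicate.) I expect the main obstacle to be precisely this last step: correctly casting the inner sum into a form to which a classical summation applies and confirming that all parameters align so that the closed form is \emph{exactly} the coefficient $\frac{(b;q)_n}{(bq;q)_n(q;q)_{n-r}(aq;q)_{n+r}}$, with the $(-b)^{\pm r}$, $q^{\pm\binom{r}{2}}$ and $(aq/b;q)_r$, $(b;q)_r$ prefactors canceling as required. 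As a conceptual cross-check one may also view the lemma as the inverse of the base-lowering transform in Lemma \ref{lem-DJK} (which passes from $a$ to $a/q$), but the direct verification above is the cleaner route to a complete proof.
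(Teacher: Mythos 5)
Your proposal is correct, but there is nothing in the paper to compare it against: the paper does not prove this lemma at all, quoting it from Lovejoy \cite[Lemma 2.3]{Lovejoy2022}, so your direct verification is a genuinely self-contained alternative. Your reduction is sound — by linearity of \eqref{defn-BP} in the $\alpha$'s, the lemma for all Bailey pairs is equivalent to your displayed coefficient identity — and the key inner sum does evaluate by Rogers' terminating very-well-poised ${}_6\phi_5$ summation exactly as you anticipated. Concretely, writing $k=r+j$, $A=aq^{2r+1}$, $B=bq^{r}$, $m=n-r$, and applying the reflection of $1/(q;q)_{n-k}$ (note your exponent has the wrong sign; it should read $1/(q;q)_{n-k}=(-1)^kq^{nk-\binom{k}{2}}(q^{-n};q)_k/(q;q)_n$), the Gaussian factors $q^{\binom{j}{2}}$ cancel completely and the identity becomes
\begin{align*}
\sum_{j=0}^{m}(1-Aq^{2j})\,\frac{(A/B;q)_j(q^{-m};q)_j}{(Bq;q)_j(Aq^{m+1};q)_j}\,\bigl(Bq^{m}\bigr)^{j}
=\frac{(1-Aq^{m})(1-B)}{1-Bq^{m}},
\end{align*}
which is $(1-A)$ times the terminating ${}_6\phi_5$ with very-well-poised parameter $A$, upper parameters $A/B$, $q$, $q^{-m}$, and argument $Bq^{m}$; the classical evaluation gives $(1-A)\frac{(Aq;q)_m(B;q)_m}{(A;q)_m(Bq;q)_m}$, which collapses to the right-hand side above. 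Two caveats, neither fatal. First, this is not a ``doubly limiting'' case of the ${}_6\phi_5$ but the direct specialization $c=q$ (the parameter $q$ is hidden because $(q;q)_j$ in the numerator cancels the standard $(q;q)_j$ in the denominator); no limits are needed, and realizing this is what makes the parameter matching painless. Second, your fallback of splitting $1-aq^{2k+1}$ and invoking $q$-Pfaff--Saalsch\"utz is doubtful as stated: after the split the two pieces have argument $Bq^{m}\neq q$ and are not balanced, so they are not directly Saalsch\"utz-summable. Since the ${}_6\phi_5$ route goes through (and induction on $n$ remains a viable backup), the proof stands; your closing remark that the lemma is inverse to the base-lowering Lemma \ref{lem-DJK} is also the right conceptual picture, but as you say, the direct verification is the cleaner complete argument.
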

In particular, when $b\rightarrow \infty$, \eqref{eq-raise-power} becomes
\begin{equation}\label{eq-b-infinity-raise}
\alpha_n'(aq;q)=\frac{1-aq^{2n+1}}{1-aq}q^{-n}\sum_{r=0}^n \alpha_r(a;q), \quad \beta_n'(aq;q)=q^{-n}\beta_n(a;q).
\end{equation}
Applying \eqref{eq-b-infinity-raise} to the Bailey pair \eqref{C(4)}, we obtain the following Bailey pair:
\begin{equation}
 \begin{split}
    &\alpha_{2n}(q^2;q)=(-1)^nq^{3n^2+n}\frac{1-q^{4n+2}}{1-q^2}, \quad \alpha_{2n+1}(q^2;q)=0, \\
    &\beta_n(q^2;q)=\frac{1}{(q;q)_n(q^3;q^2)_n}.
    \end{split} \tag{C4*} \label{C4*}
\end{equation}
Applying \eqref{eq-b-infinity-raise} to the Bailey pair \eqref{C(7)}, we obtain the following Bailey pair:
\begin{equation}
    \begin{split}
        &\alpha_{2n}'(q^2;q)=(-1)^n\frac{1-q^{4n+2}}{1-q^2}q^{n^2-n}, \quad \alpha_{2n+1}'(q^2;q)=0, \\
        &\beta_n'(q^2;q)=\frac{q^{\frac{1}{2}(n^2-n)}}{(q;q)_n(q^3;q^2)_n}.
    \end{split} \tag{C7*} \label{C7*}
\end{equation}
Similarly, we can also obtain the Bailey pair \eqref{B-G(1.1)} from \eqref{G(3)} using \eqref{eq-b-infinity-raise}.

When $b=0$, \eqref{eq-raise-power} becomes
\begin{equation}\label{eq-b-0-raise}
\begin{split}
&\alpha_n'(aq;q)=\frac{(1-aq^{2n+1})a^nq^{n^2}}{1-aq}\sum_{r=0}^na^{-r}q^{-r^2}\alpha_r, \\
&\beta_n'(aq;q)=\beta_n(a;q).
\end{split}
\end{equation}
Applying \eqref{eq-b-0-raise} to the Bailey pair \eqref{G(4.1)}, we obtain the following Bailey pair:
\begin{equation}
    \begin{split}
    &\alpha_n'(q;q)=(-1)^n\frac{1-q^{2n+1}}{1-q}q^{\frac{1}{4}n^2-\frac{3}{4}n},\\
    &\beta_n'(q;q)=\frac{(-1)^nq^{\frac{1}{2}n^2-n}}{(-q^{1/2};q)_n(q^2;q^2)_n}.
\end{split}\label{G4.1-new}
\end{equation}

For Example 1 we need the following Bailey pair which appears to be new.
\begin{lemma}\label{lem-exam1-new-BP}
The following $(\alpha_n(1;q),\beta_n(1;q))$ is a Bailey pair related to $1$:
\begin{equation}\label{eq-exam1-key-BP}
\begin{split}
&\alpha_0(1;q)=1, ~ \alpha_{2n}(1;q)=(-1)^nq^{3n^2}(q^{3n}+q^{-3n}), ~ \alpha_{2n+1}(1;q)=0, \\
&\beta_n(1;q)=\frac{q^n(1+q^{-1})-q^{2n-1}}{(q;q)_n(q;q^2)_n}.
\end{split}
\end{equation}
\end{lemma}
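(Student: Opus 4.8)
The plan is to verify the defining relation \eqref{defn-BP} directly. Since the pair is relative to $a=1$ we have $(aq;q)_{n+k}=(q;q)_{n+k}$, and since $\alpha_{2k+1}(1;q)=0$ the assertion collapses to the single terminating identity
\[
\sum_{k\ge 0}\frac{\alpha_{2k}(1;q)}{(q;q)_{n-2k}(q;q)_{n+2k}}=\frac{q^n(1+q^{-1})-q^{2n-1}}{(q;q)_n(q;q^2)_n}\qquad(n\ge 0),
\]
where the sum truncates at $k=\lfloor n/2\rfloor$ and the $k=0$ term is read off the convention $\alpha_0(1;q)=1$. First I would check the endpoints $n=0,1$ by hand (only $k=0$ survives), which already fixes the normalisation and the apparent $q^{-1}$'s cancel.

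The key structural observation is that the target $\alpha$ is a one-line modification of Slater's Group~C pair \eqref{C(1)}. Writing $(\widehat\alpha_n,\widehat\beta_n)$ for \eqref{C(1)}, so that $\widehat\alpha_{2k}(1;q)=(-1)^kq^{3k^2}(q^k+q^{-k})$ and $\widehat\beta_n(1;q)=1/\big((q;q)_n(q;q^2)_n\big)$, a direct expansion of $(q^k+q^{-k})(q^{2k}+q^{-2k}-1)=q^{3k}+q^{-3k}$ gives the exact factorisation
\[
\alpha_{2k}(1;q)=\big(q^{2k}+q^{-2k}-1\big)\,\widehat\alpha_{2k}(1;q),
\]
valid for every $k\ge 0$ under the convention $\alpha_0=\widehat\alpha_0=1$ (indeed $q^{0}+q^{0}-1=1$). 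Substituting this and using that \eqref{C(1)} is a genuine Bailey pair, the $-\widehat\alpha$ part contributes exactly $-\widehat\beta_n$, so the claim \eqref{eq-exam1-key-BP} becomes equivalent to evaluating the companion weighted sum
\[
T_n:=\sum_{k\ge 0}\frac{(q^{2k}+q^{-2k})\,\widehat\alpha_{2k}(1;q)}{(q;q)_{n-2k}(q;q)_{n+2k}}=\frac{1+q^n+q^{n-1}-q^{2n-1}}{(q;q)_n(q;q^2)_n}.
\]

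Evaluating $T_n$ is the heart of the matter, and the naive route is circular: writing $q^{\pm 2k}=q^{-n}q^{\,n\pm 2k}$ and telescoping each factor via $q^{m}/(q;q)_m=1/(q;q)_m-1/(q;q)_{m-1}$ simply returns $T_n$ to itself. To break this I would instead regard $T_n$ as a terminating basic hypergeometric sum and evaluate it in closed form through a very-well-poised summation (a suitable specialisation of Jackson's terminating ${}_6\phi_5$, equivalently a $q$-Dixon type sum), the two asymmetric weights $q^{+2k}$ and $q^{-2k}$ matching the two legs of the bilateral theta series $\sum_k(-1)^kq^{3k^2+3k}$ that the Jacobi triple product \eqref{Jacobi} resolves. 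As a robust fallback I would use creative telescoping: exhibit an explicit $q$-difference operator in $n$ annihilating the summand, conclude that both $T_n$ and the proposed right-hand side satisfy the same recurrence, and match the initial values $n=0,1$ from the first step. The main obstacle is precisely this closed-form evaluation of the asymmetrically weighted Group~C sum $T_n$, since the weight $q^{2k}+q^{-2k}$ destroys the symmetry that makes \eqref{C(1)} summable; the summation theorem (or the telescoping certificate) therefore has to be produced explicitly rather than inherited from \eqref{C(1)}.
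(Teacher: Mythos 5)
Your setup is sound: the reduction of \eqref{defn-BP} to a single terminating identity, the factorisation $\alpha_{2k}(1;q)=(q^{2k}+q^{-2k}-1)\,\widehat\alpha_{2k}(1;q)$ against the pair \eqref{C(1)}, and the resulting target value of $T_n$ are all correct (one checks $\beta_n+\widehat\beta_n=\bigl(1+q^n+q^{n-1}-q^{2n-1}\bigr)/\bigl((q;q)_n(q;q^2)_n\bigr)$, as you state). But the argument stops exactly where the work begins. Since $(q^{2k}+q^{-2k})\widehat\alpha_{2k}=\alpha_{2k}+\widehat\alpha_{2k}$, the identity $T_n=\beta_n+\widehat\beta_n$ is literally a restatement of the lemma, not a reduction of it; no content has been transferred to known results. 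The two routes you offer for evaluating $T_n$ are both left unexecuted: the claim that a specialisation of Jackson's terminating ${}_6\phi_5$ or a $q$-Dixon sum handles it is asserted without exhibiting the parameters, and — as you yourself note — the weight $q^{2k}+q^{-2k}$ destroys the very-well-poised structure that makes \eqref{C(1)} summable, so it is far from clear such a specialisation exists. The creative-telescoping fallback would presumably succeed, but a proof requires the actual certificate (an explicit recurrence annihilating both sides plus the initial values), none of which is produced. As it stands this is a plan, not a proof.

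For comparison, the paper avoids any direct summation. It manufactures the pair \eqref{eq-exam1-key-BP} from \eqref{C(1)} by alternately applying the ``raising'' specialisation \eqref{eq-b-0-raise} and the ``lowering'' specialisation \eqref{eq-b-infinity-reduce}, producing intermediate pairs $(\alpha^{(1)},\beta^{(1)}),\dots,(\alpha^{(4)},\beta^{(4)})$ whose $\beta$'s are $q^{n}/\bigl((q;q)_n(q;q^2)_n\bigr)$ and $q^{2n}/\bigl((q;q)_n(q;q^2)_n\bigr)$, and then takes the linear combination $(1+q^{-1})(\alpha^{(2)},\beta^{(2)})-q^{-1}(\alpha^{(4)},\beta^{(4)})$. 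Because each step is a black-box Bailey-pair transformation, only routine algebraic simplification of the $\alpha$'s is needed. If you want to salvage your approach, the most direct fix is to carry out the $q$-Zeilberger computation for $T_n$ explicitly (the paper itself uses the \texttt{q-Zeil} package in the proof of Lemma \ref{lem-new-BP}, so this is in the spirit of the text), or to adopt the paper's iterative construction.
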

\begin{proof}
Applying \eqref{eq-b-0-raise} to the \eqref{C(1)} Bailey pair, after simplifications, we obtain a new Bailey pair: for $n\geq 0$,
\begin{equation}\label{eq-exam1-BP-iterate-1}
\begin{split}
&\alpha_{2n}^{(1)}(q;q)=(-1)^n\frac{1-q^{4n+1}}{1-q}q^{3n^2-n}, \\
&\alpha_{2n+1}^{(1)}(q;q)=(-1)^n\frac{1-q^{4n+3}}{1-q}q^{3n^2+3n+1},   \\
&\beta_n^{(1)}(q;q)=\frac{1}{(q;q)_n(q;q^2)_n}.
\end{split}
\end{equation}
Applying \eqref{eq-b-infinity-reduce} to this Bailey pair, we obtain the Bailey pair:
\begin{equation}\label{eq-exam1-BP-iterate-2}
\begin{split}
&\alpha_0^{(2)}(1;q)=1, \\
&\alpha_{2n}^{(2)}(1;q)=(-1)^nq^{3n^2}(q^n+q^{-n}), \quad n\geq 1, \\
&\alpha_{2n+1}^{(2)}(1;q)=(-1)^n\Big(q^{3n^2+5n+2}-q^{3n^2+n}\Big), \quad n\geq 0, \\
&\beta_n^{(2)}(1;q)=\frac{q^n}{(q;q)_n(q;q^2)_n}, \quad n\geq 0.
\end{split}
\end{equation}
Next, applying \eqref{eq-b-0-raise} to \eqref{eq-exam1-BP-iterate-2}, we obtain the Bailey pair: for $n\geq 0$,
\begin{equation}\label{eq-exam1-BP-iterate-3}
\begin{split}
&\alpha_{2n}^{(3)}(q;q)=\frac{(1-q^{4n+1})q^{4n^2}}{1-q}\Big((-1)^nq^{-n(n+1)}(1+q)-(-1)^{n}q^{-n(n-1)+1}\Big), \\
&\alpha_{2n+1}^{(3)}(q;q)=\frac{(1-q^{4n+3})q^{(2n+1)^2}}{1-q}\Big((-1)^nq^{-n(n+1)}(1+q)-(-1)^nq^{-n^2-3n-1}\Big), \\
&\beta_{n}^{(3)}(q;q)=\frac{q^n}{(q;q)_n(q;q^2)_n}.
\end{split}
\end{equation}
Applying \eqref{eq-b-infinity-reduce} to \eqref{eq-exam1-BP-iterate-3}, after simplifications we obtain the  Bailey pair:
\begin{align}
&\alpha_0^{(4)}(1;q)=1, \nonumber  \\
&\alpha_{2n}^{(4)}(1;q)=(-1)^nq^{3n^2+n}-(-1)^nq^{3n^2+3n+1}+(-1)^nq^{3n^2+n+1}+(-1)^nq^{3n^2-n} \nonumber \\
&\qquad \qquad \qquad +(-1)^nq^{3n^2-n+1}-(-1)^nq^{3n^2-3n+1},  \\
&\alpha_{2n+1}^{(4)}(1;q)=(-1)^nq^{3n^2+5n+2}+(-1)^nq^{3n^2+5n+3} -(-1)^nq^{3n^2+n}-(-1)^nq^{3n^2+n+1}, \nonumber \\
&\beta_n^{(4)}(1;q)=\frac{q^{2n}}{(q;q)_n(q;q^2)_n}. \nonumber
\end{align}
Now we construct a Bailey pair using linear combinations:
\begin{align*}
(\alpha_n(1;q),\beta_n(1;q))=(1+q^{-1})(\alpha_n^{(2)}(1;q),\beta_n^{(2)}(1;q))-q^{-1}(\alpha_n^{(4)}(1;q),\beta_n^{(4)}(1;q)).
\end{align*}
It is easy to see that $\beta_n(1;q)$ has the desired form in \eqref{eq-exam1-key-BP}. After simplifications, we find that $\alpha_n(1;q)$ agrees with the expression in \eqref{eq-exam1-key-BP} as well.
\end{proof}

\begin{lemma} \label{lem-exam1-new-BP2}
The following $(\alpha_n(1;q),\beta_n(1;q))$ is a Bailey pair related to $1$:
\begin{equation}\label{eq-exam1-key-BP2}
    \begin{split}
    &\alpha_0(1;q)=1, ~~\alpha_n(1;q)=(-1)^nq^{\frac{3}{4}n^2-\frac{5}{4}n}(1+q^{\frac{5}{2}n}), \quad n\geq 1,\\
    &\beta_n(1;q)=\frac{-q^{-\frac{1}{2}}+q^{n}+q^{2n-\frac{1}{2}}}{(-q^{1/2};q)_n(q^2,q^2)_n}, \quad n\geq 0.
\end{split}
\end{equation}
\end{lemma}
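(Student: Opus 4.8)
The structure of \eqref{eq-exam1-key-BP2} mirrors that of Lemma \ref{lem-exam1-new-BP}, so the plan is to realize it as an explicit linear combination of Bailey pairs relative to $1$ sharing the common denominator $D_n:=(-q^{1/2};q)_n(q^2;q^2)_n$, and then to simplify the resulting $\alpha_n$. First I would rewrite the numerator of $\beta_n(1;q)$ as $-q^{-1/2}+q^n+q^{2n-1/2}=-q^{-1/2}\cdot 1+q^n+q^{-1/2}\cdot q^{2n}$, which displays $\beta_n(1;q)$ as the combination $-q^{-1/2}\beta_n^{\mathrm G1}+\beta_n^{\mathrm G3}+q^{-1/2}\beta_n^{\star}$, where $\beta_n^{\mathrm G1}=1/D_n$ is the $\beta$ of \eqref{B-G(1)}, $\beta_n^{\mathrm G3}=q^n/D_n$ is the $\beta$ of \eqref{G(3)}, and $\beta_n^{\star}=q^{2n}/D_n$ is an auxiliary $\beta$ relative to $1$ that must still be produced. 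Since Bailey pairs relative to a fixed $a$ form a linear space, once $\beta_n^{\star}$ is realized by a genuine pair the same linear combination of the three $\alpha$'s is forced, and it only remains to check that it collapses to $(-1)^nq^{\frac34 n^2-\frac54 n}(1+q^{\frac52 n})$.

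To produce the auxiliary pair with $\beta_n=q^{2n}/D_n$, I would start from \eqref{G(3)} (relative to $1$, with $\beta_n=q^n/D_n$), apply the raising operation \eqref{eq-b-0-raise} to pass to a pair relative to $q$ with the same $\beta_n=q^n/D_n$, and then apply the reducing operation \eqref{eq-b-infinity-reduce} to return to level $1$, which multiplies $\beta$ by $q^n$ and yields $\beta_n^{\star}=q^{2n}/D_n$ as desired. The corresponding $\alpha_n^{\star}$ is obtained by differencing the partial sum produced by \eqref{eq-b-0-raise}; after the telescoping cancellation this should reduce to a six-term theta-type expression, in complete analogy with the intermediate pairs \eqref{eq-exam1-BP-iterate-2}--\eqref{eq-exam1-BP-iterate-3} in the proof of Lemma \ref{lem-exam1-new-BP}.

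Finally I would substitute the three explicit $\alpha$'s into $-q^{-1/2}\alpha_n^{\mathrm G1}+\alpha_n^{\mathrm G3}+q^{-1/2}\alpha_n^{\star}$ and simplify. Using the two-term forms $\alpha_n^{\mathrm G1}=(-1)^n(q^{\frac34 n^2-\frac14 n}+q^{\frac34 n^2+\frac14 n})$ and $\alpha_n^{\mathrm G3}=(-1)^n(q^{\frac34 n^2-\frac34 n}+q^{\frac34 n^2+\frac34 n})$, the four terms they contribute should cancel four of the six terms of $q^{-1/2}\alpha_n^{\star}$, leaving precisely $(-1)^n(q^{\frac34 n^2-\frac54 n}+q^{\frac34 n^2+\frac54 n})$. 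I expect the main obstacle to be exactly this bookkeeping: carrying the cumulative sum from \eqref{eq-b-0-raise} through \eqref{eq-b-infinity-reduce} in closed form and verifying the cancellation of all spurious terms. A separate check of the base case is also needed, since the displayed formulas give the nominal value $2$ at $n=0$ whereas the convention $\alpha_0(1;q)=1$ must hold; one verifies directly that $\beta_0(1;q)=1=\alpha_0(1;q)$ satisfies \eqref{defn-BP}. As an alternative that sidesteps the auxiliary construction, one could instead verify \eqref{defn-BP} head on by inserting the claimed $\alpha_k$ and evaluating $\sum_{k=0}^n\alpha_k/((q;q)_{n-k}(q;q)_{n+k})$, but this requires a nontrivial summation and seems less transparent than the linear-combination route.
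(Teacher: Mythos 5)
Your proposal is correct and follows essentially the same route as the paper: the paper forms the identical linear combination $-q^{-1/2}\cdot(\text{G1}) + (\text{G3}) + q^{-1/2}\cdot(\text{auxiliary})$, where the auxiliary pair with $\beta_n = q^{2n}/\bigl((-q^{1/2};q)_n(q^2;q^2)_n\bigr)$ is produced exactly as you describe, by applying \eqref{eq-b-0-raise} to the G3 pair (which the paper obtains as \eqref{eq-exam1-BP2-part-2} from \eqref{B-G(1.1)} via \eqref{eq-b-infinity-reduce}) and then \eqref{eq-b-infinity-reduce}. Your predicted six-term auxiliary $\alpha_n$ and the cancellation leaving $(-1)^n\bigl(q^{\frac34 n^2-\frac54 n}+q^{\frac34 n^2+\frac54 n}\bigr)$ match the paper's computation \eqref{eq-exam1-BP2-part3} exactly.
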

\begin{proof}
    Applying \eqref{eq-b-infinity-reduce} to \eqref{B-G(1.1)}, we obtain the Bailey pair:
\begin{equation}\label{eq-exam1-BP2-part-2}
    \begin{split}
    &\alpha_0^{(1)}(1;q)=1, \\
    &\alpha_n^{(1)}(1;q)=(-1)^nq^{\frac{3}{4}n^2-\frac{3}{4}n}(1+q^{\frac{3}{2}n}), \quad n\geq 1, \\
    &\beta_n^{(1)}(1;q)=\frac{q^{n}}{(-q^{1/2};q)_n(q^2,q^2)_n}, \quad n\geq 0.
\end{split}
\end{equation}

    Applying \eqref{eq-b-0-raise} to \eqref{eq-exam1-BP2-part-2}, we obtain the Bailey pair:
    \begin{equation}\label{eq-exam1-BP2-3}
    \begin{split}
    &\alpha_n^{(2)}(q;q)=\frac{1-q^{2n+1}}{1-q}(-1)^{n}q^{\frac{3}{4}n^2-\frac{3}{4}n}(1-q^{\frac{n+1}{2}}+q^{n+\frac{1}{2}}), \quad n\geq 0,\\
    &\beta_n^{(2)}(q;q)=\frac{q^{n}}{(-q^{1/2};q)_n(q^2,q^2)_n}, \quad n\geq 0.
\end{split}
\end{equation}
Applying \eqref{eq-b-infinity-reduce} to \eqref{eq-exam1-BP2-3}, we obtain the Bailey pair:
    \begin{equation}\label{eq-exam1-BP2-part3}
    \begin{split}
    &\alpha_0^{(3)}(1;q)=1, \\
    &\alpha_n^{(3)}(1;q)=(-1)^nq^{\frac{3}{4}n^2-\frac{5}{4}n+\frac{1}{2}}
    -(-1)^nq^{\frac{3}{4}n^2-\frac{3}{4}n+\frac{1}{2}}
    +(-1)^nq^{\frac{3}{4}n^2-\frac{1}{4}n}\\
    & \qquad +(-1)^nq^{\frac{3}{4}n^2+\frac{1}{4}n}
    -(-1)^nq^{\frac{3}{4}n^2+\frac{3}{4}n+\frac{1}{2}}
    +(-1)^nq^{\frac{3}{4}n^2+\frac{5}{4}n+\frac{1}{2}}, \quad n\geq 1,\\
    &\beta_n^{(3)}(1;q)=\frac{q^{2n}}{(q^2,q^2)_n(-q^{1/2};q)_n}, \quad n\geq 0.
\end{split}
\end{equation}
Now we obtain the desired Bailey pair $(\alpha_n(1;q),\beta_n(1;q))$ \eqref{eq-exam1-key-BP2} by
\begin{align*}
-q^{-\frac{1}{2}}(\alpha_n^{(0)}(1;q),\beta_n^{(0)}(1;q))+(\alpha_n^{(1)}(1;q),\beta_n^{(1)}(1;q))+q^{-\frac{1}{2}}(\alpha_n^{(3)}(1;q),\beta_n^{(3)}(1;q)),
\end{align*}
where $(\alpha_n^{(0)}(1;q),\beta_n^{(0)}(1;q))$ is the Bailey pair \eqref{B-G(1)}.
\end{proof}

\begin{lemma}\label{lem-exam1-new-BP3}
    The following $(\alpha_n(q;q),\beta_n(a;q))$ is a Bailey pair related to $q$:
    \begin{equation}\label{eq-exam1-key-BP3}
    \begin{split}
    &\alpha_{2n}(q;q)=(-1)^nq^{3n^2-n}, \quad \alpha_{2n+1}(q;q)=(-1)^{n+1}q^{3(n+1)^2+(n+1)},\\
    &\beta_n(q;q)=\frac{1+q^{n+1}-q^{2n+1}}{(q,q)_n(q^3;q^2)_n}.
\end{split}
\end{equation}
\end{lemma}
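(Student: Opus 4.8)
The plan is to exploit the linearity of the Bailey pair relation \eqref{defn-BP} in the pair $(\alpha_n,\beta_n)$: since \eqref{defn-BP} is linear, any $\mathbb{Q}(q)$-linear combination of Bailey pairs relative to $q$ is again a Bailey pair relative to $q$. Accordingly I would first split the target $\beta_n$ as
$$\frac{1+q^{n+1}-q^{2n+1}}{(q;q)_n(q^3;q^2)_n}=\frac{1}{(q;q)_n(q^3;q^2)_n}+q\cdot\frac{q^n}{(q;q)_n(q^3;q^2)_n}-q\cdot\frac{q^{2n}}{(q;q)_n(q^3;q^2)_n}.$$
The first two summands are precisely the $\beta$-parts of the Bailey pairs \eqref{C(3)} and \eqref{C(4)}, both relative to $q$, so the whole problem reduces to producing a third Bailey pair relative to $q$, call it $(\alpha_n^{(*)},\beta_n^{(*)})$, whose $\beta_n^{(*)}$ equals $q^{2n}/\bigl((q;q)_n(q^3;q^2)_n\bigr)$.

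To build this auxiliary pair I would start from \eqref{C(4)} and apply \eqref{eq-b-0-raise} with $a=q$, which raises the base from $q$ to $q^2$ while leaving $\beta_n$ unchanged; the resulting $\alpha$ involves the weighted partial sum $\sum_{r=0}^n q^{-r^2-r}\alpha_r(q;q)$. The key enabling observation is that, because \eqref{C(4)} satisfies $\alpha_{2n}(q;q)+\alpha_{2n+1}(q;q)=0$, this sum telescopes to the compact form $(-1)^Nq^{-N^2-N}$ when $n=2N$ and $(-1)^N(q^{-N^2-N}-q^{-N^2-3N-2})$ when $n=2N+1$. I would then apply \eqref{eq-b-infinity-reduce} with $a=q^2$ to lower the base back from $q^2$ to $q$, which multiplies $\beta_n$ by $q^n$ and hence yields exactly $\beta_n^{(*)}=q^{2n}/\bigl((q;q)_n(q^3;q^2)_n\bigr)$, together with an explicit three-term $\alpha_n^{(*)}$ after simplification (for example $\alpha_{2N}^{(*)}=(-1)^N(q^{3N^2+3N}+q^{3N^2+N-1}-q^{3N^2-N-1})$).

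Finally I would assemble the target as $\alpha_n^{\mathrm{target}}=\alpha_n^{(C3)}+q\,\alpha_n^{(C4)}-q\,\alpha_n^{(*)}$, split according to the parity of $n$, and check that the accumulated exponents collapse to $(-1)^nq^{3n^2-n}$ for even index and to $(-1)^{n+1}q^{3(n+1)^2+(n+1)}$ for odd index, matching \eqref{eq-exam1-key-BP3}. The main obstacle is not any single identity but the bookkeeping: correctly tracking the base value through the raising step \eqref{eq-b-0-raise} and the lowering step \eqref{eq-b-infinity-reduce}, and then verifying that the several $q$-powers produced in $\alpha_n^{(*)}$ and in the final combination cancel down to the stated two-line formula. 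Once the telescoping of the partial sum above is in hand, the remaining work is routine simplification.
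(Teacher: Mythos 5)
Your proposal is correct and is essentially the paper's own proof: the same three-term splitting of $\beta_n$ using \eqref{C(3)}, \eqref{C(4)} and an auxiliary pair with $\beta_n^{(*)}=q^{2n}/\bigl((q;q)_n(q^3;q^2)_n\bigr)$, the same final combination $\eqref{C(3)}+q\,\eqref{C(4)}-q\,(\ast)$, and the same two operations \eqref{eq-b-0-raise} and \eqref{eq-b-infinity-reduce} applied to \eqref{C(4)} to manufacture the auxiliary pair (your $\alpha_{2N}^{(*)}$ matches the paper's $\alpha_{2n}^{(2)}$ exactly). The only difference is that you raise to base $q^2$ and then reduce back to $q$, whereas the paper reduces to base $1$ and then raises back to $q$; since $\alpha_n$ is uniquely determined by $\beta_n$ and the base, the two orders yield the same pair.
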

\begin{proof}
    Applying \eqref{eq-b-infinity-reduce} to \eqref{C(4)}, we obtain the Bailey pair:
    \begin{align}
    &\alpha_{2n}^{(1)}(1;q)=\frac{(-1)^n(1-q)}{(1-q^{4n+1})(1-q^{4n-1})}
    (-q^{3n^2-n-1}+q^{3n^2+3n}+q^{3n^2+5n}-q^{3n^2+9n-1}), \nonumber
    \\&\alpha^{(1)}_{2n+1}(1;q)=\frac{(-1)^n(1-q^2)}{(1-q^{4n+3})(1-q^{4n+1})}(-q^{3n^2+5n}+q^{3n^2+9n+2}), \label{eq-exam1-key-BP3-part3-1} \\
    &\beta_n^{(1)}(1;q)=\frac{q^{2n}}{(q,q)_n(q^3;q^2)_n}. \nonumber
\end{align}
    Applying \eqref{eq-b-0-raise} to \eqref{eq-exam1-key-BP3-part3-1}, we obtain the Bailey pair:
    \begin{equation}\label{eq-exam1-key-BP3-part3}
    \begin{split}
    &\alpha_{2n}^{(2)}(q;q)=-(-1)^nq^{3n^2-n-1}+(-1)^nq^{3n^2+n-1}+(-1)^nq^{3n^2+3n},
    \\&\alpha_{2n+1}^{(2)}(q;q)=-(-1)^{n}q^{3n^2+3n}-(-1)^nq^{3n^2+5n+1}+(-1)^nq^{3n^2+7n+3},\\
    &\beta_n^{(2)}(q;q)=\frac{q^{2n}}{(q,q)_n(q^3;q^2)_n}.
\end{split}
\end{equation}
Now we obtain the desired Bailey pair $(\alpha_n(q;q),\beta_n(q;q))$ \eqref{eq-exam1-key-BP3} by
\begin{align*}
(\alpha_n^{(3)}(q;q),\beta_n^{(3)}(q;q))+q(\alpha_n^{(0)}(q;q),\beta_n^{(0)}(q;q))-q(\alpha_n^{(2)}(1;q),\beta_n^{(2)}(1;q)),
\end{align*}
where $(\alpha_n^{(3)}(q;q),\beta_n^{(3)}(q;q))$ is the Bailey pair \eqref{C(3)} and
$(\alpha_n^{(0)}(q;q),\beta_n^{(0)}(q;q))$ is the Bailey pair \eqref{C(4)}.
\end{proof}

\begin{lemma}\label{lem-exam1-new-BP-add}
The following $(\alpha_n(q;q),\beta_n(q;q))$ is a Bailey pair related to $q$: for $n\geq 0$,
\begin{equation}\label{eq-exam1-key-BP-add}
\begin{split}
&\alpha_n(q;q)=(-1)^nq^{\frac{3}{4}(n^2-n)}\frac{1-q^{3n+\frac{3}{2}}}{1-q^{\frac{1}{2}}}, \\
&\beta_n(q;q)=\frac{q^n(1+q+q^{n+\frac{1}{2}})}{(-q^{3/2};q)_{n}(q^2;q^2)_n}.
\end{split}
\end{equation}
\end{lemma}
It is possible to prove this lemma using the method in the proofs of Lemmas \ref{lem-exam1-new-BP}--\ref{lem-exam1-new-BP3} but the calculations is a bit messy. Here we present a different approach which is also applicable to prove Lemmas \ref{lem-exam1-new-BP}--\ref{lem-exam1-new-BP3}.
\begin{proof}
By the definition in \eqref{defn-BP}, it suffices to show that
\begin{align}\label{add-LR-relation}
L_n(q):=\frac{q^n(1+q+q^{n+\frac{1}{2}})}{(-q^{1/2};q)_{n+1}(q^2;q^2)_n}=\sum_{k=0}^n\frac{(-1)^kq^{\frac{3}{4}(k^2-k)}(1-q^{3k+\frac{3}{2}})}{(q;q)_{n-k}(q;q)_{n+k+1}}=:R_n(q).
\end{align}
On the one hand, using the Mathematica package \texttt{q-Zeil} \cite{q-Zeil} we find that
\begin{align}
R_n(q^2)=\frac{q^2(1+q^2+q^{2n+1})}{(1+q^2+q^{2n-1})(1-q^{4n})(1+q^{2n+1})}R_{n-1}(q^2).
\end{align}
On the other hand, by definition it is easy to see that $L_n(q^2)$ satisfies the same recurrence relation.  Since
\begin{align*}
L_0(q^2)=R_0(q^2)=\frac{1+q+q^2}{1+q},
\end{align*}
we deduce that $L_n(q^2)=R_n(q^2)$ for any $n\geq 0$. Replacing $q^2$ by $q$ we obtain \eqref{add-LR-relation}.
\end{proof}

As consequences of the new Bailey pairs in Lemmas \ref{lem-exam1-new-BP}--\ref{lem-exam1-new-BP-add}, we find the following new identities.
\begin{corollary}
We have
\begin{align}
\sum_{n=0}^\infty \frac{q^{n^2+n-1}(1+q-q^n)}{(q;q)_n(q;q^2)_n}&=\frac{(q^4,q^{10},q^{14};q^{14})_\infty}{(q;q)_\infty}, \label{id-mod14-1}\\
\sum_{n=0}^\infty \frac{q^{2n^2}(-1+q^{2n+1}+q^{4n})}{(-q;q^2)_n(q^4;q^4)_n}&=q\frac{(q,q^{6},q^{7};q^{7})_\infty}{(q^2;q^2)_\infty}, \label{id-mod7} \\
\sum_{n=0}^\infty \frac{q^{n^2+n}(1+q^{n+1}-q^{2n+1})}{(q;q)_n(q;q^2)_{n+1}}&=\frac{(q^6,q^{8},q^{14};q^{14})_\infty}{(q;q)_\infty}, \label{id-mod14-2} \\
\sum_{n=0}^\infty \frac{q^{2n^2+4n}(1+q^2+q^{2n+1})}{(-q;q^2)_{n+1}(q^4;q^4)_n}&=\frac{(q^3,q^4,q^7;q^7)_\infty}{(q^2;q^2)_\infty}. \label{id-mod7-add}
\end{align}
\end{corollary}
\begin{proof}
Substituting the Bailey pairs \eqref{eq-exam1-key-BP}, \eqref{eq-exam1-key-BP2}, \eqref{eq-exam1-key-BP3} and \eqref{eq-exam1-key-BP-add} into Lemma \ref{lem-BP-sum} and using \eqref{Jacobi}, we obtain the desired identities.
\end{proof}
The identities \eqref{id-mod14-1} and \eqref{id-mod14-2} give new companions to three mod 14 identities in Rogers' work \cite{Rogers1894} (see \cite[Eqs.\ (2.14.1)--(2.14.3)]{MSZ}. The identities \eqref{id-mod7} and \eqref{id-mod7-add} gives new companions to the the Rogers--Selberg mod 7 identities (see \cite{Rogers1894,Rogers1917} or \cite[Eqs.\ (2.7.1)--(2.7.3)]{MSZ}).

For the two special cases in Example 5 we need the following new Bailey pair.
\begin{lemma}\label{lem-new-BP}
The following $(\alpha_n(1;q),\beta_n(1;q))$ is a Bailey pair relative to 1:
    \begin{align}
        &\alpha_0(1;q)=1, \quad
        \alpha_n(1;q)=\sum_{i=-n}^{n-1}(-1)^{i+n}q^{i^2}, \\
        &\beta_n(1;q)=\frac{(-1)^n}{(q^2;q^2)_n} \sum_{k=0}^n (-1)^k \frac{(-q;q)_k}{(q;q)_k}.
    \end{align}
\end{lemma}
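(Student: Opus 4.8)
The plan is to verify the defining relation \eqref{defn-BP} with $a=1$ directly, i.e.\ to show that
$$B_n:=\sum_{k=0}^{n}\frac{\alpha_k(1;q)}{(q;q)_{n-k}(q;q)_{n+k}}$$
equals the stated $\beta_n(1;q)$, by proving that both sides satisfy the same first-order recurrence in $n$ and agree at $n=0$. Since the leading coefficient of that recurrence will be $1-q^{2n}\ne 0$, matching the recurrence and the initial value forces $B_n=\beta_n(1;q)$. For the right-hand side this is easy: writing $S_n=\sum_{k=0}^{n}(-1)^k(-q;q)_k/(q;q)_k$ so that $S_n=S_{n-1}+(-1)^n(-q;q)_n/(q;q)_n$, a short simplification using $(q^2;q^2)_n=(q;q)_n(-q;q)_n$ yields
\begin{align}
(1-q^{2n})\beta_n(1;q)+\beta_{n-1}(1;q)=\frac{1-q^{2n}}{(q;q)_n^2},\qquad \beta_0(1;q)=1. \nonumber
\end{align}

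The next step is to record the structural property of the coefficients $\alpha_n(1;q)$. Pairing the terms of $\alpha_n(1;q)$ and $\alpha_{n+1}(1;q)$, whose signs are opposite on the overlapping range $-n\le i\le n-1$, all but the two extreme terms cancel and leave $\alpha_n(1;q)+\alpha_{n+1}(1;q)=q^{(n+1)^2}-q^{n^2}$ for $n\ge 1$; the override $\alpha_0(1;q)=1$ contributes one extra unit, so that $\alpha_k(1;q)+\alpha_{k+1}(1;q)=q^{(k+1)^2}-q^{k^2}+\delta_{k,0}$ for all $0\le k\le n-1$. I would then form $(1-q^{2n})B_n+B_{n-1}$, express $B_{n-1}$ over the denominators $(q;q)_{n-k}(q;q)_{n+k}$, combine termwise so that each coefficient becomes $2-q^{n-k}-q^{n+k}=(1-q^{n-k})+(1-q^{n+k})$, and reindex one of the two resulting pieces by $k\mapsto k+1$ so that consecutive $\alpha$'s pair up via the relation above. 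After tracking the few boundary terms and the $\delta_{k,0}$ contribution, the whole expression collapses to $2/((q;q)_{n-1}(q;q)_n)$ plus the single finite sum appearing in the identity
\begin{align}
\sum_{k=0}^{m}\frac{q^{(k+1)^2}-q^{k^2}}{(q;q)_{m-k}(q;q)_{m+1+k}}=-\frac{1}{(q;q)_m^2}. \nonumber
\end{align}

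This finite identity is where the real content lies, and I expect locating it (and keeping the boundary bookkeeping and the $\alpha_0=1$ override straight) to be the main obstacle; the identity itself, however, has a clean telescoping proof. Setting $u_k:=q^{k^2}/\big((q;q)_{m-k}(q;q)_{m+k}\big)$, a direct computation of $u_{k+1}-u_k$ shows that the two ``cross'' exponentials in the numerator cancel and that $u_{k+1}-u_k$ is exactly the $k$-th summand; since the convention $1/(q;q)_{-1}=0$ forces $u_{m+1}=0$, the sum equals $u_{m+1}-u_0=-1/(q;q)_m^2$. Feeding this back gives $(1-q^{2n})B_n+B_{n-1}=2/((q;q)_{n-1}(q;q)_n)-1/(q;q)_{n-1}^2=(1+q^n)/((q;q)_{n-1}(q;q)_n)=(1-q^{2n})/(q;q)_n^2$, so $B_n$ obeys the same recurrence and initial value as $\beta_n(1;q)$. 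Hence $B_n=\beta_n(1;q)$ for every $n$, which is precisely the asserted Bailey pair relation.
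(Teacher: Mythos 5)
Your proof is correct, and it departs from the paper's precisely at the step where the real work happens. Both arguments form the same first difference: the paper's $\widetilde{R}_n(q)=R_n(q)-R_{n-1}(q)$ is, after clearing the prefactor $(-1)^n(q^2;q^2)_{n-1}$, exactly your combination $(1-q^{2n})B_n+B_{n-1}$, and both use the splitting $2-q^{n-k}-q^{n+k}=(1-q^{n-k})+(1-q^{n+k})$ together with the pairing $\alpha_k+\alpha_{k+1}=q^{(k+1)^2}-q^{k^2}$ (the paper's $S_k+S_{k-1}=q^{k^2}-q^{(k-1)^2}$) to arrive at the single sum $\sum_{k}\bigl(q^{(k+1)^2}-q^{k^2}\bigr)/\bigl((q;q)_{n-1-k}(q;q)_{n+k}\bigr)$. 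At that point the paper splits this into two sums $A_n(q)$ and $B_n(q)$ (its notation, not yours), invokes the Mathematica package \texttt{q-Zeil} to produce a second-order recurrence for each, and then verifies that $\widetilde{L}_n(q)$ satisfies the same second-order recurrence with matching initial values. You instead evaluate the sum in closed form by telescoping: with $u_k=q^{k^2}/\bigl((q;q)_{m-k}(q;q)_{m+k}\bigr)$ the cross terms $q^{(k+1)^2+m-k}$ and $q^{k^2+m+k+1}$ cancel, so $u_{k+1}-u_k$ is the $k$-th summand and the sum collapses to $-1/(q;q)_m^2$. Your boundary bookkeeping also checks out: the $k=-1$ term of the reindexed sum and the $\delta_{k,0}$ correction each contribute $1/\bigl((q;q)_{n-1}(q;q)_n\bigr)$, giving $2/\bigl((q;q)_{n-1}(q;q)_n\bigr)-1/(q;q)_{n-1}^2=(1-q^{2n})/(q;q)_n^2$, which is the recurrence satisfied by $\beta_n(1;q)$; with $B_0=\beta_0=1$ and leading coefficient $1-q^{2n}\neq 0$ the induction closes. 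The payoff of your route is that it is entirely elementary and self-contained, replacing the computer-algebra step and the second-order recurrence with a one-line telescoping identity; the paper's approach is more mechanical but requires trusting (or reproving) the \texttt{q-Zeil} output.
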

\begin{proof}
By the definition in \eqref{defn-BP}, it suffices to show that
\begin{align}
\beta_n(1;q)=\sum_{k=1}^n \frac{1}{(q;q)_{n-k}(q;q)_{n+k}} \sum_{i=-k}^{k-1} (-1)^{i+k}q^{i^2} +\frac{1}{(q;q)_n^2}.
\end{align}
This is equivalent to
\begin{align}\label{LR-equal}
    L_n(q)=R_n(q), \quad n\geq 1,
\end{align}
where
\begin{align}
L_n(q)&:=\sum_{k=0}^{n-1} (-1)^k \frac{(-q;q)_k}{(q;q)_k}, \\
R_n(q)&:=(-1)^n (q^2;q^2)_n \sum_{k=1}^n \frac{1}{(q;q)_{n-k}(q;q)_{n+k}}\sum_{i=-k}^{k-1} (-1)^{i+k}q^{i^2}.
\end{align}
For convenience, we agree that $L_0(q)=R_0(q)=0$.
By definition we have
\begin{align}\label{LR-initial}
L_1(q)=R_1(q)=1, \quad L_2(q)=R_2(q)=-\frac{2q}{1-q}.
\end{align}
Clearly we have
\begin{align}\label{L-rec}
   \widetilde{L}_n(q):=L_n(q)-L_{n-1}(q)=(-1)^{n-1} \frac{(-q;q)_{n-1}}{(q;q)_{n-1}}.
\end{align}
Now we are going to show that $R_n(q)$ satisfies the same recurrence relation.

For convenience we denote
\begin{align}
    S_k(q):=\sum_{i=-k}^{k-1} (-1)^{i+k}q^{i^2}, \quad k\geq 1.
\end{align}
It is easy to see that $S_1(q)=q-1$. Moreover,
we agree that $S_0(q)=0$. It is straightforward to verify that
\begin{align}
    S_k(q)+S_{k-1}(q)=q^{k^2}-q^{(k-1)^2}, \quad k\geq 1.
\end{align}
We have
\begin{align}
&\widetilde{R}_n(q):=R_n(q)-R_{n-1}(q)\nonumber \\
&=(-1)^{n}(q^2;q^2)_{n-1} \left((1-q^{2n})\sum_{k=1}^n \frac{S_k(q)}{(q;q)_{n-k}(q;q)_{n+k}}+\sum_{k=1}^{n-1} \frac{S_k(q)}{(q;q)_{n-1-k}(q;q)_{n-1+k}}  \right) \nonumber \\
&=(-1)^n(q^2;q^2)_{n-1}\sum_{k=1}^n \frac{S_k(q)}{(q;q)_{n-k}(q;q)_{n+k}}\Big(1-q^{2n}+(1-q^{n-k})(1-q^{n+k})\Big) \nonumber \\
&=(-1)^n(q^2;q^2)_{n-1}\sum_{k=1}^n \frac{S_k(q)}{(q;q)_{n-k}(q;q)_{n+k}}(2-q^{n-k}-q^{n+k}) \nonumber \\
&=(-1)^n(q^2;q^2)_{n-1}\Big(\sum_{k=1}^n \frac{S_k(q)}{(q;q)_{n-k}(q;q)_{n+k}}(1-q^{n-k}) \nonumber \\
&\qquad \qquad \qquad \qquad \qquad +\sum_{k=1}^n \frac{S_k(q)}{(q;q)_{n-k}(q;q)_{n+k}}(1-q^{n+k})\Big) \nonumber \\
&=(-1)^n(q^2;q^2)_{n-1}\Big(\sum_{k=1}^n \frac{S_k(q)}{(q;q)_{n-k-1}(q;q)_{n+k}}+\sum_{k=1}^n \frac{S_k(q)}{(q;q)_{n-k}(q;q)_{n+k-1}}\Big) \nonumber \\
&=(-1)^n(q^2;q^2)_{n-1}\Big(\sum_{k=2}^n \frac{S_{k-1}(q)}{(q;q)_{n-k}(q;q)_{n+k-1}} +\sum_{k=1}^n \frac{S_k(q)}{(q;q)_{n-k}(q;q)_{n+k-1}}\Big) \nonumber \\
&=(-1)^n(q^2;q^2)_{n-1}\sum_{k=1}^n \frac{q^{k^2}-q^{(k-1)^2}}{(q;q)_{n-k}(q;q)_{n+k-1}}=A_n(q)-B_n(q), \label{wR-AB}
\end{align}
where
\begin{align}
A_n(q)&:=(-1)^n(q^2;q^2)_{n-1}\sum_{k=1}^n \frac{q^{k^2}}{(q;q)_{n-k}(q;q)_{n+k-1}}, \\
B_n(q)&:=(-1)^n(q^2;q^2)_{n-1}\sum_{k=1}^n \frac{q^{(k-1)^2}}{(q;q)_{n-k}(q;q)_{n+k-1}} \nonumber \\
&=(-1)^n(q^2;q^2)_{n-1}\sum_{k=0}^{n-1} \frac{q^{k^2}}{(q;q)_{n-k-1}(q;q)_{n+k}}.
\end{align}
Using the Mathematica package \texttt{q-Zeil} \cite{q-Zeil}, we find that
\begin{align}
A_n(q)&=(-1)^{n+1}\frac{q^{2n-2}(1-q)(-q;q)_{n-1}}{(1+q^{n-1})(1-q^{2n-1})(q;q)_{n-1}}+\frac{1+q+q^{2n-3}+q^{2n-1}}{1-q^{2n-1}}A_{n-1}(q) \nonumber \\
&\qquad \qquad -\frac{q(1-q^{2n-4})}{1-q^{2n-1}}A_{n-2}(q), \label{A-rec} \\
B_n(q)&=(-1)^n\frac{q^{2n-2}(1-q)(-q;q)_{n-1}}{(1+q^{n-1})(1-q^{2n-1})(q;q)_{n-1}}+\frac{1+q+q^{2n-3}+q^{2n-1}}{1-q^{2n-1}}B_{n-1}(q) \nonumber \\
&\qquad \qquad -\frac{q(1-q^{2n-4})}{1-q^{2n-1}}B_{n-2}(q). \label{B-rec}
\end{align}
In view of \eqref{wR-AB} and combining \eqref{A-rec} with \eqref{B-rec}, we deduce that
\begin{align}\label{wR-rec}
\widetilde{R}_n(q)&=2(-1)^{n+1}\frac{q^{2n-2}(1-q)(-q;q)_{n-1}}{(1+q^{n-1})(1-q^{2n-1})(q;q)_{n-1}}
+\frac{1+q+q^{2n-3}+q^{2n-1}}{1-q^{2n-1}}\widetilde{R}_{n-1}(q)\nonumber \\
&\qquad -q\frac{1-q^{2n-4}}{1-q^{2n-1}}\widetilde{R}_{n-2}(q).
\end{align}

We claim that $\widetilde{L}_n(q)$ also satisfies the recurrence relation \eqref{wR-rec}. In fact, using \eqref{L-rec}, this claim is equivalent to
\begin{align}
\frac{(-q;q)_{n-1}}{(q;q)_{n-1}}&=2\frac{(1-q)q^{2n-2}(-q;q)_{n-1}}{(1+q^{n-1})(1-q^{2n-1})(q;q)_{n-1}}  \\
&\qquad +\frac{1+q+q^{2n-3}+q^{2n-1}}{1-q^{2n-1}} \cdot \frac{(-q;q)_{n-2}}{(q;q)_{n-2}}-q\frac{1-q^{2n-4}}{1-q^{2n-1}}\cdot \frac{(-q;q)_{n-3}}{(q;q)_{n-3}}. \nonumber
\end{align}
Clearing the denominators on both sides, this amounts to show that
\begin{align}
    (1+q^{n-1})(1-q^{2n-1})&=2(1-q)q^{2n-2}+(1+q+q^{2n-3}+q^{2n-1})(1-q^{n-1}) \nonumber \\
    &\quad -q(1-q^{n-2})^2(1-q^{n-1}).
\end{align}
which is clearly true by direct calculations. This proves the claim on $\widetilde{L}_n(q)$.

Note that
\begin{align}
\widetilde{L}_1(q)=\widetilde{R}_1(q)=1, \quad \widetilde{L}_2(q)=\widetilde{R}_2(q)=-\frac{1+q}{1-q}.
\end{align}
Since $\widetilde{L}_n(q)$ and $\widetilde{R}_n(q)$ share the same recurrence relation stated in \eqref{wR-rec}, we conclude that $\widetilde{L}_n(q)=\widetilde{R}_n(q)$ for any $n\geq 1$. This then implies that $L_n(q)$ and $R_n(q)$ satisfy the same recurrence relation  stated in \eqref{L-rec}. In view of \eqref{LR-initial}, we conclude that $L_n(q)=R_n(q)$ for any $n\geq 1$. This proves \eqref{LR-equal} and hence the desired assertion.
\end{proof}

\subsection{Basic facts about modular forms}
In order to prove the modular transformation formulas stated in Section \ref{sec-intro} and discuss modularity of Nahm sums, we need some knowledge from the theory of modular forms.

Recall that $q=e^{2\pi i\tau}$ ($\mathrm{Im}~\tau>0$). We define the Dedekind eta function
\begin{align}
\eta(\tau):=q^{1/24}(q;q)_\infty
\end{align}
and Weber's modular functions \cite{We}:
\begin{align}\label{Weber-defn}
\mathfrak{f}(\tau):=q^{-1/48} (-q^{1/2};q)_\infty, \ \  \mathfrak{f}_1(\tau):=q^{-1/48} (q^{1/2};q)_\infty, \ \ \mathfrak{f}_2(\tau):=q^{1/24}(-q;q)_\infty.
\end{align}
The following properties are well-known:
\begin{align}
&\eta(-1/\tau)=\sqrt{-i \tau} \eta(\tau), \ \ \eta(\tau+1)=e^{\pi i  /12} \eta(\tau), \label{eta-tran} \\
&\mathfrak{f}(-1/\tau) = \mathfrak{f}(\tau), \ \ \ \mathfrak{f}_2(-1/\tau) = \frac{1}{\sqrt{2}}  \mathfrak{f}_1(\tau), \ \ \mathfrak{f}_1(-1/\tau)=\sqrt{2} \mathfrak{f}_2(\tau), \label{Weber-1} \\
&\mathfrak{f}(\tau+1) =e^{-\pi i /24}  \mathfrak{f}_1(\tau), \ \ \mathfrak{f}_1(\tau+1) = e^{-\pi i/24} \mathfrak{f}(\tau),  \ \ \mathfrak{f}_2(\tau+1) = e^{\pi i/12} \mathfrak{f}_2(\tau). \label{Weber-2}
\end{align}
For $m\in \mathbb{Q}_{>0}$ and $j\in \mathbb{Q}$, we define the theta series \cite[p.\ 215]{Wakimoto}
\begin{align}\label{hg-defn}
    h_{j,m}(\tau):=\sum_{k\in \mathbb{Z}} q^{m(k+\frac{j}{2m})^2}, \quad
    g_{j,m}(\tau):=\sum_{k\in \mathbb{Z}} (-1)^kq^{m(k+\frac{j}{2m})^2}.
\end{align}
By \eqref{Jacobi} we have the product representations:
\begin{align}
h_{j,m}(\tau)&=q^{\frac{j^2}{4m}}(-q^{m-j},-q^{m+j},q^{2m};q^{2m})_\infty,  \label{add-h-product}\\ g_{j,m}(\tau)&=q^{\frac{j^2}{4m}}(q^{m+j},q^{m-j},q^{2m};q^{2m})_\infty.  \label{add-g-product}
\end{align}

It is easy to verify the following properties \cite[p.\ 215]{Wakimoto}:
\begin{align}
&h_{j,m}(\tau)=h_{-j,m}(\tau)=h_{2m+j,m}(\tau), \quad g_{j,m}(\tau)=g_{-j,m}(\tau)=-g_{2m+j,m}(\tau), \label{g-h-period}\\
&h_{j,m}(\tau)=h_{2j,4m}(\tau)+h_{4m-2j,4m}(\tau), \label{h-h-change} \\
&g_{j,m}(\tau)=h_{2j,4m}(\tau)-h_{4m-2j,4m}(\tau), \label{g-h-change}\\
&h_{j,m}(2\tau)=h_{2j,2m}(\tau), \quad g_{j,m}(2\tau)=g_{2j,2m}(\tau). \label{hg-double}
\end{align}

\begin{lemma}\label{lem-modular}
(Cf. \cite[p.\ 215, Theorem 4.5]{Wakimoto}.) For $j\in \mathbb{Z}$ and $m\in \frac{1}{2}\mathbb{N}$ we have
\begin{align}
h_{j,m}\left(-\frac{1}{\tau}\right)&=\frac{(-i\tau)^{\frac{1}{2}}}{\sqrt{2m}} \sum_{0\leq k\leq 2m-1} e^{\frac{\pi ijk}{m}}h_{k,m}(\tau), \label{h-modualr} \\
g_{j,m}\left(-\frac{1}{\tau}\right)&=\frac{(-i\tau)^{\frac{1}{2}}}{\sqrt{2m}}\sum_{\begin{smallmatrix}
    0\leq k\leq 4m-1 \\ k ~~\text{odd}
\end{smallmatrix}} e^{\frac{\pi ijk}{2m}} h_{\frac{k}{2},m}(\tau). \label{g-modular}
\end{align}
For $j+m\in \mathbb{Z}$ we have
\begin{align}\label{fg-translation}
h_{j,m}(\tau+1)=e^{\frac{\pi ij^2}{2m}}h_{j,m}(\tau), \quad g_{j,m}(\tau+1)=e^{\frac{\pi ij^2}{2m}}g_{j,m}(\tau).
\end{align}
\end{lemma}

For any odd integer $m$ we define
\begin{align}
\epsilon_m:=\left\{\begin{array}{ll}
1 & \text{if $m\equiv 1\pmod{4}$} \\
i & \text{if $m\equiv 3 \pmod{4}$}
\end{array}\right..
\end{align}
We recall a well-known formula for the quadratic Gauss sum. For $m$ odd and $\gcd(a,m)=1$ we have (see e.g.\ \cite[p.\ 26, Theorem 1.5.2]{BEW-book})
\begin{align}\label{eq-Gauss-original}
\sum_{n=0}^{m-1} e^{2\pi i \frac{an^2}{m}}=\epsilon_m\sqrt{m}\left(\frac{a}{m}\right)
\end{align}
where $\left(\frac{a}{m}\right)$ is the Jacobi symbol. After completing the square, it is easy to deduce from \eqref{eq-Gauss-original} that
\begin{align}\label{Gauss-sum}
\sum_{n=0}^{m-1} e^{2\pi i \frac{an^2+bn}{m}}=\epsilon_m \sqrt{m}\left(\frac{a}{m}\right)e^{-2\pi i \frac{\psi(a)b^2}{m}},
\end{align}
where $\psi(a)$ is some number with $4\psi(a) a\equiv 1$ (mod $m$).

We will need the function
\begin{align}\label{eq-Ta}
T(a,m)&:=\sum_{\substack{0\leq k\leq 2m-1 \\ k~~\text{odd}}} e^{\frac{\pi i (k^2+ak)}{2m}}.
\end{align}
Sometimes we omit $m$ and write $T(a,m)$ as $T(a)$. By definition it is easy to see that
\begin{align}\label{T-period}
T(a+2m,m)=-T(a,m).
\end{align}
\begin{lemma}\label{lem-sum}
We have
\begin{align}\label{exp-reduce}
\sum_{\substack{0\leq k \leq 4m-1 \\ k~~\text{odd}}} e^{\frac{\pi ik(k+a)}{2m}}
=\left\{\begin{array}{ll}
2T(a,m), & \text{$a$ even}, \\
0 &\text{$a$ odd}. \end{array}\right.
\end{align}
If $a$ is even and $m$ is odd, then
\begin{align}\label{Tam-result}
T(a,m)=\epsilon_m\sqrt{m}e^{\frac{\pi i(1-a-(a-2)^2\delta_m)}{2m}}
\end{align}
where  $\delta_m$ is any integer satisfying $4\delta_m\equiv 1 \!\pmod{m}$.
\end{lemma}
\begin{proof}
We have
\begin{align}
&\sum_{\substack{0\leq k \leq 4m-1 \\ k~~\text{odd}}} e^{\frac{\pi ik(k+a)}{2m}}=\sum_{\substack{0\leq k \leq 2m-1 \\ k~~\text{odd}}} \Big(e^{\frac{\pi ik(k+a)}{2m}}+e^{\frac{\pi i(k+2m)(k+2m+a)}{2m}}\Big) \nonumber \\
&=(1+e^{\pi ia})\sum_{\substack{0\leq k\leq 2m-1 \\ k~~\text{odd}}} e^{\frac{\pi i (k^2+ak)}{2m}}.
\end{align}
This proves \eqref{exp-reduce}.

When $a$ is even and $m$ is odd, we write $a=2a_0$ with $a_0\in \mathbb{Z}$. We have
\begin{align}\label{Ta-proof}
&T(a)=\sum_{k=1}^m e^{\frac{\pi i ((2k-1)^2+a(2k-1))}{2m}}=e^{\frac{\pi i(1-a)}{2m}}\sum_{k=1}^m e^{\frac{\pi i (2k^2+(a-2)k)}{m}} \nonumber \\
&=e^{\frac{\pi i(1-a)}{2m}}\sum_{k=1}^m e^{\frac{2\pi i (k^2+(a_0-1)k)}{m}}.
\end{align}
Substituting \eqref{Gauss-sum} into \eqref{Ta-proof} we obtain \eqref{Tam-result}.
\end{proof}

We establish the following new lemma which will be invoked in the proof of Theorem \ref{thm-Mizuno-conj}.
\begin{lemma}\label{lem-add-g-tran}
Let $1\leq j \leq m$ be an odd integer. We have
\begin{align}\label{add-g-tran}
&g_{j,m}\left(-\frac{\tau+1}{4\tau}\right)\nonumber \\
&=\sqrt{\frac{-\tau}{m}} \epsilon_m \sum_{\substack{1\leq \ell \leq m-1 \\ \ell~ \text{odd}}} \left(e^{\pi i \frac{1-(j+\ell)-(j+\ell-2)^2\delta_m}{2m}}+e^{\pi i \frac{1-(j-\ell)-(j-\ell-2)^2\delta_m}{2m}} \right)
g_{\ell,m}\left(\frac{\tau+1}{4}\right).
\end{align}
\end{lemma}
\begin{proof}
From \eqref{g-modular} we have
\begin{align}
g_{j,m}\left(-\frac{\tau+1}{4\tau}\right)=\frac{1}{\sqrt{2m}}\left(\frac{-4i\tau}{\tau+1}\right)^{\frac{1}{2}}\sum_{\substack{0\leq k\leq 4m-1 \\ k~~\text{odd}}} e^{\frac{\pi ijk}{2m}}h_{\frac{k}{2},m}\left(\frac{4\tau}{\tau+1}\right). \label{add-g-tran-start}
\end{align}
Using \eqref{fg-translation} and \eqref{h-modualr} we deduce that
\begin{align}
&h_{\frac{k}{2},m}\left(\frac{4\tau}{\tau+1}\right)=h_{2k,4m}\left(\frac{\tau}{\tau+1}\right)=h_{2k,4m}\left(1-\frac{1}{\tau+1}\right) \nonumber \\
&=e^{\frac{\pi ik^2}{2m}}h_{2k,4m}\left(-\frac{1}{\tau+1}\right)
=e^{\frac{\pi ik^2}{2m}} \times \frac{(-i(\tau+1))^{\frac{1}{2}}}{\sqrt{8m}}\sum_{0\leq \ell \leq 8m-1} e^{\frac{\pi i\ell k}{2m}}h_{\ell,4m}(\tau+1). \label{add-h-tran-start}
\end{align}
Substituting \eqref{add-h-tran-start} into \eqref{add-g-tran-start}, we deduce that
\begin{align}
&g_{j,m}\left(-\frac{\tau+1}{4\tau}\right)=\frac{\sqrt{-\tau}}{2m} \sum_{\substack{0\leq \ell \leq 8m-1}} h_{\ell,4m}(\tau+1) \sum_{\substack{0\leq k\leq 4m-1 \\ k~~\text{odd}}} e^{\frac{\pi ik(k+j+\ell )}{2m}} \nonumber \\
&=\frac{\sqrt{-\tau}}{m} \sum_{\substack{0\leq \ell \leq 8m-1 \\ \ell ~\text{odd}}} h_{\ell,4m}(\tau+1) T(j+\ell) \quad \text{(by \eqref{exp-reduce})} \nonumber \\
&=\frac{\sqrt{-\tau}}{m}
 \sum_{\substack{0\leq \ell \leq 4m-1 \\ \ell ~\text{odd}}} \Big(T(j+\ell)h_{\ell,4m}(\tau+1)+T(j+8m-\ell)h_{8m-\ell,4m}(\tau+1) \Big) \nonumber \\
 &=\frac{\sqrt{-\tau}}{m}  \sum_{\substack{0\leq \ell \leq 4m-1 \\ \ell ~\text{odd}}} \Big(T(j+\ell)+T(j-\ell)\Big)h_{\ell,4m}(\tau+1) \Big)
 \quad \text{(by \eqref{T-period} and \eqref{g-h-period})}\nonumber \\
&=\frac{\sqrt{-\tau}}{m}
 \sum_{\substack{1\leq \ell \leq 2m-1 \\ \ell ~\text{odd}}} \Big(\big(T(j+\ell)+T(j-\ell)\big)h_{\ell,4m}(\tau+1) \nonumber \\
 &\qquad \qquad \qquad +\big(T(j+4m-\ell)+T(j-4m+\ell)\big)h_{4m-\ell,4m}(\tau+1)\Big)  \nonumber \\
 &=\frac{\sqrt{-\tau}}{m}
 \sum_{\substack{1\leq \ell \leq 2m-1 \\ \ell ~\text{odd}}} \big(T(j+\ell)+T(j-\ell)\big)\Big(h_{\ell,4m}(\tau+1) +h_{4m-\ell,4m}(\tau+1)\Big) \nonumber \\
&=\frac{\sqrt{-\tau}}{m} \sum_{\substack{1\leq \ell \leq 2m-1 \\ \ell ~\text{odd}}} \big(T(j+\ell)+T(j-\ell)\big) h_{\ell/2,m}(\tau+1).
\end{align}
Here for the last two equalities we used \eqref{T-period} and \eqref{h-h-change}, respectively.  Continuing this process, we have
\begin{align}
&g_{j,m}\left(-\frac{\tau+1}{4\tau}\right)=\frac{\sqrt{-\tau}}{m}
 \Bigg(\sum_{\substack{1\leq \ell \leq m-1 \\ \ell ~\text{odd}}} \Big( \big(T(j+\ell)+T(j-\ell)\big) h_{\ell/2,m}(\tau+1) \nonumber \\
&\qquad \qquad \qquad + \big(T(j+2m-\ell)+T(j-2m+\ell)\big) h_{(2m-\ell)/2,m}(\tau+1)\Big) \nonumber \\
&\qquad \qquad \qquad +\big(T(j+m)+T(j-m)\big)h_{m/2,m}(\tau+1) \Bigg) \nonumber \\
&=\frac{\sqrt{-\tau}}{m}
 \sum_{\substack{1\leq \ell \leq m-1 \\ \ell ~\text{odd}}}  \big(T(j+\ell)+T(j-\ell)  \big) \Big(h_{\ell/2,m}(\tau+1) - h_{(2m-\ell)/2,m}(\tau+1)\Big) \nonumber \\
&=\frac{\sqrt{-\tau}}{m}  \sum_{\substack{1\leq \ell \leq m-1 \\ \ell ~\text{odd}}} \big(T(j+\ell)+T(j-\ell)  \big)   g_{\ell/4,m/4}(\tau+1) \nonumber \\
&=\frac{\sqrt{-\tau}}{m}
 \sum_{\substack{1\leq \ell \leq m-1 \\ \ell ~\text{odd}}}  \big(T(j+\ell)+T(j-\ell)  \big)   g_{\ell,m}\left(\frac{\tau+1}{4}\right). \label{add-g-tran-proof-end}
\end{align}
Here for the second equality we used \eqref{T-period} to conclude that
\begin{align}
T(j+m)+T(j-m)=0,
\end{align}
and for the last three equalities we used \eqref{T-period}, \eqref{g-h-change} and \eqref{hg-double}, respectively.
Now by \eqref{add-g-tran-proof-end} and \eqref{Tam-result} we obtain \eqref{add-g-tran}.
\end{proof}

Let $P_2(t):=\{t\}^2-\{t\}+\frac{1}{6}$ be the second periodic Bernoulli polynomial where $\{t\}=t-\lfloor t\rfloor$ is the fractional part of $t$. Let $g,\delta$ be positve integers with $0<g<\delta$. The generalized Dedekind eta function is defined as
\begin{align}
    \eta_{\delta;g}(\tau):=q^{\frac{\delta}{2}P_2(g/\delta)} \prod\limits_{\substack{m\equiv \pm g \!\!\!\! \pmod{\delta} \\ m\in \mathbb{Z}_{>0}}} (1-q^m).
\end{align}
Note that $\eta(\tau)$ and $\eta_{\delta;g}(\tau)$ are modular forms of weights $1/2$ and $0$, respectively.

A generalized Dedekind eta product of level $N$ has the form
\begin{align}
    f(\tau)=\prod\limits_{\begin{smallmatrix}
        \delta |N \\ 0<g<\delta
    \end{smallmatrix}} \eta_{\delta;g}^{r_{\delta,g}}(\tau)
    \quad \text{where} \quad r_{\delta,g}\in \left\{\begin{array}{ll} \frac{1}{2}\mathbb{Z} & \text{if $g=\delta/2$,} \\
    \mathbb{Z} & \text{otherwise.}
    \end{array}\right.
\end{align}
For convenience, we use the notation
\begin{align}
&J_m:=(q^m;q^m)_\infty=q^{-m/24}\eta(m\tau), \label{J-defn} \\
&J_{a,m}:=(q^a,q^{m-a},q^m;q^m)_\infty=q^{-\frac{m}{2}P_2(a/m)-\frac{m}{24}}\eta_{m;a}(\tau)\eta(m\tau). \label{Jam-defn}
\end{align}

As in \cite{Wang-rank2,Wang-rank3,WW-rank2,WW-I}, our strategy of justifying the modularity of a specific Nahm sum consists of two steps. First, we establish Rogers--Ramanujan type identities which express a Nahm sum ${f}_{A,b,0,d}(q)$ as a sum of infinite products:
\begin{align}
   {f}_{A,b,0,d}(q)=f_1(q)+\cdots+f_{m}(q),
\end{align}
where each $f_i(q)$ is a product/quotient expressed by $J_m$ and $J_{a,m}$. Second, from \eqref{J-defn} and \eqref{Jam-defn} it is easy to find a unique $c_i$ such that $q^{c_i}f_i(q)$ becomes a generalized Dedekind eta product ($i=1,2,\dots,m$). If $c_1=c_2=\cdots=c_m=c$ and all of $q^{c_i}f_i(q)$ are  modular functions, then we conclude that ${f}_{A,b,c,d}(q)=q^c{f}_{A,b,0,d}(q)$ is a modular function. We refer the reader to \cite[Sect.\ 2.2]{Wang-rank2}  for more details of the second step. Note that the values of $c_i$ can be computed using \eqref{J-defn} and \eqref{Jam-defn} directly, and in many cases they can also be computed by writing infinite products as theta series using \eqref{Jacobi} (see e.g.\ \eqref{hg-defn}--\eqref{add-g-product}). Moreover, these values can also be  easily computed with the aid of the Maple package \texttt{thetaids} developed by Frye and Garvan \cite{Frye-Garvan}. Furthermore, this package allows us to check whether $q^{c_i}f_i(q)$ is a modular function on some congruence subgroup or not, and it can be used to automatically prove some theta function identities.

We have to conduct the above two steps for each of Mizuno's rank three Nahm sums associated the quadruples in \cite[Table 3]{Mizuno}. Since the second step is routine and easily achieved using the method in \cite{Frye-Garvan}, we will omit the details and only focus on the first step.

\section{Mizuno's rank three Nahm sums of index $(1,2,2)$}\label{sec-exam}
In this section we set $d=(2,2,1)$ and $D=\diag(2,2,1)$. We will write the summation indices $(n_1,n_2,n_3)$ as $(k,j,i)$ or $(j,k,i)$ so that the Nahm sum is of index $(1,2,2)$.

\subsection{Examples 1, 7 and 8} We will split the Nahm sums in these examples into two parts according to the parity of one of the summation indices.

\subsubsection{Example 1}
This example corresponds to
\begin{align*}
A=\begin{pmatrix}
1 & 0 & 1\\
0 & 2 & 2\\
1/2&1 & 2
\end{pmatrix},\quad
AD=\begin{pmatrix}
2 & 0 & 1\\
0 & 4 & 2\\
1 & 2 & 2
\end{pmatrix},\quad
b\in  \bigg\{
\begin{pmatrix}
0 \\ 0 \\ 0
\end{pmatrix},
\begin{pmatrix}
1 \\ 0 \\ 0
\end{pmatrix},
\begin{pmatrix}
1 \\ 0 \\ 1
\end{pmatrix},
\begin{pmatrix}
1 \\ 2 \\ 2
\end{pmatrix}
\bigg\}.
\end{align*}
The Nahm sums involved here are special instances of the function $G(u,v,w)=G(u,v,w;q)$ defined in \eqref{G-defn}.
We split it into two parts according to the parity of $i$:
\begin{align}\label{G-split}
    G(u,v,w)=G_0(u,v,w)+G_1(u,v,w)
\end{align}
where $G_\sigma(u,v,w)=G_\sigma(u,v,w;q)$ ($\sigma=0,1$) was defined in \eqref{Gc-defn}.

\begin{theorem}\label{thm-G-parity}
We have
\begin{align}
&G_0(1,1,1;q)
=
\frac{(-q;q^2)_\infty(q^{20},q^{24},q^{44};q^{44})_\infty}{(q^2;q^2)_\infty}, \label{G-1-even}
\\
&G_1(1,1,1;q)=q\frac{(-q^2;q^2)_\infty(-q,q^{10},-q^{11};-q^{11})_\infty}{(q^2;q^2)_\infty}, \label{G-1-odd} \\
&G_0(1,1,q;q)=\frac{(-q^2;q^2)_\infty(-q^5,q^6,-q^{11};-q^{11})_\infty}{(q^2;q^2)_\infty}, \label{G-2-even} \\
&G_1(1,1,q;q)
=
q\frac{(-q;q^2)_\infty(q^{12},q^{32},q^{44};q^{44})_\infty}{(q^2;q^2)_\infty}, \label{G-2-odd}
\\
&G_0(q,1,q;q)=\frac{(-q^2;q^2)_\infty(q^4,-q^7,-q^{11};-q^{11})_\infty}{(q^2;q^2)_\infty}, \label{G-3-even} \\
&G_1(q,1,q;q)
=
q^2\frac{(-q;q^2)_\infty(q^{8},q^{36},q^{44};q^{44})_\infty}{(q^2;q^2)_\infty}, \label{G-3-odd}
\\
&G_0(q^2,q^2,q;q)=\frac{(-q^2;q^2)_\infty(q^2,-q^9,-q^{11};q^{11})_\infty}{(q^2;q^2)_\infty}, \label{G-4-even} \\
&G_1(q^2,q^2,q;q)=q^3\frac{(-q;q^2)_\infty(q^{4},q^{40},q^{44};q^{44})_\infty}{(q^2;q^2)_\infty}, \label{G-4-odd} \\
& G_0(q,1,1;q)+G_1(q^3,q^2,q;q)
=
\frac{(-q;q^2)_\infty(q^{16},q^{28},q^{44};q^{44})_\infty}{(q^2;q^2)_\infty}, \label{G-sum-1} \\
& G_1(q,1,1;q)+G_0(q^3,q^2,q;q)=\frac{(-q^2;q^2)_\infty}{(q^2;q^2)_\infty}(-q^3,q^8,-q^{11};-q^{11})_\infty. \label{G-sum-2}
\end{align}
\end{theorem}

\begin{proof}
We have
\begin{align}
&G_\sigma(u,v,w)=\sum_{i,j\ge 0}\frac{q^{(2i+\sigma)^2+2j^2+2(2i+\sigma)j}u^{2i+\sigma}v^j}{(q;q)_{2i+\sigma}(q^2;q^2)_j}
\sum_{k\ge 0}\frac{q^{k^2+(2i+\sigma)k}w^k}{(q^2;q^2)_k} \nonumber \\
&=\sum_{i,j\ge 0}\frac{q^{(2i+\sigma)^2+2j^2+2(2i+\sigma)j}u^{2i+\sigma}v^j(-wq^{2i+\sigma+1};q^2)_\infty}{(q;q)_{2i+\sigma}(q^2;q^2)_j}\nonumber \\
&=(-wq^{\sigma+1};q^2)_\infty\sum_{i,j\ge 0}\frac{q^{(2i+\sigma)^2+2j^2+2(2i+\sigma)j}u^{2i+\sigma}v^j}{(q;q)_{2i+\sigma}(q^2;q^2)_j(-wq^{1+\sigma};q^2)_i}.
\label{Ftable3-1}
\end{align}

(1) Setting $(\sigma,u,v,w)=(0,1,1,1)$ in \eqref{Ftable3-1}, we have
\begin{align}\label{proof-1-1-even}
&G_0(1,1,1)=
(-q;q^2)_\infty\sum_{i,j\ge 0}\frac{q^{4i^2+4ij+2j^2}}{(q;q)_{2i}(q^2;q^2)_j(-q;q^2)_i} \nonumber  \\
&=(-q;q^2)_\infty\sum_{n=0}^\infty q^{2n^2}\sum_{i=0}^n\frac{q^{2i^2}}{(q^2;q^2)_{i}(q^2;q^2)_{n-i}(q^2;q^4)_i}.
\end{align}
Substituting the Bailey pair \eqref{C(1)} into \eqref{id-BP-twice} and then replacing $q$ by $q^2$, we deduce from \eqref{proof-1-1-even} that
\begin{align}\label{proof-1-1-even-result}
&G_0(1,1,1)=\frac{(-q;q^2)_\infty}{(q^2;q^2)_\infty}\Big(1+\sum_{n=1}^\infty (-1)^nq^{22n^2}(q^{2n}+q^{-2n})\Big) \nonumber \\
&=\frac{(-q;q^2)_\infty}{(q^2;q^2)_\infty}(q^{20},q^{24},q^{44};q^{44})_\infty.  \quad (\text{by \eqref{Jacobi}})
\end{align}

(2) Setting $(\sigma,u,v,w)=(1,1,1,1)$ in \eqref{Ftable3-1}, we have
\begin{align}\label{proof-1-1-odd}
     &G_1(1,1,1)=q(-q^2;q^2)_\infty\sum_{i,j\ge 0}\frac{q^{4i^2+4ij+2j^2+4i+2j}}{(q;q)_{2i+1}(q^2;q^2)_j(-q^2;q^2)_i} \nonumber\\
     &=q\frac{(-q^2;q^2)_\infty}{1-q}\sum_{n= 0}^\infty q^{2n^2+2n}\sum_{i=0}^n\frac{q^{2i^2+2i}}{(q^3;q^2)_{i}(q^2;q^2)_{n-i}(q^4;q^4)_i}.
\end{align}
Substituting the Bailey pair \eqref{G(2)} into \eqref{id-BP-twice} and then replacing $q$ by $q^2$, we deduce that
\begin{align}
    &\sum_{n=0}^\infty q^{2n^2+2n}\sum_{i=0}^n \frac{q^{2i^2+2i}}{(-q^3;q^2)_{i}(q^2;q^2)_{n-i}(q^4;q^4)_i} \nonumber \\
    &=\frac{1}{(1-q)(q^4;q^2)_\infty}\Big(\sum_{n=0}^\infty (-1)^n q^{\frac{11}{2}(n^2+n)}(q^{-n}-q^{n+1})\Big) \nonumber \\
    &=\frac{(q,q^{10},q^{11};q^{11})_\infty}{(1-q)(q^4;q^2)_\infty}.  \quad (\text{by \eqref{Jacobi}})
\end{align}
Substituting this identity with $q$ replaced by $-q$ into \eqref{proof-1-1-odd}, we obtain \eqref{G-1-odd}.

 (3) Setting $(\sigma,u,v,w)=(0,1,1,q)$ in \eqref{Ftable3-1}, we have
 \begin{align}
     &G_0(1,1,q)=(-q^2;q^2)_\infty\sum_{i,j\ge 0}\frac{q^{4i^2+2j^2+4ij}}{(q;q)_{2i}(q^2;q^2)_j(-q^2;q^2)_i} \nonumber \\
     &=(-q^2;q^2)_\infty\sum_{n=0}^\infty q^{2n^2}\sum_{i=0}^n\frac{q^{2i^2}}{(q;q^2)_{i}(q^2;q^2)_{n-i}(q^4;q^4)_i}. \label{exam1-G0-3}
\end{align}
Substituting the Bailey pair \eqref{B-G(1)} into \eqref{id-BP-twice} and then replacing $q$  by $q^2$,  we deduce that
\begin{align}
    &\sum_{n=0}^\infty q^{2n^2}\sum_{i=0}^n\frac{q^{2i^2}}{(-q;q^2)_{i}(q^2;q^2)_{n-i}(q^4;q^4)_i}\nonumber \\
    &=\frac{1}{(q^2;q^2)_\infty} \Big(1+\sum_{n=1}^\infty (-1)^nq^{\frac{11}{2}n^2-\frac{1}{2}n}(1+q^n)\Big) \nonumber \\
    &=\frac{(q^5,q^6,q^{11};q^{11})_\infty}{(q^2;q^2)_\infty}.  \quad (\text{by \eqref{Jacobi}}) \label{exam1-G0-3-proof}
\end{align}
Substituting \eqref{exam1-G0-3-proof} with $q$ replaced by $-q$ into \eqref{exam1-G0-3}, we obtain \eqref{G-2-even}.

(4) Setting $(\sigma,u,v,w)=(1,1,1,q)$ in \eqref{Ftable3-1}, we have
 \begin{align*}
     &G_1(1,1,q)=q(-q^3,q^2)_\infty\sum_{i,j\ge 0}\frac{q^{4i^2+4ij+2j^2+4i+2j}}{(q;q)_{2i+1}(q^2;q^2)_j(-q^3;q^2)_i} \nonumber \\
     &=q\frac{(-q^3;q^2)_\infty}{1-q}\sum_{n=0}^\infty q^{2n^2+2n}\sum_{i=0}^n\frac{q^{2i^2+2i}}{(q^2;q^2)_{n-i}(q^2;q^2)_i(q^6;q^4)_i}.
\end{align*}
Substituting the Bailey pair \eqref{C(3)} into \eqref{id-BP-twice} and then replacing $q$ by $q^2$, we deduce that
\begin{align*}
     &G_1(1,1,q)=q\frac{(-q^3;q^2)_\infty}{(1-q)(q^4;q^2)_\infty}\Big(\sum_{n=0}^\infty(-1)^nq^{22n^2+10n}+\sum_{n=0}^\infty (-1)^{n+1}q^{22n^2+34n+12}\Big) \nonumber \\
     &=q\frac{(-q;q^2)_\infty}{(q^2;q^2)_\infty}(q^{12},q^{32},q^{44};q^{44})_\infty.  \quad (\text{by \eqref{Jacobi}})
 \end{align*}

(5) Setting $(\sigma,u,v,w)=(0,q,1,q)$ in \eqref{Ftable3-1}, we have
 \begin{align}
     &G_0(q,1,q)=(-q^2;q^2)_\infty\sum_{i,j\ge 0}\frac{q^{4i^2+4ij+2j^2+2i}}{(q;q)_{2i}(q^2;q^2)_j(-q^2;q^2)_i} \nonumber \\
     &=(-q^2;q^2)_\infty\sum_{n=0}^\infty q^{2n^2}\sum_{i=0}^n\frac{q^{2i^2+2i}}{(q;q^2)_i(q^2;q^2)_{n-i}(q^4;q^4)_i}. \label{exam1-G0-5}
\end{align}
Substituting the Bailey pair \eqref{G(3)} into \eqref{id-BP-twice} and then replacing $q$ by  $q^2$, we deduce that
\begin{align}
    &\sum_{n=0}^\infty q^{2n^2}\sum_{i=0}^n\frac{q^{2i^2+2i}}{(-q;q^2)_i(q^2;q^2)_{n-i}(q^4;q^4)_i} \nonumber \\
    &=\frac{1}{(q^2;q^2)_\infty} \sum_{n=0}^\infty (-1)^nq^{\frac{11}{2}n^2-\frac{3}{2}n}(1+q^{3n}) \nonumber \\
    &=\frac{(q^4,q^7,q^{11};q^{11})_\infty}{(q^2;q^2)_\infty}. \quad (\text{by \eqref{Jacobi}}) \label{exam1-G0-5-proof}
\end{align}
Substituting \eqref{exam1-G0-5-proof} with $q$ replaced by $-q$ into \eqref{exam1-G0-5}, we obtain \eqref{G-3-even}.

(6) Setting $(\sigma,u,v,w)=(1,q,1,q)$ in \eqref{Ftable3-1}, we have
\begin{align*}
     &G_1(q,1,q)=(-q^3;q^2)_\infty\sum_{i,j\ge 0}\frac{q^{4i^2+4ij+2j^2+6i+2j+2}}{(q;q)_{2i+1}(q^2;q^2)_j(-q^3;q^2)_i} \\
     &=q^2\frac{(-q^3;q^2)_\infty}{1-q}\sum_{n= 0}^\infty q^{2n^2+2n}\sum_{i=0}^n\frac{q^{2i^2+4i}}{(q^2;q^2)_{n-i}(q^2;q^2)_i(q^6;q^4)_i}.
\end{align*}
Substituting the Bailey pair \eqref{C(4)} into \eqref{id-BP-twice} and then replacing $q$ by $q^2$,  we deduce that
\begin{align*}
    &G_1(q,1,q)=q^2\frac{(-q^3;q^2)_\infty}{(1-q)(q^4;q^2)_\infty}\Big(\sum_{n=0}^\infty (-1)^nq^{22n^2+14n}+\sum_{n=0}^\infty(-1)^{n+1}q^{22n^2+30n+8}\Big) \nonumber \\
 &=q^2\frac{(-q;q^2)_\infty(q^8,q^{36},q^{44};q^{44})_\infty}{(q^2;q^2)_\infty}. \quad (\text{by \eqref{Jacobi}})
 \end{align*}

 (7)  Setting $(\sigma,u,v,w)=(0,q^2,q^2,q)$ in \eqref{Ftable3-1}, we have
  \begin{align}
     &G_0(q^2,q^2,q)=(-q^2;q^2)_\infty\sum_{i,j\ge 0}\frac{q^{4i^2+4ij+2j^2+4i+2j}}{(q;q)_{2i}(q^2;q^2)_j(-q^2;q^2)_i} \nonumber \\
     &=(-q^2;q^2)_\infty\sum_{n= 0}^\infty q^{2n^2+2n}\sum_{i=0}^n\frac{q^{2i^2+2i}}{(q;q^2)_{i}(q^2;q^2)_{n-i}(q^4;q^4)_i}. \label{exam1-G0-7}
\end{align}
Substituting the Bailey pair \eqref{B-G(1.1)} into \eqref{id-BP-twice} and then replacing $q$ by  $q^2$,  we deduce that
\begin{align}
    &\sum_{n= 0}^\infty q^{2n^2+2n}\sum_{i=0}^n\frac{q^{2i^2+2i}}{(-q;q^2)_{i}(q^2;q^2)_{n-i}(q^4;q^4)_i} \nonumber \\
    &=\frac{1}{(q^2;q^2)_\infty} \sum_{n=0}^\infty (-1)^n q^{\frac{11}{2}(n^2+n)}(q^{-2n}-q^{2n+2})  \nonumber \\
    &=\frac{(q^2,q^9,q^{11};q^{11})_\infty}{(q^2;q^2)_\infty}. \quad (\text{by \eqref{Jacobi}}) \label{exam1-G0-7-proof}
\end{align}
Substituting \eqref{exam1-G0-7-proof} with $q$ replaced by $-q$ into \eqref{exam1-G0-7}, we obtain \eqref{G-4-even}.

(8) Setting $(\sigma,u,v,w)=(1,q^2,q^2,q)$ in \eqref{Ftable3-1}, we have
 \begin{align*}
     &G_1(q^2,q^2,q)=(-q^3;q^2)_\infty\sum_{i,j\ge 0}\frac{q^{4i^2+4ij+2j^2+8i+4j+3}}{(q;q)_{2i+1}(q^2;q^2)_j(-q^3;q^2)_i}  \\
     &=q^3\frac{(-q^3;q^2)_\infty}{1-q}\sum_{n= 0}^\infty q^{2n^2+4n}\sum_{i=0}^n\frac{q^{2i^2+4i}}{(q^2;q^2)_{n-i}(q^2;q^2)_i(q^6;q^4)_i}.
\end{align*}
Substituting the Bailey pair \eqref{C4*} into \eqref{id-BP-twice} and then replacing $q$ by $q^2$, we deduce that
\begin{align*}
     &G_1(q^2,q^2,q)=q^3\frac{(-q^3;q^2)_\infty}{(1-q)(q^6;q^2)_\infty}\sum_{n=0}^\infty (-1)^n q^{22n^2+18n}\frac{1-q^{8n+4}}{1-q^4} \nonumber \\
     &=q^3\frac{(-q;q^2)_\infty}{(q^2;q^2)_\infty}(q^4,q^{40},q^{44};q^{44})_\infty. \quad (\text{by \eqref{Jacobi}})
 \end{align*}

(9) By \eqref{Ftable3-1} we have
\begin{align}
G_0(q,1,1)&=(-q;q^2)_\infty \sum_{i,j\geq 0} \frac{q^{4i^2+2j^2+4ij+2i}}{(q;q)_{2i}(q^2;q^2)_j(-q;q^2)_i}  \nonumber \\
&=(-q;q^2)_\infty \sum_{n=0}^\infty q^{2n^2} \sum_{i=0}^n \frac{q^{2i^2+2i}}{(q^2;q^2)_{n-i}(q^2;q^2)_i(q^2;q^4)_i}, \\
G_1(q^3,q^2,q)&=(-q^3;q^2)_\infty \sum_{i,j\geq 0}\frac{q^{(2i+1)^2+2j^2+2(2i+1)j+3(2i+1)+2j}}{(q;q)_{2i+1}(q^2;q^2)_j(-q^3;q^2)_i} \nonumber \\
&=(-q;q^2)_\infty\sum_{n=0}^\infty q^{2(n+1)^2+2}\sum_{i=0}^n \frac{q^{2i^2+6i}}{(q^2;q^2)_{n-i}(q^2;q^2)_{i}(q^2;q^4)_{i+1}} \nonumber \\
&=(-q;q^2)_\infty \sum_{n=1}^\infty q^{2n^2} \sum_{i=1}^n \frac{q^{2i^2+2i-2}}{(q^2;q^2)_{n-i}(q^2;q^2)_{i-1}(q^2;q^4)_i}.
\end{align}
Adding them together, we deduce that
\begin{align}
G_0(q,1,1)+G_1(q^3,q^2,q)=(-q;q^2)_\infty \sum_{n=0}^\infty q^{2n^2} \sum_{i=0}^n \frac{q^{2i^2+2i}(1+q^{-2}-q^{2i-2})}{(q^2;q^2)_{n-i}(q^2;q^2)_i(q^2;q^4)_i}.
\end{align}
Substituting the Bailey pair \eqref{eq-exam1-key-BP} into \eqref{id-BP-twice} and then replacing $q$ by $q^2$, we deduce that
\begin{align*}
   & G_0(q,1,1)+G_1(q^3,q^2,q)=\frac{(-q;q^2)_\infty}{(q^2;q^2)_\infty} \Big(1+\sum_{n=1}^\infty (-1)^nq^{22n^2}(q^{6n}+q^{-6n})  \Big) \nonumber \\
&=\frac{(-q;q^2)_\infty}{(q^2;q^2)_\infty} (q^{16},q^{28},q^{44};q^{44})_\infty. \quad (\text{by \eqref{Jacobi}})
\end{align*}

(10) By \eqref{Ftable3-1} we have
\begin{align}
&G_1(q,1,1;q)=(-q^2;q^2)_\infty \sum_{i,j\geq 0} \frac{q^{4i^2+4ij+2j^2+6i+2j+2}}{(q;q)_{2i+1}(q^2;q^2)_j(-q^2;q^2)_i} \nonumber \\
&=q^2(-q^2;q^2)_\infty \sum_{n=0}^\infty q^{2n^2+2n} \sum_{i=0}^n  \frac{q^{2i^2+4i}}{(q^2;q^2)_{n-i}(q;q^2)_{i+1}(q^4;q^4)_i}, \\
&G_0(q^3,q^2,q;q)=(-q^2;q^2)_\infty \sum_{i,j\geq 0} \frac{q^{4i^2+2j^2+4ij+6i+2j}}{(q;q)_{2i}(q^2;q^2)_j(-q^2;q^2)_i} \nonumber \\
&=(-q^2;q^2)_\infty \sum_{n=0}^\infty q^{2n^2+2n} \sum_{i=0}^n \frac{q^{2i^2+4i}}{(q^2;q^2)_{n-i}(q;q^2)_i(q^4;q^4)_i}.
\end{align}
Adding them together and then replacing $q$ by $-q$, we deduce that
\begin{align}
&G_1(-q,1,1;-q)+G_0(-q^3,q^2,-q;-q) \nonumber \\
&=(-q^2;q^2)_\infty \sum_{n=0}^\infty q^{2n^2+2n} \sum_{i=0}^n \frac{q^{2i^2+4i}(1+q^2+q^{2i+1})}{(q^2;q^2)_{n-i}(-q;q^2)_{i+1}(q^4;q^4)_i}.
\end{align}
Substituting \eqref{eq-exam1-key-BP-add} into \eqref{id-BP-twice} and replacing $q$ by $q^2$, we deduce that
\begin{align}
&G_1(-q,1,1;-q)+G_0(-q^3,q^2,-q;-q) \nonumber \\
&=\frac{(-q^2;q^2)_\infty}{(q^4;q^2)_\infty} \times \frac{1}{1+q} \sum_{n=0}^\infty (-1)^nq^{\frac{11}{2}n^2+\frac{5}{2}n}\frac{1-q^{6n+3}}{1-q} \nonumber \\
&=\frac{(-q^2;q^2)_\infty}{(q^2;q^2)_\infty} \sum_{n=-\infty}^\infty (-1)^n q^{\frac{11}{2}n^2+\frac{5}{2}n} \nonumber \\
&=\frac{(-q^2;q^2)_\infty}{(q^2;q^2)_\infty}(q^3,q^8,q^{11};q^{11})_\infty. \label{add-proof-last}
\end{align}
Replacing $q$ by $-q$ in \eqref{add-proof-last}, we obtain \eqref{G-sum-2}.
\end{proof}
The modularity of the Nahm sums in this example are now clear from Theorem \ref{thm-exam-1} stated in Section 1.

\begin{proof}[Proof of Theorem \ref{thm-exam-1}]
This follows from Theorem \ref{thm-G-parity} and \eqref{G-split}.
\end{proof}

Now we are going to prove Theorem \ref{thm-exam-1-2}. We first establish two further identities on the function $G_\sigma(u,v,w)$.
\begin{theorem}\label{thm-G-parity2}
    We have
    \begin{align}
        &G_0(1,1,q)+qG_0(q,1,q)-G_0(q^2,1,q)=q\frac{(-q^3,q^8,-q^{11};-q^{11})_\infty}{(q^2,q^2,q^4;q^4)_\infty}, \label{add-exam1-G-1}
        \\
        &G_1(1,1,q)+qG_1(q,1,q)-G_1(q^2,1,q)=q\frac{(q^{16},q^{28},q^{44};q^{44})_\infty}{(q,q^3,q^4;q^4)_\infty}. \label{add-exam1-G-2}
    \end{align}
\end{theorem}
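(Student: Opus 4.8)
The plan is to handle both identities by the same mechanism used throughout the proof of Theorem~\ref{thm-G-parity}: reduce each combination to the left-hand side of the twice-iterated Bailey identity \eqref{id-BP-twice} and then read off the product via the Jacobi triple product \eqref{Jacobi}. I would start from the factorization \eqref{Ftable3-1} with $v=1$, $w=q$, which after summing over $k$ gives, for $\sigma\in\{0,1\}$,
\[
G_\sigma(u,1,q)=(-q^{\sigma+2};q^2)_\infty\sum_{i,j\ge 0}\frac{q^{(2i+\sigma)^2+2j^2+2(2i+\sigma)j}u^{2i+\sigma}}{(q;q)_{2i+\sigma}(q^2;q^2)_j(-q^{\sigma+2};q^2)_i}.
\]
Since the three summands in each combination differ only in the power of $u$, forming $G_\sigma(1,1,q)+qG_\sigma(q,1,q)-G_\sigma(q^2,1,q)$ merely inserts the unified weight $1+q^{2i+\sigma+1}-q^{4i+2\sigma}$ into the sum. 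Setting $n=i+j$ collapses the quadratic form to $2n^2+2i^2$ (resp.\ $2n^2+2n+2i^2+2i+1$ when $\sigma=1$), and splitting $(q;q)_{2i+\sigma}$ into its odd and even factors together with the collapses $(q^2;q^2)_i(-q^{\sigma+2};q^2)_i$, exactly as in parts (3)--(8) of Theorem~\ref{thm-G-parity}, turns each combination into a sum of the shape $\sum_n q^{\cdots}\sum_i \frac{q^{\cdots}(\text{weight})}{(\cdots)_{n-i}(\cdots)_i}$, which I expect to be the left-hand side of \eqref{id-BP-twice} after $q\mapsto q^2$.

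For \eqref{add-exam1-G-2} (the case $\sigma=1$) the reduction produces the $i$-summand $\frac{q^{2i^2+2i}(1+q^{2i+2}-q^{4i+2})}{(q^2;q^2)_i(q^6;q^4)_i}$, which matches $\beta_n(q^2;q^2)$ of the new Bailey pair \eqref{eq-exam1-key-BP3} under $q\mapsto q^2$. Applying \eqref{id-BP-twice} with $a=q^2$ converts the sum into $\frac{1}{(q^4;q^2)_\infty}\sum_n q^{4n^2+4n}\alpha_n(q^2;q^2)$; substituting the two-line formula for $\alpha_n$ and folding the even- and odd-indexed contributions into a single bilateral sum, \eqref{Jacobi} yields $(q^{16},q^{28},q^{44};q^{44})_\infty$. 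The prefactor $\frac{(-q^3;q^2)_\infty}{(1-q)(q^4;q^2)_\infty}$ then collapses to $(-q;q^2)_\infty/(q^2;q^2)_\infty=1/(q,q^3,q^4;q^4)_\infty$, giving the stated right-hand side.

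For \eqref{add-exam1-G-1} (the case $\sigma=0$) the same reduction gives the $i$-summand $\frac{q^{2i^2}(1+q^{2i+1}-q^{4i})}{(q;q^2)_i(q^4;q^4)_i}$, but now the factor $(q;q^2)_i$ carries the ``wrong'' sign to match a Bailey $\beta$. Here I would use the device already employed in parts (3), (5), (7): replace $q$ by $-q$ to turn $(q;q^2)_i$ into $(-q;q^2)_i$ and the weight into $1-q^{2i+1}-q^{4i}$. The resulting sum is exactly $(-q)$ times the $\beta$-side of \eqref{eq-exam1-key-BP2} under $q\mapsto q^2$; since \eqref{id-BP-twice} is linear, I apply it to \eqref{eq-exam1-key-BP2} and scale by $-q$, obtaining $\frac{-q}{(q^2;q^2)_\infty}\sum_n q^{4n^2}\alpha_n(1;q^2)$, which by \eqref{Jacobi} equals $\frac{-q}{(q^2;q^2)_\infty}(q^3,q^8,q^{11};q^{11})_\infty$. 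Undoing $q\mapsto -q$ and restoring the $(-q^2;q^2)_\infty$ prefactor (using $(-q^2;q^2)_\infty/(q^2;q^2)_\infty=1/(q^2,q^2,q^4;q^4)_\infty$) produces $q\,(-q^3,q^8,-q^{11};-q^{11})_\infty/(q^2,q^2,q^4;q^4)_\infty$, as claimed.

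The genuinely delicate point is matching each parity to the right Bailey pair, and in particular recognizing for $\sigma=0$ that the sign-twisted weight $1-q^{2i+1}-q^{4i}$ is a scalar multiple ($-q$ at base $q^2$) of the $\beta$ in \eqref{eq-exam1-key-BP2}; the pairs in Lemmas~\ref{lem-exam1-new-BP2} and \ref{lem-exam1-new-BP3} were built for precisely this role. The rest---carrying the half-integer exponents through $q\mapsto q^2$ and $q\mapsto -q$, assembling the split $\alpha$-formulas into bilateral theta sums, and simplifying the eta-quotient prefactors---is routine bookkeeping. Once both identities are in hand, Theorem~\ref{thm-exam-1-2} follows at once by adding \eqref{add-exam1-G-1} to \eqref{add-exam1-G-2} and using $G=G_0+G_1$.
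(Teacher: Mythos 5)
Your proposal is correct and follows essentially the same route as the paper: reduce each combination via the factorization \eqref{Ftable3-1} to a double sum carrying the weight $1+q^{2i+\sigma+1}-q^{4i+2\sigma}$, recognize that weight (after the $q\mapsto -q$ twist and the scalar $-q$ in the $\sigma=0$ case) as the $\beta$-side of the purpose-built Bailey pairs \eqref{eq-exam1-key-BP2} and \eqref{eq-exam1-key-BP3}, apply \eqref{id-BP-twice}, and finish with the Jacobi triple product and the routine eta-quotient simplifications. All the key matchings you identify (including the prefactor collapses) are exactly those carried out in the paper's proof.
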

\begin{proof}
    (1) Setting $(\sigma,u,v,w)=(0,1,1,q), (0,q,1,q)$ and $(0,q^2,1,q)$ in \eqref{Ftable3-1}, we have
    \begin{align}\label{add-exam1-G0-sum-start}
        &G_0(1,1,q)+qG_0(q,1,q)-G_0(q^2,1,q) \nonumber \\
        &=(-q^2;q^2)_\infty\sum_{i,j\ge 0}\frac{q^{4i^2+2j^2+4ij}(1+q^{2i+1}-q^{4i})}{(q;q)_{2i}(q^2;q^2)_j(-q^2;q^2)_i} \nonumber
        \\
        &=(-q^2;q^2)_\infty\sum_{n=0}^\infty q^{2n^2}\sum_{i=0}^n\frac{q^{2i^2}(1+q^{2i+1}-q^{4i})}{(q;q^2)_i(q^2;q^2)_{n-i}(q^4;q^4)_i}.
    \end{align}
    Substituting the Bailey pair \eqref{eq-exam1-key-BP2} into \eqref{id-BP-twice} and then replacing $q$ by $q^2$, we deduce that
    \begin{align}\label{add-exam1-G0-sum-mid}
        &\sum_{n=0}^\infty q^{2n^2}\sum_{i=0}^n\frac{q^{2i^2}(1-q^{2i+1}-q^{4i})}{(-q;q^2)_i(q^2;q^2)_{n-i}(q^4;q^4)_i} \nonumber\\
        &=-\frac{q}{(q^2;q^2)_\infty}\Big(1+\sum_{n=1}^\infty(-1)^nq^{\frac{11}{2}n^2-\frac{5}{2}n}(1+q^{5n})\Big)\nonumber\\
        &=-q\frac{(q^3,q^8,q^{11};q^{11})_\infty}{(q^2;q^2)_\infty}. \quad (\text{by \eqref{Jacobi}})
    \end{align}
    Substituting \eqref{add-exam1-G0-sum-mid} with $q$ replaced by $-q$ into \eqref{add-exam1-G0-sum-start}, we obtain \eqref{add-exam1-G-1}.

    (2) Setting $(\sigma,u,v,w)=(1,1,1,q), (1,q,1,q)$ and $(1,q^2,1,q)$ in \eqref{Ftable3-1}, we have
    \begin{align}\label{add-exam1-G-2-start}
        &G_1(1,1,q)+qG_1(q,1,q)-G_1(q^2,1,q) \nonumber
        \\
        &=q(-q^3;q^2)_\infty\sum_{i,j\ge 0}\frac{q^{4i^2+2j^2+4ij+4i+2j}(1+q^{2i+2}-q^{4i+2})}{(q;q)_{2i+1}(q^2;q^2)_j(-q^3;q^2)_i} \nonumber
        \\
        &=q\frac{(-q^3;q^2)_\infty}{1-q}\sum_{n=0}^\infty q^{2n^2+2n}\sum_{i=0}^n\frac{q^{2i^2+2i}(1+q^{2i+2}-q^{4i+2})}{(q^2;q^2)_{n-i}(q^2;q^2)_i(q^6;q^4)_i}.
    \end{align}
Substituting the Bailey pair \eqref{eq-exam1-key-BP3} into \eqref{id-BP-twice} and then replacing $q$ by $q^2$, we deduce from \eqref{add-exam1-G-2-start} that
\begin{align*}
      &G_1(1,1,q)+qG_1(q,1,q)-G_1(q^2,1,q) \nonumber
        \\
    &=q\frac{(-q^3;q^2)_\infty}{(1-q)(q^4;q^2)_\infty}\Big(\sum_{n=0}^\infty(-1)^nq^{22n^2+6n}+\sum_{n=0}^\infty (-1)^{n+1}q^{22n^2+38n+16}\Big) \nonumber \\
     &=q\frac{(-q;q^2)_\infty}{(q^2;q^2)_\infty}(q^{16},q^{28},q^{44};q^{44})_\infty.  \quad (\text{by \eqref{Jacobi}}) \qedhere
 \end{align*}
\end{proof}

\begin{proof}[Proof of Theorem \ref{thm-exam-1-2}]
The left side of \eqref{revise-add-exam1-companion} is
\begin{align}
&G(q,1,1)+G(q^3,q^2,q) \nonumber \\
&=\big( G_0(q,1,1)+G_1(q^3,q^2,q) \big)+\big( G_1(q,1,1)+G_0(q^3,q^2,q)\big).
\end{align}
Adding \eqref{G-sum-1} and \eqref{G-sum-2} together, we obtain \eqref{revise-add-exam1-companion}.

The left side of \eqref{add-exam1-companion} is
\begin{align}
&G(1,1,q)+qG(q,1,q)-G(q^2,1,q) \nonumber \\
&=\sum_{\sigma=0}^1 \left(G_\sigma(1,1,q)+qG_\sigma(q,1,q)-G_\sigma(q^2,1,q) \right).
\end{align}
The identity \eqref{add-exam1-companion} then follows from Theorem \ref{thm-G-parity2}.
\end{proof}

We are now able to prove the modular transformation formulas stated in Section \ref{sec-intro}.
\begin{proof}[Proof of Theorem \ref{thm-Mizuno-conj}]
Since $S^{-1}=2S$, then second formulas in \eqref{eq-conj-tran}, \eqref{eq-conj-tran-add} and \eqref{eq-M54} follow after replacing $\tau$ by $-1/(2\tau)$ in the first formulas of them. It remains to prove the first formulas in these equations.

For convenience, for $i=1,2,3,4,5$, we denote the $i$-th component of $U(\tau)$, $U^*(\tau)$, $V(\tau)$ and $V^*(\tau)$ by $U_i(\tau)$, $U_i^*(\tau)$, $V_i(\tau)$ and $V_i^*(\tau)$, respectively. We divide our proof into three parts.

(1) From \cite[Theorems 3.17 and 3.18]{WW-I} and \eqref{add-g-product} we have
\begin{align}
U_1(2\tau)=q^{-\frac{5}{44}}\frac{(-q;q^2)_\infty}{(q^2;q^2)_\infty}(q^5,q^6,q^{11};q^{11})_\infty=\frac{\mathfrak{f}(2\tau)}{\eta(2\tau)}g_{1,11}(\frac{\tau}{2}),
\label{U-g-1}\\
U_2(2\tau)=q^{-\frac{1}{44}}\frac{(-q;q^2)_\infty}{(q^2;q^2)_\infty}(q^4,q^7,q^{11};q^{11})_\infty=\frac{\mathfrak{f}(2\tau)}{\eta(2\tau)}g_{3,11}(\frac{\tau}{2}),
\label{U-g-2}\\
U_3(2\tau)=q^{\frac{7}{44}}\frac{(-q;q^2)_\infty}{(q^2;q^2)_\infty}(q^3,q^8,q^{11};q^{11})_\infty=\frac{\mathfrak{f}(2\tau)}{\eta(2\tau)}g_{5,11}(\frac{\tau}{2}),
\label{U-g-3} \\
U_4(2\tau)=q^{\frac{19}{44}}\frac{(-q;q^2)_\infty}{(q^2;q^2)_\infty}(q^2,q^9,q^{11};q^{11})_\infty=\frac{\mathfrak{f}(2\tau)}{\eta(2\tau)}g_{7,11}(\frac{\tau}{2}),
\label{U-g-4} \\
U_5(2\tau)=q^{\frac{35}{44}}\frac{(-q;q^2)_\infty}{(q^2;q^2)_\infty}(q,q^{10},q^{11};q^{11})_\infty=\frac{\mathfrak{f}(2\tau)}{\eta(2\tau)}g_{9,11}(\frac{\tau}{2}). \label{U-g-5}
\end{align}
Similarly, from Theorem \ref{thm-G-parity} and \eqref{add-g-product} we have
\begin{align}
V_1(\tau)=q^{-\frac{7}{88}}\frac{(-q;q^2)_\infty}{(q^2;q^2)_\infty}(q^{20},q^{24},q^{44};q^{44})_\infty=\frac{\mathfrak{f}(2\tau)}{\eta(2\tau)}g_{1,11}(2\tau),
\\
V_2(\tau)=q^{\frac{25}{88}}\frac{(-q;q^2)_\infty}{(q^2;q^2)_\infty}(q^{16},q^{28},q^{44};q^{44})_\infty=\frac{\mathfrak{f}(2\tau)}{\eta(2\tau)}g_{3,11}(2\tau),
\\
V_3(\tau)=q^{\frac{89}{88}}\frac{(-q;q^2)_\infty}{(q^2;q^2)_\infty} (q^{12},q^{32},q^{44};q^{44})_\infty=\frac{\mathfrak{f}(2\tau)}{\eta(2\tau)}g_{5,11}(2\tau),
\\
V_4(\tau)=q^{\frac{185}{88}}\frac{(-q;q^2)_\infty}{(q^2;q^2)_\infty}(q^8,q^{36},q^{44};q^{44})_\infty=\frac{\mathfrak{f}(2\tau)}{\eta(2\tau)}g_{7,11}(2\tau),
\\
V_5(\tau)=q^{\frac{313}{88}}\frac{(-q;q^2)_\infty}{(q^2;q^2)_\infty}(q^4,q^{40},q^{44};q^{44})_\infty=\frac{\mathfrak{f}(2\tau)}{\eta(2\tau)}g_{9,11}(2\tau).
\end{align}
Applying \eqref{eta-tran}, \eqref{Weber-1} and Lemma \ref{lem-modular}, we deduce that for $j\in \{1,2,3,4,5\}$,
\begin{align}
&V_j\left(-\frac{1}{4\tau}\right)=\frac{\mathfrak{f}(-1/(2\tau))}{\eta(-1/(2\tau))}g_{2j-1,11}\left(-\frac{1}{2\tau}\right)
=\sqrt{\frac{1}{22}}\frac{\mathfrak{f}(2\tau)}{\eta(2\tau)}\sum_{k=0}^{21} e^{\frac{\pi i(2k+1)(2j-1)}{22}}h_{k+\frac{1}{2},11}(2\tau) \nonumber \\
&=\sqrt{\frac{1}{22}}\frac{\mathfrak{f}(2\tau)}{\eta(2\tau)}\sum_{k=0}^{10} \left( e^{\frac{\pi i(2k+1)(2j-1)}{22}} h_{k+\frac{1}{2},11}(2\tau)+e^{\frac{\pi i(2(21-k)+1)(2j-1)}{22}}h_{21-k+\frac{1}{2},11}(2\tau) \right)\nonumber \\
&=\sqrt{\frac{1}{22}}\frac{\mathfrak{f}(2\tau)}{\eta(2\tau)}\sum_{k=0}^{10} 2\cos \frac{(2k+1)(2j-1)\pi}{22} h_{k+\frac{1}{2},11}(2\tau) \quad \text{(by \eqref{g-h-period})} \nonumber \\
&=\sqrt{\frac{2}{11}}\frac{\mathfrak{f}(2\tau)}{\eta(2\tau)}\sum_{k=0}^{4} \cos \frac{(2k+1)(2j-1)\pi}{22} \left(h_{k+\frac{1}{2},11}(2\tau)-h_{\frac{21}{2}-k,11}(2\tau)  \right) \nonumber \\
&=\sqrt{\frac{2}{11}}\frac{\mathfrak{f}(2\tau)}{\eta(2\tau)}\sum_{k=0}^{4} \cos \frac{(2k+1)(2j-1)\pi}{22} \left(h_{4k+2,44}\left(\frac{\tau}{2}\right)-h_{42-4k,44}\left(\frac{\tau}{2}\right)  \right) ~~\text{(by \eqref{hg-double})}  \nonumber \\
&=\sqrt{\frac{2}{11}}\frac{\mathfrak{f}(2\tau)}{\eta(2\tau)}\sum_{k=0}^{4} \cos \frac{(2k+1)(2j-1)\pi}{22} g_{2k+1,11}\left(\frac{\tau}{2}\right). \quad \text{(by \eqref{g-h-change})}
 \label{V1-tran}
\end{align}
Setting $j=1,2,3,4,5$ we obtain
\begin{align*}
    &V_1(-\frac{1}{2\tau})=\alpha_5U_1(\tau)+\alpha_4U_2(\tau)+\alpha_3U_3(\tau)+\alpha_2U_4(\tau)+\alpha_1U_5(\tau),
    \\
    &V_2(-\frac{1}{2\tau})=\alpha_4U_1(\tau))+\alpha_1U_2(\tau)-\alpha_2U_3(\tau)-\alpha_5U_4(\tau)-\alpha_3U_5(\tau),
    \\
    &V_3(-\frac{1}{2\tau})=\alpha_3U_1(\tau)-\alpha_2U_2(\tau)-\alpha_4U_3(\tau)+\alpha_1U_4(\tau)+\alpha_5U_5(\tau),
    \\
    &V_4(-\frac{1}{2\tau})=\alpha_2U_1(\tau)-\alpha_5U_2(\tau)+\alpha_1U_3(\tau)+\alpha_3U_4(\tau)-\alpha_4U_5(\tau),
    \\
    &V_5(-\frac{1}{2\tau})=\alpha_1U_1(\tau)-\alpha_3U_2(\tau)+\alpha_5U_3(\tau)-\alpha_4U_4(\tau)+\alpha_2U_5(\tau).
\end{align*}
This proves the first transformation formula in \eqref{eq-conj-tran}.

(2) In view of \eqref{F-defn} and \eqref{Fc-defn} we have
\begin{align}
&U_1^*(2\tau)=q^{-\frac{5}{44}}\Big(F_0(1,1,1;q)-F_1(1,1,1;q)\Big)\nonumber \\
&=q^{-\frac{5}{44}}F(1,1,1;-q) =\zeta_{88}^5U_1(2\tau+1).
\end{align}
Here for the last equality we used the fact that replacing $\tau$ by $\tau+\frac{1}{2}$ is equivalent to replacing $q$ by $-q$.
Arguing in this way, we can prove that
\begin{align}
&U_\ell^*(\tau)=\lambda_\ell U_\ell(\tau+1), \quad 1\leq \ell \leq 5, \\
&\lambda_1=\zeta_{88}^5, ~~ \lambda_2=-\zeta_{88}, ~~ \lambda_3=-\zeta_{88}^{-7}, ~~ \lambda_4=\zeta_{88}^{-19}, ~~\lambda_5=\zeta_{88}^{-35}.
\end{align}
If we denote $\Lambda=\mathrm{diag}(\lambda_1,\lambda_2,\lambda_3,\lambda_4,\lambda_5)$, then
\begin{align}\label{U-star-U-relation}
U^*(\tau)=\Lambda U(\tau+1).
\end{align}
From \eqref{U-g-1}--\eqref{U-g-5} we have for $1\leq \ell \leq 5$ that
\begin{align}\label{add-Ui-tau}
U_\ell(\tau+1)=\frac{\mathfrak{f}(\tau+1)}{\eta(\tau+1)}g_{2\ell-1,11}\left(\frac{\tau+1}{4}\right)=e^{-\frac{\pi i}{8}}\frac{\mathfrak{f}_1(\tau)}{\eta(\tau)}g_{2\ell-1,11}\left(\frac{\tau+1}{4}\right).
\end{align}
From \eqref{wU-wV-defn} and Theorem \ref{thm-G-parity} we have
\begin{align}
{V}_1^*\big(\tau+\frac{1}{2}\big)&=-\zeta_{176}^{-7}q^{1-\frac{7}{88}}\frac{(-q^2;q^2)_\infty}{(q^2;q^2)_\infty}(q,q^{10},q^{11};q^{11})_\infty
=-\zeta_{176}^{-7}\frac{\mathfrak{f}_2(2\tau)}{\eta(2\tau)}g_{9,11}\left(\frac{\tau}{2}\right), \label{wV-1} \\
{V}_2^*\big(\tau+\frac{1}{2}\big)&=\zeta_{176}^{25}q^{\frac{25}{88}}\frac{(-q^2;q^2)_\infty}{(q^2;q^2)_\infty} (q^3,q^8,q^{11};q^{11})_\infty
=\zeta_{176}^{25}\frac{\mathfrak{f}_2(2\tau)}{\eta(2\tau)}g_{5,11}\left(\frac{\tau}{2}\right), \label{wV-2} \\
{V}_3^*\big(\tau+\frac{1}{2}\big)&=\zeta_{176}q^{\frac{1}{88}}\frac{(-q^2;q^2)_\infty}{(q^2;q^2)_\infty}(q^5,q^6,q^{11};q^{11})_\infty
=\zeta_{176}\frac{\mathfrak{f}_2(2\tau)}{\eta(2\tau)}g_{1,11}\left(\frac{\tau}{2}\right), \label{wV-3} \\
{V}_4^*\big(\tau+\frac{1}{2}\big)&=\zeta_{176}^{9}q^{\frac{9}{88}}\frac{(-q^2;q^2)_\infty}{(q^2;q^2)_\infty}(q^4,q^7,q^{11};q^{11})_\infty
=\zeta_{176}^{9}\frac{\mathfrak{f}_2(2\tau)}{\eta(2\tau)}g_{3,11}\left(\frac{\tau}{2}\right),  \label{wV-4}\\
{V}_5^*\big(\tau+\frac{1}{2}\big)&= \zeta_{176}^{49}q^{\frac{49}{88}}\frac{(-q^2;q^2)_\infty}{(q^2;q^2)_\infty}(q^2,q^9,q^{11};q^{11})_\infty
=\zeta_{176}^{49}\frac{\mathfrak{f}_2(2\tau)}{\eta(2\tau)}g_{7,11}\left(\frac{\tau}{2}\right). \label{wV-5}
\end{align}
We let $W(\tau)=(W_1(\tau),W_2(\tau),W_3(\tau),W_4(\tau),W_5(\tau))^\mathrm{T}$ where
\begin{align}
W_j(\tau):=\frac{\mathfrak{f}_2(2\tau)}{\eta(2\tau)}g_{2j-1,11}\left(\frac{2\tau-1}{4}\right), \quad 1\leq j \leq 5,
\end{align}
and we denote
\begin{align}
P=\begin{pmatrix}
0 & 0 & 0 & 0 & -\zeta_{176}^{-7} \\
0 & 0 & \zeta_{176}^{25} & 0 & 0 \\
\zeta_{176} & 0 & 0 & 0 & 0 \\
0 & \zeta_{176}^9 & 0 & 0 & 0 \\
0 & 0 & 0 & \zeta_{176}^{49} & 0
\end{pmatrix}.
\end{align}
Replacing $\tau$ by $\tau-\frac{1}{2}$ in \eqref{wV-1}--\eqref{wV-5}, we deduce that
\begin{align}\label{wV-W-relation}
{V}^*(\tau)=PW(\tau).
\end{align}
Using Lemma \ref{lem-add-g-tran} we have for $1\leq j\leq 5$ that
\begin{align}
&W_{j}\left(-\frac{1}{2\tau}\right)= \frac{\mathfrak{f}_2(-1/\tau)}{\eta(-1/\tau)}g_{2j-1,11}\left(-\frac{\tau+1}{4\tau}\right)
=\frac{1}{\sqrt{-2i\tau}}\frac{\mathfrak{f}_1(\tau)}{\eta(\tau)}g_{2j-1,11}\left(-\frac{\tau+1}{4\tau}\right) \nonumber \\
&=\sqrt{\frac{i}{22}} \frac{\mathfrak{f}_1(\tau)}{\eta(\tau)}\sum_{\ell=1}^5 \Big(e^{-\pi i\frac{2j+2\ell-3+3(2j+2\ell-4)^2}{22}}+ e^{-\pi i\frac{2j-2\ell-1+3(2j-2\ell-2)^2}{22}} \Big) g_{2\ell-1,11}\left(\frac{\tau+1}{4}\right) \nonumber \\
&=\sum_{\ell=1}^5 c_{j\ell}U_\ell(\tau+1), \quad \text{(by \eqref{add-Ui-tau})}\label{W-tran-mid}
\end{align}
where
\begin{align}
c_{j\ell}=\frac{1}{\sqrt{22}}e^{\frac{3\pi i}{8}}\Big(e^{-\pi i\frac{2j+2\ell-3+3(2j+2\ell-4)^2}{22}}+ e^{-\pi i\frac{2j-2\ell-1+3(2j-2\ell-2)^2}{22}} \Big). \label{cjl-exp}
\end{align}
Let $C=(c_{j\ell})$ be a $5\times 5$ matrix with the $(j,\ell)$-entry $c_{j\ell}$. The formula \eqref{W-tran-mid} can be rewritten as
\begin{align}\label{W-U-tran}
W\left(-\frac{1}{2\tau}\right)=CU(\tau+1).
\end{align}
Combining \eqref{U-star-U-relation}, \eqref{wV-W-relation} and \eqref{W-U-tran}, we deduce that
\begin{align}\label{wV-U-star}
{V}^*\left(-\frac{1}{2\tau}\right)=MU^*(\tau), \quad M=PC\Lambda^{-1}.
\end{align}
Simplifying the entries in $M$ we see that $M=S$ and this proves the first formula in \eqref{eq-conj-tran-add}.

(3) From \eqref{wU-wV-defn} we have
\begin{align}
g(\tau)=\frac{1}{2}\big(U(\tau)+{U}^*(\tau)\big), \quad g^\vee(\tau)=V(\tau)+{V}^*(\tau).
\end{align}
Mizuno's conjectural formula \eqref{eq-M54} is equivalent to
\begin{align}\label{M-54-equivalent}
 V\left(-\frac{1}{2\tau}\right)+{V}^*\left(-\frac{1}{2\tau}\right)=S\big(U(\tau)+{U}^*(\tau)\big).
\end{align}
This follows from the first formulas in \eqref{eq-conj-tran} and \eqref{eq-conj-tran-add}.
\end{proof}

Examples 7 and 8 are closely connected to each other. We will employ some results in Example 8 to complete the proof of Example 7.
\subsubsection{Example 7}
This example corresponds to
\begin{align*}
A=\begin{pmatrix}
1 & 1 & 1\\
1 & 2 & 2\\
1/2&1 &3/2\\
\end{pmatrix},\quad
AD=\begin{pmatrix}
2 & 2 & 1\\
2 & 4 & 2\\
1 & 2 &3/2\\
\end{pmatrix},\quad
b\in  \bigg\{
\begin{pmatrix}
0 \\ 0 \\ 0
\end{pmatrix},
\begin{pmatrix}
1 \\ 0 \\1/2
\end{pmatrix},
\begin{pmatrix}
1 \\ 2 \\3/2
\end{pmatrix}
\bigg\}.
\end{align*}
To prove its modularity, we first establish the following result.
\begin{theorem}\label{thm-7-parity}
    We have
    \begin{align}
        &\sum_{i,j,k\ge 0}\frac{q^{3i^2+j^2+2k^2+2ij+4ik+2jk}}{(q;q)_{2i}(q^2;q^2)_j(q^2;q^2)_k}
        =
        \frac{J_2J_{16,36}}{J_1J_4}, \label{conj-3-even-1}
        \\
         &\sum_{i,j,k\ge 0}\frac{q^{3i^2+j^2+2k^2+2ij+4ik+2jk+i+j}}{(q;q)_{2i}(q^2;q^2)_j(q^2;q^2)_k}
        =\frac{J_8J_{18}^4J_{36}}{J_{2,8}J_{2,18}J_{5,18}J_{6,18}J_{9,36}J_{8,36}}, \label{conj-3-even-2}
        \\
        &\sum_{i,j,k\ge 0}\frac{q^{3i^2+j^2+2k^2+2ij+4ik+2jk+3i+j+2k}}{(q;q)_{2i}(q^2;q^2)_j(q^2;q^2)_k}
        =\frac{J_8J_{18}^4J_{36}}{J_{2,8}J_{6,18}J_{7,18}J_{8,18}J_{9,36}J_{4,36}}, \label{conj-3-even-3}  \\
        &\sum_{i,j,k\ge 0}\frac{q^{3i^2+j^2+2k^2+2ij+4ik+2jk+3i+j+2k}}{(q;q)_{2i+1}(q^2;q^2)_j(q^2;q^2)_k}
        =\frac{J_8J_{18}^4J_{36}}{J_{2,8}J_{1,18}J_{4,18}J_{6,18}J_{9,36}J_{16,36}}, \label{conj-3-odd-1}
        \\
        &\sum_{i,j,k\ge 0}\frac{q^{3i^2+j^2+2k^2+2ij+4ik+2jk+4i+2j+2k}}{(q;q)_{2i+1}(q^2;q^2)_j(q^2;q^2)_k}
        =\frac{J_2J_{8,36}}{J_1J_4}, \label{conj-3-odd-2}
        \\
        &\sum_{i,j,k\ge 0}\frac{q^{3i^2+j^2+2k^2+2ij+4ik+2jk+6i+2j+4k}}{(q;q)_{2i+1}(q^2;q^2)_j(q^2;q^2)_k}
        =\frac{J_2J_{4,36}}{J_1J_4}. \label{conj-3-odd-3}
    \end{align}
\end{theorem}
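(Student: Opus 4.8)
The plan is to exploit the factorization of the quadratic form
\begin{align*}
3i^2+j^2+2k^2+2ij+4ik+2jk=i^2+(i+k)^2+(i+j+k)^2,
\end{align*}
which exhibits the telescoping chain $i\le i+k\le i+j+k$ characteristic of an iterated Bailey construction. The parity split on $i$ is already encoded in the denominators $(q;q)_{2i+\sigma}$, so I would first sum over $j$, the outermost increment. Writing $(i+j+k)^2=(i+k)^2+j^2+2(i+k)j$ and applying the second identity in \eqref{euler} with base $q^2$, the $j$-sum collapses to $q^{(i+k)^2}(-q^{2(i+k)+c};q^2)_\infty$, where $c$ records the linear-in-$j$ coefficient of the given $b$. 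Factoring $(-q^{2(i+k)+c};q^2)_\infty=(-q^c;q^2)_\infty/(-q^c;q^2)_{i+k}$ and substituting $m=i+k$ then leaves
\begin{align*}
(-q^c;q^2)_\infty\sum_{m\ge0}\frac{q^{2m^2+\cdots}}{(-q^c;q^2)_m}\sum_{i=0}^m\frac{q^{i^2+\cdots}}{(q;q)_{2i+\sigma}(q^2;q^2)_{m-i}},
\end{align*}
an outer-times-inner sum of exactly the shape fed into Lemma \ref{lem-BP-sum} and \eqref{id-BP-twice}.

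The next step is to read off a Bailey pair. The inner convolution against $(q^2;q^2)_{m-i}$ together with the seed factor $q^{i^2}/(q;q)_{2i+\sigma}$ should be matched to one of Slater's Group C/G pairs (after replacing $q$ by $q^2$) or to one of the new pairs in Lemmas \ref{lem-exam1-new-BP}--\ref{lem-new-BP}; the outer factor $1/(-q^c;q^2)_m$ suggests that one Bailey-lemma step must be taken with a finite parameter $\rho=-q^c$ rather than in the limit $\rho_1,\rho_2\to\infty$, exactly as in the Bressoud (even-modulus) mechanism behind \eqref{eq-Bressoud}. Once the pair is identified, Lemma \ref{lem-BP-sum} reduces the double sum to a single theta-type series $\frac{1}{(q^2;q^2)_\infty}\sum_m q^{2m^2}\alpha_m$, and the Jacobi triple product \eqref{Jacobi} turns this into the advertised quotient. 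This route should directly yield the three cases with simple product sides, \eqref{conj-3-even-1}, \eqref{conj-3-odd-2} and \eqref{conj-3-odd-3}, whose right-hand sides $J_2J_{a,36}/(J_1J_4)$ are single thetas.

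For the remaining cases \eqref{conj-3-even-2}, \eqref{conj-3-even-3} and \eqref{conj-3-odd-1}, whose product sides are the dense generalized eta quotients containing $J_{18}^4J_{36}$, the same reduction produces a signed combination of several theta residues modulo $18$ and $36$ rather than a single product. I would recombine these into the stated quotient by a theta-function identity, checked with the \texttt{thetaids} package of Frye and Garvan \cite{Frye-Garvan}; alternatively, since Examples 7 and 8 share convertible pieces, I would identify these $\sigma$-parts with the correspondingly evaluated parts of Example 8 and transfer the product evaluation. Finally I would rewrite every product in the $J_m,J_{a,m}$ notation of \eqref{J-defn}--\eqref{Jam-defn}.

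The main obstacle is the fully coupled off-diagonal structure of $AD$: because $j$ meets both $i$ and $k$, no single summation decouples into a product in the remaining variables, and the $m=i+k$ substitution produces the factor $1/(-q^c;q^2)_m$ tied to $m$ rather than to $i$ alone. Consequently the inner sum keeps the awkward $(q;q)_{2i+\sigma}$ and does not collapse to a clean base-$q^2$ Bailey $\beta$ as it did for Example 1. Pinning down the precise seed pair and the finite Bailey parameter that reproduce both $(q;q)_{2i+\sigma}$ and $1/(-q^c;q^2)_m$, and then verifying the multi-theta recombination for the dense cases, is where essentially all the difficulty lies.
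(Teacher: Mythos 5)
Your reduction stalls exactly where you say it does, and the fix you propose does not repair it. After summing over $j$ with Euler's identity the surviving factor is $1/(-q^{c};q^2)_{i+k}$, attached to the \emph{outer} index $m=i+k$, while the leftover inner exponent is $q^{i^2}$; so the double sum has the shape $\sum_m \frac{q^{2m^2}}{(-q^{c};q^2)_m}\sum_{i=0}^m\frac{q^{i^2+\cdots}}{(q;q)_{2i+\sigma}(q^2;q^2)_{m-i}}$, which is \emph{not} of the form \eqref{id-BP-twice} in base $q^2$: the inner seed is not $q^{2i^2}\beta_i$, and the $m$-dependent denominator cannot be absorbed into any $\beta_i$. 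Taking a Bailey step with a finite parameter $\rho=-q^{c}$ does not help either: in \eqref{eq-Bailey-general-id} a finite $\rho_1$ produces a numerator factor $(\rho_1;q)_j$ inside the $\beta$-sum and a global prefactor $1/(aq/\rho_1;q)_n$, never a denominator $1/(\rho;q)_m$ varying with the outer summation index. This is precisely why the paper does \emph{not} attack Example 7 by the route that works for Example 8 (where the absence of a $jk$ cross term makes the Euler factor depend on $i$ alone and it folds into $\beta_i$). A secondary misreading: the ``dense'' right-hand sides of \eqref{conj-3-even-2}, \eqref{conj-3-even-3}, \eqref{conj-3-odd-1} are single infinite products of the form $(\pm q^a,\pm q^b,-q^9;-q^9)_\infty$ times $(-q^2;q^2)_\infty/(q^2;q^2)_\infty$ merely rewritten in the $J_{a,m}$ notation; no recombination of several theta residues is required.

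Your fallback --- identifying these parity pieces with the corresponding pieces of Example 8 --- is indeed the paper's actual route, but the transfer mechanism is the entire content of the proof and is missing from your proposal. Concretely, the paper proves (Lemmas \ref{lem26-27} and \ref{lem-equivalence-key}, both elementary consequences of \eqref{eq-qbinomial} and \eqref{euler}) that
\begin{align*}
\sum_{i,j,k\ge 0}\frac{u^{i+j}v^{i+k}q^{2jk}}{(q;q)_{2i+\sigma}(q^2;q^2)_j(q^2;q^2)_k}
=\sum_{i,j,k\ge 0}\frac{u^{i+j}v^{i+k}q^{2i^2+(2\sigma-1)i}}{(q;q)_{2i+\sigma}(q^2;q^2)_j(q^2;q^2)_k},
\end{align*}
and comparing coefficients of $u^mv^n$ yields \eqref{78-sum-id-1}--\eqref{78-sum-id-2}. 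Since $3i^2+j^2+2k^2+2ij+4ik+2jk=(i+j)^2+2(i+k)^2+2jk$ depends on $(i,j,k)$ only through $m=i+j$, $n=i+k$ and the term $2jk$, this replaces $2jk$ by $2i^2\mp i$ and converts each left-hand side of Theorem \ref{thm-7-parity} into the corresponding sum of Theorem \ref{thm-8-parity} (quadratic form $5i^2+j^2+2k^2+2ij+4ik\mp i$), whose $j$-summation \emph{does} decouple and which is then evaluated by the Bailey pairs \eqref{G(4.1)}, \eqref{C(5)}--\eqref{C(7)}, \eqref{G(4)}, \eqref{G(5)}, \eqref{G4.1-new} and \eqref{C7*}. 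Without this replacement lemma (or some equivalent device for eliminating the $jk$ cross term) your plan does not go through.
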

This can be proved in a way similar to the proof of Theorem \ref{thm-8-parity} (see Example 8) using Bailey pairs. We omit details but present a different proof here. We will show that they are equivalent to some identities in Theorem \ref{thm-8-parity}.

To prove the equivalence, we establish two useful lemmas.
\begin{lemma}\label{lem26-27}
    We have
    \begin{align}
        \sum_{i\ge 0}\frac{u^iq^{2i^2-i}}{(q;q)_{2i}}
        =
        (u;q^2)_\infty\sum_{i\ge 0}\frac{u^i}{(q;q)_{2i}},\label{eq-u-1}\\
        \sum_{i\ge 0}\frac{u^iq^{2i^2+i}}{(q;q)_{2i+1}}
        =
        (u;q^2)_\infty\sum_{i\ge 0}\frac{u^i}{(q;q)_{2i+1}}. \label{eq-u-2}
    \end{align}
\end{lemma}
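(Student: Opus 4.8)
The plan is to prove the two identities simultaneously by recognizing that their left-hand sides are exactly the even and odd parts of a single instance of Euler's $q$-exponential identity \eqref{euler}. The crucial observation is that the quadratic exponents are binomial coefficients:
\begin{align*}
2i^2-i=\binom{2i}{2}, \qquad 2i^2+i=\binom{2i+1}{2}.
\end{align*}
Hence, introducing an auxiliary formal variable $x$ with $x^2=u$, the left side of \eqref{eq-u-1} is the sum of the even powers of $x$ in $\sum_{n\ge 0}q^{\binom{n}{2}}x^n/(q;q)_n$, while $x$ times the left side of \eqref{eq-u-2} is the sum of the odd powers.

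First I would record the two closed forms from \eqref{euler}, namely $\sum_n q^{\binom{n}{2}}x^n/(q;q)_n=(-x;q)_\infty$ and $\sum_n x^n/(q;q)_n=1/(x;q)_\infty$. Replacing $x$ by $-x$ and taking half the sum and half the difference extracts the even and odd parts; in particular
\begin{align*}
\sum_{i\ge 0}\frac{q^{2i^2-i}x^{2i}}{(q;q)_{2i}}=\frac{(-x;q)_\infty+(x;q)_\infty}{2}, \qquad \sum_{i\ge 0}\frac{x^{2i}}{(q;q)_{2i}}=\frac12\Big(\frac{1}{(x;q)_\infty}+\frac{1}{(-x;q)_\infty}\Big),
\end{align*}
together with the companion odd-part formulas obtained by replacing the plus signs with minus signs and inserting an overall factor $x$.

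The key algebraic input is the standard product rearrangement $(x;q)_\infty(-x;q)_\infty=(x^2;q^2)_\infty$. Multiplying the even-part reciprocal sum above by $(x^2;q^2)_\infty$ and invoking this factorization collapses the reciprocals, leaving precisely $\tfrac12\big((-x;q)_\infty+(x;q)_\infty\big)$, which is the even-part exponential sum; substituting $u=x^2$ yields \eqref{eq-u-1}. The identical computation applied to the difference of reciprocals produces $\tfrac12\big((-x;q)_\infty-(x;q)_\infty\big)$, and after cancelling the common factor $x$ and setting $u=x^2$ this gives \eqref{eq-u-2}.

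I do not expect a serious obstacle: once the exponents are identified as $\binom{2i}{2}$ and $\binom{2i+1}{2}$, everything reduces to the routine even/odd splitting and the elementary factorization $(x;q)_\infty(-x;q)_\infty=(x^2;q^2)_\infty$. The one point deserving care is the bookkeeping of the factor $x$ (equivalently $u^{1/2}$) in the odd case; working throughout with the formal variable $x$ and specializing $u=x^2$ only at the very end avoids any square-root ambiguity. If one prefers to dispense with the auxiliary variable, a fallback is to multiply \eqref{eq-u-1} by $1/(u;q^2)_\infty=\sum_{n\ge 0}u^n/(q^2;q^2)_n$ and compare coefficients of $u^m$, which reduces the claim to the finite summation $\sum_{i=0}^m q^{2i^2-i}/\big((q;q)_{2i}(q^2;q^2)_{m-i}\big)=1/(q;q)_{2m}$, provable by induction on $m$; however, the even/odd approach is cleaner and dispatches both identities at once.
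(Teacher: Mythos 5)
Your proof is correct and is essentially the paper's own argument: both rest on Euler's two identities, the factorization $(x;q)_\infty(-x;q)_\infty=(x^2;q^2)_\infty$, and an even/odd extraction in the auxiliary variable. The only cosmetic difference is that the paper extracts the parities directly from the single combined identity $\sum_{n\ge 0}x^nq^{\binom{n}{2}}/(q;q)_n=(x^2;q^2)_\infty\sum_{n\ge 0}x^n/(q;q)_n$ (using that the factor $(x^2;q^2)_\infty$ is even in $x$), whereas you split each side into even and odd parts first and then recombine.
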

\begin{proof}
By Euler's identities \eqref{euler-1} and \eqref{euler-2} we have
\begin{align}
    \sum_{n=0}^\infty \frac{u^nq^{n(n-1)/2}}{(q;q)_n}=(-u;q)_\infty=\frac{(u^2;q^2)_\infty}{(u;q)_\infty}=(u^2;q^2)_\infty \sum_{n=0}^\infty \frac{u^n}{(q;q)_n}.
\end{align}
Extracting the terms with even and odd powers of $u$ and then replacing $u^2$ by $u$, we obtain \eqref{eq-u-1} and \eqref{eq-u-2}, respectively.
\end{proof}

\begin{lemma}\label{lem-equivalence-key}
We have
\begin{align}
    &\sum_{i,j,k\ge 0}\frac{u^{i+j}v^{i+k}q^{2jk}}{(q;q)_{2i}(q^2;q^2)_j(q^2;q^2)_k}
    =
     \sum_{i,j,k\ge 0}\frac{u^{i+j}v^{i+k}q^{2i^2-i}}{(q;q)_{2i}(q^2;q^2)_j(q^2;q^2)_k},\label{eq-2627-1}
     \\
     &\sum_{i,j,k\ge 0}\frac{u^{i+j}v^{i+k}q^{2jk}}{(q;q)_{2i+1}(q^2;q^2)_j(q^2;q^2)_k}
    =
     \sum_{i,j,k\ge 0}\frac{u^{i+j}v^{i+k}q^{2i^2+i}}{(q;q)_{2i+1}(q^2;q^2)_j(q^2;q^2)_k}.\label{eq-2627-2}
\end{align}
\end{lemma}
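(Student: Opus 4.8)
The plan is to prove Lemma~\ref{lem-equivalence-key} by summing over the index $k$ first and applying Lemma~\ref{lem26-27}. Fix the summation variable $k$ and observe that in each double sum the $k$-dependence is controlled by the inner sum over $k$. For the left-hand side of \eqref{eq-2627-1}, after fixing $i$ and $j$, the factor $q^{2jk}v^{i+k}$ groups as $v^i (vq^{2j})^k / (q^2;q^2)_k$, while on the right-hand side after fixing $i$ and $j$ the factor is $v^{i+k}/(q^2;q^2)_k = v^i \cdot v^k/(q^2;q^2)_k$. So the two sides differ only in the replacement of a plain geometric-type weight $v^k$ by the shifted weight $(vq^{2j})^k$ times a compensating power of $q$. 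The first step is therefore to make this $k$-sum explicit and reduce both sides to expressions in which $k$ has been eliminated.

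Concretely, I would carry out the summation over $k$ using Euler's identity \eqref{euler} in the base $q^2$: for the left side, $\sum_{k\ge 0}(vq^{2j})^k/(q^2;q^2)_k = 1/(vq^{2j};q^2)_\infty$, and for the right side, $\sum_{k\ge 0}v^k/(q^2;q^2)_k = 1/(v;q^2)_\infty$. After this, both sides become double sums over $i$ and $j$ only, and the identity \eqref{eq-2627-1} becomes equivalent to
\begin{align*}
\sum_{i,j\ge 0}\frac{u^{i+j}v^{i}q^{2jk}\big|_{\text{summed}}}{(q;q)_{2i}(q^2;q^2)_j}\cdot\frac{1}{(vq^{2j};q^2)_\infty}
=\sum_{i,j\ge 0}\frac{u^{i+j}v^{i}q^{2i^2-i}}{(q;q)_{2i}(q^2;q^2)_j}\cdot\frac{1}{(v;q^2)_\infty}.
\end{align*}
Rather than pushing this reduction through verbatim, the cleaner route is to first sum over $i$ using Lemma~\ref{lem26-27}. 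Indeed, collecting the $i$-dependent part of the right-hand side of \eqref{eq-2627-1} gives exactly the shape $\sum_i (uv)^i q^{2i^2-i}/(q;q)_{2i}$, to which \eqref{eq-u-1} applies with the substitution $u\mapsto uv$, producing the factor $(uv;q^2)_\infty$ and converting $q^{2i^2-i}$ into $1$. The strategy is thus to treat $i$ as the variable governed by Lemma~\ref{lem26-27} and $k$ as the variable governed by Euler's identity, so that both manipulations remove the quadratic/cross terms simultaneously.

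The key algebraic point I expect to verify is that the infinite products generated by the two summations match up correctly: summing the left side over $k$ produces $(vq^{2j};q^2)_\infty^{-1}$ with a $j$-shift, whereas the right side produces $(v;q^2)_\infty^{-1}$ together with the $(uv;q^2)_\infty$ factor from Lemma~\ref{lem26-27}, and one must check these recombine into the same bivariate generating function. The cleanest way to organize the bookkeeping is to introduce the generating function $\Phi_{\epsilon}(u,v):=\sum_{i,j,k\ge 0} u^{i+j}v^{i+k}q^{2jk}/\big((q;q)_{2i+\epsilon}(q^2;q^2)_j(q^2;q^2)_k\big)$ for $\epsilon\in\{0,1\}$ and show directly that it equals the right-hand side by summing out $k$ and $i$ in turn. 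The proof of \eqref{eq-2627-2} is identical in structure, using \eqref{eq-u-2} in place of \eqref{eq-u-1} to handle the odd index $2i+1$, which contributes $q^{2i^2+i}$ rather than $q^{2i^2-i}$.

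The main obstacle is not any single computation but rather ensuring that the $q$-power accounting is exact: the cross term $q^{2jk}$ on the left and the pure quadratic term $q^{2i^2\mp i}$ on the right are a priori unrelated, and the identity asserts that the two different sources of $q$-powers are interchangeable once the full triple sum is formed. The resolution is that Lemma~\ref{lem26-27} is precisely the mechanism that trades the quadratic self-interaction of $i$ for an infinite product, and Euler's identity trades the $jk$ cross-interaction for another infinite product; the content of Lemma~\ref{lem-equivalence-key} is that these two trades yield the same answer. I would therefore present the proof as a short direct computation: sum over $k$ by \eqref{euler}, then recognize the resulting $i$-sum as an instance of Lemma~\ref{lem26-27}, and finally observe that the remaining double product identity is a formal rearrangement requiring no further input.
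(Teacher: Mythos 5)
Your proposal is correct and is essentially the paper's own argument: the paper likewise combines Lemma \ref{lem26-27} (with $u\mapsto uv$) with the $q$-binomial evaluation $\sum_{j,k\ge 0}u^jv^kq^{2jk}/\big((q^2;q^2)_j(q^2;q^2)_k\big)=(uv;q^2)_\infty/(u,v;q^2)_\infty$, merely packaged as multiplying those two identities together rather than summing out $k$ and then $i$ as you do. The only small caveat is that your final ``formal rearrangement requiring no further input'' --- namely $\sum_{j\ge 0}u^j/\big((q^2;q^2)_j(vq^{2j};q^2)_\infty\big)=(uv;q^2)_\infty/\big((u;q^2)_\infty(v;q^2)_\infty\big)$ --- is not input-free: it is precisely the $q$-binomial theorem \eqref{eq-qbinomial} in base $q^2$, which should be cited at that step.
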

\begin{proof}
By  the $q$-binomial theorem \eqref{eq-qbinomial} we have
\begin{align}\label{id-uv}
    &\frac{(uv;q^2)_\infty}{(u,v;q^2)_\infty}
    =
    \sum_{j\ge 0}\frac{(v;q^2)_ju^j}{(q^2;q^2)_j(v;q^2)_\infty} \nonumber \\
     &=
    \sum_{j\ge 0}\frac{u^j}{(q^2;q^2)_j(vq^{2j};q^2)_\infty}
    =\sum_{j,k\ge 0}\frac{u^jv^kq^{2jk}}{(q^2;q^2)_j(q^2;q^2)_k}.
\end{align}
Replacing $u$ by $uv$ in \eqref{eq-u-1} and \eqref{eq-u-2} and then
multiplying both sides of the resulting identities by \eqref{id-uv}, we obtain \eqref{eq-2627-1} and \eqref{eq-2627-2}, respectively.
\end{proof}
\begin{proof}[Proof of Theorem \ref{thm-7-parity}]
Comparing the coefficients of $u^mv^n$ on both sides of the identities in Lemma \ref{lem-equivalence-key}, we deduce that for any $m,n\geq 0$
\begin{align}
    \sum_{\begin{smallmatrix} i+j=m,i+k=n \\ i,j,k\geq 0 \end{smallmatrix}} \frac{q^{2jk}}{(q;q)_{2i}(q^2;q^2)_j(q^2;q^2)_k}=\sum_{\begin{smallmatrix} i+j=m,i+k=n \\ i,j,k\geq 0 \end{smallmatrix}} \frac{q^{2i^2-i}}{(q;q)_{2i}(q^2;q^2)_j(q^2;q^2)_k}, \label{78-sum-id-1} \\
     \sum_{\begin{smallmatrix} i+j=m,i+k=n \\ i,j,k\geq 0 \end{smallmatrix}} \frac{q^{2jk}}{(q;q)_{2i+1}(q^2;q^2)_j(q^2;q^2)_k}=\sum_{\begin{smallmatrix} i+j=m,i+k=n \\ i,j,k\geq 0 \end{smallmatrix}} \frac{q^{2i^2+i}}{(q;q)_{2i+1}(q^2;q^2)_j(q^2;q^2)_k}. \label{78-sum-id-2}
\end{align}
Therefore, we have
\begin{align}
     &\sum_{i,j,k\ge 0}\frac{q^{3i^2+j^2+2k^2+2ij+4ik+2jk}u^{i+j}v^{i+k}}{(q;q)_{2i}(q^2;q^2)_j(q^2;q^2)_k} \nonumber \\
     &=\sum_{m,n\geq 0} q^{m^2+2n^2}u^mv^n \sum_{\begin{smallmatrix}
        i+j=m,i+k=n \\ i,j,k\geq 0
        \end{smallmatrix}} \frac{q^{2jk}}{(q;q)_{2i}(q^2;q^2)_j(q^2;q^2)_k} \\
        &=\sum_{m,n\geq 0} q^{m^2+2n^2}u^mv^n\sum_{i+j=m,i+k=n} \frac{q^{2i^2-i}}{(q;q)_{2i}(q^2;q^2)_j(q^2;q^2)_k}  \quad (\text{by \eqref{78-sum-id-1}}) \nonumber \\
        &=\sum_{i,j,k\ge 0}\frac{q^{5i^2+j^2+2k^2+2ij+4ik-i}u^{i+j}v^{i+k}}{(q;q)_{2i}(q^2;q^2)_j(q^2;q^2)_k}. \label{eq-78-transfer-1}
\end{align}
Setting $(u,v)=(1,1)$, $(q,1)$ and $(q,q^2)$ in \eqref{eq-78-transfer-1}, we obtain \eqref{conj-3-even-1}--\eqref{conj-3-even-3} from \eqref{ex27-2-1}, \eqref{ex27-3-1} and \eqref{ex27-4-1}, respectively.

Similarly, using \eqref{78-sum-id-2} we can prove that
\begin{align}
     &\sum_{i,j,k\ge 0}\frac{q^{3i^2+j^2+2k^2+2ij+4ik+2jk}u^{i+j}v^{i+k}}{(q;q)_{2i+1}(q^2;q^2)_j(q^2;q^2)_k}
        = \sum_{i,j,k\ge 0}\frac{q^{5i^2+j^2+2k^2+2ij+4ik+i}u^{i+j}v^{i+k}}{(q;q)_{2i+1}(q^2;q^2)_j(q^2;q^2)_k}. \label{eq-78-transfer-2}
\end{align}
Setting $(u,v)=(q,q^2)$, $(q^2,q^2)$ and $(q^2,q^4)$, we obtain \eqref{conj-3-odd-1}--\eqref{conj-3-odd-3} from \eqref{ex27-2-2}, \eqref{ex27-3-2} and \eqref{ex27-4-2}, respectively.
\end{proof}

As a consequence, we have the following theorem which justifies the modularity of this example.
\begin{theorem}\label{thm-7}
We have
\begin{align}
    &\sum_{i,j,k\ge 0}\frac{q^{3i^2+4j^2+8k^2+4ij+8ik+8jk}}{(q^4;q^4)_i(q^8;q^8)_j(q^8;q^8)_k}
    =
    \frac{J_{64,144}}{J_{4,16}}
    +q^3\frac{J_{32}J_{72}^4J_{144}}{J_{8,32}J_{4,72}J_{16,72}J_{24,72}J_{36,144}J_{64,144}}, \label{eq-thm7-1}
    \\
    &\sum_{i,j,k\ge 0}\frac{q^{3i^2+4j^2+8k^2+4ij+8ik+8jk+2i+4j}}{(q^4;q^4)_i(q^8;q^8)_j(q^8;q^8)_k}
    =
    q^5\frac{J_{32,144}}{J_{4,16}}
    +\frac{J_{32}J_{72}^4J_{144}}{J_{8,32}J_{8,72}J_{20,72}J_{24,72}J_{36,144}J_{32,144}}, \label{eq-thm7-2}
    \\
   &\sum_{i,j,k\ge 0}\frac{q^{3i^2+4j^2+8k^2+4ij+8ik+8jk+6i+4j+8k}}{(q^4;q^4)_i(q^8;q^8)_j(q^8;q^8)_k}
    =
   q^9\frac{J_{16,144}}{J_{4,16}}
    +\frac{J_{32}J_{72}^4J_{144}}{J_{8,32}J_{24,72}J_{28,72}J_{32,72}J_{36,144}J_{16,144}}. \label{eq-thm7-3}
\end{align}
\end{theorem}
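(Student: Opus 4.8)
The plan is to deduce each of the three identities in Theorem~\ref{thm-7} directly from Theorem~\ref{thm-7-parity} by applying the substitution $q\mapsto q^4$ and splitting the outer index $i$ according to its parity. The crucial observation is that the quadratic form $3i^2+4j^2+8k^2+4ij+8ik+8jk$ of Theorem~\ref{thm-7}, upon writing $i=2I$ or $i=2I+1$, becomes exactly four times the form $3I^2+j^2+2k^2+2Ij+4Ik+2jk$ of Theorem~\ref{thm-7-parity} plus lower-order terms, while the denominator $(q^4;q^4)_i(q^8;q^8)_j(q^8;q^8)_k$ becomes $(q^4;q^4)_{2I}(q^8;q^8)_j(q^8;q^8)_k$ or $(q^4;q^4)_{2I+1}(q^8;q^8)_j(q^8;q^8)_k$. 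This is precisely the shape of the denominators in the even-index and odd-index identities of Theorem~\ref{thm-7-parity} after $q\mapsto q^4$.

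First I would record that under $q\mapsto q^4$ the definitions \eqref{J-defn} and \eqref{Jam-defn} give $J_m\mapsto J_{4m}$ and $J_{a,m}\mapsto J_{4a,4m}$. Then, for the $n$-th identity of Theorem~\ref{thm-7} ($n=1,2,3$), I would split the sum as the even part (set $i=2I$) plus the odd part (set $i=2I+1$), and match the even part with the $n$-th even-index identity of Theorem~\ref{thm-7-parity} and the odd part with the $n$-th odd-index identity, both taken with $q\mapsto q^4$. Tracking the linear terms shows that the even part is an exact match, whereas the odd part carries an overall extra factor $q^{3}$ for \eqref{eq-thm7-1}, $q^{5}$ for \eqref{eq-thm7-2}, and $q^{9}$ for \eqref{eq-thm7-3}; these are exactly the powers appearing in the statement. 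For instance, for \eqref{eq-thm7-1} the even part reproduces \eqref{conj-3-even-1} and the odd part reproduces $q^3$ times \eqref{conj-3-odd-1}, each with $q\mapsto q^4$.

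The one nontrivial product manipulation is to reconcile the ``simple'' right-hand sides $J_2J_{a,36}/(J_1J_4)$ occurring in \eqref{conj-3-even-1}, \eqref{conj-3-odd-2} and \eqref{conj-3-odd-3} with the claimed forms $J_{4a,144}/J_{4,16}$. Under $q\mapsto q^4$ these become $J_8J_{4a,144}/(J_4J_{16})$, so it suffices to establish the elementary eta-quotient identity
\begin{align*}
J_8J_{4,16}=J_4J_{16},
\end{align*}
which follows by writing $J_8=(q^8,q^{16};q^{16})_\infty$ and $J_{4,16}=(q^4,q^{12},q^{16};q^{16})_\infty$ and recognizing the product as $(q^4;q^4)_\infty(q^{16};q^{16})_\infty$. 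I expect the main obstacle to be the bookkeeping rather than any conceptual difficulty: one must carefully verify that every linear coefficient survives the parity substitution correctly and that the three extracted powers of $q$ are exact, since a single off-by-one in the exponents would destroy the match with the product side. Granting Theorem~\ref{thm-7-parity} and the displayed product identity, the three identities of Theorem~\ref{thm-7} then follow at once.
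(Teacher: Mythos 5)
Your proposal is correct and is essentially the paper's own proof: the paper likewise writes the sum as $F(u,v,w;q^4)$ with $F=F_0+F_1$ split by the parity of $i$, identifies $F_0$ and $F_1$ with the corresponding even- and odd-index identities of Theorem \ref{thm-7-parity} (the extra factors $q^3,q^5,q^9$ arising exactly as you compute), and reads off the product sides. Your explicit check of $J_8J_{4,16}=J_4J_{16}$ is a small detail the paper leaves implicit, but otherwise the two arguments coincide.
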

\begin{proof}
Let
\begin{align}\label{exam7-F-defn}
    F(u,v,w)=F(u,v,w;q):= &\sum_{i,j,k\ge 0}\frac{q^{\frac{3}{4}i^2+j^2+2k^2+ij+2ik+2jk}u^iv^jw^k}{(q;q)_i(q^2;q^2)_j(q^2;q^2)_k}.
\end{align}
Then
\begin{align}\label{exam7-F-split}
    F(u,v,w)=F_0(u,v,w)+F_1(u,v,w)
\end{align}
where
\begin{align}\label{exam7-F-parity-defn}
  F_\sigma(u,v,w)=F_\sigma(u,v,w;q):=  &\sum_{\begin{smallmatrix}i,j,k\ge 0 \\ i\equiv \sigma \,\, \text{(mod 2)} \end{smallmatrix}}\frac{q^{\frac{3}{4}i^2+j^2+2k^2+ij+2ik+2jk}u^iv^jw^k}{(q;q)_i(q^2;q^2)_j(q^2;q^2)_k}.
\end{align}
Note that $F_0(1,1,1)$ and $F_1(1,1,1)$ are evaluated in \eqref{conj-3-even-1} and \eqref{conj-3-odd-1}, respectively. Substituting \eqref{conj-3-even-1} and \eqref{conj-3-odd-1} into \eqref{exam7-F-split}, we obtain \eqref{eq-thm7-1}.

Similarly, substituting \eqref{conj-3-even-2} and \eqref{conj-3-odd-2} into \eqref{exam7-F-split}, we obtain \eqref{eq-thm7-2}. Substituting \eqref{conj-3-even-3} and \eqref{conj-3-odd-3} into \eqref{exam7-F-split}, we obtain \eqref{eq-thm7-3}.
\end{proof}

To give a companion to the identities \eqref{eq-thm7-1}--\eqref{eq-thm7-3} (see Theorem \ref{thm-exam7-add} below), we first establish the following identities.
\begin{theorem}\label{ex8-new}
    We have
    \begin{align}
        &\sum_{i,j,k\ge 0}\frac{q^{3i^2+j^2+2k^2+2ij+4ik+2jk+i+j}}{(q;q)_{2i}(q^2;q^2)_j(q^2;q^2)_k}
        +\sum_{i,j,k\ge 0}\frac{q^{3i^2+j^2+2k^2+2ij+4ik+2jk+7i+3j+4k+3}}{(q;q)_{2i+1}(q^2;q^2)_j(q^2;q^2)_k}\nonumber \\
        &=\frac{(-q^2;q^2)_\infty(-q^3,q^6,-q^9;-q^9)_\infty}{(q^2;q^2)_\infty}, \label{ex8-new-1}
        \\
        &\sum_{i,j,k\ge 0}\frac{q^{3i^2+j^2+2k^2+2ij+4ik+2jk+4i+2j+2k}}{(q;q)_{2i}(q^2;q^2)_j(q^2;q^2)_k}
        +\sum_{i,j,k\ge 0}\frac{q^{3i^2+j^2+2k^2+2ij+4ik+2jk+4i+2j+2k+1}}{(q;q)_{2i+1}(q^2;q^2)_j(q^2;q^2)_k}\nonumber \\
        &=\frac{(-q;q^2)_\infty
        (q^{12},q^{24},q^{36};q^{36})_\infty}{(q^2;q^2)_\infty}.\label{ex8-new-2}
    \end{align}
\end{theorem}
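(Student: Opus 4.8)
The plan is to collapse each of the two composite identities into a single Bailey-summable series and then read the product side off from the Jacobi triple product \eqref{Jacobi}. First I would strip the mixed term $2jk$ from every summand by invoking the transfer identities \eqref{eq-78-transfer-1} and \eqref{eq-78-transfer-2} that were proved en route to Theorem \ref{thm-7-parity}. The four sums in \eqref{ex8-new-1}--\eqref{ex8-new-2} are the specializations $(u,v)=(q,1)$ and $(u,v)=(q^3,q^4)$ (the latter carrying an extra factor $q^3$) for \eqref{ex8-new-1}, and $(u,v)=(q^2,q^2)$ together with $(u,v)=(q^2,q^2)$ (with an extra factor $q$) for \eqref{ex8-new-2}. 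After the transfer each becomes a sum of the shape $\sum q^{5i^2+j^2+2k^2+2ij+4ik+(\mathrm{linear})}/\big((q;q)_{2i+\sigma}(q^2;q^2)_j(q^2;q^2)_k\big)$ in which $j$ and $k$ no longer interact.

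Next I would sum over $j$ using Euler's identity \eqref{euler}. In every transferred summand the $j$-part is $q^{j^2+(2i+c)j}=(q^2)^{\binom{j}{2}}(q^{2i+c+1})^{j}$, so the $j$-sum collapses to $(-q^{2i+c+1};q^2)_\infty$, which depends on $i$ alone. Extracting the global product — namely $(-q^2;q^2)_\infty$ for \eqref{ex8-new-1} and $(-q^3;q^2)_\infty$ for \eqref{ex8-new-2} (the latter reconciling with the $(-q;q^2)_\infty$ on the product side up to the elementary factor $1+q$) — leaves a double sum over $(i,k)$ whose remaining denominator depends only on $i$. Setting $m=i+k$ and using $5i^2+2k^2+4ik=3i^2+2m^2$ converts this into an outer series $\sum_m q^{2m^2}$ for \eqref{ex8-new-1} (resp.\ $\sum_m q^{2m^2+2m}$ for \eqref{ex8-new-2}) weighting an inner sum over $0\le i\le m$.

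The decisive manoeuvre is then to add the even and odd pieces. After reindexing the odd piece by $m\mapsto m-1$, $i\mapsto i-1$ so that it acquires the same outer weight as the even piece, the two inner summands merge through the telescoping $\frac{q^{3i^2}}{(q;q)_{2i}}+\frac{q^{3i^2-2i}}{(q;q)_{2i-1}}=\frac{q^{3i^2-2i}}{(q;q)_{2i}}$ (with the analogous collapse $\frac{q^{3i^2+i}}{(q;q)_{2i}}+\frac{q^{3i^2+3i+1}}{(q;q)_{2i+1}}=\frac{q^{3i^2+i}}{(q;q)_{2i+1}}$ for \eqref{ex8-new-2}). Thus the left-hand side of \eqref{ex8-new-1} becomes $(-q^2;q^2)_\infty\sum_m q^{2m^2}\sum_{i=0}^m q^{3i^2-2i}/\big((q;q)_{2i}(q^2;q^2)_{m-i}(-q^2;q^2)_i\big)$, which is exactly the left-hand side of the doubled Bailey relation \eqref{id-BP-twice} in base $q^2$ with $a=1$ and $\beta_i=q^{i^2-2i}/\big((q;q^2)_i(q^4;q^4)_i\big)$; likewise \eqref{ex8-new-2} reduces with $a=q^2$ and $\beta_i=q^{i^2-i}/\big((q;q)_{2i+1}(-q^3;q^2)_i\big)$.

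It finally remains to apply \eqref{id-BP-twice}: its right-hand side is $(aq^2;q^2)_\infty^{-1}\sum_m a^{2m}q^{4m^2}\alpha_m$, and I would match $\sum_m q^{4m^2}\alpha_m(1;q^2)$ with the mod $9$ product $(-q^3,q^6,-q^9;-q^9)_\infty$ and $\sum_m q^{4m^2+4m}\alpha_m(q^2;q^2)$ with $(q^{12},q^{24},q^{36};q^{36})_\infty/(1-q)$ via \eqref{Jacobi}. The main obstacle is supplying these two Bailey pairs: neither prescribed $\beta_i$ appears directly in Slater's list, so I expect to construct them in base $q^2$ from a Group C pair such as \eqref{C(1)} or \eqref{C(5)} by means of the raising and lowering iterations \eqref{eq-b-0-raise} and \eqref{eq-b-infinity-reduce}, exactly in the spirit of Lemmas \ref{lem-exam1-new-BP}--\ref{lem-exam1-new-BP3}, and then to verify that the $\alpha_m$ forced by each $\beta_i$ is precisely the bilateral theta coefficient that the Jacobi triple product converts into the claimed product. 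Checking that this last match comes out exactly right is where the real work lies.
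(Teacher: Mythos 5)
Your proposal is correct and follows essentially the same route as the paper: the transfer identities \eqref{eq-78-transfer-1}--\eqref{eq-78-transfer-2} followed by the telescoping $\frac{1}{(q;q)_{2i}}-\frac{q^{2i}}{(q;q)_{2i}}=\frac{1}{(q;q)_{2i-1}}$ collapse each left-hand side to a single-parity sum, which is then evaluated via \eqref{id-BP-twice} and the Jacobi triple product. The only inaccuracy is your concern about supplying the final Bailey pairs: your reduction lands exactly on \eqref{ex27-1-1} and \eqref{ex27-1-2} of Theorem \ref{thm-8-parity}, which the paper has already proved from the Slater pairs \eqref{G(4.1)} (after $q\mapsto -q$, in base $q^2$) and \eqref{C(6)}, so no new pairs need to be constructed and the ``real work'' you flag is already done.
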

\begin{proof}
From \eqref{eq-78-transfer-1} and \eqref{eq-78-transfer-2}  we have
    \begin{align}
        &\sum_{i,j,k\ge 0}\frac{q^{3i^2+j^2+2k^2+2ij+4ik+2jk+i+j}}{(q;q)_{2i}(q^2;q^2)_j(q^2;q^2)_k}
        +\sum_{i,j,k\ge 0}\frac{q^{3i^2+j^2+2k^2+2ij+4ik+2jk+7i+3j+4k+3}}{(q;q)_{2i+1}(q^2;q^2)_j(q^2;q^2)_k} \nonumber \\
        &=\sum_{i,j,k\ge 0}\frac{q^{5i^2+j^2+2k^2+2ij+4ik+j}}{(q;q)_{2i}(q^2;q^2)_j(q^2;q^2)_k}
        +\sum_{i,j,k\ge 0}\frac{q^{5i^2+j^2+2k^2+2ij+4ik+8i+3j+4k+3}}{(q;q)_{2i+1}(q^2;q^2)_j(q^2;q^2)_k} \nonumber \\
        &=\sum_{i,j,k\ge 0}\frac{q^{5i^2+j^2+2k^2+2ij+4ik-2i+j}}{(q;q)_{2i}(q^2;q^2)_j(q^2;q^2)_k}.\label{ex8-new-proof-1}
    \end{align}
See the remark below for explanation of the last equality. Now by \eqref{ex27-1-1} and \eqref{ex8-new-proof-1} we prove \eqref{ex8-new-1}.

Similarly, from \eqref{eq-78-transfer-1} and \eqref{eq-78-transfer-2} we have
    \begin{align}
        &\sum_{i,j,k\ge 0}\frac{q^{3i^2+j^2+2k^2+2ij+4ik+2jk+4i+2j+2k}}{(q;q)_{2i}(q^2;q^2)_j(q^2;q^2)_k}
        +\sum_{i,j,k\ge 0}\frac{q^{3i^2+j^2+2k^2+2ij+4ik+2jk+4i+2j+2k+1}}{(q;q)_{2i+1}(q^2;q^2)_j(q^2;q^2)_k} \nonumber \\
        &=\sum_{i,j,k\ge 0}\frac{q^{5i^2+j^2+2k^2+2ij+4ik+3i+2j+2k}}{(q;q)_{2i}(q^2;q^2)_j(q^2;q^2)_k}
        +\sum_{i,j,k\ge 0}\frac{q^{3i^2+j^2+2k^2+2ij+4ik+5i+2j+2k+1}}{(q;q)_{2i+1}(q^2;q^2)_j(q^2;q^2)_k} \nonumber \\
        &=\sum_{i,j,k\ge 0}\frac{q^{5i^2+j^2+2k^2+2ij+4ik+3i+2j+2k}}{(q;q)_{2i+1}(q^2;q^2)_j(q^2;q^2)_k}. \label{ex8-new-proof-2}
    \end{align}
 Now by \eqref{ex27-1-2} and \eqref{ex8-new-proof-2} we prove \eqref{ex8-new-2}.
\end{proof}

\begin{rem}
    We give more details on the last equalities in \eqref{ex8-new-proof-1} and \eqref{ex8-new-proof-2}. Let $A_1(i)$ and $A_2(i)$ be any sequences such that
    \begin{align}
        W_1(u)=\sum_{i\ge 0}\frac{u^{i}A_1(i)}{(q;q)_{2i}}
        \quad \text{and} \quad
        W_2(u)=\sum_{i\ge 0}\frac{u^{i}A_2(i)}{(q;q)_{2i+1}}
    \end{align}
    converge. Then it is easy to see that
    \begin{align}
        &W_1(u)-W_1(uq^2)=\sum_{i\ge 0}\frac{u^{i+1}A_1(i+1)}{(q;q)_{2i+1}}, \label{G-diff-1}\\
        &W_2(u)-qW_2(uq^2)=\sum_{i\ge 0}\frac{u^iA_2(i)}{(q;q)_{2i}}.\label{G-diff-2}
    \end{align}
    If we set
    \begin{align*}
        A_1(i)=\sum_{i,j,k\ge 0}\frac{q^{5i^2+j^2+2k^2+2ij+4ik-2i+j}}{(q;q)_{2i}(q^2;q^2)_j(q^2;q^2)_k}, \quad
        A_2(i)=\sum_{i,j,k\ge 0}\frac{q^{5i^2+j^2+2k^2+2ij+4ik+3i+2j+2k}}{(q;q)_{2i+1}(q^2;q^2)_j(q^2;q^2)_k},
    \end{align*}
    then we obtain the last equalities in \eqref{ex8-new-proof-1} and \eqref{ex8-new-proof-2} immediately by setting $u=1$ in \eqref{G-diff-1} and \eqref{G-diff-2}, respectively.
\end{rem}

\begin{theorem}\label{thm-exam7-add}
    We have
    \begin{align}\label{eq-exam7-add}
        &\sum_{i,j,k\ge 0}\frac{q^{3i^2+4j^2+8k^2+4ij+8ik+8jk+2i+4j}(1+q^{6i+4j+8k+1})}{(q^4;q^4)_i(q^8;q^8)_j(q^8;q^8)_k}\nonumber
        \\
        &=\frac{(-q^8;q^8)_\infty(-q^{12},q^{24},-q^{36};-q^{36})_\infty}{(q^8;q^8)_\infty}+q\frac{(-q^4;q^8)_\infty(q^{48},q^{96},q^{144};q^{144})}{(q^8;q^8)_\infty}.
    \end{align}
\end{theorem}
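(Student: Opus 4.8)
The plan is to reduce the identity \eqref{eq-exam7-add} to the two identities of Theorem \ref{ex8-new} under the substitution $q\mapsto q^4$, after splitting the summand according to the parity of the index $i$. First I would expand the factor $1+q^{6i+4j+8k+1}$ and use $2i+4j+(6i+4j+8k+1)=8i+8j+8k+1$ to write the left side of \eqref{eq-exam7-add} as $\Sigma_1+\Sigma_2$, where
\[
\Sigma_1:=\sum_{i,j,k\ge 0}\frac{q^{3i^2+4j^2+8k^2+4ij+8ik+8jk+2i+4j}}{(q^4;q^4)_i(q^8;q^8)_j(q^8;q^8)_k},\quad \Sigma_2:=\sum_{i,j,k\ge 0}\frac{q^{3i^2+4j^2+8k^2+4ij+8ik+8jk+8i+8j+8k+1}}{(q^4;q^4)_i(q^8;q^8)_j(q^8;q^8)_k}.
\]

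Next I would split each of $\Sigma_1$ and $\Sigma_2$ according to whether $i$ is even or odd, substituting $i=2I$ and $i=2I+1$ respectively and expanding the resulting quadratic and linear exponents. A direct computation shows that each of the four resulting pieces, after the replacement $q\mapsto q^4$, coincides with exactly one of the four summands appearing on the left sides of \eqref{ex8-new-1} and \eqref{ex8-new-2}, the even-$i$ contributions producing the $(q^4;q^4)_{2I}$-type sums and the odd-$i$ contributions producing the $(q^4;q^4)_{2I+1}$-type sums. The crucial observation is that the pieces cross-pair: the even-$i$ part of $\Sigma_1$ and the odd-$i$ part of $\Sigma_2$ together form precisely the left side of \eqref{ex8-new-1} with $q$ replaced by $q^4$, while the odd-$i$ part of $\Sigma_1$ and the even-$i$ part of $\Sigma_2$ together form $q$ times the left side of \eqref{ex8-new-2} with $q$ replaced by $q^4$.

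Once this regrouping is established, the conclusion is immediate: applying Theorem \ref{ex8-new} with $q\mapsto q^4$ converts these two combined sums into $\frac{(-q^8;q^8)_\infty(-q^{12},q^{24},-q^{36};-q^{36})_\infty}{(q^8;q^8)_\infty}$ and $q\frac{(-q^4;q^8)_\infty(q^{48},q^{96},q^{144};q^{144})_\infty}{(q^8;q^8)_\infty}$ respectively, whose sum is exactly the right side of \eqref{eq-exam7-add}. I expect the main obstacle to be bookkeeping rather than anything conceptual: one must carefully track the six quadratic and several linear terms through the substitutions $i=2I$ and $i=2I+1$ and verify that the constant shifts (such as the $+5$ and $+12$ arising in the odd-$i$ pieces) and the overall factor of $q$ align so that the cross-pairing reproduces \eqref{ex8-new-1} and \eqref{ex8-new-2} verbatim. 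There is no analytic difficulty; the only real insight required is to anticipate the cross-pairing so that precisely the two target identities of Theorem \ref{ex8-new} emerge, and I would confirm the exponent matching by comparing the lowest-order terms of each of the four pieces.
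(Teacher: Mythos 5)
Your proposal is correct and follows essentially the same route as the paper: the paper likewise writes the left side as $F(q^2,q^4,1;q^4)+qF(q^8,q^8,q^8;q^4)$, splits each summand by the parity of $i$, and cross-pairs the even part of the first with the odd part of the second (and vice versa) to invoke \eqref{ex8-new-1} and \eqref{ex8-new-2} under $q\mapsto q^4$. The exponent bookkeeping you describe, including the constant shifts $+5$ and $+12$ in the odd-$i$ pieces, checks out exactly.
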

\begin{proof}
Note that the left side of \eqref{eq-exam7-add} is
\begin{align}
&F(q^2,q^4,1;q^4)+qF(q^8,q^8,q^8;q^4)  \\
&=\left(F_0(q^2,q^4,1;q^4)+qF_1(q^8,q^8,q^8;q^4)\right) +\left(qF_0(q^8,q^8,q^8;q^4)+F_1(q^2,q^4,1;q^4)\right). \nonumber
\end{align}
Substituting \eqref{ex8-new-1} and \eqref{ex8-new-2} into it, we obtain \eqref{eq-exam7-add}.
\end{proof}

\subsubsection{Example 8}
This example corresponds to
\begin{align*}
A=\begin{pmatrix}
1 & 0 & 1\\
0 & 2 & 2\\
1/2 & 1 & 5/2
\end{pmatrix},
AD=\begin{pmatrix}
2 & 0 & 1\\
0 & 4 & 2\\
1 & 2 & 5/2
\end{pmatrix},
b\in  \bigg\{
\begin{pmatrix}
0 \\ 0 \\ -1/2
\end{pmatrix},
\begin{pmatrix}
1 \\ 0 \\ -1
\end{pmatrix},
\begin{pmatrix}
1 \\ 0 \\ 0
\end{pmatrix},
\begin{pmatrix}
1 \\ 2 \\ 1
\end{pmatrix}
\bigg\}.
\end{align*}
To prove its modularity, we first prove the following result mentioned in the previous example.
\begin{theorem}\label{thm-8-parity}
We have
\begin{align}
    &\sum_{i,j,k\ge 0}\frac{q^{5i^2+j^2+2k^2+2ij+4ik-2i+j}}{(q;q)_{2i}(q^2;q^2)_j(q^2;q^2)_k}
    =\frac{(-q^2;q^2)_\infty(-q^3,q^6,-q^9;-q^9)_\infty}{(q^2;q^2)_\infty},\label{ex27-1-1}
    \\
    &\sum_{i,j,k\ge 0}\frac{q^{5i^2+j^2+2k^2+2ij+4ik+3i+2j+2k}}{(q;q)_{2i+1}(q^2;q^2)_j(q^2;q^2)_k}
    =
    \frac{(-q;q^2)_\infty(q^{12},q^{24},q^{36};q^{36})_\infty}{(q^2;q^2)_\infty},\label{ex27-1-2}
    \\
     &\sum_{i,j,k\ge 0}\frac{q^{5i^2+j^2+2k^2+2ij+4ik-i}}{(q;q)_{2i}(q^2;q^2)_j(q^2;q^2)_k}
    =
    \frac{(-q;q^2)_\infty(q^{16},q^{20},q^{36};q^{36})_\infty}{(q^2;q^2)_\infty},\label{ex27-2-1}
    \\
     &\sum_{i,j,k\ge 0}\frac{q^{5i^2+j^2+2k^2+2ij+4ik+4i+j+2k}}{(q;q)_{2i+1}(q^2;q^2)_j(q^2;q^2)_k}
    =\frac{(-q^2;q^2)_\infty(-q,q^8,-q^9;-q^9)_\infty}{(q^2;q^2)_\infty},\label{ex27-2-2}
    \\
    &\sum_{i,j,k\ge 0}\frac{q^{5i^2+j^2+2k^2+2ij+4ik+j}}{(q;q)_{2i}(q^2;q^2)_j(q^2;q^2)_k}
    =\frac{(-q^2;q^2)_\infty(q^4,-q^5,-q^9;-q^9)_\infty}{(q^2;q^2)_\infty},\label{ex27-3-1}
    \\
      &\sum_{i,j,k\ge 0}\frac{q^{5i^2+j^2+2k^2+2ij+4ik+5i+2j+2k}}{(q;q)_{2i+1}(q^2;q^2)_j(q^2;q^2)_k}
    =\frac{(-q,q^2)_\infty(q^{8},q^{28},q^{36};q^{36})_\infty}{(q^2;q^2)_\infty},\label{ex27-3-2}
    \\
    &\sum_{i,j,k\ge 0}\frac{q^{5i^2+j^2+2k^2+2ij+4ik+2i+j+2k}}{(q;q)_{2i}(q^2;q^2)_j(q^2;q^2)_k}
    =
    \frac{(-q^2;q^2)_\infty(q^2,-q^7,-q^9;-q^9)_\infty}{(q^2;q^2)_\infty},\label{ex27-4-1}    \\
    &\sum_{i,j,k\ge 0}\frac{q^{5i^2+j^2+2k^2+2ij+4ik+7i+2j+4k}}{(q;q)_{2i+1}(q^2;q^2)_j(q^2;q^2)_k}
    =\frac{(-q;q^2)_\infty(q^{4},q^{32},q^{36};q^{36})_\infty}{(q^2,q^2)_\infty}.\label{ex27-4-2}
\end{align}
\end{theorem}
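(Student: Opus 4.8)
The plan is to adapt the parity-splitting method used to prove Theorem~\ref{thm-G-parity}, the only structural change being that here one sums over the index $j$ first rather than over $k$. All eight identities are specializations of the single generating function
\begin{align*}
\Psi(u,v,w):=\sum_{i,j,k\ge 0}\frac{q^{\frac{5}{4}i^2+j^2+2k^2+ij+2ik}u^iv^jw^k}{(q;q)_i(q^2;q^2)_j(q^2;q^2)_k},
\end{align*}
split as $\Psi=\Psi_0+\Psi_1$ according to the parity $i\equiv\sigma\pmod 2$. Writing $i=2i'+\sigma$ turns the denominator $(q;q)_i$ into $(q;q)_{2i'}$ or $(q;q)_{2i'+1}$, so the even parts produce the four identities with $(q;q)_{2i}$ and the odd parts the four with $(q;q)_{2i+1}$, with the various linear terms recorded by the choice of $(u,v,w)$.

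First I would carry out the inner summation over $j$. Since the quadratic form has no $jk$ cross term and the coefficient of $j^2$ is $1$, the $j$-dependent part is $q^{j^2+(2i+b)j}$ for a linear shift $b$, and the second Euler identity in \eqref{euler} collapses it to $(-q^{2i+b+1};q^2)_\infty$. Pulling out $(-q^{b+1};q^2)_\infty$ leaves a single factor $(-q^{b+1};q^2)_i^{-1}$ in the denominator, exactly as the $k$-summation does in \eqref{Ftable3-1}. Next I would set $n=i+k$ and use $5i^2+2k^2+4ik=2(i+k)^2+3i^2$ to rewrite the remaining double sum in the shape
\begin{align*}
\sum_{n\ge 0}q^{2n^2}\sum_{i=0}^n\frac{q^{2i^2}}{(q^2;q^2)_{n-i}}\,\beta_i,
\end{align*}
which is precisely the left-hand side of \eqref{id-BP-twice} with $q$ replaced by $q^2$. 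Combining the factor $(q;q)_{2i}$ (or $(q;q)_{2i+1}$) with the Pochhammer symbol from the $j$-sum gives $\beta_i$ the form of a $\beta_n(1;q^2)$ (or $\beta_n(q;q^2)$) carrying an extra power $q^{i^2}$.

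The decisive step is to recognize each such $\beta_i$ as coming from a Bailey pair in Slater's Groups~C and~G. Because the inner sum carries $q^{3i^2}$ rather than $q^{2i^2}$, the relevant $\beta$'s are those with a $q^{n^2/2}$ numerator, namely \eqref{G(4)}, \eqref{G(5)}, \eqref{G(4.1)}, \eqref{G4.1-new} and their even/odd companions; taken with $q\mapsto q^2$ in \eqref{id-BP-twice} they match the required denominators up to a sign in $(\pm q;q^2)_i$, which is corrected by the substitution $q\mapsto -q$ exactly as in parts~(3), (5), (7) of the proof of Theorem~\ref{thm-G-parity}. For instance, for \eqref{ex27-1-1} the $j$-sum produces the prefactor $(-q^2;q^2)_\infty/(q^2;q^2)_\infty$ and the pair \eqref{G(4.1)} with $q\mapsto q^2$ gives, through \eqref{id-BP-twice}, the theta sum $\sum_{n\ge 0}q^{4n^2}\alpha_n(1;q^2)$, which \eqref{Jacobi} evaluates to the modulus-$9$ product $(q^3,q^6,q^9;q^9)_\infty$ (the convention $\alpha_0(1;q)=1$ disposing of the $n=0$ term); the final substitution $q\mapsto -q$ then simultaneously converts $(-q;q^2)_i$ into $(q;q^2)_i$ and $(q^3,q^6,q^9;q^9)_\infty$ into $(-q^3,q^6,-q^9;-q^9)_\infty$, reproducing \eqref{ex27-1-1} exactly. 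The other seven identities follow the same route, with the triple product \eqref{Jacobi} delivering the stated products of moduli $9$, $18$ and $36$.

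The main obstacle I anticipate is the bookkeeping of selecting, for each of the eight cases, the Bailey pair whose $\beta$ matches both the exact numerator power $q^{3i^2+(\mathrm{lin})i}$ and the exact denominator ($(q;q^2)_i$ versus $(q^3;q^2)_i$), together with tracking the linear shift $b$ from the $j$-sum and the compensating $q\mapsto -q$ sign. The odd-index cases with $(q;q)_{2i+1}=(q;q^2)_{i+1}(q^2;q^2)_i$, which introduce an asymmetric factor, are the most delicate, and may — as with Lemmas~\ref{lem-exam1-new-BP}--\ref{lem-exam1-new-BP3} — require Bailey pairs not already on Slater's list. Such pairs would be manufactured from the listed ones by the iteration operators \eqref{eq-b-infinity-reduce} and \eqref{eq-b-0-raise} and by Lemma~\ref{lem-DJK}. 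Once the pairs are in hand, the remaining work (the $q^2$- and $-q$-substitutions and the triple-product evaluations) is routine.
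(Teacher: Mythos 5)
Your proposal is correct and follows essentially the same route as the paper: sum over $j$ first via Euler's identity to produce the prefactor $(-q^{\sigma+1}v;q^2)_\infty/(-q^{\sigma+1}v;q^2)_i$, set $n=i+k$ using $5i^2+4ik+2k^2=2(i+k)^2+3i^2$ to reach the left side of \eqref{id-BP-twice} with $q\mapsto q^2$, feed in Bailey pairs with $q^{n^2/2}$-type numerators, and finish with \eqref{Jacobi} and the $q\mapsto-q$ substitution. The paper's eight cases use exactly the pairs you anticipate — \eqref{G(4.1)}, \eqref{C(6)}, \eqref{C(5)}, \eqref{G(5)}, \eqref{G(4)}, \eqref{C(7)}, \eqref{G4.1-new} and \eqref{C7*} — the last two being manufactured from Slater's list by the iteration operators, just as you predicted.
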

\begin{proof}
For $\sigma=0,1$ we let
\begin{align}\label{proof-exam8-Fc-start}
    &F_\sigma(u,v,w)=F_\sigma(u,v,w;q):=\sum_{i,j,k\ge 0}\frac{q^{\frac{5}{4}(2i+\sigma)^2+j^2+2k^2+(2i+\sigma)j+2(2i+\sigma)k}u^{2i+\sigma}v^jw^k}{(q;q)_{2i+\sigma}(q^2;q^2)_j(q^2;q^2)_k}\nonumber\\
    &=q^{\frac{5}{4}\sigma^2}\sum_{i,k\ge 0}\frac{q^{5i^2+2k^2+4ik+5\sigma i+2\sigma k}u^{2i+\sigma}w^k}{(q;q)_{2i+\sigma}(q^2;q^2)_k}\sum_{j\ge 0}\frac{q^{j^2+(2i+\sigma)j}v^j}{(q^2;q^2)_j} \nonumber \\
    &=q^{\frac{5}{4}\sigma^2}\sum_{i,k\ge 0}\frac{q^{5i^2+2k^2+4ik+5\sigma i+2\sigma k}u^{2i+\sigma}w^k(-q^{2i+\sigma+1}v;q^2)_\infty}{(q;q)_{2i+\sigma}(q^2;q^2)_k} \nonumber\\
    &=q^{\frac{5}{4}\sigma^2}u^{\sigma}(-q^{\sigma+1}v;q^2)_\infty\sum_{i,k\ge 0}\frac{q^{5i^2+2k^2+4ik+5\sigma i+2\sigma k}u^{2i}w^k}{(q;q)_{2i+\sigma}(q^2;q^2)_k(-q^{\sigma+1}v;q^2)_i}.
\end{align}
(1)  From \eqref{proof-exam8-Fc-start} we have
\begin{align}
    &F_0(q^{-1},q,1)=(-q^2;q^2)_\infty\sum_{i,k\ge 0}\frac{q^{5i^2+2k^2+4ik-2i}}{(q;q)_{2i}(q^2;q^2)_k(-q^2;q^2)_i} \nonumber \\
    &=(-q^2;q^2)_\infty\sum_{m\ge 0}q^{2m^2}\sum_{i=0}^m\frac{q^{3i^2-2i}}{(q;q^2)_{i}(q^2;q^2)_{m-i}(q^4;q^4)_i}. \label{proof-exam8-1-F}
\end{align}
Substituting the Bailey pair \eqref{G(4.1)} into \eqref{id-BP-twice} and then replacing $q$ by $q^2$, we deduce that
\begin{align}
&\sum_{m\ge 0}q^{2m^2}\sum_{i=0}^m\frac{(-1)^iq^{3i^2-2i}}{(-q;q^2)_{i}(q^2;q^2)_{m-i}(q^4;q^4)_i} \nonumber \\
    &=\frac{1}{(q^2;q^2)_\infty}\Big(1+\sum_{n\ge 1}q^{\frac{9}{2}n^2-\frac{3}{2}n}(1+q^{3n})\Big) \nonumber \\
    &=\frac{1}{(q^2;q^2)_\infty}(q^3,q^6,q^9;q^9)_\infty.  \quad (\text{by \eqref{Jacobi}}) \label{proof-exam8-1-F-result}
\end{align}
Substituting \eqref{proof-exam8-1-F-result} with $q$ replaced by $-q$ into \eqref{proof-exam8-1-F}, we obtain \eqref{ex27-1-1}.

(2) Substituting the Bailey pair \eqref{C(6)} into \eqref{id-BP-twice} and then replacing $q$ by $q^2$, we deduce from \eqref{proof-exam8-Fc-start} that
\begin{align*}
    &q^{-\frac{1}{4}}F_1(q^{-1},q,1)=(-q^3;q^2)_\infty\sum_{i,k\ge 0}\frac{q^{5i^2+2k^2+4ik+3i+2k}}{(q;q)_{2i+1}(q^2;q^2)_k(-q^3;q^2)_i}\\
    &=\frac{(-q^3;q^2)_\infty}{1-q}\sum_{m\ge 0}q^{2m^2+2m}\sum_{i=0}^m\frac{q^{3i^2+i}}{(q^2;q^2)_{i}(q^2;q^2)_{m-i}(q^6;q^4)_i}\\
    &=\frac{(-q;q^2)_\infty}{(q^2;q^2)_\infty}\Big(\sum_{n\ge 0} (-1)^nq^{18n^2+6n}+\sum_{n\ge 0}(-1)^{n+1}q^{18n^2+30n+12}  \Big)\\
    &=\frac{(-q;q^2)_\infty}{(q^2;q^2)_\infty}(q^{12},q^{24},q^{36};q^{36})_\infty.  \quad (\text{by \eqref{Jacobi}})
\end{align*}
This proves \eqref{ex27-1-2}.

(3) Substituting the Bailey pair \eqref{C(5)} into \eqref{id-BP-twice} and then replacing $q$ by $q^2$, we deduce from \eqref{proof-exam8-Fc-start} that
\begin{align*}
    &F_0(q^{-\frac{1}{2}},1,1)=(-q;q^2)_\infty\sum_{i,k\ge 0}\frac{q^{5i^2+2k^2+4ik-i}}{(q;q)_{2i}(q^2;q^2)_k(-q;q^2)_i}
    \\
    &=(-q;q^2)_\infty\sum_{m\ge 0}q^{2m^2}\sum_{i=0}^m\frac{q^{3i^2-i}}{(q^2;q^2)_{i}(q^2;q^2)_{m-i}(q^2;q^4)_i}\\
    &=\frac{(-q;q^2)_\infty}{(q^2;q^2)_\infty}\Big(1+\sum_{n\ge 1}(-1)^nq^{18n^2}(q^{2n}+q^{-2n})\Big)\\
    &=\frac{(-q;q^2)_\infty}{(q^2;q^2)_\infty}(q^{16},q^{20},q^{36};q^{36})_\infty.  \quad (\text{by \eqref{Jacobi}})
\end{align*}
This proves \eqref{ex27-2-1}.

(4) From \eqref{proof-exam8-Fc-start} we have
\begin{align}
    &q^{-\frac{3}{4}}F_1(q^{-\frac{1}{2}},1,1)=(-q^2;q^2)_\infty\sum_{i,k\ge 0}\frac{q^{5i^2+2k^2+4ik+4i+2k}}{(q;q)_{2i+1}(q^2;q^2)_k(-q^2;q^2)_i}
   \nonumber \\
    &=\frac{(-q^2;q^2)_\infty}{1-q}\sum_{m\ge 0}q^{2m^2+2m}\sum_{i=0}^m\frac{q^{3i^2+2i}}{(q^3;q^2)_{i}(q^2;q^2)_{m-i}(q^4;q^4)_i}. \label{proof-exam8-4-F}
\end{align}
Substituting the Bailey pair \eqref{G(5)} into \eqref{id-BP-twice} and then replacing $q$ by $q^2$, we obtain
\begin{align}
&\sum_{m\ge 0}q^{2m^2+2m}\sum_{i=0}^m\frac{(-1)^iq^{3i^2+2i}}{(-q^3;q^2)_{i}(q^2;q^2)_{m-i}(q^4;q^4)_i}
 \nonumber   \\
    &=\frac{1}{(q^4;q^2)}\sum_{n\ge 0} (-1)^n\frac{q^{\frac{9}{2}n^2+\frac{7}{2}n}(1-q^{2n+1})}{1-q}  \nonumber \\
    &=\frac{1+q}{(q^2;q^2)_\infty}(q,q^8,q^9;q^9)_\infty.  \quad (\text{by \eqref{Jacobi}}) \label{proof-exam8-4-F-result}
\end{align}
Substituting \eqref{proof-exam8-4-F-result} with $q$ replaced by $-q$ into \eqref{proof-exam8-4-F}, we obtain  \eqref{ex27-2-2}.

(5) From \eqref{proof-exam8-Fc-start} we have
\begin{align}
    &F_0(1,q,1)=(-q^2;q^2)_\infty\sum_{i,k\ge 0}\frac{q^{5i^2+2k^2+4ik}}{(q;q)_{2i}(q^2;q^2)_k(-q^2;q^2)_i} \nonumber \\
    &=(-q^2;q^2)_\infty\sum_{m\ge 0}q^{2m^2}\sum_{i=0}^m\frac{q^{3i^2}}{(q;q^2)_{i}(q^2;q^2)_{m-i}(q^4;q^4)_i}. \label{proof-exam8-5-F}
\end{align}
Substituting the Bailey pair \eqref{G(4)} into \eqref{id-BP-twice} and then replacing $q$ by $q^2$, we deduce that
\begin{align}
    &\sum_{m\ge 0}q^{2m^2}\sum_{i=0}^m\frac{(-1)^iq^{3i^2}}{(-q;q^2)_{i}(q^2;q^2)_{m-i}(q^4;q^4)_i} \nonumber \\
    &=\frac{1}{(q^2;q^2)_\infty}\Big(1+\sum_{n\ge 1} (-1)^nq^{\frac{9}{2}n^2-\frac{1}{2}n}\frac{1-q^{2n+1}}{1-q} \Big) \nonumber \\
    &=\frac{(q^4,q^5,q^9;q^9)_\infty}{(q^2;q^2)_\infty}.  \quad (\text{by \eqref{Jacobi}}) \label{proof-exam8-5-F-result}
\end{align}
Substituting \eqref{proof-exam8-5-F-result} with $q$ replaced by $-q$ into \eqref{proof-exam8-5-F}, we obtain \eqref{ex27-3-1}.

(6) Substituting the Bailey pair \eqref{C(7)} into \eqref{id-BP-twice} and then replacing $q$ by $q^2$, we deduce from \eqref{proof-exam8-Fc-start} that
\begin{align*}
    &q^{-\frac{5}{4}}F_1(1,q,1)=(-q^3;q^2)_\infty\sum_{i,k\ge 0}\frac{q^{5i^2+2k^2+4ik+5i+2k}}{(q;q)_{2i+1}(q^2;q^2)_k(-q^3;q^2)_i}\\
    &=\frac{(-q^3;q^2)_\infty}{1-q}\sum_{m\ge 0}q^{2m^2+2m}\sum_{i=0}^m\frac{q^{3i^2+3i}}{(q^2;q^2)_{i}(q^2;q^2)_{m-i}(q^6;q^4)_i}\\
    &=\frac{(-q^3;q^2)_\infty}{(1-q)(q^4;q^2)_\infty}\sum_{n\ge 0} \Big((-1)^nq^{18n^2+10n}+(-1)^{n+1}q^{18n^2+26n+8} \Big)\\
    &=\frac{(-q;q^2)_\infty}{(q^2;q^2)_\infty}(q^8,q^{28},q^{36};q^{36})_\infty.  \quad (\text{by \eqref{Jacobi}})
\end{align*}
This proves \eqref{ex27-3-2}.

(7)  From \eqref{proof-exam8-Fc-start} we have
\begin{align}
    & F_0(q,q,q^2)=(-q^2;q^2)_\infty\sum_{i,k\ge 0}\frac{q^{5i^2+2k^2+4ik+2i+2k}}{(q;q)_{2i}(q^2;q^2)_k(-q^2;q^2)_i}
    \nonumber \\
    &=(-q^2;q^2)_\infty\sum_{m\ge 0}q^{2m^2+2m}\sum_{i=0}^m\frac{q^{3i^2}}{(q;q^2)_{i}(q^2;q^2)_{m-i}(q^4;q^4)_i}. \label{proof-exam8-7-F}
\end{align}
Substituting the Bailey pair \eqref{G4.1-new} into \eqref{id-BP-twice} and then replacing $q$ by $q^2$, we deduce that
\begin{align}
     &\sum_{m\ge 0}q^{2m^2+2m}\sum_{i=0}^m\frac{(-1)^iq^{3i^2}}{(-q;q^2)_{i}(q^2;q^2)_{m-i}(q^4;q^4)_i}
    \\
    &=\frac{1}{(q^4;q^2)_\infty}\sum_{n\ge 0}(-1)^nq^{\frac{9}{2}n^2+\frac{5}{2}n}\frac{1-q^{4n+2}}{1-q^2} \nonumber \\
    &=\frac{(q^2,q^7,q^9;q^9)_\infty}{(q^2;q^2)_\infty}.  \quad (\text{by \eqref{Jacobi}}) \label{proof-exam8-7-F-result}
\end{align}
Substituting \eqref{proof-exam8-7-F-result} with $q$ replaced by $-q$ into \eqref{proof-exam8-7-F}, we obtain \eqref{ex27-4-1}.

(8) Substituting the Bailey pair \eqref{C7*} into \eqref{id-BP-twice} and then replacing $q$ by $q^2$, we deduce from \eqref{proof-exam8-Fc-start} that
\begin{align*}
    &q^{-\frac{9}{4}} F_1(q,q,q^2)=(-q^3;q^2)_\infty\sum_{i,k\ge 0}\frac{q^{5i^2+2k^2+4ik+7i+4k}}{(q;q)_{2i+1}(q^2;q^2)_k(-q^3;q^2)_i}
    \\
    &=(-q^3;q^2)_\infty\sum_{m\ge 0}q^{2m^2+4m}\sum_{i=0}^m\frac{q^{3i^2+3i}}{(q;q)_{2i+1}(q^2;q^2)_{m-i}(-q^3;q^2)_i}\\
    &=\frac{(-q^3;q^2)_\infty}{(1-q)(q^6;q^2)_\infty}\sum_{n\ge 0} (-1)^nq^{18n^2+14n} \times \frac{1-q^{8n+4}}{1-q^4}\\
    &=\frac{(-q;q^2)_\infty}{(q^2;q^2)_\infty}(q^4,q^{32},q^{36};q^{36}). \quad (\text{by \eqref{Jacobi}})
\end{align*}
This proves \eqref{ex27-4-2}.
\end{proof}

\begin{theorem}\label{thm-8}
We have
\begin{align}
&\sum_{i,j,k\ge 0}\frac{q^{5i^2+4j^2+8k^2+4ij+8ik-4i+4j}}{(q^4;q^4)_i(q^8;q^8)_j(q^8;q^8)_k}
    =\frac{J_2^2J_3J_{24}}{J_1J_4J_6J_8}, \label{add-thm8-1} \\
    &\sum_{i,j,k\ge 0}\frac{q^{5i^2+4j^2+8k^2+4ij+8ik-2i}}{(q^4;q^4)_i(q^8;q^8)_j(q^8;q^8)_k}
    =
    \frac{J_{64,144}}{J_{4,16}}
    +q^3\frac{J_{32}J_{72}^4J_{144}}{J_{8,32}J_{4,72}J_{16,72}J_{24,72}J_{36,144}J_{64,144}}, \label{add-thm8-2}
    \\
    &\sum_{i,j,k\ge 0}\frac{q^{5i^2+4j^2+8k^2+4ij+8ik+4j}}{(q^4;q^4)_i(q^8;q^8)_j(q^8;q^8)_k}=
    q^5\frac{J_{32,144}}{J_{4,16}}
    +\frac{J_{32}J_{72}^4J_{144}}{J_{8,32}J_{8,72}J_{20,72}J_{24,72}J_{36,144}J_{32,144}}, \label{add-thm8-3}
    \\
    &\sum_{i,j,k\ge 0}\frac{q^{5i^2+4j^2+8k^2+4ij+8ik+4i+4j+8k}}{(q^4;q^4)_i(q^8;q^8)_j(q^8;q^8)_k}=
    q^9\frac{J_{16,144}}{J_{4,16}}
    +\frac{J_{32}J_{72}^4J_{144}}{J_{8,32}J_{24,72}J_{28,72}J_{32,72}J_{36,144}J_{16,144}}. \label{add-thm8-4}
\end{align}
\end{theorem}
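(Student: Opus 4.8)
The plan is to read off each left-hand side of \eqref{add-thm8-1}--\eqref{add-thm8-4} as a specialization of the Example~8 generating function. Writing $F=F_0+F_1$ with $F_\sigma$ as in \eqref{proof-exam8-Fc-start}, I first observe that after replacing $q$ by $q^4$ the four sums are exactly $F(q^{-4},q^4,1;q^4)$, $F(q^{-2},1,1;q^4)$, $F(1,q^4,1;q^4)$ and $F(q^4,q^4,q^8;q^4)$: in every case the quadratic part $5i^2+4j^2+8k^2+4ij+8ik$ is four times the exponent of $F$, and the displayed monomial reproduces the linear terms. I would then split each sum by the parity of the first index and substitute the evaluations of Theorem~\ref{thm-8-parity}, keeping in mind that the $\sigma=1$ normalization in \eqref{proof-exam8-Fc-start} contributes an extra power of $q$ to each odd part, which becomes $q^{1},q^{3},q^{5},q^{9}$ after $q\mapsto q^4$.

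For \eqref{add-thm8-2}, \eqref{add-thm8-3} and \eqref{add-thm8-4} this already finishes the computation. Their even parts are \eqref{ex27-2-1}, \eqref{ex27-3-1}, \eqref{ex27-4-1} and their odd parts are \eqref{ex27-2-2}, \eqref{ex27-3-2}, \eqref{ex27-4-2} (each with the appropriate power of $q$). These are precisely the series that, via the transfer identities \eqref{eq-78-transfer-1}--\eqref{eq-78-transfer-2}, were identified with \eqref{conj-3-even-1}--\eqref{conj-3-even-3} and \eqref{conj-3-odd-1}--\eqref{conj-3-odd-3} in the proof of Theorem~\ref{thm-7}; there the corresponding theta products were already rewritten in the $J$-notation. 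Consequently the three identities carry word-for-word the same right-hand sides as \eqref{eq-thm7-1}--\eqref{eq-thm7-3}, and I would simply quote those conversions (the elementary simplification $J_{4,16}J_8=J_4J_{16}$ turning the ``nice'' pieces into $J_{64,144}/J_{4,16}$, $J_{32,144}/J_{4,16}$, $J_{16,144}/J_{4,16}$, and the $(-q^9)$-base pieces into the stated generalized eta quotients).

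The genuinely new case is \eqref{add-thm8-1}, the one specialization of $F$ with no Example~7 counterpart, so it uses the ``leftover'' pair \eqref{ex27-1-1}--\eqref{ex27-1-2} that never entered the proof of Theorem~\ref{thm-7}. Substituting them (with $q\mapsto q^4$) writes the left-hand side as
\begin{equation*}
\frac{(-q^8;q^8)_\infty(-q^{12},q^{24},-q^{36};-q^{36})_\infty}{(q^8;q^8)_\infty}
+q\,\frac{(-q^4;q^8)_\infty(q^{48},q^{96},q^{144};q^{144})_\infty}{(q^8;q^8)_\infty}.
\end{equation*}
The crux is the theta-function identity stating that this two-term combination collapses to the single eta quotient $J_2^2J_3J_{24}/(J_1J_4J_6J_8)$. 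I would prove it by expanding both theta products through the Jacobi triple product \eqref{Jacobi} and then matching against the eta quotient with the Frye--Garvan \texttt{thetaids} package, i.e.\ reducing both sides to modular functions on a common congruence subgroup and verifying finitely many Fourier coefficients.

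The parity split and the direct substitutions are routine once Theorem~\ref{thm-8-parity} is in hand, and the conversions for \eqref{add-thm8-2}--\eqref{add-thm8-4} are inherited from Theorem~\ref{thm-7}. The one real obstacle is the collapse in \eqref{add-thm8-1}: the two surviving theta products naturally sit at level $144$, yet their sum drops all the way to level $24$, so the identity reflects genuine cancellation rather than a formal rearrangement, and this is exactly the place where the modular-forms machinery is indispensable.
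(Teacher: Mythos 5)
Your proposal is correct and takes essentially the same route as the paper: the paper's own proof is exactly the parity decomposition $F=F_0+F_1$ of the generating function \eqref{proof-exam8-start}, substitution of the evaluations from Theorem \ref{thm-8-parity}, and an appeal to the method of \cite{Frye-Garvan} for the remaining theta/eta conversions, including the level-drop collapse needed for \eqref{add-thm8-1}. Your write-up merely makes explicit the bookkeeping (the $(u,v,w)$ specializations, the prefactors $q,q^3,q^5,q^9$ on the odd parts, and the reuse of the product sides of \eqref{eq-thm7-1}--\eqref{eq-thm7-3}) that the paper's two-line proof leaves implicit.
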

Note that the product sides of \eqref{add-thm8-2}--\eqref{add-thm8-4} are the same with the product sides of \eqref{eq-thm7-1}--\eqref{eq-thm7-3}.
\begin{proof}
Let
\begin{align}\label{proof-exam8-start}
    &F(u,v,w)=F(u,v,w;q):=\sum_{i,j,k\ge 0}\frac{q^{\frac{5}{4}i^2+j^2+2k^2+ij+2ik}u^{i}v^jw^k}{(q;q)_i(q^2;q^2)_j(q^2;q^2)_k}.
\end{align}
Then
$$F(u,v,w)=F_0(u,v,w)+F_1(u,v,w)$$
where $F_\sigma(u,v,w)$ ($\sigma=0,1$) was defined in \eqref{proof-exam8-Fc-start}. Utilizing Theorem \ref{thm-8-parity} and the method in \cite{Frye-Garvan}, we obtain the desired identities.
\end{proof}

\subsection{Examples 2, 9 and 11}
For these three examples, we will use the following key identity to reduce triple sums to double sums: for $n\geq 0$ we have
\begin{align}
\sum_{i+2j=n} \frac{q^{i(i-1)/2}}{(q;q)_i(q^2;q^2)_j}&=\frac{1}{(q;q)_n}. \quad \text{(\cite[Eq.\ (2.4)]{WW-I})} \label{lem-12}
\end{align}
\subsubsection{Example 2}
This example corresponds to
\begin{align*}
A=\begin{pmatrix}
1 & 1 & 1\\
1 & 2 & 2\\
1/2 & 1 & 2
\end{pmatrix},\quad
AD=\begin{pmatrix}
2 & 2 & 1\\
2 & 4 & 2\\
1 & 2 & 2
\end{pmatrix},\quad
b\in  \bigg\{
\begin{pmatrix}
0 \\ 0 \\-1/2
\end{pmatrix},
\begin{pmatrix}
1 \\ 0 \\-1/2
\end{pmatrix},
\begin{pmatrix}
1 \\ 2 \\1/2
\end{pmatrix}
\bigg\}.
\end{align*}
The Nahm series involved here will be connected to the following identities in the work of Li and Wang \cite{LW}:
\begin{align}
&\sum_{i,j\ge 0}\frac{q^{i^2+2j^2+2ij}}{(q^2;q^2)_i(q^4;q^4)_j}
    =
    \frac{(-q;q^2)_\infty(q^3,q^4,q^7;q^7)_\infty}{(q^2;q^2)_\infty}, \quad \text{(\cite[Eq.\ (3.42)]{LW})} \label{ZL.3.28}
    \\
    &\sum_{i,j\ge 0}\frac{q^{i^2+2j^2+2ij+2j}}{(q^2;q^2)_i(q^4;q^4)_j}
    =
    \frac{(-q;q^2)_\infty(q^2,q^5,q^7;q^7)_\infty}{(q^2;q^2)_\infty}, \quad \text{(\cite[Eq.\ (3.43)]{LW})}\label{ZL.3.29}
    \\
    &\sum_{i,j\ge 0}\frac{q^{i^2+2j^2+2ij+2i+2j}}{(q^2;q^2)_i(q^4;q^4)_j}
    =
    \frac{(-q;q^2)_\infty(q,q^6,q^7;q^7)_\infty}{(q^2;q^2)_\infty}. \quad \text{(\cite[Eq.\ (3.44)]{LW})}\label{ZL.3.30}
\end{align}
\begin{theorem}
We have
\begin{align}
\sum_{i,j,k\ge 0}\frac{q^{2i^2+4j^2+2k^2+4ij+2ik+4jk-i}}{(q^2;q^2)_i(q^4;q^4)_j(q^4;q^4)_k}
&= \frac{(-q;q^2)_\infty(q^3,q^4,q^7;q^7)_\infty}{(q^2;q^2)_\infty},
\label{table3.2.1}
\\
\sum_{i,j,k\ge 0}\frac{q^{2i^2+4j^2+2k^2+4ij+2ik+4jk-i+2k}}{(q^2;q^2)_i(q^4;q^4)_j(q^4;q^4)_k}
&= \frac{(-q;q^2)_\infty(q^2,q^5,q^7;q^7)_\infty}{(q^2;q^2)_\infty},
\label{table3.2.2}
\\
\sum_{i,j,k\ge 0}\frac{q^{2i^2+4j^2+2k^2+4ij+2ik+4jk+i+4j+2k}}{(q^2;q^2)_i(q^4;q^4)_j(q^4;q^4)_k}
&= \frac{(-q;q^2)_\infty(q,q^6,q^7;q^7)_\infty}{(q^2;q^2)_\infty}.
\label{table3.2.3}
\end{align}
\end{theorem}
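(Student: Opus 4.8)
The plan is to use the finite summation formula \eqref{lem-12}, with $q$ replaced by $q^2$, to collapse the two summation variables carrying the factors $(q^2;q^2)_i$ and $(q^4;q^4)_j$ into a single index, thereby reducing each triple sum in \eqref{table3.2.1}--\eqref{table3.2.3} to one of the double sums evaluated by Li and Wang in \eqref{ZL.3.28}--\eqref{ZL.3.30}.

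First I would replace $q$ by $q^2$ in \eqref{lem-12} to obtain
\begin{align*}
\sum_{i+2j=n} \frac{q^{i^2-i}}{(q^2;q^2)_i(q^4;q^4)_j}=\frac{1}{(q^2;q^2)_n}, \quad n\geq 0.
\end{align*}
The crucial observation is that after setting $n=i+2j$, the quadratic form in the exponent separates cleanly. Indeed, using $i^2+4ij+4j^2=(i+2j)^2=n^2$ and $2ik+4jk=2k(i+2j)=2nk$, the full exponent of each summand becomes $(i^2-i)$ plus a quantity depending only on $n$ and $k$. For the linear perturbations the identity $2i+4j=2(i+2j)=2n$ handles the remaining terms: in \eqref{table3.2.1} there is no leftover linear term, in \eqref{table3.2.2} one obtains an extra $2k$, and in \eqref{table3.2.3} the terms $i+4j+2k$ combine into $2n+2k$.

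Second, I would carry out the inner summation. For instance, for \eqref{table3.2.1} the exponent $2i^2+4j^2+2k^2+4ij+2ik+4jk-i$ equals $(i^2-i)+(n^2+2nk+2k^2)$, so that
\begin{align*}
\sum_{i,j,k\ge 0}\frac{q^{2i^2+4j^2+2k^2+4ij+2ik+4jk-i}}{(q^2;q^2)_i(q^4;q^4)_j(q^4;q^4)_k}
&=\sum_{n,k\ge 0}\frac{q^{n^2+2nk+2k^2}}{(q^4;q^4)_k}\sum_{i+2j=n}\frac{q^{i^2-i}}{(q^2;q^2)_i(q^4;q^4)_j} \\
&=\sum_{n,k\ge 0}\frac{q^{n^2+2k^2+2nk}}{(q^2;q^2)_n(q^4;q^4)_k},
\end{align*}
which is exactly the left-hand side of \eqref{ZL.3.28}. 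The same manipulation applied to \eqref{table3.2.2} and \eqref{table3.2.3} produces the left-hand sides of \eqref{ZL.3.29} and \eqref{ZL.3.30} respectively, the only change being the surviving linear term ($2k$ and $2n+2k$). Invoking \eqref{ZL.3.28}--\eqref{ZL.3.30} then yields the three product sides.

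The computation is entirely routine once the right pairing is identified; the only point requiring care --- and the natural place for a sign or coefficient slip --- is verifying that the exponent decomposition leaves precisely the factor $q^{i^2-i}$ demanded by the $q\to q^2$ form of \eqref{lem-12}, and that the residual linear terms are expressible solely through $n$ and $k$. Since all three quadruples share the same matrix $AD$, this verification need only be done once for the quadratic part, after which the three cases differ only by their easily matched linear shifts.
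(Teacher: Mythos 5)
Your proposal is correct and is essentially the paper's own proof: the paper introduces bookkeeping variables $u^{i+2j}w^k$ to reduce all three sums at once to $\sum_{m,k}q^{m^2+2km+2k^2}u^mw^k/(q^2;q^2)_m(q^4;q^4)_k$ via \eqref{lem-12} with $q\to q^2$, then specializes $(u,w)$ and invokes \eqref{ZL.3.28}--\eqref{ZL.3.30}, exactly as you do case by case. The exponent decompositions you describe (including absorbing the extra $+2i$ from $i^2+i=(i^2-i)+2i$ into $2n$ in the third case) all check out.
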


\begin{proof}
We have
\begin{align}
&\sum_{i,j,k\ge 0}\frac{q^{2i^2+4j^2+2k^2+4ij+2ik+4jk-i}u^{i+2j}w^k}{(q^2;q^2)_i(q^4;q^4)_j(q^4;q^4)_k} \notag
\\
&=\sum_{k\ge 0}\frac{q^{2k^2}w^k}{(q^4;q^4)_k}
\sum_{m\ge 0}q^{m^2+2km}u^m
\sum_{i+2j=m}\frac{q^{i^2-i}}{(q^2;q^2)_i(q^4;q^4)_j} \notag
\\
&=\sum_{m,k\ge 0}\frac{q^{m^2+2km+2k^2}u^mw^k}{(q^2;q^2)_m(q^4;q^4)_k}.   \quad \text{(by \eqref{lem-12})}
\label{F3.2}
\end{align}
Setting $(u,w)=(1,1)$, $(1,q^2)$ and $(q^2,q^2)$ in   \eqref{F3.2}, and then using \eqref{ZL.3.28}--\eqref{ZL.3.30} we obtain \eqref{table3.2.1}--\eqref{table3.2.3}, respectively.
\end{proof}

\subsubsection{Example 9}
This example corresponds to
\begin{align*}
A=\begin{pmatrix}
1 & 1 & 1\\
1 & 3 & 3\\
1/2&3/2&5/2
\end{pmatrix},
AD=\begin{pmatrix}
2 & 2 & 1\\
2 & 6 & 3\\
1 & 3&5/2
\end{pmatrix},
b\in  \bigg\{
\begin{pmatrix}
0 \\ -1 \\ -1
\end{pmatrix},
\begin{pmatrix}
1 \\ -2 \\ -3/2
\end{pmatrix},
\begin{pmatrix}
1 \\ 0 \\ -1/2
\end{pmatrix}
\bigg\}.
\end{align*}
To prove its modularity, we need the following identities:
\begin{align}
    &\sum_{i,j\ge 0}\frac{q^{3i^2+4j^2+4ij-2i}}{(q^4;q^4)_i(q^8;q^8)_j}
    =\frac{1}{(q,q^4;q^5)_\infty(-q^2;q^2)_\infty}, \quad \text{(\cite[Eq.\ (3.57)]{LW})}
    \label{ZL.3.57}
    \\
    &\sum_{i,j\ge 0}\frac{q^{3i^2+4j^2+4ij-4i+4j+1}}{(q^4;q^4)_i(q^8;q^8)_j}
    =\frac{1}{(q,q^4;q^5)_\infty(-q^2;q^2)_\infty}, \quad \text{(\cite[Eq.\ (3.58)]{LW})}
    \label{ZL.3.58}
    \\
    &\sum_{i,j\ge 0}\frac{q^{3i^2+4j^2+4ij+4j}}{(q^4;q^4)_i(q^8;q^8)_j}
    =\frac{1}{(q^2,q^3;q^5)_\infty(-q^2;q^2)_\infty}. \quad \text{(\cite[Eq.\ (3.59)]{LW})}
    \label{ZL.3.59}
\end{align}
\begin{theorem}
We have
\begin{align}
\sum_{i,j,k\ge 0}\frac{q^{5i^2+12j^2+4k^2+12ij+4ik+8jk-4i-4j}}{(q^4;q^4)_i(q^8;q^8)_j(q^8;q^8)_k}
&=\frac{1}{(q,q^4;q^5)_\infty(-q^2;q^2)_\infty},
\label{table3.9.1}
\\
\sum_{i,j,k\ge 0}\frac{q^{5i^2+12j^2+4k^2+12ij+4ik+8jk-6i-8j+4k+1}}{(q^4;q^4)_i(q^8;q^8)_j(q^8;q^8)_k}
&=\frac{1}{(q,q^4;q^5)_\infty(-q^2;q^2)_\infty},
\label{table3.9.2}
\\
\sum_{i,j,k\ge 0}\frac{q^{5i^2+12j^2+4k^2+12ij+4ik+8jk-2i+4k}}{(q^4;q^4)_i(q^8;q^8)_j(q^8;q^8)_k}
&=\frac{1}{(q^2,q^3;q^5)_\infty(-q^2;q^2)_\infty}.
\label{table3.9.3}
\end{align}
\end{theorem}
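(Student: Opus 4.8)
The plan is to mimic the reduction carried out for Example~2 around~\eqref{F3.2}: I would introduce a two–parameter generating sum, collapse two of the three indices into one by means of the key identity~\eqref{lem-12}, and then read off the three cases from the Li--Wang identities~\eqref{ZL.3.57}--\eqref{ZL.3.59}. Concretely, I would set
\begin{align*}
H(u,w):=\sum_{i,j,k\ge 0}\frac{q^{5i^2+12j^2+4k^2+12ij+4ik+8jk-2i}\,u^{i+2j}w^k}{(q^4;q^4)_i(q^8;q^8)_j(q^8;q^8)_k}
\end{align*}
and establish the reduction formula
\begin{align*}
H(u,w)=\sum_{m,k\ge 0}\frac{q^{3m^2+4mk+4k^2}\,u^{m}w^{k}}{(q^4;q^4)_m(q^8;q^8)_k}.
\end{align*}

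To prove this, the computational heart is the change of variable $m=i+2j$. Here one checks the two algebraic facts $5i^2+12j^2+12ij=3m^2+2i^2$ and $4ik+8jk=4k(i+2j)=4mk$, so that after pulling out $u^{m}w^{k}$ and the factor $q^{3m^2+4mk+4k^2}$ the summand's remaining $q$–exponent is exactly $2i^2-2i=4\binom{i}{2}$, depending on $i$ alone. Summing over the pairs $(i,j)$ with $i+2j=m$ fixed then gives
\begin{align*}
\sum_{i+2j=m}\frac{q^{4\binom{i}{2}}}{(q^4;q^4)_i(q^8;q^8)_j}=\frac{1}{(q^4;q^4)_m},
\end{align*}
which is precisely~\eqref{lem-12} with $q$ replaced by $q^4$. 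This yields the reduction formula, the $j$–index and the unwanted cross terms $im$ and $ik$ having disappeared.

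Finally, I would specialize $(u,w)$. Since the base exponent carries the linear term $-2i$ — exactly the term demanded by~\eqref{lem-12} — each of the three linear forms in the theorem differs from the base only by a power of $q^{i+2j}=q^{m}$, a power of $q^{k}$, and a constant. Taking $(u,w)=(q^{-2},1)$ produces the left side of~\eqref{table3.9.1} together with the double sum $\sum_{m,k}q^{3m^2+4mk+4k^2-2m}/\big((q^4;q^4)_m(q^8;q^8)_k\big)$, which is~\eqref{ZL.3.57}; taking $(u,w)=(q^{-4},q^4)$ and multiplying by $q$ produces~\eqref{table3.9.2} and matches~\eqref{ZL.3.58}; and $(u,w)=(1,q^4)$ produces~\eqref{table3.9.3} and matches~\eqref{ZL.3.59}.

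The method is essentially mechanical once the reduction formula is in hand, so there is no serious analytic obstacle. The one point that must be verified with care — and the linchpin of the whole argument — is the clean decoupling of the index $i$: namely that the quadratic form collapses to $3m^2+4mk+4k^2+2i^2$ with vanishing $im$ and $ik$ cross terms, and simultaneously that every one of the three linear parts contributes precisely $-2i$ in the pure $i$ direction. Both are forced by the shape of Mizuno's quadruple, and it is exactly this compatibility that lets~\eqref{lem-12} apply and that routes the three triple sums onto the three Li--Wang double sums.
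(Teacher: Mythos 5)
Your proposal is correct and follows essentially the same route as the paper: the paper's proof of \eqref{table3.9.1}--\eqref{table3.9.3} is exactly the reduction \eqref{F3.9} via the substitution $m=i+2j$ and the key identity \eqref{lem-12} with $q\mapsto q^4$, followed by the specializations $(u,w)=(q^{-2},1)$, $(q^{-4},q^4)$ and $(1,q^4)$ matched against \eqref{ZL.3.57}--\eqref{ZL.3.59}. Your verification of the decoupling $5i^2+12j^2+12ij=3m^2+2i^2$ and of the residual exponent $2i^2-2i=4\binom{i}{2}$ is precisely the computation the paper leaves implicit.
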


\begin{proof}
We have
\begin{align}
&\sum_{i,j,k\ge 0}\frac{q^{5i^2+12j^2+4k^2+12ij+4ik+8jk-2i}u^{i+2j}w^{k}}{(q^4;q^4)_i(q^8;q^8)_j(q^8;q^8)_k}\notag
\\
&=
\sum_{k\ge 0}\frac{q^{4k^2}w^k}{(q^8;q^8)_k}
\sum_{m\ge 0}q^{3m^2+4km}u^m
\sum_{i+2j=m}\frac{q^{2i^2-2i}}{(q^4;q^4)_i(q^8;q^8)_j}\notag
\\
&=\sum_{m,k\ge 0}\frac{q^{3m^2+4km+4k^2}u^mw^k}{(q^4;q^4)_m(q^8;q^8)_k}.  \quad \text{(by \eqref{lem-12})}
\label{F3.9}
\end{align}
Setting $(u,w)=(q^{-2},1)$, $(q^{-4},q^4)$ and $(1,q^4)$ in \eqref{F3.9} and  using \eqref{ZL.3.57}--\eqref{ZL.3.59}, we obtain \eqref{table3.9.1}--\eqref{table3.9.3}, respectively.
\end{proof}

\subsubsection{Example 11}
This example corresponds to
\begin{align*}
A=\begin{pmatrix}
4 & 4 & 4\\
4 & 6 & 6\\
2 & 3 & 4\\
\end{pmatrix},\quad
AD=\begin{pmatrix}
8 & 8 & 4\\
8 & 12 & 6\\
4 & 6 & 4\\
\end{pmatrix},\quad
b\in  \bigg\{
\begin{pmatrix}
0 \\ -1\\ -1
\end{pmatrix},
\begin{pmatrix}
2 \\ 1 \\ 0
\end{pmatrix}
\bigg\}.
\end{align*}
To prove its modularity, we need the following identities:
\begin{align}
    &\sum_{i,j\ge 0}\frac{q^{\frac{3}{2}i^2+4ij+4j^2-\frac{1}{2}i}}{(q;q)_i(q^2;q^2)_j}
    =
    \frac{1}{(q,q^4;q^5)_\infty},\quad \text{(\cite[Eq.\ (47)]{Mizuno})}
    \label{mizuno-47}
    \\
    &\sum_{i,j\ge 0}\frac{q^{\frac{3}{2}i^2+4ij+4j^2+\frac{1}{2}i+2j}}{(q;q)_i(q^2;q^2)_j}
    =
    \frac{1}{(q^2,q^3;q^5)_\infty}.\quad \text{(\cite[Eq.\ (48)]{Mizuno})}
    \label{mizuno-48}
\end{align}
\begin{theorem}
We have
\begin{align}
&\sum_{i,j,k\ge 0}\frac{q^{2i^2+6j^2+4k^2+6ij+4ik+8jk-i-j}}{(q;q)_i(q^2;q^2)_j(q^2;q^2)_k}
=\frac{1}{(q,q^4;q^5)_\infty},
\label{table3.11.1}
\\
&\sum_{i,j,k\ge 0}\frac{q^{2i^2+6j^2+4k^2+6ij+4ik+8jk+j+2k}}{(q;q)_i(q^2;q^2)_j(q^2;q^2)_k}
=\frac{1}{(q^2,q^3;q^5)_\infty}.
\label{table3.11.2}
\end{align}
\end{theorem}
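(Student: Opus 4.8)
The plan is to follow the same reduction strategy used for Examples 2 and 9: introduce a two-variable generating function, collapse the pair of indices $(i,j)$ into a single index $m=i+2j$ by means of the key identity \eqref{lem-12}, and thereby reduce each of the two triple sums to one of Mizuno's double-sum identities \eqref{mizuno-47} and \eqref{mizuno-48}. Both identities \eqref{table3.11.1} and \eqref{table3.11.2} should come out of a single generating function evaluated at two points.

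Concretely, I would set
\begin{align}
H(u,w):=\sum_{i,j,k\ge 0}\frac{q^{2i^2+6j^2+4k^2+6ij+4ik+8jk-i-j}\,u^{i+2j}w^k}{(q;q)_i(q^2;q^2)_j(q^2;q^2)_k}.
\end{align}
The first step is to note that the mixed terms $4ik+8jk$ equal $4km$ with $m=i+2j$, so $k$ decouples from the $(i,j)$-summation apart from this clean dependence on $m$. The crucial second step is to rewrite the pure $(i,j)$-part together with the linear terms: substituting $j=(m-i)/2$ one finds
\begin{align}
2i^2+6j^2+6ij-i-j=\binom{i}{2}+\tfrac{3}{2}m^2-\tfrac{m}{2}.
\end{align}
The point is that the $i$-dependence collapses exactly to $q^{\binom{i}{2}}$, which is precisely the weight appearing in \eqref{lem-12}; the linear term $-i-j$ is chosen so that the spurious $\tfrac12 i$ produced by completing the square is canceled. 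Applying \eqref{lem-12} to the inner sum over $i+2j=m$ then yields
\begin{align}
H(u,w)=\sum_{m,k\ge 0}\frac{q^{\frac{3}{2}m^2-\frac{m}{2}+4km+4k^2}\,u^m w^k}{(q;q)_m(q^2;q^2)_k}.
\end{align}

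Finally I would specialize. Setting $(u,w)=(1,1)$ reproduces exactly the left-hand side of \eqref{mizuno-47} with $(i,j)$ renamed $(m,k)$, which gives \eqref{table3.11.1}. Setting $(u,w)=(q,q^2)$ multiplies the summand by $q^{m+2k}$ and turns the exponent into $\tfrac32 m^2+\tfrac m2+4km+4k^2+2k$, which is the left-hand side of \eqref{mizuno-48}, and this proves \eqref{table3.11.2}. The shift $(u,w)\mapsto(q,q^2)$ is precisely what converts the linear part $-i-j$ baked into $H$ into the $+j+2k$ appearing in the second identity, since their difference is $i+2j+2k=m+2k$.

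I do not anticipate a serious obstacle, since the argument is purely algebraic manipulation relying on already-established identities. The only delicate point is the bookkeeping of linear terms in the second step: one must verify that, after completing the square in the $(i,j)$-variables, the residual linear-in-$i$ contribution is exactly canceled by the chosen linear terms, so that the inner sum matches the hypothesis of \eqref{lem-12} verbatim. Once this cancellation and the two specializations are checked against \eqref{mizuno-47}--\eqref{mizuno-48}, the proof is complete.
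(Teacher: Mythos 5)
Your proposal is correct and follows essentially the same route as the paper: both collapse the $(i,j)$-sum via the key identity \eqref{lem-12} to reach the double sum $\sum_{m,k}q^{\frac{3}{2}m^2+4mk+4k^2}u^mw^k/(q;q)_m(q^2;q^2)_k$ (up to a harmless shift $u\mapsto uq^{-1/2}$ in how the linear term is parametrized) and then invoke \eqref{mizuno-47} and \eqref{mizuno-48}. The algebraic bookkeeping you flag as the delicate point checks out exactly as you state it.
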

\begin{proof}
We have
\begin{align}
&\sum_{i,j,k\ge 0}\frac{q^{2i^2+6j^2+4k^2+6ij+4ik+8jk-\frac{1}{2}i}u^{i+2j}w^{k}}{(q;q)_i(q^2;q^2)_j(q^2;q^2)_k}\notag
\\
&=\sum_{k\ge 0}\frac{q^{4k^2}w^k}{(q^2;q^2)_k}
\sum_{m\ge 0}q^{\frac{3}{2}m^2+4km}u^m
\sum_{i+2j=m}\frac{q^{\frac{1}{2}(i^2-i)}}{(q;q)_i(q^2;q^2)_j}\notag
\\
&=\sum_{m,k\ge 0}\frac{q^{\frac{3}{2}m^2+4mk+4k^2}u^mw^k}{(q;q)_m(q^2;q^2)_k}. \quad \text{(by  \eqref{lem-12})}
\label{F3.11}
\end{align}
Setting $(u,w)=(q^{-\frac{1}{2}},1)$ and $(q^{\frac{1}{2}},q^2)$ in \eqref{F3.11}, by \eqref{mizuno-47}  and  \eqref{mizuno-48} we obtain \eqref{table3.11.1} and \eqref{table3.11.2}, respectively.
\end{proof}

\subsection{Example 3}
This example corresponds to
\begin{align*}
A=\begin{pmatrix}
1 & 1 & 0\\
1 & 4 & 4\\
0 & 2 & 3
\end{pmatrix},\quad
AD=\begin{pmatrix}
2 & 2 & 0\\
2 & 8 & 4\\
0 & 4 & 3
\end{pmatrix},\quad
b\in  \bigg\{
\begin{pmatrix}
1 \\ 0 \\ -1/2
\end{pmatrix},
\begin{pmatrix}
1 \\ 4 \\3/2
\end{pmatrix}
\bigg\}.
\end{align*}
We will connect it with the following identities found in
the work of Kur\c{s}ung\"oz \cite[Corollary 18]{Kursungoz-JCTA}:
\begin{align}
    \sum_{i,j\ge 0}\frac{q^{(3i^2-i)/2+4ij+4j^2}}{(q;q)_i(q^4;q^4)_j}
    &=
    \frac{1}{(q,q^4,q^7;q^8)_\infty}, \label{Kur.cor18.1}
    \\
    \sum_{i,j\ge 0}\frac{q^{(3i^2+3i)/2+4ij+4j^2+4j}}{(q;q)_i(q^4;q^4)_j}
    &=
    \frac{1}{(q^3,q^4,q^5;q^8)_\infty}.\label{Kur.cor18.2}
\end{align}
\begin{theorem}
We have
\begin{align}
\sum_{i,j,k\ge 0}\frac{q^{\frac{3}{2}i^2+4j^2+k^2+4ij+2jk-\frac{1}{2}i+k}}{(q;q)_i(q^2;q^2)_j(q^2;q^2)_k}
&=\frac{(q^3,q^5,q^8;q^8)_\infty}{(q;q)_\infty},
\label{table3.3.1}
\\
\sum_{i,j,k\ge 0}\frac{q^{\frac{3}{2}i^2+4j^2+k^2+4ij+2jk+\frac{3}{2}i+4j+k}}{(q;q)_i(q^2;q^2)_j(q^2;q^2)_k}
&=\frac{(q,q^7,q^8;q^8)_\infty}{(q;q)_\infty}.
\label{table3.3.2}
\end{align}
\end{theorem}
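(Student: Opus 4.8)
The plan is to sum over the index $k$ first, exactly as the introduction promises, and thereby reduce each triple sum to one of Kur\c{s}ung\"oz's double sums \eqref{Kur.cor18.1}--\eqref{Kur.cor18.2}. The key observation is that in both \eqref{table3.3.1} and \eqref{table3.3.2} the part of the exponent involving $k$ is the same, namely $k^2+(2j+1)k$, because the cross term $2jk$ and the linear term $k$ combine identically in the two cases and there is no $ik$ term. Writing $k^2+(2j+1)k=2\binom{k}{2}+(2j+2)k$ and applying the second Euler identity in \eqref{euler} with $q$ replaced by $q^2$ and $z=q^{2j+2}$ gives
\begin{align*}
\sum_{k\ge 0}\frac{q^{k^2+(2j+1)k}}{(q^2;q^2)_k}=(-q^{2j+2};q^2)_\infty=\frac{(-q^2;q^2)_\infty}{(-q^2;q^2)_j}.
\end{align*}

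After carrying out this summation I would pull out the factor $(-q^2;q^2)_\infty$ and absorb the remaining $(-q^2;q^2)_j$ into the denominator via $(q^2;q^2)_j(-q^2;q^2)_j=(q^4;q^4)_j$. For \eqref{table3.3.1} this leaves
\begin{align*}
(-q^2;q^2)_\infty\sum_{i,j\ge 0}\frac{q^{\frac{3}{2}i^2+4j^2+4ij-\frac{1}{2}i}}{(q;q)_i(q^4;q^4)_j},
\end{align*}
whose inner double sum is exactly \eqref{Kur.cor18.1} (note $\tfrac{3}{2}i^2-\tfrac{1}{2}i=(3i^2-i)/2$), so it equals $1/(q,q^4,q^7;q^8)_\infty$. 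Similarly, in \eqref{table3.3.2} the linear exponent becomes $\tfrac{3}{2}i+4j$, the double sum matches \eqref{Kur.cor18.2}, and it equals $1/(q^3,q^4,q^5;q^8)_\infty$.

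The final step is to verify the product identities
\begin{align*}
\frac{(-q^2;q^2)_\infty}{(q,q^4,q^7;q^8)_\infty}=\frac{(q^3,q^5,q^8;q^8)_\infty}{(q;q)_\infty},\qquad
\frac{(-q^2;q^2)_\infty}{(q^3,q^4,q^5;q^8)_\infty}=\frac{(q,q^7,q^8;q^8)_\infty}{(q;q)_\infty}.
\end{align*}
These reduce to elementary rearrangements of infinite products sorted by residues modulo $8$: using $(-q^2;q^2)_\infty=1/(q^2,q^6;q^8)_\infty$ on the left and $1/(q;q)_\infty=1/(q,q^2,q^3,q^4,q^5,q^6,q^7,q^8;q^8)_\infty$ on the right, both sides of the first identity collapse to $1/(q,q^2,q^4,q^6,q^7;q^8)_\infty$ and both sides of the second to $1/(q^2,q^3,q^4,q^5,q^6;q^8)_\infty$. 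There is no genuine obstacle in this argument; the only points requiring care are the choice to sum over $k$ (whose $k^2$ coefficient is exactly $1$, so that Euler's identity applies cleanly, unlike the $i$- or $j$-sums) and tracking the linear terms so that the reduced double sums align precisely with Kur\c{s}ung\"oz's exponents.
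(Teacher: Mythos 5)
Your proposal is correct and follows essentially the same route as the paper: the paper also sums over $k$ first via Euler's identity \eqref{euler}, absorbs $(-q^{2j+2};q^2)_\infty=(-q^2;q^2)_\infty/(-q^2;q^2)_j$ into the $j$-denominator to get $(q^4;q^4)_j$, and then invokes \eqref{Kur.cor18.1} and \eqref{Kur.cor18.2}. The only cosmetic difference is that the paper works with a three-parameter generating function $F(u,v,w)$ and specializes $(u,v,w)=(q^{-1/2},1,q)$ and $(q^{3/2},q^4,q)$, while you treat the two cases directly and also spell out the final mod-$8$ product rearrangement that the paper leaves implicit.
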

\begin{proof}
We have
\begin{align}
&\sum_{i,j,k\ge 0}\frac{q^{\frac{3}{2}i^2+4j^2+k^2+4ij+2jk}u^iv^jw^k}{(q;q)_i(q^2;q^2)_j(q^2;q^2)_k}\notag
\\
&=\sum_{i,j\ge 0}\frac{q^{\frac{3}{2}i^2+4ij+4j^2}u^iv^j}{(q;q)_i(q^2;q^2)_j}
\sum_{k\ge 0}\frac{q^{k^2-k}(wq^{1+2j})^k}{(q^2;q^2)_k}\notag
\\
&=\sum_{i,j\ge 0}\frac{q^{\frac{3}{2}i^2+4ij+4j^2}u^iv^j(-wq^{1+2j};q^2)_\infty}{(q;q)_i(q^2;q^2)_j}\quad \text{(by \eqref{euler-2})}
\notag
\\
&=(-wq;q^2)_\infty\sum_{i,j\ge 0}\frac{q^{\frac{3}{2}i^2+4ij+4j^2}u^iv^j}{(q;q)_i(q^2;q^2)_j(-wq;q^2)_j}.
\label{F3.3}
\end{align}
Setting $(u,v,w)=(q^{-\frac{1}{2}},1,q)$ and $(q^{\frac{3}{2}},q^4,q)$ in \eqref{F3.3}, and then using \eqref{Kur.cor18.1} and \eqref{Kur.cor18.2}, we obtain \eqref{table3.3.1} and \eqref{table3.3.2}, respectively.
\end{proof}

\subsection{Example 4}
This example corresponds to
\begin{align*}
A=\begin{pmatrix}
2 & 1 & 0\\
1 & 2 & 2\\
0 & 1 & 2
\end{pmatrix},\quad
AD=\begin{pmatrix}
4 & 2 & 0\\
2 & 4 & 2\\
0 & 2 & 2
\end{pmatrix},\quad
b\in  \bigg\{
\begin{pmatrix}
0 \\ 1 \\ 0
\end{pmatrix},
\begin{pmatrix}
2 \\ 1 \\ 0
\end{pmatrix},
\begin{pmatrix}
2 \\ 3 \\ 1
\end{pmatrix}
\bigg\}.
\end{align*}

We will confirm its modularity in Theorem \ref{thm-exam-4}. Before that, we first establish the identities announced in Theorem \ref{thm-exam-4-relation}. We need some preparations before giving the proof of this theorem.

Following \cite[Eq.\ (2.5)]{Andrews1974PNAS} we define for $1\leq i\leq k$ and $k\geq 2$ that
\begin{align}
Q_{k,i}(x):=\sum_{n_1,n_2,\dots,n_{k-1}\geq 0} \frac{x^{N_1+N_2+\cdots+N_{k-1}}q^{N_1^2+N_2^2+\cdots+N_{k-1}^2+N_i+\cdots+N_{k-1}}}{(q;q)_{n_1}(q;q)_{n_2}\cdots (q;q)_{n_{k-1}}}
\end{align}
where as before, $N_j=n_j+n_{j+1}+\cdots+n_{k-1}$ for $j=1,2,\dots,k-1$ and $N_j=0$ for $j\geq k$. For convenience, we let $Q_{k,0}(x)=0$. It was proved that \cite[Eq.\ (2.2)]{Andrews1974PNAS}
\begin{align}
    Q_{k,i}(x)-Q_{k,i-1}(x)=(xq)^{i-1}Q_{k,k-i+1}(xq). \label{Q-general}
\end{align}
In particular, when $k=4$ and $i=1,2,3,4$ we have
\begin{align}
&Q_{4,1}(x)=Q_{4,4}(xq), \label{Q-relation-1} \\
&Q_{4,2}(x)-Q_{4,1}(x)=(xq)Q_{4,3}(xq), \label{Q-relation-2} \\
&Q_{4,3}(x)-Q_{4,2}(x)=(xq)^2Q_{4,2}(xq), \label{Q-relation-3} \\
&Q_{4,4}(x)-Q_{4,3}(x)=(xq)^3Q_{4,1}(xq). \label{Q-relation-4}
\end{align}
We consider the two series in Theorem \ref{thm-exam-4-relation}:
\begin{align}
&L_2(x):=\sum_{i,j,k\geq 0} \frac{q^{i^2+2j^2+2k^2+2ij+2jk+j+2k}}{(q;q)_i(q^2;q^2)_j(q^2;q^2)_k}x^{i+2j+2k}, \label{L2-defn} \\
&L_4(x):=\sum_{i,j,k\geq 0} \frac{q^{i^2+2j^2+2k^2+2ij+2jk+j}}{(q;q)_i(q^2;q^2)_j(q^2;q^2)_k}x^{i+2j+2k}. \label{L4-defn}
\end{align}

Motivated by \eqref{Q-relation-1} and \eqref{Q-relation-3}, we will further define two series $L_1(x)$ and $L_3(x)$ in the following lemma. We aim to show that $L_i(x)$ ($i=1,2,3,4$) also satisfy the system of equations \eqref{Q-relation-1}--\eqref{Q-relation-4}.
\begin{lemma}\label{lem-L-system}
Let
\begin{align}
    &L_1(x):=L_4(xq), \label{L-relation-1}\\
    &L_3(x):=L_2(x)+(xq)^2L_2(xq).  \label{L-relation-3}
\end{align}
We have
\begin{align}
&L_2(x)-L_1(x)=xqL_3(xq),  \label{L-relation-2}\\
&L_4(x)-L_3(x)=(xq)^3L_1(xq). \label{L-relation-4}
\end{align}
\end{lemma}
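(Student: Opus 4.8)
The plan is to substitute the definitions \eqref{L-relation-1} and \eqref{L-relation-3} into the two claimed relations, thereby reducing everything to two functional equations that involve only $L_2$ and $L_4$ evaluated at $x$, $xq$ and $xq^2$. Indeed, since $L_1(x)=L_4(xq)$ and $L_3(xq)=L_2(xq)+x^2q^4L_2(xq^2)$, the relation \eqref{L-relation-2} is equivalent to the first line below, while, using $L_3(x)=L_2(x)+x^2q^2L_2(xq)$ and $L_1(xq)=L_4(xq^2)$, the relation \eqref{L-relation-4} is equivalent to the second line:
\begin{align}
L_2(x)-L_4(xq)&=xq\,L_2(xq)+x^3q^5L_2(xq^2), \\
L_4(x)-L_2(x)-x^2q^2L_2(xq)&=x^3q^3L_4(xq^2).
\end{align}
Thus it suffices to prove these two identities by the method of $q$-difference equations, exploiting the factorization $i^2+2j^2+2k^2+2ij+2jk=(i+j)^2+(j+k)^2+k^2$ to keep the exponent bookkeeping transparent.

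For the second identity I would isolate the innermost sum over $k$. Writing $h(z):=\sum_{k\ge 0}q^{2k^2}z^k/(q^2;q^2)_k$, Euler's identities \eqref{euler} (applied with $q$ replaced by $q^2$) yield the Rogers--Ramanujan type functional equation $h(z)=h(zq^2)+zq^2h(zq^4)$. The series $L_2(x)$ and $L_4(x)$ differ only in that the $k$-sum contributes $h(x^2q^{2j+2})$ versus $h(x^2q^{2j})$, so the relation $h(x^2q^{2j})-h(x^2q^{2j+2})=x^2q^{2j+2}h(x^2q^{2j+4})$ collapses $L_4(x)-L_2(x)$ into $x^2q^2M(x)$ for an explicit double/triple sum $M(x)$. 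The residual identity $M(x)=L_2(xq)+xq\,L_4(xq^2)$ then follows from the single index shift $i\mapsto i+1$, using $(1-q^i)/(q;q)_i=1/(q;q)_{i-1}$ to match the two exponents.

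For the first identity I would instead isolate the sum over $i$. With $g(y):=\sum_{i\ge 0}q^{i^2}y^i/(q;q)_i$, Euler's identities give $g(y)=g(yq)+yq\,g(yq^2)$, and one checks that $L_2(x)$ carries the factor $g(xq^{2j})$ whereas $L_4(xq)$ carries $g(xq^{2j+1})$. A single application of this functional equation splits $L_2(x)-L_4(xq)$ into two sums; expanding $xq\,L_2(xq)$ by the same equation reproduces one of these two sums verbatim, and the leftover difference reduces to $x^3q^5L_2(xq^2)$ via the shift $k\mapsto k+1$, using $(1-q^{2k})/(q^2;q^2)_k=1/(q^2;q^2)_{k-1}$.

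The main obstacle is that the monomial weight $x^{i+2j+2k}$ couples all three summation indices, so no single functional equation closes the recursion on its own; in particular one cannot simply peel off one variable and read off a shifted copy of $L_2$ or $L_4$. The decisive point is to peel the \emph{right} index for each relation — $k$ for \eqref{L-relation-4} and $i$ for \eqref{L-relation-2} — and to combine two applications of the appropriate Rogers--Ramanujan functional equation with exactly one elementary index shift. Once the correct index is chosen, the only delicate part is the exponent bookkeeping, which becomes routine after the quadratic form is written as $(i+j)^2+(j+k)^2+k^2$.
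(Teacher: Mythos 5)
Your proposal is correct, and your reduction of \eqref{L-relation-2} and \eqref{L-relation-4} to the two displayed functional equations is exactly the paper's starting point. For \eqref{L-relation-4} your argument coincides with the paper's: peeling the $k$-sum and applying $h(z)-h(zq^2)=zq^2h(zq^4)$ is the same computation as the paper's shift $k\mapsto k+1$ producing $x^2q^2M(x)$, followed by the shift $i\mapsto i+1$ giving $M(x)-L_2(xq)=xqL_4(xq^2)$. For \eqref{L-relation-2}, however, you take a genuinely different and more economical route. The paper splits $1-q^{i+2j}=(1-q^i)+q^i(1-q^{2j})$ to get $L_2(x)-L_4(xq)=xqG(xq)+x^2q^3G(xq^2)$ for an auxiliary series $G$, and then must prove the separate relation $G(x)+xqG(xq)=L_2(x)+x^2q^2L_2(xq)$ by introducing two further series $T_1,T_2$; your approach avoids $G$, $T_1$, $T_2$ altogether. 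I checked that it closes up: writing $c_{j,k}$ for the $(j,k)$-coefficient so that $L_2(x)=\sum c_{j,k}g(xq^{2j})$, $L_4(xq)=\sum c_{j,k}q^{2j}g(xq^{2j+1})$ and $xqL_2(xq)=\sum c_{j,k}xq^{2j+2k+1}g(xq^{2j+1})$, one application of $g(y)=g(yq)+yqg(yq^2)$ to each of $L_2(x)$ and $xqL_2(xq)$ yields four sums; the piece $\sum c_{j,k}(1-q^{2j})g(xq^{2j+1})$ equals $\sum c_{j,k}x^2q^{4j+2k+3}g(xq^{2j+3})$ after the shift $j\mapsto j+1$, and the remaining difference $\sum c_{j,k}xq^{2j+1}(1-q^{2k})g(xq^{2j+2})$ equals $x^3q^5L_2(xq^2)$ after the shift $k\mapsto k+1$. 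Two small inaccuracies in your write-up: the matched sum is not reproduced ``verbatim'' but only after the reindexing in $j$, and consequently this half of the argument needs two index shifts rather than ``exactly one.'' Neither affects correctness; your route is, if anything, cleaner than the published one.
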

\begin{proof}
We denote
\begin{align}
    P(i,j,k)=(i+j)^2+(j+k)^2+k^2.
\end{align}
Note the following simple but useful fact:
\begin{align}
P(i+1,j,k)-P(i,j,k)&=2i+2j+1, \label{Q-diff-1} \\
P(i,j+1,k)-P(i,j,k)&=2i+4j+2k+2, \label{Q-diff-2} \\
P(i,j,k+1)-P(i,j,k)&=2j+4k+2. \label{Q-diff-3}
\end{align}
We have
\begin{align}
&L_4(x)-L_2(x)=\sum_{i,j,k\geq 0} \frac{q^{i^2+2j^2+2k^2+2ij+2jk+j}(1-q^{2k})}{(q;q)_i(q^2;q^2)_j(q^2;q^2)_k}x^{i+2j+2k} \nonumber \\
&=\sum_{i,j,k\geq 0} \frac{q^{P(i,j,k+1)+j}}{(q;q)_i(q^2;q^2)_j(q^2;q^2)_k}x^{i+2j+2(k+1)}  \nonumber \\
&=x^2q^2\sum_{i,j,k\geq 0} \frac{q^{(i+j)^2+(j+k)^2+k^2+3j+4k}}{(q;q)_i(q^2;q^2)_j(q^2;q^2)_k}x^{i+2j+2k}. \quad \text{(by \eqref{Q-diff-3})}
\end{align}
Hence we have
\begin{align}
&L_4(x)-L_2(x)-x^2q^2L_2(xq) \nonumber \\
&=x^2q^2\sum_{i,j,k\geq 0} \frac{q^{(i+j)^2+(j+k)^2+k^2+3j+4k}(1-q^i)}{(q;q)_i(q^2;q^2)_j(q^2;q^2)_k}x^{i+2j+2k} \nonumber \\
&=x^2q^2\sum_{i,j,k\geq 0} \frac{q^{P(i+1,j,k)+3j+4k}}{(q;q)_i(q^2;q^2)_j(q^2;q^2)_k}x^{i+1+2j+2k} \nonumber \\
&=x^3q^3\sum_{i,j,k\geq 0} \frac{q^{(i+j)^2+(j+k)^2+k^2+2i+5j+4k}}{(q;q)_i(q^2;q^2)_j(q^4;q^4)_k}x^{i+2j+2k} \quad \text{(by \eqref{Q-diff-1})} \nonumber \\
&=x^3q^3L_4(xq^2). \label{L1L2-1}
\end{align}
Using \eqref{L-relation-1} and \eqref{L-relation-3} we deduce from \eqref{L1L2-1} that
\begin{align}
    L_4(x)-L_3(x)=(xq)^3L_1(xq).
\end{align}
This proves \eqref{L-relation-4}.

It now remains to prove \eqref{L-relation-2}. By \eqref{L-relation-1} and \eqref{L-relation-3} it is equivalent to
\begin{align}\label{add-L-equiv}
L_2(x)-L_4(xq)=xqL_2(xq)+x^3q^5L_2(xq^2).
\end{align}
By definition we have
\begin{align}
&L_2(x)-L_4(xq)=\sum_{i,j,k\geq 0} \frac{q^{P(i,j,k)+j+2k}(1-q^{i+2j})}{(q;q)_i(q^2;q^2)_j(q^2;q^2)_k}x^{i+2j+2k} \\
&=\sum_{i,j,k\geq 0} \frac{q^{P(i,j,k)+j+2k}(1-q^i)}{(q;q)_i(q^2;q^2)_j(q^2;q^2)_k}x^{i+2j+2k} +\sum_{i,j,k\geq 0} \frac{q^{P(i,j,k)+j+2k}q^i(1-q^{2j})}{(q;q)_i(q^2;q^2)_j(q^2;q^2)_k}x^{i+2j+2k} \nonumber \\
&=\sum_{i,j,k\geq 0} \frac{q^{P(i+1,j,k)+j+2k}}{(q;q)_i(q^2;q^2)_j(q^2;q^2)_k}x^{i+1+2j+2k} +\sum_{i,j,k\geq 0} \frac{q^{P(i,j+1,k)+j+1+2k}q^i}{(q;q)_i(q^2;q^2)_j(q^2;q^2)_k}x^{i+2(j+1)+2k} \nonumber \\
&=\sum_{i,j,k\geq 0} \frac{q^{P(i,j,k)+2i+3j+2k+1}}{(q;q)_i(q^2;q^2)_j(q^2;q^2)_k}x^{i+1+2j+2k} +\sum_{i,j,k\geq 0} \frac{q^{P(i,j,k)+3i+5j+4k+3}}{(q;q)_i(q^2;q^2)_j(q^2;q^2)_k}x^{i+2j+2k+2} \nonumber \\
&=xqG(xq)+x^2q^3G(xq^2), \label{L2L4G}
\end{align}
where in the last second equality we used \eqref{Q-diff-1} and \eqref{Q-diff-2} and
\begin{align}
    G(x):=\sum_{i,j,k\geq 0}\frac{q^{P(i,j,k)+i+j}}{(q;q)_i(q^2;q^2)_j(q^2;q^2)_k}x^{i+2j+2k}. \label{exam4-G-defn}
\end{align}
From \eqref{L2-defn} and \eqref{exam4-G-defn} we have
\begin{align}
&L_2(x)-G(x)=\sum_{i,j,k\geq 0}\frac{q^{P(i,j,k)+j}\left((1-q^i)-(1-q^{2k})\right)}{(q;q)_i(q^2;q^2)_j(q^2;q^2)_k}x^{i+2j+2k} \nonumber \\
&=\sum_{i,j,k\geq 0}\frac{q^{P(i+1,j,k)+j}}{(q;q)_i(q^2;q^2)_j(q^2;q^2)_k}x^{i+1+2j+2k} -\sum_{i,j,k\geq 0}  \frac{q^{P(i,j,k+1)+j}}{(q;q)_i(q^2;q^2)_j(q^2;q^2)_k}x^{i+2j+2k+2}\nonumber \\
&=\sum_{i,j,k\geq 0}\frac{q^{P(i,j,k)+2i+3j+1}}{(q;q)_i(q^2;q^2)_j(q^2;q^2)_k}x^{i+2j+2k+1}-\sum_{i,j,k\geq 0}\frac{q^{P(i,j,k)+3j+4k+2}}{(q;q)_i(q^2;q^2)_j(q^2;q^2)_k}x^{i+2j+2k+2} \nonumber \\
&=xqT_1(x)-x^2q^2T_2(x), \label{L2GT1T2}
\end{align}
where in the last second equality we used \eqref{Q-diff-1} and \eqref{Q-diff-3} and
\begin{align}
&T_1(x):=\sum_{i,j,k\geq 0}\frac{q^{P(i,j,k)+2i+3j}}{(q;q)_i(q^2;q^2)_j(q^2;q^2)_k}x^{i+2j+2k}, \\
&T_2(x):=\sum_{i,j,k\geq 0}\frac{q^{P(i,j,k)+3j+4k}}{(q;q)_i(q^2;q^2)_j(q^2;q^2)_k}x^{i+2j+2k}.
\end{align}
We have
\begin{align}
&T_1(x)-G(xq)=\sum_{i,j,k\geq 0}\frac{q^{P(i,j,k)+2i+3j}(1-q^{2k})}{(q;q)_i(q^2;q^2)_j(q^2;q^2)_k}x^{i+2j+2k} \nonumber \\
&=\sum_{i,j,k\geq 0}\frac{q^{P(i,j,k+1)+2i+3j}}{(q;q)_i(q^2;q^2)_j(q^2;q^2)_k}x^{i+2j+2(k+1)} \nonumber \\
&=\sum_{i,j,k\geq 0}\frac{q^{P(i,j,k)+2i+5j+4k+2}}{(q;q)_i(q^2;q^2)_j(q^2;q^2)_k}x^{i+2j+2k+2} \quad \text{(by \eqref{Q-diff-3})} \nonumber \\
&=x^2q^2L_4(xq^2). \label{T1GL4}
\end{align}
Similarly,
\begin{align}
&T_2(x)-L_2(xq)=\sum_{i,j,k\geq 0}\frac{q^{P(i,j,k)+3j+4k}(1-q^{i})}{(q;q)_i(q^2;q^2)_j(q^2;q^2)_k}x^{i+2j+2k} \nonumber \\
&=\sum_{i,j,k\geq 0}\frac{q^{P(i+1,j,k)+3j+4k}}{(q;q)_i(q^2;q^2)_j(q^2;q^2)_k}x^{i+1+2j+2k} \nonumber \\
&=\sum_{i,j,k\geq 0}\frac{q^{P(i,j,k)+2i+5j+4k+1}}{(q;q)_i(q^2;q^2)_j(q^2;q^2)_k}x^{i+1+2j+2k} \quad \text{(by \eqref{Q-diff-1})}
 \nonumber \\
&=xqL_4(xq^2). \label{T2L2L4}
\end{align}
Comparing \eqref{T1GL4} with \eqref{T2L2L4}, we deduce that
\begin{align}
T_1(x)-G(xq)=xq(T_2(x)-L_2(xq)),
\end{align}
or equivalently,
\begin{align}
T_1(x)-xqT_2(x)=G(xq)-xqL_2(xq). \label{T1T2GL2}
\end{align}
Substituting \eqref{T1T2GL2} into \eqref{L2GT1T2}, we deduce that
\begin{align}
    G(x)+xqG(xq)=L_2(x)+x^2q^2L_2(xq). \label{GL2}
\end{align}
Substituting it with $x$ replaced by $xq$ into \eqref{L2L4G} we deduce that
\begin{align}
L_2(x)-L_4(xq)=xq\left(L_2(xq)+x^2q^4L_2(xq^2)\right).
\end{align}
This proves \eqref{add-L-equiv} and hence \eqref{L-relation-2} holds.
\end{proof}

\begin{lemma}\label{lem-LQ}
We have $Q_{4,i}(x)=L_i(x)$ for $i=1,2,3,4$.
\end{lemma}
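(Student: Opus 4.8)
The plan is to exploit the fact, already secured in Lemma \ref{lem-L-system}, that the quadruple $(L_1,L_2,L_3,L_4)$ obeys \emph{exactly} the same system of $q$-difference equations as $(Q_{4,1},Q_{4,2},Q_{4,3},Q_{4,4})$. Indeed, \eqref{L-relation-1} matches \eqref{Q-relation-1}, \eqref{L-relation-2} matches \eqref{Q-relation-2}, the definition \eqref{L-relation-3} rewritten as $L_3(x)-L_2(x)=(xq)^2L_2(xq)$ matches \eqref{Q-relation-3}, and \eqref{L-relation-4} matches \eqref{Q-relation-4}. So the whole content of the lemma reduces to a uniqueness statement: the system \eqref{Q-relation-1}--\eqref{Q-relation-4}, together with the common normalization at $x=0$, has at most one power-series solution. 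From \eqref{L2-defn} and \eqref{L4-defn} one reads off $L_2(0)=L_4(0)=1$, whence $L_1(0)=L_4(0)=1$ and $L_3(0)=L_2(0)=1$ by \eqref{L-relation-1} and \eqref{L-relation-3}; these agree with $Q_{4,i}(0)=1$ (only the all-zero summand survives at $x=0$). Thus it suffices to prove uniqueness.

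The approach to uniqueness is to extract coefficients order by order. Writing $Q_{4,i}(x)=\sum_{n\geq 0}c_{i,n}x^n$ and reading off the coefficient of $x^n$ in \eqref{Q-relation-2}--\eqref{Q-relation-4}, the prefactors $(xq),(xq)^2,(xq)^3$ and the scaling $x\mapsto xq$ on the right-hand sides shift every index strictly downward, so one obtains
\begin{align*}
c_{2,n}=c_{1,n}+q^{n}c_{3,n-1},\qquad
c_{3,n}=c_{2,n}+q^{n}c_{2,n-2},\qquad
c_{4,n}=c_{3,n}+q^{n}c_{1,n-3},
\end{align*}
with the convention $c_{i,m}=0$ for $m<0$. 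These express $c_{2,n},c_{3,n},c_{4,n}$ through $c_{1,n}$ and coefficients of order $<n$. Substituting them into the one remaining relation $c_{1,n}=q^{n}c_{4,n}$ coming from \eqref{Q-relation-1} yields
\begin{align*}
(1-q^{n})\,c_{1,n}=q^{2n}\bigl(c_{3,n-1}+c_{2,n-2}+c_{1,n-3}\bigr).
\end{align*}
Since $1-q^{n}$ is a unit (nonzero for $0<|q|<1$, and invertible in $\mathbb{Q}[[q]]$ as a formal power series in $q$), this recovers $c_{1,n}$ and hence all of $c_{2,n},c_{3,n},c_{4,n}$ from strictly lower-order data.

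By strong induction on $n$, starting from $c_{i,0}=1$, the four quantities $c_{1,n},\dots,c_{4,n}$ are therefore uniquely determined by the system. Since $(Q_{4,i})$ and $(L_i)$ are two solutions satisfying the same normalization, their coefficients coincide, giving $Q_{4,i}(x)=L_i(x)$ for $i=1,2,3,4$. The only point requiring care is the coupling at order $n$ introduced by \eqref{Q-relation-1}, which relates the two same-order unknowns $c_{1,n}$ and $c_{4,n}$ rather than reducing immediately to lower order; this is precisely where invertibility of $1-q^{n}$ is essential, and it is the step I would single out as the crux of the argument.
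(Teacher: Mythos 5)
Your proof is correct, and it follows the same high-level strategy as the paper: observe that \eqref{L-relation-1}--\eqref{L-relation-4} are literally the system \eqref{Q-relation-1}--\eqref{Q-relation-4}, then argue that this system plus the normalization at $x=0$ pins down the solution. Where you diverge is in the uniqueness step. The paper eliminates $L_1,L_2,L_3$ from the system to produce a single fourth-order $q$-difference equation
$F(x)=(1+qx)F(qx)+q^2x^2(1+qx+q^2x)F(q^2x)-q^7x^4F(q^3x)-q^{13}x^6F(q^4x)$
satisfied by both $L_4$ and $Q_{4,4}$, and then invokes uniqueness of its power-series solution (stated rather tersely as a matching of low-order coefficients) before propagating equality back to the other three components. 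You instead prove uniqueness for the coupled system directly, by extracting the coefficient of $x^n$ from all four relations and isolating the genuine same-order coupling $c_{1,n}=q^nc_{4,n}$, which collapses to $(1-q^n)c_{1,n}=q^{2n}(c_{3,n-1}+c_{2,n-2}+c_{1,n-3})$; your computation here is correct, and you rightly flag the invertibility of $1-q^n$ for $n\ge 1$ (and the need for the separate normalization $c_{i,0}=1$, since the $n=0$ instance is vacuous) as the crux. Your route avoids the elimination computation entirely and makes the induction fully explicit, at the cost of tracking four coefficient sequences at once; the paper's route trades that for a single scalar equation whose uniqueness argument is the same $(1-q^n)$ phenomenon in disguise. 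Both are sound.
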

\begin{proof}
From \eqref{L-relation-2} we have
\begin{align}
L_2(x)=L_1(x)+xqL_3(xq).
\end{align}
Substituting it into \eqref{L-relation-3} to eliminate $L_2(x)$ and $L_2(xq)$, we deduce that
\begin{align}
L_3(x)=L_1(x)+xqL_3(xq)+(xq)^2\left(L_1(xq)+xq^2L_3(xq^2)\right). \label{L3L1}
\end{align}
Substituting \eqref{L-relation-1} and \eqref{L-relation-4} into \eqref{L3L1} to eliminate $L_1$ and $L_3$, we deduce that
\begin{align}
&L_4(x)-(xq)^3L_4(xq^2)=L_4(xq)+xq\left(L_4(xq)-(xq^2)^3L_4(xq^3) \right) \nonumber \\
&\qquad \qquad +(xq)^2\left(L_4(xq^2)+xq^2\big(L_4(xq^2)-(xq^3)^3L_4(xq^4)  \big)  \right).
\end{align}
Upon simplification we find that $L_4(x)$ satisfies the $q$-difference equation
\begin{align}
F(x)=&(1+qx)F(qx)+q^2x^2(1+qx+q^2x)F(q^2x)  \nonumber \\
&-q^7x^4F(q^3x)-q^{13}x^6F(q^4x).
\end{align}
Similarly, from \eqref{Q-relation-1}--\eqref{Q-relation-4} we know that $Q_{4,4}(x)$ also satisfies this equation. Since the coefficients of $x^nq^m$ in $Q_{4,4}(x)$ and $L_4(x)$ match for $n,m\leq 0$, we conclude that $Q_{4,4}(x)=L_4(x)$. Now from the systems of equations \eqref{Q-relation-1}--\eqref{Q-relation-4} and \eqref{L-relation-1}--\eqref{L-relation-4}  we immediately deduce that $Q_{4,i}(x)=L_i(x)$ for $i=2,3,4$.
\end{proof}

\begin{proof}[Proof of Theorem \ref{thm-exam-4-relation}]
The desired identities are exactly the facts
$$L_4(x)=Q_{4,4}(x) \quad \text{and} \quad L_2(x)=Q_{4,2}(x),$$
which follow from Lemma \ref{lem-LQ}.
\end{proof}

\begin{theorem}\label{thm-exam-4}
We have
\begin{align}
\sum_{i,j,k\ge 0}\frac{q^{i^2+2j^2+2k^2+2ij+2jk+j}}{(q;q)_i(q^2;q^2)_j(q^2;q^2)_k}
=\frac{(q^4,q^5,q^9;q^9)_\infty}{(q;q)_\infty},
\\
\sum_{i,j,k\ge 0}\frac{q^{i^2+2j^2+2k^2+2ij+2jk+j+2k}}{(q;q)_i(q^2;q^2)_j(q^2;q^2)_k}
=\frac{(q^2,q^7,q^9;q^9)_\infty}{(q;q)_\infty},
\\
\sum_{i,j,k\ge 0}\frac{q^{i^2+2j^2+2k^2+2ij+2jk+i+3j+2k}}{(q;q)_i(q^2;q^2)_j(q^2;q^2)_k}
=\frac{(q,q^8,q^9;q^9)_\infty}{(q;q)_\infty}.
\end{align}
\end{theorem}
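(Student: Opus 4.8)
The plan is to recognize the three sums on the left-hand sides as the specializations $L_4(1)$, $L_2(1)$, and $L_4(q)$ of the auxiliary series introduced in \eqref{L2-defn} and \eqref{L4-defn}, and then to invoke the identification $L_i(x)=Q_{4,i}(x)$ furnished by Lemma \ref{lem-LQ} together with the Andrews--Gordon identity \eqref{AG}.

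First I would expand $P(i,j,k)=(i+j)^2+(j+k)^2+k^2=i^2+2j^2+2k^2+2ij+2jk$ and match exponents. The first sum is $L_4(1)$, since its exponent is $P(i,j,k)+j$ evaluated at $x=1$. The second sum is $L_2(1)$, since its exponent is $P(i,j,k)+j+2k$. For the third sum, multiplying the summand of $L_4(x)$ by $x^{i+2j+2k}=q^{i+2j+2k}$ at $x=q$ produces the exponent $i^2+2j^2+2k^2+2ij+2jk+i+3j+2k$, so the third sum equals $L_4(q)$, which by \eqref{L-relation-1} coincides with $L_1(1)$.

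Next I would apply Lemma \ref{lem-LQ}, which gives $L_i(1)=Q_{4,i}(1)$ for each $i$. Setting $x=1$ in the definition of $Q_{4,i}(x)$ deletes the factor $x^{N_1+\cdots+N_{k-1}}$, so $Q_{4,i}(1)$ is precisely the left-hand side of the Andrews--Gordon identity \eqref{AG} with $k=4$. Taking $i=4,2,1$ with $2k+1=9$ then yields the products $\tfrac{(q^4,q^5,q^9;q^9)_\infty}{(q;q)_\infty}$, $\tfrac{(q^2,q^7,q^9;q^9)_\infty}{(q;q)_\infty}$, and $\tfrac{(q,q^8,q^9;q^9)_\infty}{(q;q)_\infty}$, which are exactly the three right-hand sides.

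The substantive work has already been carried out in establishing Lemma \ref{lem-LQ}: the genuine obstacle is showing that $L_4(x)$ satisfies the same fourth-order $q$-difference equation as $Q_{4,4}(x)$ and that their low-order coefficients agree, which in turn rests on the system \eqref{L-relation-1}--\eqref{L-relation-4} from Lemma \ref{lem-L-system}. Given that machinery, the proof of the theorem is only the bookkeeping described above, and I expect no further difficulty beyond carefully verifying the three exponent matchings.
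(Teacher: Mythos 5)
Your proposal is correct and follows essentially the same route as the paper: the paper obtains these identities by specializing $x=1,q$ in \eqref{conj-u-id-1} and $x=1$ in \eqref{conj-u-id-2} (i.e.\ the statements $L_4(x)=Q_{4,4}(x)$ and $L_2(x)=Q_{4,2}(x)$ from Lemma \ref{lem-LQ}) and then applying the Andrews--Gordon identity \eqref{AG} with $k=4$. Your exponent matchings and the identification of the third sum as $L_4(q)=L_1(1)=Q_{4,1}(1)$ are all accurate, and the heavy lifting is indeed concentrated in Lemmas \ref{lem-L-system} and \ref{lem-LQ}.
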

\begin{proof}
Setting $x=1,q$ in \eqref{conj-u-id-1} and setting $x=1$ in \eqref{conj-u-id-2}, then using the Andrews-Gordon identity \eqref{AG} with $k=4$, we obtain the desired identities.
\end{proof}

We also find the following identity to express the missing product $(q^3,q^6,q^9;q^9)_\infty$.
\begin{theorem}
We have
\begin{align}\label{exam4-new-id}
    \sum_{i,j,k\ge 0}\frac{q^{i^2+2j^2+2k^2+2ij+2jk+j+2k}(1+q^{i+2j+2k+2})}{(q;q)_i(q^2;q^2)_j(q^2;q^2)_k}
=\frac{(q^3,q^6,q^9;q^9)_\infty}{(q;q)_\infty}.
\end{align}
\end{theorem}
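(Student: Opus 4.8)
The plan is to recognize the left-hand side of \eqref{exam4-new-id} as the specialization $L_3(1)$ of the series $L_3(x)$ introduced in Lemma~\ref{lem-L-system}, after which the result follows immediately from the machinery already developed. First I would split the summand using the factor $1 + q^{i+2j+2k+2}$ into two pieces. The piece coming from the $1$ is exactly $L_2(1)$, with $L_2(x)$ as defined in \eqref{L2-defn}. For the second piece, I would pull out $q^2$ and observe that the remaining factor $q^{i+2j+2k}$ is precisely what the weight $x^{i+2j+2k}$ in \eqref{L2-defn} produces upon setting $x = q$; hence this piece equals $q^2 L_2(q)$. Thus the left-hand side of \eqref{exam4-new-id} equals $L_2(1) + q^2 L_2(q)$.

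Next I would invoke the definition \eqref{L-relation-3}, namely $L_3(x) = L_2(x) + (xq)^2 L_2(xq)$. Setting $x = 1$ gives $L_3(1) = L_2(1) + q^2 L_2(q)$, which is exactly the expression obtained above. Therefore the left-hand side of \eqref{exam4-new-id} is $L_3(1)$.

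Finally, Lemma~\ref{lem-LQ} gives $L_3(x) = Q_{4,3}(x)$, so $L_3(1) = Q_{4,3}(1)$. By the Andrews--Gordon identity \eqref{AG} with $k = 4$ and $i = 3$ (so that $2k+1 = 9$), we have
\begin{align}
Q_{4,3}(1) = \sum_{n_1,n_2,n_3 \geq 0} \frac{q^{N_1^2 + N_2^2 + N_3^2 + N_3}}{(q;q)_{n_1}(q;q)_{n_2}(q;q)_{n_3}} = \frac{(q^3, q^6, q^9; q^9)_\infty}{(q;q)_\infty},
\end{align}
which is the desired right-hand side. The only genuine subtlety is correctly identifying the $x = q$ specialization in the second piece of the split; once the left-hand side is rewritten as $L_2(1) + q^2 L_2(q)$, the identity is a direct consequence of Lemmas~\ref{lem-L-system} and \ref{lem-LQ} together with \eqref{AG}, so there is no substantial obstacle beyond this bookkeeping.
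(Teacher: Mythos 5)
Your proposal is correct and follows essentially the same route as the paper: rewrite the left-hand side as $L_2(1)+q^2L_2(q)$, identify this with $L_3(1)=Q_{4,3}(1)$ via \eqref{L-relation-3} and Lemma \ref{lem-LQ}, and finish with the Andrews--Gordon identity \eqref{AG} at $(k,i)=(4,3)$. The bookkeeping of the $x=q$ specialization is handled correctly, so there is nothing to add.
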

\begin{proof}
Note that the left side of \eqref{exam4-new-id} is exactly
$$L_2(1)+q^2L_2(q)=L_3(1)=Q_{4,3}(1).$$
Here we used \eqref{L-relation-3} and Lemma \ref{lem-LQ} for the equalities. Now using the Andrews--Gordon identity \eqref{AG} with $(k,i)=(4,3)$ we prove \eqref{exam4-new-id}.
\end{proof}

\subsection{Example 5}
This example corresponds to
\begin{align*}
&A=\begin{pmatrix}
2 & 2 & 2\\
2 & 4 & 4\\
1 & 2 & 3
\end{pmatrix}, \quad
AD=\begin{pmatrix}
4 & 4 & 2\\
4 & 8 & 4\\
2 & 4 & 3
\end{pmatrix},\\
&b\in  \bigg\{
\begin{pmatrix}
-1\\-2\\-3/2
\end{pmatrix},
\begin{pmatrix}
    -1\\0\\1/2
\end{pmatrix},
\begin{pmatrix}
    0\\-1\\-1
\end{pmatrix},
\begin{pmatrix}
    0\\0\\-1/2
\end{pmatrix},
\begin{pmatrix}
    0\\0\\0
\end{pmatrix},
\\ &\quad \quad \quad
\begin{pmatrix}
    0\\2\\3/2
\end{pmatrix},
\begin{pmatrix}
    1\\0\\-1/2
\end{pmatrix},
\begin{pmatrix}
    1\\0\\1/2
\end{pmatrix},
\begin{pmatrix}
    2\\2\\1/2
\end{pmatrix},
\begin{pmatrix}
    3\\4\\5/2
\end{pmatrix}
\bigg\}.
\end{align*}
As mentioned in Section \ref{sec-intro}, all of the Nahm sums $f_{A,b,c,d}(q)$ for quadruples $(A,b,c,d)$ in this example are modular functions of weight zero except for the cases with $b=(1,0,1/2)^\mathrm{T}$ and $(3,4,5/2)^\mathrm{T}$. We first establish the identities for the weight zero cases, for which Bressoud's identity \eqref{eq-Bressoud} and the following identities will be used:
\begin{align}
&\sum_{n=0}^\infty \frac{q^{2n^2}}{(q;q)_{2n}}=\frac{(-q^3,-q^5,q^8;q^8)_\infty}{(q^2;q^2)_\infty}, \quad \text{(\cite[Eq.\ (39)]{Slater1951})}   \label{S39}  \\
&\sum_{i,j\geq 0} \frac{u^{i+2j}q^{i^2+2ij+2j^2-i-j}}{(q;q)_i(q^2;q^2)_j}=(-u;q)_\infty. \quad \text{(\cite[Eq.\ (3.29)]{CW})}  \label{eq-Cao-Wang}
\end{align}

The constant term method will play an important role for this example. For any series $f(z)=\sum_{n\in \mathbb{Z}}a(n)z^n$, we define the constant term extractor $\mathrm{CT}$ as
\begin{align}
    \mathrm{CT} (f(z))=a(0).
\end{align}
\begin{theorem}
We have
\begin{align}
    &\sum_{i,j,k\ge 0}\frac{q^{\frac{3}{2}i^2+2j^2+4k^2+2ij+4ik+4jk-\frac{3}{2}i-j-2k}}{(q;q)_i(q^2;q^2)_j(q^2;q^2)_k}
    =(-1;q)_\infty,\label{122-1}
    \\
    &\sum_{i,j,k\ge 0}\frac{q^{\frac{3}{2}i^2+2j^2+4k^2+2ij+4ik+4jk+\frac{1}{2}i-j}}{(q;q)_i(q^2;q^2)_j(q^2;q^2)_k}
    =
    \frac{1}{(q;q^2)_\infty}, \label{122-2}
    \\
    &\sum_{i,j,k\ge 0}\frac{q^{3i^2+4j^2+8k^2+4ij+8ik+8jk-2i-2k}}{(q^2;q^2)_i(q^4;q^4)_j(q^4;q^4)_k}
    =(-q;q^2)_\infty,\label{122-3}
    \\
    &\sum_{i,j,k\ge 0}\frac{q^{\frac{3}{2}i^2+2j^2+4k^2+2ij+4ik+4jk-\frac{1}{2}i}}{(q;q)_i(q^2;q^2)_j(q^2;q^2)_k}
    =
    \frac{(q^3,q^3,q^6;q^6)_\infty}{(q;q)_\infty}, \label{122-4} \\
     &\sum_{i,j,k\ge 0}\frac{q^{3i^2+4j^2+8k^2+4ij+8ik+8jk}}{(q^2;q^2)_i(q^4;q^4)_j(q^4;q^4)_k}
   =
    \frac{(q^6,q^{10};q^{16})_\infty}{(q^3,q^4,q^5;q^8)_\infty},\label{122-5}
    \\
    &\sum_{i,j,k\ge 0}\frac{q^{\frac{3}{2}i^2+2j^2+4k^2+2ij+4ik+4jk+\frac{3}{2}i+2k}}{(q;q)_i(q^2;q^2)_j(q^2;q^2)_k}
    =
    \frac{1}{(q^2,q^3,q^4;q^6)_\infty}, \label{122-6} \\
&\sum_{i,j,k\ge 0}\frac{q^{\frac{3}{2}i^2+2j^2+4k^2+2ij+4ik+4jk-\frac{1}{2}i+j}}{(q;q)_i(q^2;q^2)_j(q^2;q^2)_k}=\frac{1}{(q;q^2)_\infty},\label{122-7}
\\
&\sum_{i,j,k\ge 0}\frac{q^{\frac{3}{2}i^2+2j^2+4k^2+2ij+4ik+4jk+\frac{1}{2}i+2j+2k}}{(q;q)_i(q^2;q^2)_j(q^2;q^2)_k}
    =
    \frac{1}{(q^2,q^3,q^4;q^6)_\infty}. \label{122-9}
\end{align}
\end{theorem}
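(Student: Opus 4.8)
The plan is to handle all eight identities uniformly by the constant term method, reducing each rank-three Nahm sum to a single one-dimensional constant-term extraction against a theta series. The starting point is that the common quadratic form diagonalizes completely as
\[
\tfrac{3}{2}i^2+2j^2+4k^2+2ij+4ik+4jk=\tfrac{1}{2}i^2+j^2+(2k+i+j)^2,
\]
so that after appending the linear form $L$ coming from each choice of $b$, the only coupling among $i,j,k$ is buried inside the single square $(2k+i+j)^2$.

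First I would encode this square by a bilateral theta series. Setting $\theta(z):=\sum_{\ell\in\mathbb{Z}}z^{\ell}q^{\ell^2}$, one has $\mathrm{CT}_z\big[z^{-(2k+i+j)}\theta(z)\big]=q^{(2k+i+j)^2}$, so each Nahm sum in the theorem equals
\[
\mathrm{CT}_z\Big[\theta(z)\sum_{i,j,k\ge 0}\frac{q^{\frac12 i^2+j^2+L}\,z^{-(2k+i+j)}}{(q;q)_i(q^2;q^2)_j(q^2;q^2)_k}\Big].
\]
Since the exponent now carries no cross terms, the inner triple sum factors into three one-variable sums. By Euler's identities \eqref{euler} the $k$-sum is $\sum_{k}z^{-2k}/(q^2;q^2)_k=1/(z^{-2};q^2)_\infty$, while the $i$- and $j$-sums are Euler sums evaluating to products $(-z^{-1}q^{a};q)_\infty$ and $(-z^{-1}q^{b};q^2)_\infty$, with $a,b$ in $\tfrac12\mathbb{Z}$ determined by the linear shifts in $L$. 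Thus the whole Nahm sum becomes the constant term of $\theta(z)$ against an explicit eta-quotient in $z$.

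The remaining step is to simplify that $z$-quotient. Using $1-z^{-2}q^{2m}=(1-z^{-1}q^m)(1+z^{-1}q^m)$ to split the denominator $(z^{-2};q^2)_\infty$, the product $\theta(z)$ times the quotient telescopes to a form to which the Jacobi triple product \eqref{Jacobi} applies, and reading off the coefficient of $z^0$ produces the stated infinite products. For the cases whose right-hand side is a genuine double product, namely \eqref{122-4}, \eqref{122-6} and \eqref{122-9}, the same scheme lands on the Bressoud double sum, which matches \eqref{eq-Bressoud} at $k=3$ with $i=3$ and $i=1$ respectively; the identity \eqref{S39} and the Cao--Wang evaluation \eqref{eq-Cao-Wang} supply the closed forms needed to recognize the $b=0$ case \eqref{122-5} and the $(-q;q^2)$-type right-hand sides, and indeed for the simplest outputs $(-1;q)_\infty$ and $(-q;q)_\infty$ one may alternatively reduce directly to \eqref{eq-Cao-Wang} at $u=1$ and $u=q$. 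The coincidences \eqref{122-2}$=$\eqref{122-7} and \eqref{122-6}$=$\eqref{122-9} then fall out automatically, since the two linear forms $L$ in each pair yield the same $z^0$ coefficient.

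The hard part will be the constant-term bookkeeping. One must verify that the linear shifts determined by each $b$ place the surviving coefficient of $\theta(z)$ in exactly the residue class for which the Jacobi triple product collapses to the advertised product, and one must justify interchanging $\mathrm{CT}_z$ with the triple summation. The latter forces one to locate an annulus $|z|>1$ on which the $k$-sum $1/(z^{-2};q^2)_\infty$ and the series $\theta(z)$ converge simultaneously, so that the constant term is well defined and the interchange is legitimate. Once this convergence region is fixed, the eight evaluations reduce to the routine theta-quotient manipulations sketched above.
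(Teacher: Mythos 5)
Your proposal is essentially the paper's own proof: the authors use exactly this diagonalization $\tfrac32 i^2+2j^2+4k^2+2ij+4ik+4jk=\tfrac12 i^2+j^2+(i+j+2k)^2$, encode the square by a constant-term extraction against the Jacobi theta product, sum the three decoupled Euler series into the quotient $(-uzq^{1/2};q)_\infty(-qvz;q^2)_\infty(-qz,-q/z,q^2;q^2)_\infty/(wz^2;q^2)_\infty$, and then re-expand to recognize Bressoud's identity \eqref{eq-Bressoud} at $k=3$, the Cao--Wang evaluation \eqref{eq-Cao-Wang}, and Slater's \eqref{S39}, with the pairs \eqref{122-2},\eqref{122-7} and \eqref{122-6},\eqref{122-9} collapsing because their $z$-quotients coincide. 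Up to the substitution $z\mapsto 1/z$ and minor bookkeeping of the specialization parameters, this is the same argument.
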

\begin{proof}
Let
\begin{align}
    &F(u,v,w)=F(u,v,w;q):=\sum_{i,j,k\geq 0} \frac{q^{\frac{3}{2}i^2+2j^2+4k^2+2ij+4ik+4jk}u^iv^jw^k}{(q;q)_i(q^2;q^2)_j(q^2;q^2)_k}.
\end{align}
Using \eqref{Jacobi}, \eqref{euler-1} and \eqref{euler-2} we have
\begin{align}
    &F(u,v,w)=\CT \left[\sum_{i\geq 0}\frac{q^{\frac{1}{2}i^2}u^iz^i}{(q;q)_i} \sum_{j\geq 0} \frac{q^{j^2}v^jz^j}{(q^2;q^2)_j} \sum_{k\geq 0} \frac{z^{2k}w^k}{(q^2;q^2)_k} \sum_{n=-\infty}^\infty z^{-n}q^{n^2}  \right] \nonumber \\
    &=\CT \left[\frac{(-uzq^{1/2};q)_\infty (-qvz;q^2)_\infty (-qz,-q/z,q^2;q^2)_\infty}{(wz^2;q^2)_\infty}\right]. \label{122-F-start}
\end{align}

(1) From \eqref{122-F-start} we have
\begin{align}
    &F(q^{-3/2},q^{-1},q^{-2};q)=\CT \left[\frac{(-z;q^2)_\infty }{(q^{-1}z;q)_\infty} (-qz,-q/z,q^2;q^2)_\infty \right] \nonumber \\
&=\CT \left[\sum_{i=0}^\infty \frac{q^{-i}z^i}{(q;q)_i} \sum_{j=0}^\infty \frac{q^{j^2-j}z^j}{(q^2;q^2)_j} \sum_{k=-\infty}^\infty q^{k^2}z^{-k}  \right] \nonumber \\
&=\sum_{i,j\geq 0} \frac{q^{i^2+2ij+2j^2-i-j}}{(q;q)_i(q^2;q^2)_j}.
\end{align}
Now by \eqref{eq-Cao-Wang} with $u=1$ we obtain \eqref{122-1}.

(2) From \eqref{122-F-start} we have
\begin{align}
    &F(q^{1/2},q^{-1},1)=\CT \left[\frac{(-qz;q)_\infty (-z;q^2)_\infty (-qz,-q/z,q^2;q^2)_\infty}{(z^2;q^2)_\infty} \right] \nonumber \\
    &=\CT\left[\frac{(-q^2z;q^2)_\infty}{(z;q)_\infty} (-qz,-q/z,q^2;q^2)_\infty\right] \label{122-F-2-CT} \\
    &=\CT\left[\sum_{i\geq 0} \frac{z^i}{(q;q)_i} \sum_{j\geq 0} \frac{z^jq^{j^2+j}}{(q^2;q^2)_j} \sum_{k=-\infty}^\infty q^{k^2}z^{-k}   \right] \nonumber \\
    &=\sum_{i,j\geq 0} \frac{q^{i^2+2ij+2j^2+j}}{(q;q)_i(q^2;q^2)_j}. \nonumber
\end{align}
Now by Bressoud's identity \eqref{eq-Bressoud} with $(k,i)=(3,2)$ we obtain \eqref{122-2}.

(3) From \eqref{122-F-start} we have
\begin{align}
    &F(q^{-2},1,q^{-2};q^2)=\CT \left[\frac{(-q^2z;q^4)_\infty }{(q^{-1}z;q^2)_\infty} (-q^2z,-q^2/z,q^4;q^4)_\infty \right] \nonumber \\
&=\CT \left[\sum_{i=0}^\infty \frac{q^{-i}z^i}{(q^2;q^2)_i} \sum_{j=0}^\infty \frac{q^{2j^2}z^j}{(q^4;q^4)_j} \sum_{k=-\infty}^\infty q^{2k^2}z^{-k}  \right] \nonumber \\
&=\sum_{i,j\geq 0} \frac{q^{2i^2+4ij+4j^2-i}}{(q^2;q^2)_i(q^4;q^4)_j}.
\end{align}
Now by \eqref{eq-Cao-Wang} with $u=q^{1/2}$ we obtain \eqref{122-3}.

(4) From \eqref{122-F-start} we have
\begin{align}
&F(q^{-1/2},1,1;q^2)=\CT \left[\frac{(-qz;q^2)_\infty}{(z;q)_\infty} (-qz,-q/z,q^2;q^2)_\infty \right] \nonumber \\
&=\CT \left[\sum_{i\geq 0} \frac{z^i}{(q;q)_i} \sum_{j\geq 0} \frac{q^{j^2}z^j}{(q^2;q^2)_j} \sum_{k=-\infty}^\infty q^{k^2}z^{-k}\right] \nonumber \\
&=\sum_{i,j\geq 0} \frac{q^{i^2+2ij+2j^2}}{(q;q)_i(q^2;q^2)_j}.
\end{align}
Now by \eqref{eq-Bressoud} with $(k,i)=(3,3)$ we get \eqref{122-4}.

(5) From \eqref{122-F-start} we have
\begin{align}
&F(1,1,1;q^2)=\CT \left[\frac{(-qz;q^2)_\infty (-q^2z,-q^2/z,q^4;q^4)_\infty}{(z;q^2)_\infty (-z;q^4)_\infty} \right] \nonumber \\
&=\CT \left[\sum_{i\geq 0} \frac{(-q;q^2)_i}{(q^2;q^2)_i}z^i \sum_{j\geq 0} \frac{(-z)^j}{(q^4;q^4)_j} \sum_{k=-\infty}^\infty z^{-k}q^{2k^2}  \right] \nonumber \\
&=\sum_{i\geq 0} \frac{(-q;q^2)_i}{(q^2;q^2)_i} \sum_{j\geq 0} \frac{(-1)^jq^{2(i+j)^2}}{(q^4;q^4)_j} \nonumber \\
&=\sum_{i=0}^\infty \frac{(-q;q^2)_i}{(q^2;q^2)_i}q^{2i^2}(q^{4i+2};q^4)_\infty=(q^2;q^4)_\infty \sum_{i=0}^\infty \frac{q^{2i^2}}{(q;q)_{2i}}.
\end{align}
Now by \eqref{S39} we obtain \eqref{122-5}.

(6) From \eqref{122-F-start} we have
\begin{align}
&F(q^{3/2},1,q^2)=\CT \left[\frac{(-q^3z;q^2)_\infty }{(qz;q)_\infty} (-qz,-q/z,q^2;q^2)_\infty \right] \label{122-F-6-CT} \\
&=\CT \left[\sum_{i=0}^\infty \frac{q^iz^i}{(q;q)_i} \sum_{j=0}^\infty \frac{q^{j^2+2j}z^j}{(q^2;q^2)_j} \sum_{k=-\infty}^\infty q^{k^2}z^{-k}  \right] \nonumber \\
&=\sum_{i,j\geq 0} \frac{q^{i^2+2ij+2j^2+i+2j}}{(q;q)_i(q^2;q^2)_j}. \nonumber
\end{align}
Now by Bressoud's identity \eqref{eq-Bressoud} with $(k,i)=(3,1)$ we obtain \eqref{122-6}.

(7) From \eqref{122-F-start} and \eqref{122-F-2-CT} we have
\begin{align}
    &F(q^{-1/2},q,1;q)=\CT \left[\frac{(-q^2z;q^2)_\infty }{(z;q)_\infty} (-qz,-q/z,q^2;q^2)_\infty \right] =F(q^{1/2},q^{-1},1).
\end{align}
Hence by (2) we obtain \eqref{122-7}.

(8) From \eqref{122-F-start} and \eqref{122-F-6-CT} we have
\begin{align}
    &F(q^{1/2},q^2,q^2)=\CT \left[\frac{(-q^3z;q^2)_\infty}{(qz;q)_\infty} (-qz,-q/z,q^2;q^2)_\infty \right]=F(q^{3/2},1,q^2).
\end{align}
Hence by (6) we obtain \eqref{122-9}.
\end{proof}
Now we discuss the two special cases announced in Section 1.
\begin{proof}[Proof of Theorem \ref{thm-S}]
We denote the series in the left side of \eqref{S1-product} and \eqref{S2-product} by $S_1(q)$ and $S_2(q)$, respectively. We first prove that
\begin{align}\label{S1-add-S2}
    S_1(q)+qS_2(q)=\frac{J_2}{J_1}.
\end{align}
Subtracting $S_1(q)$ from the left hand-side of \eqref{122-7}, we have
\begin{align*}
    &\sum_{i,j,k\ge 0}\frac{q^{\frac{3}{2}i^2+2j^2+4k^2+2ij+4ik+4jk-\frac{1}{2}i+j}}{(q;q)_i(q^2;q^2)_j(q^2;q^2)_k}-\sum_{i,j,k\ge 0}\frac{q^{\frac{3}{2}i^2+2j^2+4k^2+2ij+4ik+4jk+\frac{1}{2}i+j}}{(q;q)_i(q^2;q^2)_j(q^2;q^2)_k}\\
    &=\sum_{i,j,k\ge 0}\frac{q^{\frac{3}{2}i^2+2j^2+4k^2+2ij+4ik+4jk-\frac{1}{2}i+j}(1-q^i)}{(q;q)_i(q^2;q^2)_j(q^2;q^2)_k} \\
    &=\sum_{i,j,k\ge 0}\frac{q^{\frac{3}{2}(i+1)^2+2j^2+4k^2+2(i+1)j+4(i+1)k+4jk-\frac{1}{2}(i+1)+j}}{(q;q)_i(q^2;q^2)_j(q^2;q^2)_k}=qS_2(q).
\end{align*}
Using \eqref{122-7} we obtain \eqref{S1-add-S2}.

From \eqref{122-F-start} we have
\begin{align}\label{S1-first}
&S_1(q)= F(q^{1/2},q,1)=\CT \left[\frac{(-qz;q)_\infty(-q^2z;q^2)_\infty (-qz,-q/z,q^2;q^2)_\infty}{(z^2;q^2)_\infty} \right] \nonumber \\
&=\CT \left[\frac{(-qz;q)_\infty (-q^2z,-qz,-q/z,q^2;q^2)_\infty}{(z;q)_\infty (-z,-qz;q^2)_\infty} \right] \nonumber \\
&=(q^2;q^2)_\infty\mathrm{CT} \left(\frac{(-qz;q)_\infty (-q/z;q^2)_\infty}{(1+z)(z;q)_\infty} \right) \nonumber \\
&=(q^2;q^2)_\infty \mathrm{CT} \left(\sum_{n=0}^\infty (-1)^nz^n  \sum_{k=0}^\infty \frac{(-q;q)_kz^k}{(q;q)_k} \sum_{j=0}^\infty \frac{q^{j^2}z^{-j}}{(q^2;q^2)_j}  \right) \quad \text{(by \eqref{eq-qbinomial} and \eqref{euler-2})}\nonumber \\
&=(q^2;q^2)_\infty \sum_{n=0}^\infty \sum_{k=0}^\infty (-1)^n \frac{(-q;q)_k}{(q;q)_k} \cdot \frac{q^{(n+k)^2}}{(q^2;q^2)_{n+k}} \nonumber \\
&=(q^2;q^2)_\infty \sum_{n=0}^\infty \frac{(-1)^nq^{n^2}}{(q^2;q^2)_n} \sum_{k=0}^n (-1)^k \frac{(-q;q)_k}{(q;q)_k}.
\end{align}
Applying Lemma \ref{lem-BP-sum} with the Bailey pair $(\alpha_n(1,q),\beta_n(1,q))$ in Lemma \ref{lem-new-BP}, we deduce that
\begin{align}\label{S1-mid}
&\sum_{n=0}^\infty \frac{(-1)^nq^{n^2}}{(q^2;q^2)_n} \sum_{k=0}^n (-1)^k \frac{(-q;q)_k}{(q;q)_k}=\sum_{n=0}^\infty q^{n^2}\beta_n(1,q)\nonumber \\
&=\frac{1}{(q;q)_\infty} \sum_{n=0}^\infty q^{n^2}\alpha_n(1,q) \nonumber \\
&=\frac{1}{(q;q)_\infty}\left(1+\sum_{n=1}^\infty \sum_{i=-n}^{n-1}(-1)^{i+n}q^{i^2+n^2}\right).
\end{align}
Recall Ramanujan's theta function (see \cite[Eq.\ (1.3.13)]{Berndt-book})
\begin{align}
    \varphi(q):=\sum_{n=-\infty}^\infty q^{n^2}=(-q;q^2)_\infty^2(q^2;q^2)_\infty
\end{align}
where the last equality follows from \eqref{Jacobi}. Replacing $q$ by $-q$, we have \cite[Eq.\ (2.2.12)]{Andrews}
\begin{align}\label{eq-phi-minus}
\varphi(-q)=\frac{J_1^2}{J_2}.
\end{align}
Note that
\begin{align}\label{eq-half-theta}
\sum_{n=1}^\infty (-1)^nq^{n^2}=\frac{1}{2}\Big( \sum_{n=-\infty}^\infty (-1)^nq^{n^2}-1\Big)=\frac{1}{2}\Big(\varphi(-q)-1 \Big).
\end{align}
We have
\begin{align}\label{S1-final}
&1+\sum_{n=1}^\infty \sum_{i=-n}^{n-1}(-1)^{i+n}q^{i^2+n^2}=1+\sum_{n=1}^\infty (-1)^nq^{n^2}-\sum_{n=1}^\infty q^{2n^2}+2\sum_{n=1}^\infty \sum_{i=1}^n (-1)^{i+n}q^{i^2+n^2} \nonumber \\
&=1+\sum_{n=1}^\infty (-1)^nq^{n^2}+\left(\sum_{n=1}^\infty (-1)^nq^{n^2}\right)^2 \nonumber \\
&=1+\frac{1}{2}\Big(\varphi(-q)-1\Big)+\frac{1}{4}\Big(\varphi(-q)-1\Big)^2 \nonumber \\
&=\frac{1}{4}\varphi^2(-q)+\frac{3}{4}=\frac{1}{4}\frac{J_1^4}{J_2^2}+\frac{3}{4}.
\end{align}
Substituting \eqref{S1-final} into \eqref{S1-mid} and then substituting the result into \eqref{S1-first}, we obtain \eqref{S1-product}.

Substituting \eqref{S1-product} into \eqref{S1-add-S2}, we obtain \eqref{S2-product}.
\end{proof}

\begin{corollary}
We have
\begin{align}
    &\sum_{i,j,k\ge 0}\frac{q^{\frac{3}{2}i^2+2j^2+4k^2+2ij+4ik+4jk+\frac{1}{2}i+j}}{(q;q)_i(q^2;q^2)_j(q^2;q^2)_k}=\frac{1}{(q^2;q^2)_\infty}\sum_{i=-\infty}^\infty (i+1)q^{2i^2+i}, \label{S1-theta}
    \\
    &\sum_{i,j,k\ge 0}\frac{q^{\frac{3}{2}i^2+2j^2+4k^2+2ij+4ik+4jk+\frac{5}{2}i+3j+4k}}{(q;q)_i(q^2;q^2)_j(q^2;q^2)_k}=\frac{1}{(q^2;q^2)_\infty}\sum_{i=-\infty}^\infty (i+1)q^{2i^2+3i}. \label{S2-theta}
\end{align}
\end{corollary}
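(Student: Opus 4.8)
The plan is to deduce the corollary from the product evaluations of $S_1(q)$ and $S_2(q)$ already obtained in Theorem \ref{thm-S}, reducing each side to a classical theta identity. Writing $T_a(q):=\sum_{i\in\mathbb{Z}}(i+1)q^{2i^2+ai}$, the two assertions \eqref{S1-theta} and \eqref{S2-theta} are precisely $T_1(q)=J_2\,S_1(q)$ and $T_3(q)=J_2\,S_2(q)$; by \eqref{S1-product} and \eqref{S2-product} these read $T_1(q)=\tfrac14\big(3J_2^2/J_1+J_1^3\big)$ and $T_3(q)=\tfrac{1}{4q}\big(J_2^2/J_1-J_1^3\big)$. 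So the task is purely one of evaluating two weighted theta series in closed form.

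The key device is the bijection $\phi:\mathbb{Z}\to\mathbb{Z}_{\ge 0}$ given by $\phi(i)=2i$ for $i\ge 0$ and $\phi(i)=-2i-1$ for $i\le -1$. A direct check shows that $n=\phi(i)$ satisfies $2i^2+i=\tfrac{n(n+1)}{2}$ and $4i+1=(-1)^n(2n+1)$. Decomposing the weight as $i+1=\tfrac14(4i+1)+\tfrac34$, I would split $T_1(q)=\tfrac14\sum_{i}(4i+1)q^{2i^2+i}+\tfrac34\sum_{i}q^{2i^2+i}$ and transport each sum to a sum over $n\ge 0$ through $\phi$. The first becomes $\sum_{n\ge 0}(-1)^n(2n+1)q^{n(n+1)/2}=J_1^3$, Jacobi's identity for $(q;q)_\infty^3$, while the second becomes $\sum_{n\ge 0}q^{n(n+1)/2}=\psi(q)=J_2^2/J_1$, Ramanujan's $\psi$-function (itself a specialization of \eqref{Jacobi}). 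This gives $T_1(q)=\tfrac14 J_1^3+\tfrac34 J_2^2/J_1=J_2\,S_1(q)$, proving \eqref{S1-theta}.

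For \eqref{S2-theta} I would first normalize $T_3$ to the same shape: the substitution $i\mapsto -i-1$ sends the exponent $2i^2+3i$ to $2i^2+i-1$ and sends $4i+3$ to $-(4i+1)$, so that $\sum_{i}q^{2i^2+3i}=q^{-1}\psi(q)$ and $\sum_{i}(4i+3)q^{2i^2+3i}=-q^{-1}J_1^3$. Using $i+1=\tfrac14(4i+3)+\tfrac14$ then yields $T_3(q)=\tfrac{1}{4q}\big(J_2^2/J_1-J_1^3\big)=J_2\,S_2(q)$. Alternatively, once \eqref{S1-theta} is established, \eqref{S2-theta} drops out immediately from the two relations $T_1(q)+qT_3(q)=\psi(q)=J_2^2/J_1$ (once more the reindexing $i\mapsto -i-1$) and $S_1(q)+qS_2(q)=J_2/J_1$, the latter being exactly \eqref{S1-add-S2}.

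The only genuinely delicate point is verifying that $\phi$ is a bijection carrying both the quadratic exponent and the linear weight correctly, in particular that the sign $(-1)^n$ produced by $\phi$ matches the sign of $4i+1$ on the nose; once this sign bookkeeping and the splitting of the weights are in place, everything else is the classical evaluations of Jacobi and Ramanujan together with elementary reindexing, and I anticipate no substantive obstacle.
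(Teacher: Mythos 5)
Your proposal is correct and follows essentially the same route as the paper: both deduce the corollary from Theorem \ref{thm-S} by splitting the weight $i+1=\tfrac14(4i+1)+\tfrac34$ and matching the bilateral sums $\sum_{i\in\mathbb{Z}}q^{2i^2+i}$ and $\sum_{i\in\mathbb{Z}}(4i+1)q^{2i^2+i}$ with $\psi(q)=J_2^2/J_1$ and Jacobi's $J_1^3$ via the reindexing $n=2i$ / $n=-2i-1$ (the paper's folding identity $\sum_{i\ge0}f(i)=\sum_{i\ge0}f(2i)+\sum_{i\le-1}f(-2i-1)$ is exactly your bijection $\phi$). Your shortcut for \eqref{S2-theta} via $T_1(q)+qT_3(q)=\psi(q)$ together with \eqref{S1-add-S2} is a harmless variant of the paper's ``similarly'' step and all the sign and exponent bookkeeping checks out.
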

\begin{proof}
Recall Ramanujan's theta function (see \cite[Eq.\ (2.2.13)]{Andrews} or \cite[Eq.\ (1.3.14)]{Berndt-book})
\begin{align}\label{theta-psi}
    \psi(q):=\sum_{n=0}^\infty q^{n(n+1)/2}=\frac{J_2^2}{J_1}
\end{align}
and Jacobi's identity \cite[Theorem 1.3.9]{Berndt-book}
\begin{align}\label{eq-Jacobi-id}
    J_1^3=\sum_{n=0}^\infty (-1)^n(2n+1)q^{n(n+1)/2}.
\end{align}
By \eqref{S1-product} and the above identities we have
\begin{align*}
    S_1(q)&=\frac{1}{4}\cdot \frac{1}{(q^2;q^2)_\infty} \Big(3\frac{J_2^2}{J_1}+J_1^3 \Big) \nonumber \\
    &=\frac{1}{4}\cdot \frac{1}{(q^2;q^2)_\infty} \Big(3\sum_{i=0}^\infty q^{i(i+1)/2}+\sum_{i=0}^\infty (-1)^i(2i+1)q^{i(i+1)/2}  \Big) \nonumber \\
     &=\frac{1}{4}\cdot \frac{1}{(q^2;q^2)_\infty} \Big(3\sum_{i=-\infty}^\infty q^{i(2i+1)}+\sum_{i=-\infty}^\infty (4i+1)q^{i(2i+1)}  \Big) \nonumber \\
    &=\frac{1}{(q^2;q^2)_\infty} \sum_{i=-\infty}^\infty (i+1)q^{2i^2+i}.
\end{align*}
Here for the third equality we used the fact that for any function $f$,
\begin{align}
    \sum_{i=0}^\infty f(i)=\sum_{i=0}^\infty f(2i)+\sum_{i=-\infty}^{-1} f(-2i-1)
\end{align}

Similarly, using \eqref{S2-product}, \eqref{theta-psi} and \eqref{eq-Jacobi-id} we obtain \eqref{S2-theta}.
\end{proof}

\subsection{Example 6}
This example corresponds to
\begin{align*}
A=\begin{pmatrix}
4 & 4 & 2\\
4 & 5 & 2\\
1 & 1 & 1
\end{pmatrix},\quad
AD=\begin{pmatrix}
8 & 8 & 2\\
8 & 10 & 2\\
2 & 2 & 1
\end{pmatrix},\quad
b\in  \bigg\{
\begin{pmatrix}
0 \\ 0 \\1/2
\end{pmatrix},
\begin{pmatrix}
4 \\ 4 \\1/2
\end{pmatrix}
\bigg\}.
\end{align*}
We will use the following result to reduce triple sums to some double sums.
\begin{lemma}\label{add-lem-sum}
For $n\geq 0$ we have
\begin{align}\label{eq-add-id-sum}
    \sum_{i+j=n} \frac{q^{i^2}}{(q^2;q^2)_i(q^2;q^2)_j}=\frac{(-q;q^2)_n}{(q^2;q^2)_n}.
\end{align}
\end{lemma}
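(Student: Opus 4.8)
The plan is to recognize the left-hand side of \eqref{eq-add-id-sum} as a Cauchy product and then evaluate the associated generating function using only Euler's identities \eqref{euler} and the $q$-binomial theorem \eqref{eq-qbinomial}. Write $L_n(q)$ for the left-hand side of \eqref{eq-add-id-sum}. Because the inner summation ranges over $i+j=n$, the generating function $\sum_{n\ge 0}L_n(q)x^n$ factors as a product of two single series:
\begin{align*}
\sum_{n\ge 0}\Big(\sum_{i+j=n}\frac{q^{i^2}}{(q^2;q^2)_i(q^2;q^2)_j}\Big)x^n
=\Big(\sum_{i\ge 0}\frac{q^{i^2}x^i}{(q^2;q^2)_i}\Big)\Big(\sum_{j\ge 0}\frac{x^j}{(q^2;q^2)_j}\Big).
\end{align*}

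Next I would evaluate each factor in closed form. For the second factor, the first identity in \eqref{euler} with $q$ replaced by $q^2$ and $z$ replaced by $x$ gives $\sum_{j\ge 0}x^j/(q^2;q^2)_j=1/(x;q^2)_\infty$. For the first factor, the key observation is the algebraic identity $q^{i^2}=(q^2)^{\binom{i}{2}}q^i$, which recasts the sum as $\sum_{i\ge 0}(q^2)^{\binom{i}{2}}(qx)^i/(q^2;q^2)_i$; the second identity in \eqref{euler}, again with $q$ replaced by $q^2$ and $z$ replaced by $qx$, then evaluates it as $(-qx;q^2)_\infty$. Combining the two factors yields
\begin{align*}
\sum_{n\ge 0}L_n(q)x^n=\frac{(-qx;q^2)_\infty}{(x;q^2)_\infty}.
\end{align*}

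Finally I would expand the right-hand side by the $q$-binomial theorem \eqref{eq-qbinomial} with $a=-q$, base $q^2$, and variable $x$, obtaining $\sum_{n\ge 0}(-q;q^2)_n x^n/(q^2;q^2)_n$. Comparing the coefficients of $x^n$ on the two sides gives $L_n(q)=(-q;q^2)_n/(q^2;q^2)_n$, which is exactly \eqref{eq-add-id-sum}.

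There is essentially no serious obstacle in this argument: each of the three analytic inputs is one of the standard identities recalled in Section \ref{sec-pre}, and the only mildly delicate point is the rewriting $q^{i^2}=(q^2)^{\binom{i}{2}}q^i$ needed to bring the first factor into the exact shape of Euler's second identity. One could equally verify \eqref{eq-add-id-sum} by clearing the denominator $(q^2;q^2)_n$, recognizing the sum as $\sum_i \begin{bmatrix}n\\i\end{bmatrix}_{q^2}q^{i^2}$, and invoking the terminating form of the $q$-binomial theorem; the generating-function route is preferred here since it uses verbatim the identities already stated.
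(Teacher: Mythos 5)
Your proof is correct and is essentially identical to the paper's: both form the generating function $\sum_n L_n(q)z^n$ as the product of the two Euler series $(-qz;q^2)_\infty$ and $1/(z;q^2)_\infty$, expand the quotient by the $q$-binomial theorem, and compare coefficients of $z^n$. The only difference is that you spell out the rewriting $q^{i^2}=(q^2)^{\binom{i}{2}}q^i$, which the paper leaves implicit.
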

\begin{proof}
By \eqref{eq-qbinomial}, \eqref{euler-1} and \eqref{euler-2} we have
\begin{align}
    \sum_{i=0}^\infty \frac{q^{i^2}z^i}{(q^2;q^2)_i} \sum_{j=0}^\infty \frac{z^j}{(q^2;q^2)_j}=\frac{(-zq;q^2)_\infty}{(z;q^2)_\infty}=\sum_{n=0}^\infty \frac{(-q;q^2)_n}{(q^2;q^2)_n}z^n.
\end{align}
Comparing the coefficients of $z^n$ on both sides, we obtain the desired identity.
\end{proof}

The modularity of this example follows from the following theorem.
\begin{theorem}
We have
\begin{align}
\sum_{i,j,k\ge 0}\frac{q^{\frac{1}{2}i^2+5j^2+4k^2+2ij+2ik+8jk+\frac{1}{2}i}}{(q;q)_i(q^2;q^2)_j(q^2;q^2)_k}
&=\frac{(-q;q)_\infty}{(q^4,q^{16};q^{20})_\infty},
\label{table3.6.1}
\\
\sum_{i,j,k\ge 0}\frac{q^{\frac{1}{2}i^2+5j^2+4k^2+2ij+2ik+8jk+\frac{1}{2}i+4j+4k}}{(q;q)_i(q^2;q^2)_j(q^2;q^2)_k}
&=\frac{(-q;q)_\infty}{(q^8,q^{12};q^{20})_\infty}.
\label{table3.6.2}
\end{align}
\end{theorem}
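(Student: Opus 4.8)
The plan is to collapse the triple sum to a single Rogers--Ramanujan series by summing out two indices successively, exactly in the spirit of the reductions used for Examples 2, 9 and 11. The decisive first move is an algebraic one: I would complete the square in the pair $(j,k)$, using
\[
5j^2+4k^2+8jk = 4(j+k)^2 + j^2, \qquad 2ij+2ik = 2i(j+k).
\]
Writing $m=j+k$, the exponent in \eqref{table3.6.1} becomes $\tfrac12 i^2 + 4m^2 + j^2 + 2im + \tfrac12 i$, so that the only residual coupling between $j$ and $k$ is through $m$. Collecting the terms with $i$ and $m$ fixed, the inner sum over $j$ (with $k=m-j$) is precisely $\sum_{j+k=m} q^{j^2}/\big((q^2;q^2)_j(q^2;q^2)_k\big)$, which Lemma \ref{add-lem-sum} evaluates as $(-q;q^2)_m/(q^2;q^2)_m$. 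This turns the left side of \eqref{table3.6.1} into the double sum $\sum_{i,m\ge 0} q^{\frac12 i^2+2im+\frac12 i}(q;q)_i^{-1}\, q^{4m^2}(-q;q^2)_m(q^2;q^2)_m^{-1}$.

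Next I would sum over $i$. Since $\tfrac12 i^2+\tfrac12 i + 2im = \binom{i}{2}+i(2m+1)$, Euler's second identity in \eqref{euler} gives $\sum_{i\ge 0} q^{\binom{i}{2}}(q^{2m+1})^i/(q;q)_i = (-q^{2m+1};q)_\infty = (-q;q)_\infty/(-q;q)_{2m}$. The remaining $m$-sum then simplifies by telescoping the Pochhammer factors: using $(-q;q)_{2m}=(-q;q^2)_m(-q^2;q^2)_m$ the factor $(-q;q^2)_m$ cancels, and $(q^2;q^2)_m(-q^2;q^2)_m=(q^4;q^4)_m$ leaves
\[
(-q;q)_\infty \sum_{m\ge 0} \frac{q^{4m^2}}{(q^4;q^4)_m}.
\]
Replacing $q$ by $q^4$ in the first Rogers--Ramanujan identity \eqref{RR} evaluates the sum as $1/(q^4,q^{16};q^{20})_\infty$, which yields \eqref{table3.6.1}.

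For \eqref{table3.6.2} the only difference in the exponent is the extra $4j+4k=4m$, contributing a factor $q^{4m}$; the same two reductions then produce $(-q;q)_\infty\sum_{m\ge0} q^{4m^2+4m}/(q^4;q^4)_m$, and the second Rogers--Ramanujan identity \eqref{RR} with $q\mapsto q^4$ gives $1/(q^8,q^{12};q^{20})_\infty$, establishing \eqref{table3.6.2}. The argument is otherwise a routine chain of $q$-series manipulations; the one genuinely creative step is recognizing that $5j^2+4k^2+8jk=4(j+k)^2+j^2$, which is exactly the shape needed to bring Lemma \ref{add-lem-sum} to bear. The place that warrants careful checking is the Pochhammer telescoping in the second step, where the cancellations must be verified to produce the clean $(q^4;q^4)_m$ denominator that makes the Rogers--Ramanujan evaluation applicable.
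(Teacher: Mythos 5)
Your proposal is correct and follows essentially the same route as the paper: reduce the $(j,k)$-sum via the decomposition $5j^2+4k^2+8jk=4(j+k)^2+j^2$ and Lemma \ref{add-lem-sum}, sum over $i$ with Euler's identity \eqref{euler} to produce $(-q;q)_\infty/(-q;q)_{2m}$, telescope to $(q^4;q^4)_m$, and finish with the Rogers--Ramanujan identities \eqref{RR} under $q\mapsto q^4$. All the cancellations you flag for checking do go through exactly as claimed.
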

\begin{proof}
We have
\begin{align}
&F(u,v)=F(u,v;q):=\sum_{i,j,k\ge 0}\frac{q^{\frac{1}{2}i^2+5j^2+4k^2+2ij+2ik+8jk}u^iv^{j+k}}{(q;q)_i(q^2;q^2)_j(q^2;q^2)_k}\notag
\\
&=\sum_{i\ge 0}\frac{q^{\frac{1}{2}i^2}u^i}{(q;q)_i}
\sum_{m\ge 0}q^{4m^2+2im}v^m
\sum_{j+k=m}\frac{q^{j^2}}{(q^2;q^2)_j(q^2;q^2)_k}\notag
\\
&=\sum_{i,m\ge 0}\frac{q^{\frac{1}{2}i^2+2im+4m^2}u^iv^m}{(q;q)_i} \cdot
\frac{(-q;q^2)_m}{(q^2;q^2)_m} \quad \text{(by \eqref{eq-add-id-sum})}
\notag
\\
&=\sum_{m\ge 0}\frac{q^{4m^2}v^m(-q;q^2)_m}{(q^2;q^2)_m}
\sum_{i\ge 0}\frac{q^{\frac{1}{2}(i^2-i)}(uq^{2m+\frac{1}{2}})^i}{(q;q)_i}\notag
\\
&=\sum_{m\ge 0}\frac{q^{4m^2}v^m(-q;q^2)_m(-uq^{\frac{1}{2}+2m};q)_\infty}{(q^2;q^2)_m} \quad \text{(by \eqref{euler-2})}\notag
\\
&=(-uq^{\frac{1}{2}};q)_\infty
\sum_{m\ge 0}\frac{q^{4m^2}v^m(-q;q^2)_m}{(q^2;q^2)_m(-uq^{\frac{1}{2}};q)_{2m}}.
\label{F3.6}
\end{align}
In particular,  we have
\begin{align*}
&F(q^{\frac{1}{2}},v)=(-q;q)_\infty\sum_{m\ge 0}\frac{q^{4m^2}v^m(-q;q^2)_m}{(q^2;q^2)_m(-q;q)_{2m}}
=(-q;q)_\infty\sum_{m\ge 0}\frac{q^{4m^2}v^m}{(q^4;q^4)_m}.
\end{align*}
Setting $v=1$ and $q^4$, and then using \eqref{RR} with $q$ replaced by $q^4$ we obtain \eqref{table3.6.1} and \eqref{table3.6.2}, respectively.
\end{proof}

\subsection{Examples 10, 12, 13, 14 and 15}
For these examples, we will use the following identity to reduce triple sums to double sums: for $n\geq 0$ we have
\begin{align}
\sum_{i+j=n}\frac{q^{i^2+j^2-i}}{(q^2;q^2)_i(q^2;q^2)_j}
    &=\frac{q^{(n^2-n)/2}}{(q;q)_n}. \quad \text{(\cite[Eq.\ (2.6)]{WW-I})} \label{lem-13}
\end{align}

\subsubsection{Example 10}
This example corresponds to
\begin{align*}
&A=\begin{pmatrix}
3/2&1/2&1\\
1/2&3/2&1\\
1/2&1/2&1
\end{pmatrix},\quad
AD=\begin{pmatrix}
3&1&1\\
1&3&1\\
1&1&1
\end{pmatrix},\nonumber
\\
&b\in  \bigg\{
\begin{pmatrix}
-3/2\\-1/2\\1/2
\end{pmatrix},
\begin{pmatrix}
-1/2\\-3/2\\1/2
\end{pmatrix},
\begin{pmatrix}
-1/2\\1/2\\ 0
\end{pmatrix},
\begin{pmatrix}
-1/2\\1/2\\1/2
\end{pmatrix},
\begin{pmatrix}
1/2\\-1/2\\ 0
\end{pmatrix},\nonumber
\\&\quad \quad \quad
\begin{pmatrix}
1/2\\-1/2\\1/2
\end{pmatrix},
\begin{pmatrix}
1/2\\3/2\\1/2
\end{pmatrix},
\begin{pmatrix}
1/2\\3/2\\ 1
\end{pmatrix},
\begin{pmatrix}
3/2\\1/2\\1/2
\end{pmatrix},
\begin{pmatrix}
3/2\\1/2\\ 1
\end{pmatrix}
\bigg\}.
\end{align*}
We need the following identities to prove its modularity:
\begin{align}
    &\sum_{i,j\ge 0}\frac{q^{i^2+ij+\frac{1}{2}j^2-i+\frac{1}{2}j}}{(q;q)_i(q;q)_j}
    =
    2\frac{(q^4;q^4)_\infty}{(q;q)_\infty}, \quad \text{(\cite[Eq.\ (3.4)]{Wang-rank2})}\label{LW.rank2.3.5}
    \\
    &\sum_{i,j\ge 0}\frac{q^{2i^2+2ij+j^2}}{(q^2;q^2)_i(q^2;q^2)_j}
    =
    \frac{1}{(q,q^4,q^7;q^8)_\infty}, \quad \text{(\cite[Eq.\ (3.5)]{Wang-rank2})} \label{LW.rank2.3.6}
    \\
    &\sum_{i,j\ge 0}\frac{q^{i^2+ij+\frac{1}{2}j^2+\frac{1}{2}j}}{(q;q)_i(q;q)_j}
    =
    \frac{(q^2;q^2)^3_\infty}{(q;q)^2_\infty(q^4;q^4)_\infty}, \quad \text{(\cite[Eq.\ (3.6)]{Wang-rank2})} \label{LW.rank2.3.7}
    \\
    &\sum_{i,j\ge 0}\frac{q^{i^2+ij+\frac{1}{2}j^2+i+\frac{1}{2}j}}{(q;q)_i(q;q)_j}
    =
    \frac{(q^4;q^4)_\infty}{(q;q)_\infty},  \quad \text{(\cite[Eq.\ (3.7)]{Wang-rank2})} \label{LW.rank2.3.8}
    \\
    &\sum_{i,j\ge 0}\frac{q^{2i^2+2ij+j^2+2i+2j}}{(q^2;q^2)_i(q^2;q^2)_j}
    =
    \frac{1}{(q^3,q^4,q^5;q^8)_\infty}. \quad \text{(\cite[Eq.\ (3.8)]{Wang-rank2})} \label{LW.rank2.3.9}
\end{align}
\begin{theorem}
We have
\begin{align}
\sum_{i,j,k\ge 0}\frac{q^{i^2+3j^2+3k^2+2ij+2ik+2jk+i-j-3k}}{(q^2;q^2)_i(q^4;q^4)_j(q^4;q^4)_k}
&=2\frac{(q^8;q^8)_\infty}{(q^2;q^2)_\infty},
\label{table3.10.1}
\\
\sum_{i,j,k\ge 0}\frac{q^{i^2+3j^2+3k^2+2ij+2ik+2jk+j-k}}{(q^2;q^2)_i(q^4;q^4)_j(q^4;q^4)_k}
&=\frac{1}{(q,q^4,q^7;q^8)_\infty},
\label{table3.10.3}
\\
\sum_{i,j,k\ge 0}\frac{q^{i^2+3j^2+3k^2+2ij+2ik+2jk+i+j-k}}{(q^2;q^2)_i(q^4;q^4)_j(q^4;q^4)_k}
&=\frac{(q^4;q^4)^3_\infty}{(q^2;q^2)^2_\infty(q^8;q^8)_\infty},
\label{table3.10.4}
\\
\sum_{i,j,k\ge 0}\frac{q^{i^2+3j^2+3k^2+2ij+2ik+2jk+i+3j+k}}{(q^2;q^2)_i(q^4;q^4)_j(q^4;q^4)_k}
&=\frac{(q^8;q^8)_\infty}{(q^2;q^2)_\infty},
\label{table3.10.7}
\\
\sum_{i,j,k\ge 0}\frac{q^{i^2+3j^2+3k^2+2ij+2ik+2jk+2i+3j+k}}{(q^2;q^2)_i(q^4;q^4)_j(q^4;q^4)_k}
&=\frac{1}{(q^3,q^4,q^5;q^8)_\infty}.
\label{table3.10.8}
\end{align}
\end{theorem}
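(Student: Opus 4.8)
The plan is to collapse each triple sum to a double sum by summing over the two indices $j,k$ whose $q$-Pochhammer factors have base $q^4$, using the companion identity \eqref{lem-13} (with $q$ replaced by $q^2$), and then to recognize the resulting double sums as rescaled instances of the rank-two evaluations \eqref{LW.rank2.3.5}--\eqref{LW.rank2.3.9}. First I would introduce the generating function
\[
F(u,v,w):=\sum_{i,j,k\ge 0}\frac{q^{i^2+3j^2+3k^2+2ij+2ik+2jk}\,u^iv^jw^k}{(q^2;q^2)_i(q^4;q^4)_j(q^4;q^4)_k},
\]
so that the five left-hand sides in the theorem are the specializations $F(q,q^{-1},q^{-3})$, $F(1,q,q^{-1})$, $F(q,q,q^{-1})$, $F(q,q^3,q)$ and $F(q^2,q^3,q)$, matching \eqref{table3.10.1}, \eqref{table3.10.3}, \eqref{table3.10.4}, \eqref{table3.10.7} and \eqref{table3.10.8} respectively. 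The key algebraic observation is the decomposition $3j^2+3k^2+2jk=2(j^2+k^2-k)+(j+k)^2+2k$, which isolates exactly the quadratic form $2(j^2+k^2-k)$ that appears (after $q\mapsto q^2$) on the left of \eqref{lem-13}.

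Next I would fix $i$ and set $m:=j+k$. The cross term gives $q^{2ij+2ik}=q^{2im}$, which depends only on $m$, and the remaining $(j,k)$-sum becomes
\[
q^{m^2}\sum_{j+k=m}\frac{q^{2(j^2+k^2-k)}\,v^j(q^2w)^k}{(q^4;q^4)_j(q^4;q^4)_k}.
\]
Here the monomial $v^j(q^2w)^k$ collapses to a pure function of $m$ precisely when $v=q^2w$, and one checks directly that all five specializations above satisfy this relation. Imposing $v=q^2w$ and applying \eqref{lem-13} with $q\mapsto q^2$ (in the form with $q^{i^2+j^2-j}$, valid by the symmetry of the summation) to evaluate the inner sum yields
\[
F(u,q^2w,w)=\sum_{i,m\ge 0}\frac{q^{i^2+2m^2+2im+m}\,u^iw^m}{(q^2;q^2)_i(q^2;q^2)_m}.
\]

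Finally I would substitute the five pairs $(u,w)$ (each with $v=q^2w$) into this double sum; after relabeling $i\leftrightarrow m$ each reduces to one of the rank-two series in \eqref{LW.rank2.3.5}--\eqref{LW.rank2.3.9}. Three of them ($F(q,q^{-1},q^{-3})$, $F(q,q,q^{-1})$, $F(q,q^3,q)$) match the $q\mapsto q^2$ rescalings of \eqref{LW.rank2.3.5}, \eqref{LW.rank2.3.7} and \eqref{LW.rank2.3.8}, whose product sides become $2(q^8;q^8)_\infty/(q^2;q^2)_\infty$, $(q^4;q^4)_\infty^3/\big((q^2;q^2)_\infty^2(q^8;q^8)_\infty\big)$ and $(q^8;q^8)_\infty/(q^2;q^2)_\infty$; the other two ($F(1,q,q^{-1})$ and $F(q^2,q^3,q)$) match \eqref{LW.rank2.3.6} and \eqref{LW.rank2.3.9} without rescaling. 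I expect the only genuinely non-routine step to be the reduction itself, namely spotting the decomposition $3j^2+3k^2+2jk=2(j^2+k^2-k)+(j+k)^2+2k$ and verifying that in every one of the five cases the prescribed linear terms are exactly what is needed to force the reduced $(j,k)$-monomial to depend on $m=j+k$ alone (equivalently, to force $v=q^2w$). Once this compatibility is confirmed, the identification with \eqref{LW.rank2.3.5}--\eqref{LW.rank2.3.9} is pure bookkeeping.
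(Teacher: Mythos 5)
Your proposal is correct and follows essentially the same route as the paper: the paper's equation \eqref{F3.10} performs exactly your reduction, writing the exponent as $i^2+2im+(j+k)^2+(2j^2+2k^2-2k)$ with $m=j+k$ (your constraint $v=q^2w$ is just the paper's choice of monomial $u^iv^{j+k}$ with the $-2k$ absorbed into the exponent), applying \eqref{lem-13} with $q\mapsto q^2$, and then specializing to hit \eqref{LW.rank2.3.5}--\eqref{LW.rank2.3.9}. All five of your specializations and the resulting double-sum exponents check out, so no gap remains.
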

Interchanging $j$ with $k$, we obtain identities for the remaining choices of $b$.
\begin{proof}
We have
\begin{align}
&\sum_{i,j,k\ge 0}\frac{q^{i^2+3j^2+3k^2+2ij+2ik+2jk-2k}u^{i}v^{j+k}}{(q^2;q^2)_i(q^4;q^4)_j(q^4;q^4)_k}\notag
\\
&=\sum_{i\ge 0}\frac{q^{i^2}u^i}{(q^2;q^2)_i}
\sum_{m\ge 0}q^{m^2+2im}v^m
\sum_{j+k=m}\frac{q^{2j^2+2k^2-2k}}{(q^4;q^4)_j(q^4;q^4)_k}\notag
\\
&=\sum_{i,m\ge 0}\frac{q^{i^2+2im+2m^2-m}u^iv^m}{(q^2;q^2)_i(q^2;q^2)_m}.  \quad \text{(by \eqref{lem-13})}
\label{F3.10}
\end{align}
Setting $(u,v)= (q,q^{-1})$, $(1,q)$, $(q,q)$, $(q,q^3)$ and  $(q^{2},q^3)$ in \eqref{F3.10}, and then using \eqref{LW.rank2.3.5}--\eqref{LW.rank2.3.9}, we obtain \eqref{table3.10.1}--\eqref{table3.10.8}, respectively.
\end{proof}

\subsubsection{Example 12}
This example corresponds to
\begin{align*}
&A=\begin{pmatrix}
3/2&1/2& 1\\
1/2&3/2& 1\\
1/2&1/2&3/2
\end{pmatrix}, \quad
AD=\begin{pmatrix}
3&1& 1\\
1&3& 1\\
1&1&3/2
\end{pmatrix}, \nonumber\\
& b\in  \bigg\{
\begin{pmatrix}
-1/2\\1/2\\-1/2
\end{pmatrix},
\begin{pmatrix}
-1/2\\1/2\\ 0
\end{pmatrix},
\begin{pmatrix}
    1/2\\-1/2\\-1/2
\end{pmatrix},
\begin{pmatrix}
    1/2\\-1/2\\0
\end{pmatrix},
\begin{pmatrix}
1/2\\3/2\\1/2
\end{pmatrix},
\begin{pmatrix}
    3/2\\1/2\\1/2
\end{pmatrix}
\bigg\}.
\end{align*}
To prove its modularity, we need the following identities:
\begin{align}
&\sum_{i,j\ge 0}\frac{q^{3i^2+4ij+4j^2-2i}}{(q^4;q^4)_i(q^4;q^4)_j}
    =
    \frac{J_{14}J_{28}^2J_{2,28}}{J_{1,28}J_{4,28}J_{8,28}J_{13,28}}, \quad \text{(\cite[Eq.\ (3.74)]{Wang-rank2})}
    \label{LW.rank2.3.70}
    \\
    &\sum_{i,j\ge 0}\frac{q^{3i^2+4ij+4j^2}}{(q^4;q^4)_i(q^4;q^4)_j}
    =
    \frac{J_{14}J_{28}^2J_{6,28}}{J_{3,28}J_{4,28}J_{11,28}J_{12,28}}, \quad \text{(\cite[Eq.\ (3.75)]{Wang-rank2})}
    \label{LW.rank2.3.71}
    \\
    &\sum_{i,j\ge 0}\frac{q^{3i^2+4ij+4j^2+2i+4j}}{(q^4;q^4)_i(q^4;q^4)_j}
    =
    \frac{J_{14}J_{28}^2J_{10,28}}{J_{5,28}J_{8,28}J_{9,28}J_{12,28}}. \quad \text{(\cite[Eq.\ (3.76)]{Wang-rank2})}
    \label{LW.rank2.3.72}
\end{align}
\begin{theorem}
We have
\begin{align}
\sum_{i,j,k\ge 0}
\frac{q^{3i^2+6j^2+6k^2+4ij+4ik+4jk-2i-2j+2k}}{(q^4;q^4)_i(q^8;q^8)_j(q^8;q^8)_k}
&=\frac{J_{14}J_{28}^2J_{2,28}}{J_{1,28}J_{4,28}J_{8,28}J_{13,28}},
\label{table3.12.1}
\\
\sum_{i,j,k\ge 0}
\frac{q^{3i^2+6j^2+6k^2+4ij+4ik+4jk-2j+2k}}{(q^4;q^4)_i(q^8;q^8)_j(q^8;q^8)_k}
&=\frac{J_{14}J_{28}^2J_{6,28}}{J_{3,28}J_{4,28}J_{11,28}J_{12,28}},
\label{table3.12.2}
\\
\sum_{i,j,k\ge 0}
\frac{q^{3i^2+6j^2+6k^2+4ij+4ik+4jk+2i+2j+6k}}{(q^4;q^4)_i(q^8;q^8)_j(q^8;q^8)_k}
&=\frac{J_{14}J_{28}^2J_{10,28}}{J_{5,28}J_{8,28}J_{9,28}J_{12,28}}.
\label{table3.12.5}
\end{align}
\end{theorem}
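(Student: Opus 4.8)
The plan is to mimic the reduction strategy used for Examples 10, 13, 14 and 15: collapse the triple sum to a double sum by summing over the variables $j$ and $k$ first, and then invoke the known double-sum evaluations \eqref{LW.rank2.3.70}--\eqref{LW.rank2.3.72}. Concretely, I would introduce the two-parameter auxiliary series
\begin{align*}
F(u,v)=F(u,v;q):=\sum_{i,j,k\ge 0}\frac{q^{3i^2+6j^2+6k^2+4ij+4ik+4jk-4j}\,u^i v^{j+k}}{(q^4;q^4)_i(q^8;q^8)_j(q^8;q^8)_k},
\end{align*}
in which a fixed auxiliary linear term $-4j$ is built into the $(j,k)$-part precisely so that the inner $(j,k)$-sum will match the shape required by \eqref{lem-13}.

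The key algebraic observation is the completion of the square $6j^2+6k^2+4jk=4(j^2+k^2)+2(j+k)^2$, together with $4ij+4ik=4i(j+k)$. Writing $m=j+k$, these turn the exponent into $3i^2+4im+2m^2+\big(4(j^2+k^2)-4j\big)$, so that
\begin{align*}
F(u,v)=\sum_{i\ge 0}\frac{q^{3i^2}u^i}{(q^4;q^4)_i}\sum_{m\ge 0}q^{2m^2+4im}v^m\sum_{j+k=m}\frac{q^{4(j^2+k^2-j)}}{(q^8;q^8)_j(q^8;q^8)_k}.
\end{align*}
Applying \eqref{lem-13} with $q$ replaced by $q^4$ evaluates the innermost sum as $q^{2m^2-2m}/(q^4;q^4)_m$, and hence
\begin{align*}
F(u,v)=\sum_{i,m\ge 0}\frac{q^{3i^2+4im+4m^2-2m}u^i v^m}{(q^4;q^4)_i(q^4;q^4)_m},
\end{align*}
whose quadratic form $3i^2+4im+4m^2$ is exactly that of \eqref{LW.rank2.3.70}--\eqref{LW.rank2.3.72}.

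Finally I would specialize $(u,v)$ to recover the three target identities. Setting $v=q^2$ converts the built-in $-4j$ into the $-2j+2k$ pattern of \eqref{table3.12.1}--\eqref{table3.12.2}, while $v=q^6$ produces the $2j+6k$ pattern of \eqref{table3.12.5}; the choices $u=q^{-2},1,q^2$ then supply the remaining $i$-linear terms. In the reduced double sum these specializations produce precisely the left-hand sides of \eqref{LW.rank2.3.70}, \eqref{LW.rank2.3.71} and \eqref{LW.rank2.3.72}, so that $(u,v)=(q^{-2},q^2)$, $(1,q^2)$ and $(q^2,q^6)$ yield \eqref{table3.12.1}, \eqref{table3.12.2} and \eqref{table3.12.5}, respectively, whose product sides agree with those three references. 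I do not expect a serious obstacle: the computation is entirely routine once the correct auxiliary shift $-4j$ and the decomposition $6j^2+6k^2+4jk=4(j^2+k^2)+2(j+k)^2$ are identified, which is the only genuinely non-mechanical choice. As for the other examples, the modularity of this case then follows from the product sides being generalized Dedekind eta quotients, which I would verify by the method of \cite{Frye-Garvan} as described in Section \ref{sec-pre}.
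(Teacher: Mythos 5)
Your proposal is correct and follows exactly the paper's own proof: the same auxiliary series with the built-in $-4j$ shift and $v^{j+k}$ grouping, the same reduction of the inner $(j,k)$-sum via \eqref{lem-13} (with $q\mapsto q^4$) to the double sum with quadratic form $3i^2+4im+4m^2$, and the same specializations $(u,v)=(q^{-2},q^2),(1,q^2),(q^2,q^6)$ matched to \eqref{LW.rank2.3.70}--\eqref{LW.rank2.3.72}. All the exponent bookkeeping checks out.
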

Interchanging $j$ with $k$, we obtain identities for the remaining choices of $b$.
\begin{proof}
We have
\begin{align}
&\sum_{i,j,k\ge 0}\frac{q^{3i^2+6j^2+6k^2+4ij+4ik+4jk-4j}u^{i}v^{j+k}}{(q^4;q^4)_i(q^8;q^8)_j(q^8;q^8)_k}\notag
\\
&=\sum_{i\ge 0}\frac{q^{3i^2}u^i}{(q^4;q^4)_i}
\sum_{m\ge 0}q^{2m^2+4im}v^m
\sum_{j+k=m}\frac{q^{4j^2-4j+4k^2}}{(q^8;q^8)_j(q^8;q^8)_k}\notag
\\
&=\sum_{i,m\ge 0}\frac{q^{3i^2+4im+4m^2-2m}u^iv^m}{(q^4;q^4)_i(q^4;q^4)_m}. \quad \text{(by \eqref{lem-13})}
\label{F3.12}
\end{align}
Setting $(u,v)=(q^{-2},q^{2})$, $(1,q^2)$ and $(q^2,q^{6})$ in \eqref{F3.12}, and then using \eqref{LW.rank2.3.70}--\eqref{LW.rank2.3.72}, we obtain \eqref{table3.12.1}--\eqref{table3.12.5}, respectively.
\end{proof}

\subsubsection{Example 13}
This example corresponds to
\begin{align*}
&A=\begin{pmatrix}
3/2&1/2& 2\\
1/2&3/2& 2\\
1 & 1 & 4
\end{pmatrix}, \quad
AD=\begin{pmatrix}
3 & 1 & 2\\
1 & 3 & 2\\
2 & 2 & 4
\end{pmatrix},\nonumber
\\
&b\in  \bigg\{
\begin{pmatrix}
-1/2\\1/2\\ 0
\end{pmatrix},
\begin{pmatrix}
-1/2\\1/2\\ 1
\end{pmatrix},
\begin{pmatrix}
1/2\\-1/2\\0
\end{pmatrix},
\begin{pmatrix}
1/2\\-1/2\\1
\end{pmatrix},
\begin{pmatrix}
1/2\\3/2\\ 2
\end{pmatrix},
\begin{pmatrix}
3/2\\1/2\\2
\end{pmatrix}
\bigg\}.
\end{align*}
Its modularity follows from the following theorem.
\begin{theorem}
We have
\begin{align}
\sum_{i,j,k\ge 0}\frac{q^{2i^2+\frac{3}{2}j^2+\frac{3}{2}k^2+2ij+2ik+jk-\frac{1}{2}j+\frac{1}{2}k}}{(q;q)_i(q^2;q^2)_j(q^2;q^2)_k}
&=\frac{(q^3,q^4,q^7;q^7)_\infty}{(q;q)_\infty},
\label{table3.13.1}
\\
\sum_{i,j,k\ge 0}\frac{q^{2i^2+\frac{3}{2}j^2+\frac{3}{2}k^2+2ij+2ik+jk+i-\frac{1}{2}j+\frac{1}{2}k}}{(q;q)_i(q^2;q^2)_j(q^2;q^2)_k}
&=\frac{(q^2,q^5,q^7;q^7)_\infty}{(q;q)_\infty},
\label{table3.13.2}
\\
\sum_{i,j,k\ge 0}\frac{q^{2i^2+\frac{3}{2}j^2+\frac{3}{2}k^2+2ij+2ik+jk+2i+\frac{1}{2}j+\frac{3}{2}k}}{(q;q)_i(q^2;q^2)_j(q^2;q^2)_k}
&=\frac{(q,q^6,q^7;q^7)_\infty}{(q;q)_\infty}.
\label{table3.13.5}
\end{align}
\end{theorem}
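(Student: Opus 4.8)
The plan is to treat the three identities \eqref{table3.13.1}--\eqref{table3.13.5} uniformly, following exactly the strategy used for Examples 10 and 12: introduce a two-parameter generating function, use the summation formula \eqref{lem-13} to collapse the indices $j$ and $k$ into a single index $m=j+k$, and then recognize the resulting double sum as a specialization of the Andrews--Gordon identity \eqref{AG} with $k=3$. Concretely, I would set
\begin{align*}
F(u,v):=\sum_{i,j,k\ge 0}\frac{q^{2i^2+\frac{3}{2}j^2+\frac{3}{2}k^2+2ij+2ik+jk-\frac{1}{2}j+\frac{1}{2}k}u^iv^{j+k}}{(q;q)_i(q^2;q^2)_j(q^2;q^2)_k},
\end{align*}
so that the left-hand sides of \eqref{table3.13.1}, \eqref{table3.13.2} and \eqref{table3.13.5} are exactly $F(1,1)$, $F(q,1)$ and $F(q^2,q)$, respectively (the extra factors being $q^{i}$ and $q^{2i+j+k}$).

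First I would record the algebraic identity $\frac{3}{2}j^2+\frac{3}{2}k^2+jk=(j^2+k^2)+\frac{1}{2}m^2$ with $m:=j+k$, together with $2ij+2ik=2im$ and the decomposition of the asymmetric linear term $-\frac{1}{2}j+\frac{1}{2}k=-j+\frac{1}{2}m$. Substituting these into the exponent and summing over $m=j+k$ for fixed $i$, the inner sum becomes
\begin{align*}
q^{m/2}\sum_{j+k=m}\frac{q^{j^2+k^2-j}}{(q^2;q^2)_j(q^2;q^2)_k}=q^{m/2}\cdot\frac{q^{(m^2-m)/2}}{(q;q)_m}
\end{align*}
by \eqref{lem-13}. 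Collecting the $q$-powers, the $m$-exponent $\frac{1}{2}m^2+\frac{1}{2}m+\frac{m^2-m}{2}$ simplifies to $m^2$, leaving
\begin{align*}
F(u,v)=\sum_{i,m\ge 0}\frac{q^{2i^2+2im+m^2}u^iv^m}{(q;q)_i(q;q)_m}=\sum_{i,m\ge 0}\frac{q^{i^2+(i+m)^2}u^iv^m}{(q;q)_i(q;q)_m}.
\end{align*}

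Then I would match this against \eqref{AG} with $k=3$: writing $N_1=i+m$ and $N_2=i$ (so that $n_1=m$, $n_2=i$), the double sum takes the form $\sum q^{N_1^2+N_2^2}u^{N_2}v^{n_1}/\bigl((q;q)_{n_1}(q;q)_{n_2}\bigr)$. Specializing, $F(1,1)$ reproduces the $i=3$ case of \eqref{AG}, giving $(q^3,q^4,q^7;q^7)_\infty/(q;q)_\infty$; $F(q,1)$ inserts a factor $q^{N_2}$ and reproduces the $i=2$ case; and for $F(q^2,q)$ the factor $q^{2N_2+n_1}=q^{N_1+N_2}$ reproduces the $i=1$ case, yielding the three stated products. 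I do not expect a genuine obstacle, since the reduction mirrors Examples 10 and 12 verbatim; the only point requiring care is the asymmetric linear term $-\frac{1}{2}j+\frac{1}{2}k$, which must be split so that the part $-j$ feeds the inner \eqref{lem-13} sum while the residual $\frac{1}{2}m$ is absorbed into the $m$-dependent exponent. Getting this bookkeeping right is precisely what makes the $m$-exponent collapse to $m^2$ and the Andrews--Gordon form emerge.
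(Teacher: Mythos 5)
Your proposal is correct and follows essentially the same route as the paper: the paper forms the same two-parameter generating function (with the linear term $-j$ absorbed into the $v$-specializations $v=q^{1/2},q^{1/2},q^{3/2}$ rather than built in from the start), applies \eqref{lem-13} to collapse $j,k$ into $m=j+k$, arrives at the identical double sum $\sum_{i,m}q^{2i^2+2im+m^2-\frac{1}{2}m}u^iv^m/((q;q)_i(q;q)_m)$, and invokes \eqref{AG} with $(k,i)=(3,3),(3,2),(3,1)$. The bookkeeping of the asymmetric term and the matching of specializations to the Andrews--Gordon cases are all carried out correctly.
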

Interchanging $j$ with $k$, we obtain  identities for the  remaining choices of $b$.
\begin{proof}
We have
\begin{align}
&\sum_{i,j,k\ge 0}\frac{q^{2i^2+\frac{3}{2}j^2+\frac{3}{2}k^2+2ij+2ik+jk-j}u^{i}v^{j+k}}{(q;q)_i(q^2;q^2)_j(q^2;q^2)_k}
\notag
\\
&=\sum_{i\ge 0}\frac{q^{2i^2}u^i}{(q;q)_i}
\sum_{m\ge 0}q^{\frac{1}{2}m^2+2im}v^m
\sum_{j+k=m}\frac{q^{j^2-j+k^2}}{(q^2;q^2)_j(q^2;q^2)_k}\notag
\\
&=\sum_{i,m\ge 0}\frac{q^{2i^2+2im+m^2-\frac{1}{2}m}u^iv^m}{(q;q)_i(q;q)_m}.  \quad \text{(by \eqref{lem-13})}
\label{F3.13}
\end{align}
Setting $(u,v)=(1,q^{\frac{1}{2}})$, $(q,q^{\frac{1}{2}})$ and $(q^2,q^{\frac{3}{2}})$ in \eqref{F3.13}, and then using \eqref{AG} with $(k,i)=(3,3),(3,2),(3,1)$, we obtain \eqref{table3.13.1}--\eqref{table3.13.5}, respectively.
\end{proof}

\subsubsection{Example 14}
This example corresponds to
\begin{align*}
&A=\begin{pmatrix}
5/2&3/2& 1\\
3/2&5/2& 1\\
1/2&1/2& 1
\end{pmatrix}, \quad
AD=\begin{pmatrix}
5&3& 1\\
3&5& 1\\
1&1& 1
\end{pmatrix},\nonumber
\\
&b\in  \bigg\{
\begin{pmatrix}
-1/2\\1/2\\1/2
\end{pmatrix},
\begin{pmatrix}
1/2\\-1/2\\1/2
\end{pmatrix},
\begin{pmatrix}
3/2\\5/2\\1/2
\end{pmatrix},
\begin{pmatrix}
5/2\\3/2\\1/2
\end{pmatrix}
\bigg\}.
\end{align*}
To prove its modularity, we establish the following theorem.
\begin{theorem}
We have
\begin{align}
\sum_{i,j,k\ge 0}\frac{q^{\frac{1}{2}i^2+\frac{5}{2}j^2+\frac{5}{2}k^2+ij+ik+3jk+\frac{1}{2}i-\frac{1}{2}j+\frac{1}{2}k}}{(q;q)_i(q^2;q^2)_j(q^2;q^2)_k}
&=\frac{(q^4,q^6,q^{10};q^{10})_\infty}{(q;q)_\infty},
\label{table3.14.1}
\\
\sum_{i,j,k\ge 0}\frac{q^{\frac{1}{2}i^2+\frac{5}{2}j^2+\frac{5}{2}k^2+ij+ik+3jk+\frac{1}{2}i+\frac{3}{2}j+\frac{5}{2}k}}{(q;q)_i(q^2;q^2)_j(q^2;q^2)_k}
&=\frac{(q^2,q^8,q^{10};q^{10})_\infty}{(q;q)_\infty}.
\label{table3.14.2}
\end{align}
\end{theorem}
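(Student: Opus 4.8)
The plan is to follow the pattern already used for Examples 10, 12 and 13: introduce a bivariate auxiliary series, collapse the $(j,k)$-summation against \eqref{lem-13}, and then identify the resulting double sum. Concretely, I would set
\begin{align}
F(u,v):=\sum_{i,j,k\ge 0}\frac{q^{\frac12 i^2+\frac52 j^2+\frac52 k^2+ij+ik+3jk-j}\,u^iv^{j+k}}{(q;q)_i(q^2;q^2)_j(q^2;q^2)_k},
\end{align}
where the built-in term $-j$ is inserted precisely so that the inner sum will match \eqref{lem-13}. The key algebraic observation is that, on writing $m=j+k$, one has $\frac52 j^2+\frac52 k^2+3jk=\frac32 m^2+(j^2+k^2)$ and $ij+ik=im$, so that
\begin{align}
F(u,v)=\sum_{i\ge 0}\frac{q^{\frac12 i^2}u^i}{(q;q)_i}\sum_{m\ge 0}q^{\frac32 m^2+im}v^m\sum_{j+k=m}\frac{q^{j^2+k^2-j}}{(q^2;q^2)_j(q^2;q^2)_k}.
\end{align}
Applying \eqref{lem-13} to the innermost sum turns it into $q^{(m^2-m)/2}/(q;q)_m$, which yields the double sum
\begin{align}
F(u,v)=\sum_{i,m\ge 0}\frac{q^{\frac12 i^2+im+2m^2-\frac12 m}u^iv^m}{(q;q)_i(q;q)_m}.
\end{align}

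Next I would specialize the parameters, noting that the asymmetry between the coefficients of $j$ and $k$ in the target exponents is produced entirely by the built-in $-j$ combined with the symmetric factor $v^{j+k}$. Tracking the linear terms, $(u,v)=(q^{1/2},q^{1/2})$ reproduces the exponent $\frac12 i-\frac12 j+\frac12 k$ of \eqref{table3.14.1} (since $-j+\frac12(j+k)=-\frac12 j+\frac12 k$), while $(u,v)=(q^{1/2},q^{5/2})$ reproduces $\frac12 i+\frac32 j+\frac52 k$ of \eqref{table3.14.2} (since $-j+\frac52(j+k)=\frac32 j+\frac52 k$). Thus both identities reduce to evaluating
\begin{align}
\sum_{i,m\ge 0}\frac{q^{\frac12 i^2+\frac12 i+im+2m^2+cm}}{(q;q)_i(q;q)_m},\qquad c\in\{0,2\}.
\end{align}
Unlike Example 13, this double sum is not itself an Andrews--Gordon sum, because the coefficient of $i^2$ is $\frac12$; the remedy is to sum over $i$ first. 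Using $\frac12 i(i+1)=\binom{i}{2}+i$ and Euler's identity \eqref{euler}, the inner $i$-sum equals $(-q^{m+1};q)_\infty=(-q;q)_\infty/(-q;q)_m$, so the identity $(q;q)_m(-q;q)_m=(q^2;q^2)_m$ collapses the whole expression to $(-q;q)_\infty\sum_{m\ge 0}q^{2m^2+cm}/(q^2;q^2)_m$.

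Finally I would invoke the Rogers--Ramanujan identities \eqref{RR} with $q$ replaced by $q^2$: the $c=0$ sum equals $1/(q^2,q^8;q^{10})_\infty$ and the $c=2$ sum equals $1/(q^4,q^6;q^{10})_\infty$. Writing $(-q;q)_\infty=(q^2;q^2)_\infty/(q;q)_\infty$ and splitting $(q^2;q^2)_\infty=(q^2,q^4,q^6,q^8,q^{10};q^{10})_\infty$, the remaining products telescope to $(q^4,q^6,q^{10};q^{10})_\infty/(q;q)_\infty$ and $(q^2,q^8,q^{10};q^{10})_\infty/(q;q)_\infty$, which are exactly the right-hand sides of \eqref{table3.14.1} and \eqref{table3.14.2}. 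The only genuinely delicate point is the opening move: one must verify that the decomposition $\frac52 j^2+\frac52 k^2+3jk=\frac32 m^2+(j^2+k^2)$ together with the shift $-j$ renders the inner $(j,k)$-sum exactly of the shape demanded by \eqref{lem-13}; after that, the passage through Euler's identity and \eqref{RR} is routine bookkeeping.
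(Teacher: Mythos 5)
Your proposal is correct. The opening reduction is exactly the paper's: the same auxiliary series $F(u,v)$ with the built-in $-j$, the same decomposition $\tfrac52 j^2+\tfrac52 k^2+3jk=\tfrac32(j+k)^2+j^2+k^2$, the same application of \eqref{lem-13}, and the same specializations $(u,v)=(q^{1/2},q^{1/2})$ and $(q^{1/2},q^{5/2})$ leading to the double sum $\sum_{i,m}q^{\frac12 i^2+\frac12 i+im+2m^2+cm}/\bigl((q;q)_i(q;q)_m\bigr)$ with $c\in\{0,2\}$. Where you diverge is the final step: the paper simply cites the known evaluations \eqref{LW.rank2.3.28} and \eqref{LW.rank2.3.29} from \cite{Wang-rank2} (which are this double sum with the roles of the two indices renamed), whereas you evaluate the double sum from scratch by summing over $i$ first via Euler's identity \eqref{euler}, using $(q;q)_m(-q;q)_m=(q^2;q^2)_m$, and then invoking the Rogers--Ramanujan identities \eqref{RR} with $q\mapsto q^2$. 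Your route is fully correct and has the merit of being self-contained --- it in effect supplies a short proof of the two cited identities of \cite{Wang-rank2} --- at the cost of a few extra lines of product manipulation; the paper's version is shorter but leans on an external reference.
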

Interchanging $j$ with $k$, we obtain identities for the  remaining choices of $b$.

We will connect them with the following identities:
\begin{align}
     &\sum_{i,j \ge 0}\frac{q^{2i^2+ij+\frac{1}{2}j^2+\frac{1}{2}j}}{(q;q)_i(q;q)_j}
    =
    \frac{(q^4,q^6,q^{10};q^{10})_\infty}{(q;q)_\infty}, \quad \text{(\cite[Eq.\ (3.29)]{Wang-rank2})} \label{LW.rank2.3.28}
    \\
    &\sum_{i,j \ge 0}\frac{q^{2i^2+ij+\frac{1}{2}j^2+2i+\frac{1}{2}j}}{(q;q)_i(q;q)_j}
    =
    \frac{(q^2,q^8,q^{10};q^{10})_\infty}{(q;q)_\infty}. \quad \text{(\cite[Eq.\ (3.30)]{Wang-rank2})} \label{LW.rank2.3.29}
\end{align}
\begin{proof}
We have
\begin{align}
&\sum_{i,j,k\ge 0}\frac{q^{\frac{1}{2}i^2+\frac{5}{2}j^2+\frac{5}{2}k^2+ij+ik+3jk-j}u^{i}v^{j+k}}{(q;q)_i(q^2;q^2)_j(q^2;q^2)_k}\notag
\\
&=\sum_{j\ge 0}\frac{q^{\frac{1}{2}i^2}u^i}{(q;q)_i}
\sum_{m\ge 0}q^{\frac{3}{2}m^2+im}v^m
\sum_{j+k=m}\frac{q^{j^2-j+k^2}}{(q^2;q^2)_j(q^2;q^2)_k}\notag
\\
&
=\sum_{i,m\ge 0}\frac{q^{\frac{1}{2}i^2+im+2m^2-\frac{1}{2}m}u^iv^m}{(q;q)_i(q;q)_m}. \quad \text{(by \eqref{lem-13})}
\label{F3.14}
\end{align}
Setting $(u,v)=(q^{\frac{1}{2}},q^{\frac{1}{2}})$ and $(q^{\frac{1}{2}},q^{\frac{5}{2}})$ in \eqref{F3.14}, and then using \eqref{LW.rank2.3.28} and \eqref{LW.rank2.3.29}, we obtain \eqref{table3.14.1} and \eqref{table3.14.2}, respectively.
\end{proof}

\subsubsection{Example 15}
This example corresponds to
\begin{align*}
&A=\begin{pmatrix}
5/2&3/2& 2\\
3/2&5/2& 2\\
1 & 1 & 2
\end{pmatrix},\quad
AD=\begin{pmatrix}
5&3& 2\\
3&5& 2\\
2&2& 2
\end{pmatrix},\nonumber
\\
&b\in  \bigg\{
\begin{pmatrix}
-1/2\\1/2\\ 0
\end{pmatrix},
\begin{pmatrix}
1/2\\-1/2\\0
\end{pmatrix},
\begin{pmatrix}
1/2\\3/2\\ 0
\end{pmatrix},
\begin{pmatrix}
3/2\\1/2\\0
\end{pmatrix},
\begin{pmatrix}
3/2\\5/2\\1
\end{pmatrix},
\begin{pmatrix}
5/2\\3/2\\1
\end{pmatrix}
\bigg\}.
\end{align*}
\begin{theorem}
We have
\begin{align}
\sum_{i,j,k\ge 0}\frac{q^{i^2+\frac{5}{2}j^2+\frac{5}{2}k^2+2ij+2ik+3jk-\frac{1}{2}j+\frac{1}{2}k}}{(q;q)_i(q^2;q^2)_j(q^2;q^2)_k}
&=\frac{(q^3,q^4,q^7;q^7)_\infty}{(q;q)_\infty},
\label{table3.15.1}
\\
\sum_{i,j,k\ge 0}\frac{q^{i^2+\frac{5}{2}j^2+\frac{5}{2}k^2+2ij+2ik+3jk+\frac{1}{2}j+\frac{3}{2}k}}{(q;q)_i(q^2;q^2)_j(q^2;q^2)_k}
&=\frac{(q^2,q^5,q^7;q^7)_\infty}{(q;q)_\infty},
\label{table3.15.3}
\\
\sum_{i,j,k\ge 0}\frac{q^{i^2+\frac{5}{2}j^2+\frac{5}{2}k^2+2ij+2ik+3jk+i+\frac{3}{2}j+\frac{5}{2}k}}{(q;q)_i(q^2;q^2)_j(q^2;q^2)_k}
&=\frac{(q,q^6,q^7;q^7)_\infty}{(q;q)_\infty}.
\label{table3.15.5}
\end{align}
\end{theorem}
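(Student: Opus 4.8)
The plan is to follow the same strategy already used for Examples 10, 12, 13 and 14: combine the two index-$2$ summation variables $j$ and $k$ into a single variable $m=j+k$, collapse the inner sum by means of \eqref{lem-13}, and match the resulting double sum against the Andrews--Gordon identity \eqref{AG} with $k=3$. Concretely, I would first introduce the bivariate generating function
\begin{align*}
\sum_{i,j,k\ge 0}\frac{q^{i^2+\frac{5}{2}j^2+\frac{5}{2}k^2+2ij+2ik+3jk-j}u^iv^{j+k}}{(q;q)_i(q^2;q^2)_j(q^2;q^2)_k},
\end{align*}
which carries all three desired identities as specializations of $(u,v)$, the extra $-j$ in the exponent being a convenient normalization.

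The key algebraic observation is the decomposition
\begin{align*}
\tfrac{5}{2}j^2+\tfrac{5}{2}k^2+3jk-j=(j^2+k^2-j)+\tfrac{3}{2}(j+k)^2,
\end{align*}
together with $i^2+2ij+2ik=i^2+2im$. After setting $m=j+k$ and factoring out the part depending only on $i$ and $m$, the innermost sum becomes $\sum_{j+k=m}q^{j^2+k^2-j}/\big((q^2;q^2)_j(q^2;q^2)_k\big)$, which equals $q^{(m^2-m)/2}/(q;q)_m$ by \eqref{lem-13}. Collecting the exponents via $\tfrac{3}{2}m^2+\tfrac{1}{2}(m^2-m)=2m^2-\tfrac{1}{2}m$, the triple sum collapses to the double sum
\begin{align*}
\sum_{i,m\ge 0}\frac{q^{i^2+2im+2m^2-\frac{1}{2}m}u^iv^m}{(q;q)_i(q;q)_m}.
\end{align*}

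Finally, writing $N_1=i+m$ and $N_2=m$ identifies $i^2+2im+2m^2$ with $N_1^2+N_2^2$, so that the specializations $(u,v)=(1,q^{1/2})$, $(1,q^{3/2})$ and $(q,q^{5/2})$ turn this double sum into the $k=3$ instances of \eqref{AG} with $i=3,2,1$, yielding the products $(q^3,q^4,q^7;q^7)_\infty/(q;q)_\infty$, $(q^2,q^5,q^7;q^7)_\infty/(q;q)_\infty$ and $(q,q^6,q^7;q^7)_\infty/(q;q)_\infty$. One checks directly that these three choices of $(u,v)$, combined with the base shift $-j$, reproduce exactly the linear forms $-\tfrac{1}{2}j+\tfrac{1}{2}k$, $\tfrac{1}{2}j+\tfrac{3}{2}k$ and $i+\tfrac{3}{2}j+\tfrac{5}{2}k$ appearing in \eqref{table3.15.1}, \eqref{table3.15.3} and \eqref{table3.15.5}, and the remaining three members of the family follow at once by the symmetry $j\leftrightarrow k$. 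Every step is a routine manipulation, so there is no serious obstacle; the only point that requires genuine care is locating the correct splitting of the quadratic form so that the inner sum matches the hypothesis of \eqref{lem-13} and the reduced double sum is recognizably of Andrews--Gordon type.
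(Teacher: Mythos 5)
Your proposal is correct and coincides with the paper's own proof: the paper forms the same generating function with the $-j$ normalization and $v^{j+k}$, collapses the inner sum over $j+k=m$ via \eqref{lem-13} to obtain $\sum_{i,m\ge 0}q^{i^2+2im+2m^2-\frac{1}{2}m}u^iv^m/\bigl((q;q)_i(q;q)_m\bigr)$, and then specializes $(u,v)=(1,q^{1/2})$, $(1,q^{3/2})$, $(q,q^{5/2})$ into \eqref{AG} with $(k,i)=(3,3),(3,2),(3,1)$. All of your computational checks (the splitting of the quadratic form and the matching of the linear exponents) agree with the paper.
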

Interchanging $j$ with $k$, we obtain identities for the  remaining choices of $b$.
\begin{proof}
We have
\begin{align}
&\sum_{i,j,k\ge 0}\frac{q^{i^2+\frac{5}{2}j^2+\frac{5}{2}k^2+2ij+2ik+3jk-j}u^{i}v^{j+k}}{(q;q)_i(q^2;q^2)_j(q^2;q^2)_k}
\notag
\\
&=\sum_{i\ge 0}\frac{q^{i^2}u^i}{(q;q)_i}
\sum_{m\ge 0}q^{\frac{3}{2}m^2+2im}v^m
\sum_{j+k=m}\frac{q^{j^2-j+k^2}}{(q^2;q^2)_j(q^2;q^2)_k}\notag
\\
&=\sum_{i,m\ge 0}\frac{q^{i^2+2im+2m^2-\frac{1}{2}m}u^iv^m}{(q;q)_i(q;q)_m}. \quad \text{(by \eqref{lem-13})}
\label{F3.15}
\end{align}
Setting $(u,v)=(1,q^{\frac{1}{2}})$, $(1,q^{\frac{3}{2}})$ and $(q,q^{\frac{5}{2}})$ in \eqref{F3.15}, and then using  \eqref{AG} with $(k,i)=(3,3),(3,2),(3,1)$, we obtain \eqref{table3.15.1}--\eqref{table3.15.5}, respectively.
\end{proof}

We end this paper with the following remark.
Suppose $(A,b,c,d)$ is a modular quadruple, Mizuno \cite[Conjecture 4.1]{Mizuno} conjectured that $(A^\star,b^\star,c^\star,d^\star)$ is also modular quadruple where
\begin{align}
    A^\star=A^{-1}, \quad b^\star=A^{-1}b, \quad c^\star=\frac{1}{2}b^\mathrm{T} (AD)^{-1}b-\frac{\mathrm{tr} D}{24}-c, \quad d^\star=d.
\end{align}
This is motivated and is consistent with the duality observation of Zagier \cite[p.\ 50]{Zagier} for modular triples. We shall call $(A^\star,b^\star,c^\star,d^\star)$ the dual quadruple of $(A,b,c,d)$.

Note that the dual examples of Mizuno's rank two examples are contained in his list \cite[Table 2]{Mizuno}. In contrast, the dual examples of Mizuno's rank three modular quadruples \cite[Tables 2 and 3]{Mizuno} are not included there. We have justified a number of these dual examples and will discuss them in forthcoming papers.

\subsection*{Acknowledgements}
This work was supported by the National Key R\&D Program of China (Grant No.\ 2024YFA1014500).

\end{document}